\newif\ifpersonal
\numberwithin{equation}{section}
\theoremstyle{plain}
\newtheorem{theorem}[equation]{Theorem}
\newtheorem*{theorem*}{Theorem}
\newtheorem{lemma}[equation]{Lemma}
\newtheorem*{lemma*}{Lemma}
\newtheorem*{claim*}{Claim}
\newtheorem{conjecture}[equation]{Conjecture}
\newtheorem{proposition}[equation]{Proposition}
\newtheorem*{proposition*}{Proposition}
\newtheorem{corollary}[equation]{Corollary}
\newtheorem*{corollary*}{Corollary}
\theoremstyle{definition}
\newtheorem{definition}[equation]{Definition}
\newtheorem*{definition*}{Definition}
\newtheorem{definition-theorem}[equation]{Definition-Theorem}
\newtheorem{definition-lemma}[equation]{Definition-Lemma}
\newtheorem{assumption}[equation]{Assumption}
\newtheorem{example}[equation]{Example}
\newtheorem{remark}[equation]{Remark}
\newtheorem*{remark*}{Remark}
\numberwithin{equation}{section}
\newcommand{\personal}[1]{\textcolor[rgb]{0,0,1}{(Personal: #1)}}
\newcommand{\todo}[1]{\textcolor{red}{(Todo: #1)}}
\newcommand{\personal}[1]{\ignorespaces}
\newcommand{\discussion}[1]{\ignorespaces}
\newcommand{\todo}[1]{\ignorespaces}
\providecommand{\abs}[1]{\lvert#1\rvert}
\newcommand{\bbC}{\mathbb C}
\newcommand{\bbF}{\mathbb F}
\newcommand{\bbN}{\mathbb N}
\newcommand{\bbP}{\mathbb P}
\newcommand{\bbQ}{\mathbb Q}
\newcommand{\bbR}{\mathbb R}
\newcommand{\bbZ}{\mathbb Z}
\newcommand{\cF}{\mathcal F}
\newcommand{\cH}{\mathcal H}
\newcommand{\cI}{\mathcal I}
\newcommand{\cJ}{\mathcal J}
\newcommand{\cL}{\mathcal L}
\newcommand{\cM}{\mathcal M}
\newcommand{\cO}{\mathcal O}
\newcommand{\cQ}{\mathcal Q}
\newcommand{\cS}{\mathcal S}
\let\save@mathaccent\mathaccent
\newcommand*\if@single[3]{%
	\setbox0\hbox{${\mathaccent"0362{#1}}^H$}%
	\setbox2\hbox{${\mathaccent"0362{\kern0pt#1}}^H$}%
	\ifdim\ht0=\ht2 #3\else #2\fi
}
\newcommand*\rel@kern[1]{\kern#1\dimexpr\macc@kerna}
\newcommand*\widebar[1]{\@ifnextchar^{{\wide@bar{#1}{0}}}{\wide@bar{#1}{1}}}
\newcommand*\wide@bar[2]{\if@single{#1}{\wide@bar@{#1}{#2}{1}}{\wide@bar@{#1}{#2}{2}}}
\newcommand*\wide@bar@[3]{%
	\begingroup
	\def\mathaccent##1##2{%
		%Enable nesting of accents:
		\let\mathaccent\save@mathaccent
		%If there's more than a single symbol, use the first character instead (see below):
		\if#32 \let\macc@nucleus\first@char \fi
		%Determine the italic correction:
		\setbox\z@\hbox{$\macc@style{\macc@nucleus}_{}$}%
		\setbox\tw@\hbox{$\macc@style{\macc@nucleus}{}_{}$}%
		\dimen@\wd\tw@
		\advance\dimen@-\wd\z@
		%Now \dimen@ is the italic correction of the symbol.
		\divide\dimen@ 3
		\@tempdima\wd\tw@
		\advance\@tempdima-\scriptspace
		%Now \@tempdima is the width of the symbol.
		\divide\@tempdima 10
		\advance\dimen@-\@tempdima
		%Now \dimen@ = (italic correction / 3) - (Breite / 10)
		\ifdim\dimen@>\z@ \dimen@0pt\fi
		%The bar will be shortened in the case \dimen@<0 !
		\rel@kern{0.6}\kern-\dimen@
		\if#31
		\overline{\rel@kern{-0.6}\kern\dimen@\macc@nucleus\rel@kern{0.4}\kern\dimen@}%
		\advance\dimen@0.4\dimexpr\macc@kerna
		%Place the combined final kern (-\dimen@) if it is >0 or if a superscript follows:
		\let\final@kern#2%
		\ifdim\dimen@<\z@ \let\final@kern1\fi
		\if\final@kern1 \kern-\dimen@\fi
		\else
		\overline{\rel@kern{-0.6}\kern\dimen@#1}%
		\fi
	}%
	\macc@depth\@ne
	\let\math@bgroup\@empty \let\math@egroup\macc@set@skewchar
	\mathsurround\z@ \frozen@everymath{\mathgroup\macc@group\relax}%
	\macc@set@skewchar\relax
	\let\mathaccentV\macc@nested@a
	%The following initialises \macc@kerna and calls \mathaccent:
	\if#31
	\macc@nested@a\relax111{#1}%
	\else
	%If the argument consists of more than one symbol, and if the first token is
	%a letter, use that letter for the computations:
	\def\gobble@till@marker##1\endmarker{}%
	\futurelet\first@char\gobble@till@marker#1\endmarker
	\ifcat\noexpand\first@char A\else
	\def\first@char{}%
	\fi
	\macc@nested@a\relax111{\first@char}%
	\fi
	\endgroup
}
\newcommand{\hS}{\widehat S}
\newcommand{\tR}{\widetilde R}
\newcommand{\tX}{\widetilde X}
\newcommand{\tbeta}{\widetilde\beta}
\newcommand{\dbp}[1]{(\!(#1)\!)}
\newcommand{\gitquot}{\mathbin{\!/\mkern-5mu/\!}}
\newcommand{\CF}{\mathsf{CF}}
\newcommand{\pt}{\mathrm{pt}}
\newcommand{\td}{\tilde d}
\newcommand{\fm}{\mathfrak{m}}
\newcommand{\fp}{\mathfrak{p}}
\newcommand{\Id}{\mathrm{Id}}
\newcommand{\longto}{\longrightarrow}
\newcommand{\te}{\tilde{e}}
\DeclareMathOperator{\End}{End}
\DeclareMathOperator{\Hom}{Hom}
\DeclareMathOperator{\NE}{NE}
\DeclareMathOperator{\Sym}{Sym}
\DeclareMathOperator{\Spf}{Spf}
\DeclareMathOperator{\Spec}{Spec}
\DeclareMathOperator{\Proj}{Proj}
\DeclareMathOperator{\Nef}{Nef}
\DeclareMathOperator{\Bl}{Bl}
\DeclareMathOperator{\NS}{NS}
\DeclareMathOperator{\QH}{QH} % Quantum cohomology
\DeclareMathOperator{\QDM}{QDM} % Quantum D-module
\newcommand{\La}{\mathrm{La}}
\newcommand{\rat}{\mathrm{rat}}
\newcommand{\ext}{\mathrm{ext}}
\newcommand{\red}{\mathrm{red}}
\newcommand{\loc}{\mathrm{loc}}
\newcommand{\tw}{\mathrm{tw}}
\newcommand{\ttau}{\widetilde{\tau}}
\newcommand{\tzeta}{\widetilde{\zeta}}
\def\blue{\textcolor{blue}}
\def\orange{\textcolor{orange}}
\newcommand{\bff}{\mathbf{f}}
\newcommand{\bfs}{\mathbf{s}}
\newcommand{\bft}{\mathbf{t}}
\newcommand{\sfF}{\mathsf{F}}
\newcommand{\sF}{\mathscr{F}}
\newcommand{\FT}{\mathrm{FT}}
\newcommand{\LT}{\mathrm{LT}}
\renewcommand{\Im}{\mathrm{Im}}
\newcommand{\Ker}{\mathrm{Ker}}
\newcommand{\hcS}{\widehat{\cS}}
\DeclareMathOperator{\rank}{rank}
\newcommand{\formal}[1]{\llbracket #1 \rrbracket}
\newcommand{\bigformal}[1]{\big\llbracket #1 \big\rrbracket}
\newcommand{\laurent}[1]{(\!( #1 )\!)}
\newcommand{\biglaurent}[1]{\big(\!\big( #1 \big)\!\big)}
\newcommand{\Biglaurent}[1]{\Big(\!\Big( #1 \Big)\!\Big)}
\renewenvironment{abstract}{%
  \quotation
  \small
  \textbf{\textit{\abstractname.}} % with a normal space
}{\endquotation}
\begin{document}
\title{Quantum cohomology of variations of GIT quotients and flips}

\author{Zhaoxing Gu}
\address{Zhaoxing Gu, Department of Mathematics, M/C 253-37, Caltech, 1200 E.\ California Blvd., Pasadena, CA 91125, USA}
\email{zgu2@caltech.edu}

\author{Song Yu}
\address{Song Yu, Yau Mathematical Sciences Center, Tsinghua University, Haidian District, Beijing 100084, China}
\email{song-yu@tsinghua.edu.cn}

\author{Tony Yue YU}
\address{Tony Yue YU, Department of Mathematics, M/C 253-37, Caltech, 1200 E.\ California Blvd., Pasadena, CA 91125, USA}
\email{yuyuetony@gmail.com}

\date{August 21, 2025}
\subjclass[2020]{Primary 14N35; Secondary 14E30}
%\keywords{F-bundle, non-commutative Hodge structure, spectral decomposition, framing, quantum cohomology, quantum D-module, projective bundle, blowup}

\maketitle

\begin{abstract}
We prove a decomposition theorem for the quantum cohomology of variations of GIT quotients.
More precisely, for any reductive group $G$ and a simple $G$-VGIT wall-crossing $X_- \dashrightarrow X_+$ with a wall $S$, we show that the quantum $D$-module of $X_-$ can be decomposed into a direct sum of that of $X_+$ and copies of that of $S$.
As an application, we obtain a decomposition theorem for the quantum cohomology of local models of standard flips in birational geometry.
\end{abstract}

\personal{Personal comments are shown!}

\tableofcontents

\addtocontents{toc}{\protect\setcounter{tocdepth}{1}}

\section{Introduction}\label{sect:Intro}

In this paper, we prove a decomposition theorem for the quantum cohomology of variations of geometric invariant theory (VGIT) quotients.
As an application, we deduce a decomposition theorem for the quantum cohomology of local models of standard flips in birational geometry.

In the introduction, we begin by reviewing the decomposition of (classical) cohomology of variations of GIT quotients.
Then we state our main theorem for the decomposition of quantum cohomology and discuss its implications.
% $D$-modules, and the corollary for quantum cohomology.
After that, we discuss an application towards the decomposition of quantum cohomology of standard flips.
We outline the proof strategy of our main theorem by the end of the introduction.

\subsection{VGIT wall-crossings and decomposition of cohomology}
Let $W$ be a quasi-projective variety $W$ with an action of a reductive group $G$. For any ample $G$-linearization $L$, the GIT quotient $W\gitquot_{L}G$ is defined as the good quotient of the semistable locus of $L$ (see e.g. \cite{GIT_quotients}).
The quotient $W\gitquot_L G$ depends on the linearization $L$.
Nonempty GIT quotients of $W$ given by different linearizations are birational.

The cone $E^G(W)$ of all ample $G$-linearizations with nonempty semistable loci has a wall-and-chamber structure \cite{dolgachev1998variation,thaddeus1996geometric}.
Assume that $L_+$ and $L_-$ are two linearizations lying in two adjacent chambers of $E^G(W)$.
% (equivalently, $L_+$ and $L_-$ are connected by passing through a wall of $E^G(W)$).
The two quotients $X_\pm \coloneqq W\gitquot_{L_{\pm}}G$ are related by a birational transformation $f\colon X_-\dashrightarrow X_+$, referred to as a VGIT wall-crossing. 
In this paper, we restrict to the case where $X_\pm$ are smooth projective and $f$ is a \emph{simple} VGIT wall-crossing with wall $S$ (see Definition~\ref{def:SimpleWall} for the precise definition).
In particular, such $f$ is a standard flip of type $(r_+-1,r_--1)$, where the exceptional loci are $\bbP^{r_{\pm}-1}$-bundles over $S$ respectively (see Definition~\ref{def:Flip}).
In this context, we have the following decomposition of cohomology, as a consequence of a decomposition of Chow motives \cite{lee2010flops,jiang2023chow}.

\begin{proposition}[Corollary~\ref{cor:deRhamIsoVGIT}]\label{prop:deRhamIsoVGIT}
For a simple $G$-VGIT wall-crossing $f\colon X_-\dashrightarrow X_+$ with wall $S$, assuming $r_+ \le r_-$ without loss of generality, we have an isomorphism of singular cohomology groups
% \begin{equation}\label{Cohomology decomposition for VGIT}
$$
    H^*(X_-)\xrightarrow{\ \sim\ } H^*(X_+) \oplus \bigoplus\limits_{j=0}^{r_- - r_+-1}H^{*-2j-2r_+}(S).
$$
\end{proposition}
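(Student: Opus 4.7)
The plan is to deduce the cohomological decomposition directly from the Chow-motivic decomposition theorem for standard flips. Since $f\colon X_-\dashrightarrow X_+$ is a simple $G$-VGIT wall-crossing, it follows from the structural results recalled in the text that $f$ is a standard flip of type $(r_+-1,r_--1)$ with wall $S$, whose exceptional loci $P_\pm$ are projective bundles $\bbP(V_\pm) \to S$ with prescribed normal bundles. In particular, all the hypotheses of the Chow motive decomposition for standard flips (Proposition~\ref{prop:Chow motive decom of flips}, attributed to Lee--Lin--Wang and Jiang) are satisfied.

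First, I would invoke that proposition to obtain an isomorphism of Chow motives of the form
$$
M(X_-) \;\cong\; M(X_+) \;\oplus\; \bigoplus_{j=0}^{r_- - r_+ - 1} M(S)(-j-r_+),
$$
where the Tate twists record the contributions coming from the ``extra'' fiber directions of $\bbP(V_-) \to S$ that are not accounted for by the flip correspondence with $X_+$; the asymmetry $r_+\le r_-$ is what makes exactly $r_- - r_+$ summands of $S$ appear.

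Next, I would apply the singular cohomology realization functor, which is additive and converts a Tate twist $M(Y)(-k)$ into a cohomological degree shift by $2k$: $H^*(M(Y)(-k))=H^{*-2k}(Y)$. Applying this to the motivic decomposition gives term by term
$$
H^*(X_-) \;\xrightarrow{\ \sim\ }\; H^*(X_+) \;\oplus\; \bigoplus_{j=0}^{r_- - r_+ - 1} H^{*-2j-2r_+}(S),
$$
which is exactly the desired statement.

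The genuinely nontrivial content is encapsulated in the Chow motive decomposition, which is quoted as an input; once that is in hand, the only remaining point is bookkeeping of Tate twists under cohomological realization. Accordingly, the only real obstacle is to verify that the ranks $r_\pm$ and the wall $S$ arising from the simple VGIT setup agree with those appearing in the standard-flip conventions of \cite{lee2010flops,jiang2023chow}, so that the motive decomposition applies with precisely the indicated range of $j$ and Tate twist $-j-r_+$; this is a direct comparison of definitions.
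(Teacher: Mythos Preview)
Your proposal is correct and follows essentially the same approach as the paper: first observe that a simple $G$-VGIT wall-crossing is a standard flip of type $(r_+-1,r_--1)$ with wall $S$ (Proposition~\ref{prop:VGITGeometry}), then invoke the Chow motive decomposition of Lee--Lin--Wang and Jiang (Proposition~\ref{prop:Chow motive decom of flips}), and finally pass to singular cohomology. The only cosmetic difference is that the paper phrases the last step via Manin's identity principle rather than the realization functor, and writes down the explicit maps $\phi=\pi_{-,*}\circ\pi_+^*$ and $j_{-,*}\circ(\cup h_-^i)\circ\psi_-^*$ realizing the isomorphism.
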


\subsection{Decomposition of quantum cohomology}

The main result of this paper is to extend the decomposition in Proposition~\ref{prop:deRhamIsoVGIT} from classical cohomology to quantum cohomology and quantum $D$-modules, encoding the enumerative geometry of curves in the varieties.

Recall that for a smooth projective variety $X$, its (big) quantum cohomology $\mathrm{QH}^*(X)$ is a deformation of $H^*(X)$ over $\bbC\formal{Q_X,\tau_X}$ given by the quantum product $\star_{\tau_X}$, where $Q_X$ denotes the Novikov variable, and $\tau_X$ is a formal parameter on $H^*(X)$.
The (big) quantum $D$-module $\QDM(X)$ of $X$ is the trivial $H^*(X)$-bundle over $\bbC[z]\formal{Q_X,\tau_X}$ with a flat connection $\nabla$ (called the quantum connection given by $\star_{\tau_X}$) and a $z$-sesquilinear pairing $P_X$ induced by the Poincar\'e pairing. See Section~\ref{subsec:quantum cohomology and qdm} for details.

The quantum $D$-modules $\QDM(X_{\pm})$ and $\QDM(S)$ are defined over distinct rings $\bbC[z]\formal{Q_{X_{\pm}},\tau_{X_{\pm}}}$ and $\bbC[z]\formal{Q_{S},\tau_{S}}$ respectively.
In order to compare them, we need to pull them back to a common base ring. In the case $r_+ < r_-$, we use $\bbC[z]\Biglaurent{Q_{X_-}^{\frac{a}{2(r_--r_+)}}}\formal{Q_{X_-},\tau_{X_-}}$ where $Q_{X_-}^a$ is the Novikov variable for the fiber curve class in the exceptional locus of $f\colon X_-\dashrightarrow X_+$ in $X_-$, which is a projective bundle over $S$. In the case $r_+ = r_-$, we use a more complicated ring $R^0$ which is a Laurent extension of smaller base rings $\bbC[z]\otimes_{\bbC}\bbC\formal{Q_{X_\pm}}^h\formal{\tau_{X_\pm}}$ over which the quantum $D$-modules of $X_{\pm}$ are well-defined (see Section~\ref{sect:GeneralVGITFlops} and \eqref{eqn:R0} for details). We will use the superscripts ``$\mathrm{La,red}$'' and ``$h,\La$'' to denote the base changes in the two cases respectively.

\begin{theorem}[Theorems~\ref{thm:GeneralQDMDecomposition}, \ref{thm:QDMDecompCrep}]\label{thm:IntroGeneralQDMDecomp}

When $r_+<r_-$, there exist a formal change of variables
\begin{align*}
H^*(X_-) &\longto H^*(X_+)\ \oplus\ H^*(S)^{\oplus (r_--r_+)},\\
\tau_{X_-} &\longmapsto \big(\tau_{X_+}^{\red}(\tau_{X_-}),\ \{\zeta_{j}^{\red}(\tau_{X_-})\}_{0\le j\le r_--r_+-1}\big)
\end{align*}
defined over $\bbC\Biglaurent{Q_{X_-}^{\frac{a}{2(r_--r_+)}}}\formal{Q_{X_-}}$, and an isomorphism of  $\bbC[z]\Biglaurent{Q_{X_-}^{\frac{a}{2(r_--r_+)}}}\formal{Q_{X_-},\tau_{X_-}}$-modules
$$
    \Psi\colon \QDM(X_-)^{\mathrm{La,red}} \longrightarrow (\tau_{X_+}^{\red})^*\QDM(X_+)^{\mathrm{La,red}}\oplus\bigoplus_{j=0}^{r_--r_+-1}(\zeta_j^{\red})^*\QDM(S)^{\mathrm{La,red}}
$$ 
that are compatible with the quantum connections and the pairings.

When $r_+=r_-$, 
there exists a formal change of variables $H^*(X_-)\to H^*(X_+)$, $\tau_{X_-} \mapsto \tau_{X_+}(\tau_{X_-})$, and an $R^0$-module isomorphism 
$$\Psi^{0}\colon\QDM(X_-)^{h,\La}\longrightarrow \tau_{X_+}^*\QDM(X_+)^{h,\La}$$
that are compatible with the quantum connections and the pairings. 
% Here, $R^0$ is a Laurent extension of smaller base rings $\bbC[z]\otimes_{\bbC}\bbC\formal{Q_{X_\pm}}^h\formal{\tau_{X_\pm}}$ where the quantum $D$-modules of $X_{\pm}$ are well-defined (see Section~\ref{sect:GeneralVGITFlops} for details).
\end{theorem}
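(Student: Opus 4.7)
The plan is to reduce the theorem to an identity at the level of Givental's Lagrangian cones (or equivalently, small I-functions) and then upgrade this to the full big quantum $D$-module via Iritani's reconstruction method. The main geometric input is a quasi-map / master-space compactification attached to the VGIT data: each of $X_\pm$ and the wall $S$ carries an explicit I-function lying on its Givental cone $\cL_{X_\pm}$ or $\cL_S$, and the simple-wall hypothesis lets one write down the wall-crossing comparing $I_{X_-}$, $I_{X_+}$, and $I_S$ in closed form.

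The first step is to make this comparison explicit. Near the wall, the normal weights to the unstable loci are governed by the bundles $V_\pm$ on $S$ of ranks $r_\pm$, and a residue/Mellin--Barnes analysis (in the spirit of Iritani's blowup paper and the Lee--Lin--Wang treatment of the $(2,1)$-flip) yields a formal identity
\[
I_{X_-}(Q_{X_-},\tau_{X_-},z) \;\longleftrightarrow\; I_{X_+}(Q_{X_+},\tau_{X_+},z) \;+\; \sum_{j=0}^{r_--r_+-1}(\text{shifted copy of }I_S)
\]
after a suitable change of variables in the Novikov/K\"ahler parameters. In the discrepant case $r_+ < r_-$ the residue contour picks up exactly $r_- - r_+$ simple poles, producing the $r_- - r_+$ copies of $I_S$ with shifts $\zeta_j^{\red}$; in the crepant case $r_+ = r_-$ these residues vanish and only the analytic continuation of $I_{X_-}$ into the $X_+$-chamber survives, accounting for the purely Laurent-extension nature of $R^0$.

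The second step is to lift this I-function identity to a morphism $\Psi$ (resp.\ $\Psi^0$) of big quantum $D$-modules. By the string and divisor equations and Iritani's reconstruction from the $H^2$-direction, a morphism of Lagrangian cones uniquely determines a flat morphism of big QDMs, so compatibility with $\nabla$ is automatic once the cone identity is established. Compatibility with the pairings $P_{X_\pm}$ and $P_S$ will follow from self-adjointness of the residue operator, which in turn reflects Poincar\'e duality on the $\bbP^{r_\pm-1}$-fibers of the exceptional loci $P_\pm \to S$. The mirror maps $\tau_{X_+}^{\red}$, $\zeta_j^{\red}$ (resp.\ $\tau_{X_+}(\tau_{X_-})$) are then read off as the induced changes of coordinates on the $H^2$-locus of the cones.

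The main obstacle will be the careful bookkeeping of the base rings. In the discrepant case, the fractional Novikov monomial $Q_{X_-}^{\frac{a}{2(r_--r_+)}}$ arises intrinsically from the Mellin--Barnes residue, and one must verify that all higher-order corrections to $\tau_{X_+}^{\red}$ and $\zeta_j^{\red}$ stay within $\bbC\Biglaurent{Q_{X_-}^{\frac{a}{2(r_--r_+)}}}\formal{Q_{X_-}}$ without any further ring enlargement; this amounts to tracking the exponents of the fiber class through the residue expansion. In the crepant case the subtlety shifts to showing that $\QDM(X_-)^{h,\La}$ and $\tau_{X_+}^*\QDM(X_+)^{h,\La}$ are genuinely modules over the common ring $R^0$ and agree after the change of variables $\tau_{X_+}(\tau_{X_-})$; this will rest on a convergence-in-one-variable statement for the I-function along the fiber curve class, which is where most of the technical labor of the proof is likely to concentrate.
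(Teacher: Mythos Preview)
Your proposal has the right flavor in places (Mellin--Barnes residues are indeed behind the continuous Fourier transforms, and you correctly flag that the crepant case needs separate treatment), but the overall architecture differs from the paper's in a way that creates a genuine gap.

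The paper does \emph{not} compare $I_{X_-}$ directly with $I_{X_+}$ and shifted copies of $I_S$. Instead, everything is routed through the \emph{equivariant} quantum $D$-module of the master space $M$ (or $W$ in the 3-component $\bbC^*$ setting). Concretely, one builds two maps out of $\QDM_{\bbC^*}(W)^{\wedge,\La}_{X_-}$: a discrete Fourier transform $\FT_{X_-}$ landing in $\tau_{X_-}^*\QDM(X_-)^{\La}$, and a sum $\FT_{X_+}\oplus\bigoplus_j\FT_{F_0,j}$ landing in the target decomposition. The decomposition $\Phi$ is then the \emph{composition} $(\FT_{X_+}\oplus\bigoplus_j\FT_{F_0,j})\circ\FT_{X_-}^{-1}$. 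Proving that each of these two maps is an isomorphism (by an explicit leading-term analysis of a Vandermonde-type matrix in the critical-value parameters $\lambda_j$) is the technical heart of the argument, and your sketch has no analogue of this step.

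Separately, your lifting mechanism is not sound as stated. The claim that ``by the string and divisor equations and Iritani's reconstruction from the $H^2$-direction, a morphism of Lagrangian cones uniquely determines a flat morphism of big QDMs'' requires the cohomology to be generated by $H^2$, which is not assumed for $X_\pm$ or $S$. The paper's lift from cones to $D$-modules instead goes through the fundamental solutions $M_W(\theta)$, $M_{X_\pm}(\tau_{X_\pm})$, $M_{F_0}(\sigma_j)$ and the commutative squares they satisfy with the Fourier operators; this is what forces compatibility with $\nabla$ without any generation hypothesis. Finally, for $r_+=r_-$ the paper does not attempt an independent argument: it directly invokes the Lee--Lin--Qu--Wang analytic-continuation theorem for ordinary flops and repackages it as a $D$-module isomorphism over $R^0$.
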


\begin{remark}\label{rem:remformaintheorem}
\begin{enumerate}[wide]
\item\label{rem:flopcase} When $r_+ = r_-$, the simple $G$-VGIT wall-crossing forms an ordinary flop (see Definition~\ref{def:Flip}).
In this case, Lee, Lin, Qu, and Wang  \cite{lee2010flops,lee2016ordinaryflops1,lee2016ordinaryflops2,lee2016ordinaryflops3} showed that the quantum cohomology is invariant after analytic continuation, providing an important case of the \emph{Crepant Transformation Conjecture} \cite{ruan2002stringy, ruan2006crepant, bryan2009crepant}. Previous works on the conjecture in the case of flops also include \cite{coates2018crepant, gonzalez2024vgit}.
The second part of Theorem~\ref{thm:IntroGeneralQDMDecomp} is a reformulation of this result in terms of quantum $D$-modules.
In this paper, we will mostly focus on the case $r_+ < r_-$. % except Section~\ref{sect:GeneralVGITFlops}.

\item The formal change of variables $\tau_{X_+}^{\red}$, $\zeta_{j}^{\red}$ and the module map $\Psi$ can be uniquely and explicitly reconstructed from initial conditions using the extension of framing of $F$-bundles or Birkhoff factorization, see \cite{HYZZ_Decomposition,HLYZZ_Unfolding,iritani2023quantum}.

\item Using quantum $D$-modules and motivic Galois theory, the notion of atoms is introduced in \cite{KKPY_Birational} as elementary components of smooth projective varieties for the study of birational equivalences.
In particular, for any smooth projective variety $X$, we have its atomic decomposition, or chemical formula $\CF(X)$, which is a multiset of atoms contained in $X$.
In the context of our Theorem~\ref{thm:IntroGeneralQDMDecomp}, we can deduce that
\[
\CF(X_-)=\CF(X_+)+(r_- - r_+)\CF(S).
\]
\end{enumerate}
\end{remark}

\begin{corollary}\label{cor:decom of qh}
    In the context of Theorem~\ref{thm:IntroGeneralQDMDecomp}, when $r_+ < r_-$, the isomorphism of quantum $D$-modules induces an isomorphism of quantum cohomology rings
    \[\big(H^*(X_-),\star_{\tau_{X_-}}\big)\cong \big(H^*(X_+),\star_{\tau_{X_+}^{\red}(\tau_{X_-})}\big)\oplus\bigoplus_{j=0}^{r_--r_+-1}\big(H^*(S), \star_{\zeta_j^{\red}(\tau_{X_-})}\big)\]
    over $\bbC\Biglaurent{Q_{X_-}^{\frac{a}{2(r_--r_+)}}}\formal{Q_{X_-},\tau_{X_-}}$.
    It also preserves the Euler vector fields.
\end{corollary}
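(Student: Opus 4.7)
The plan is to deduce Corollary~\ref{cor:decom of qh} by extracting the quantum product and the Euler action from the quantum $D$-module isomorphism $\Psi$ of Theorem~\ref{thm:IntroGeneralQDMDecomp}. In the standard cohomological framing of $\QDM(X)$, the quantum connection in a $\tau$-direction takes the form $\nabla_{\partial_{\tau^i}}=\partial_{\tau^i}+z^{-1}(T_i\star_{\tau_X})$, so the quantum multiplication operator $T_i\star_{\tau_X}$ is precisely the $z^{-1}$-part of the connection matrix; likewise, the Euler field enters through the $z$-direction connection $\nabla_{z\partial_z}=z\partial_z-z^{-1}(E_X\star_{\tau_X})+\mu_X$. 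Thus both the product structure and the Euler action are encoded in the $F$-bundle/TEP data that $\Psi$ is required to preserve.

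Let $\Psi_0$ denote the constant-in-$z$ part of $\Psi$, viewed as an endomorphism-valued formal series in $\tau_{X_-}$ and the Novikov variable. Expanding the compatibility $\Psi\circ\nabla^-_v=\nabla^{+,S}_v\circ\Psi$ for $v=\partial_{\tau_{X_-}^i}$ in powers of $z$ and reading off the $z^{-1}$-coefficient, the chain rule applied to the change of variables yields
\[
\Psi_0\circ(T_i\star_{\tau_{X_-}}) = \bigg(\sum_k(\partial_i\tau_{X_+}^{\red,k})(T_k\star_{\tau_{X_+}^{\red}})\oplus\bigoplus_{j=0}^{r_--r_+-1}\sum_k(\partial_i\zeta_j^{\red,k})(T_k\star_{\zeta_j^{\red}})\bigg)\circ\Psi_0,
\]
which says exactly that $\Psi_0$ intertwines quantum multiplications under the change of variables, once we know $\Psi_0$ is a bijection. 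Compatibility of $\Psi$ with the pairings $P_{X_\pm},P_S$ and the trivial-bundle framing of $\QDM$ forces $\Psi_0$ to be an isomorphism of underlying $H^*$-modules, and its classical limit at $Q_{X_-},\tau_{X_-}\to 0$ recovers the cohomology decomposition of Proposition~\ref{prop:deRhamIsoVGIT}. Combining these two facts produces the claimed isomorphism of quantum cohomology rings.

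The Euler-vector-field statement is obtained by the same method applied to the compatibility of $\Psi$ with $\nabla_{z\partial_z}$: reading off the $z^{-1}$-coefficient and invoking the ring intertwining above gives
\[
\Psi_0\circ(E_{X_-}\star_{\tau_{X_-}}) = \bigg(E_{X_+}\star_{\tau_{X_+}^{\red}}\oplus\bigoplus_{j=0}^{r_--r_+-1}E_S\star_{\zeta_j^{\red}}\bigg)\circ\Psi_0,
\]
and then applying both sides to the unit element $\one$ (preserved by $\Psi_0$, since $\Psi_0$ intertwines quantum multiplication by $\one$ and the pairing compatibility fixes its normalization) yields the identification of the Euler vector fields under the change of variables. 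The main obstacle I expect is verifying rigorously that $\Psi_0$ is genuinely a module isomorphism realizing Proposition~\ref{prop:deRhamIsoVGIT}: this requires tracking the leading-order behavior of $\Psi$ through the $F$-bundle extension (or Birkhoff factorization) argument used to construct it in the proof of Theorem~\ref{thm:IntroGeneralQDMDecomp}. Once $\Psi_0$ is controlled, the ring and Euler-field identifications follow as formal consequences.
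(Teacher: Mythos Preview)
Your approach is correct and is essentially the one the paper takes: the paper does not give an independent argument but simply refers to \cite[Section 5.9]{iritani2023quantum} (see the sentence immediately following Theorem~\ref{thm:GeneralQDMDecomposition}), where exactly this extraction of the ring and Euler structures from the $z^{-1}$-part of the connection compatibility is carried out. Your final worry is unnecessary: since $\Psi$ is already an isomorphism of free modules over $\bbC[z]\Biglaurent{Q_{X_-}^{a/2(r_--r_+)}}\formal{Q_{X_-},\tau_{X_-}}$, its restriction $\Psi_0=\Psi|_{z=0}$ is automatically an isomorphism over the base ring with $z$ removed, and whether $\Psi_0$ recovers the classical decomposition of Proposition~\ref{prop:deRhamIsoVGIT} plays no role in the corollary.
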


\begin{remark}\label{rem:Eigenvalues}
Note that the decomposition of quantum cohomology rings in Corollary~\ref{cor:decom of qh} induces a decomposition of the eigenvalues of $E_{X_-}\star_{\tau_{X_-}}$ along the restriction $Q_{X_-}^{\beta}=\tau_{X_-}=0$ for all $\beta\notin \frac{1}{2(r_--r_+)}\bbZ a $ into the eigenvalues of $E_{X_+}\star_{\tau_{X_+}^{\red}(\tau_{X_-})}$ and those of $E_{S}\star_{\zeta_j^{\red}(\tau_{X_+})}$. 
In Proposition~\ref{prop:eigenvalue of Euler vector field}, we compute that along the locus 
$$
    Q_{X_-}^{\beta} = 0 \text{ for all } \beta\notin \frac{1}{2(r_--r_+)}\bbZ a,\quad Q_{X_-}^{-\frac{a}{2(r_--r_+)}}=q\in\bbC, \quad \tau_{X_-}=0,
$$
all eigenvalues of $E_{X_+}\star_{\tau_{X_+}^{\red}(\tau_{X_-})}$ are $0$ and those of $E_{S}\star_{\zeta_j^{\red}(\tau_{X_-})}$ are $(r_--r_+)q^2e^{\frac{\pi\sqrt{-1}(r_+-2j)}{r_--r_+}}$.
Therefore, along this locus, the eigenvalues of $E_{X_-}\star_{\tau_{X_-}}$ consist of $0$ with multiplicity $\mathrm{dim}_{\bbC}(H^*(X_+))$ and $(r_--r_+)q^2e^{\frac{\pi\sqrt{-1}(r_+-2j)}{r_--r_+}}$ with multiplicity $\mathrm{dim}_{\bbC}(H^*(S))$ for $0\le j\le r_--r_+-1$.
Shen-Shoemaker \cite{shen2025quantum} provides a similar eigenvalue decomposition of the quantum multiplication by Euler vector fields in the case of a standard flip, formulated in terms of extremal quantum cohomology rings. 
\end{remark}
\medskip
% Due to the generality of the VGIT framework, 
Our Theorem~\ref{thm:IntroGeneralQDMDecomp} includes, as examples, the following previously studied cases.

\begin{example}[Blowups]\label{ex:Blowup}
Let $X$ be a smooth complex projective variety, $Z \subset X$ a smooth closed subvariety of codimension $r \ge 2$, and $\tX = \Bl_Z X$ the blowup of $X$ along $Z$. The decomposition of the quantum $D$-module of $\tX$ in terms of those of $X$ and $Z$ was proved by Iritani \cite{iritani2023quantum}, based on the following realization of $X$ and $\tX$ as a variation of GIT quotients. 
Let $W = \Bl_{Z \times \{0\}} X \times \bbP^1$ which is a smooth projective variety with a $\bbC^*$-action induced by the action on the $\bbP^1$-factor. The $\bbC^*$-fixed locus has three connected components $W^{\bbC^*} = X \sqcup Z \sqcup \tX$.
The divisors $X$ and $\tX$ are the only two nonempty GIT quotients of $W$, connected by a simple VGIT wall-crossing of type $(0,r-1)$ with wall $Z$.
\begin{comment}
In other words, we have
$$
    W\gitquot \bbC^*(+)=X, \quad W \gitquot \bbC^*(-)=\tX, \quad W^0\gitquot \bbC^*(0)=W^0 = Z,
$$
and $r_+ = 1$, $r_- = r$.
\end{comment}
Our Theorem~\ref{thm:IntroGeneralQDMDecomp} is a generalization of \cite[Theorem 1.1]{iritani2023quantum}.
\end{example}

\begin{example}[Toric flips]\label{ex:Toric}
Let $T$ be an algebraic torus acting on $W = \bbC^N$ linearly. Notice that all $T$-linearizations on $W$ are induced by characters of $T$. 
For a simple VGIT wall-crossing of smooth projective toric varieties $W\gitquot_{\chi_-} T\dashrightarrow W\gitquot_{\chi_{+}}T$ for characters $\chi_+$ and $\chi_-$ of $T$, their geometric relation can be described combinatorially. Theorem~\ref{thm:IntroGeneralQDMDecomp} provides a decomposition of quantum $D$-modules for $X_{\pm}=W\gitquot_{\chi_\pm} T$. More generally, the decompositions of quantum $D$-modules under crepant and discrepant transformations of semi-projective toric Deligne-Mumford stacks have been studied by Coates-Iritani-Jiang \cite{coates2018crepant} and Iritani \cite{iritani2020discrepant}. We note that \cite[Theorem 1.1]{iritani2020discrepant} obtains an equivariant version of our Theorem~\ref{thm:IntroGeneralQDMDecomp} with respect to the dense torus $(\bbC^*)^N \gitquot T$ of $X_\pm$. On the other hand, in our case, our Theorem~\ref{thm:IntroGeneralQDMDecomp} provides a geometric description of the residue piece in the decomposition, while in \cite{iritani2020discrepant} the residue piece is obtained through the decomposition of Brieskorn modules. Previous work on the toric case also includes \cite{gonzalez2019toric, acosta2018quantum, acosta2020GW}.
\end{example}

\begin{comment}
\begin{theorem}[QDM isomorphism as $R$-modules in $K$-equivalence case] (Theorem~\ref{thm:QDMDecompCrep})
    Under assumption (\ref{3-component assumption}), if $c_F=0$, given the mirror map $\tau_{X_+}(\tau_{X_-})=\Gamm60a_*(\tau_{X_-})\in H^*(X_+)$ with vanishing coefficients of nonzero Novikov variables, we have a $R$-module isomorphism 
    $$\Psi^{\mathrm{crep}}:\QDM(X_-)^{\mathrm{rat,ext}}\to \tau_{X_+}^*\QDM(X_+)^{\mathrm{rat,ext}}.$$ In particular $\Psi^{\mathrm{crep}}$ sends $a\in H^*(X_-)$ to $\Gamma_*(a)\in H^*(X_+)$ and extends $R$-linearly. Moreover, $\Psi^{\mathrm{crep}}$ intertwines with $\nabla$ and preserves the Poincare pairing. The well-definedness of $\QDM(X_{\pm})^{\mathrm{rat,ext}}$ is guaranteed in Proposition (\ref{Prop: big quantum ring identify in flop case}).
\end{theorem}
\end{comment}

\begin{comment}
\begin{figure}[h]
\centering
\begin{tikzpicture}
    \filldraw[black] (0,0) circle (1pt) node[above right] {$X_+$};
    \def\radius{1.8}
    \foreach \j in {1,...,6} {
      \pgfmathsetmacro\angle{180*(2*\j - 1)/6} % Degrees
      \filldraw[black] ({\radius*cos(\angle)},{\radius*sin(\angle)}) circle (1pt) node[above] {\small{$M^0$}};
    }
\end{tikzpicture}
\caption{Example of eigenvalue decomposition of $E_{X_-}\star_{\tau_{X_-}}$ along the locus $Q_{X_-}^{\beta}=\tau_{X_-}=0, \forall \beta\notin \frac{1}{2(r_--r_+)}\bbZ a $ for $r_+=3$ and $r_-=9$.}
\label{fig:eigenvalue decomposition}
\end{figure}
\end{comment}

\subsection{Application: decomposition of quantum cohomology for general flips}
As an application of Theorem~\ref{thm:IntroGeneralQDMDecomp}, we study the decomposition of quantum $D$-modules of general standard flips (see Definition~\ref{def:Flip}).
First we state the following conjecture.

%In particular, a birational transformation $f:X_-\dashrightarrow X_+$ between smooth projective varieties is a standard flip of type $(r_+-1,r_--1)$ if there exist a normal variety $X_0$ and birational contractions $q_{\pm}\colon X_{\pm}\to X_0$ such that the exceptional locus $P_{\pm}$ of $q_{\pm}$ in $X_{\pm}$ are projective bundles $\bbP(V_{\pm})$ over smooth projective exceptional locus $S$ of $q_{\pm}$ in $X_0$, where $\mathrm{rank}(V_{\pm})=r_{\pm}$, $\psi_{\pm}:P_{\pm}\to S$ are the projections, and $N_{P_{\pm}/X_{\pm}}\cong \psi_{\pm}^*(V_{\mp})(-1)$. The definition of standard flips is still referred to 

\begin{conjecture}\label{conj:IntroGeneralFlip}
For a standard flip $X_- \dashrightarrow X_+$ of type $(r_+-1, r_--1)$ with wall $S$ and $r_+<r_-$, there exist formal changes of variables $\tau_{X_+}$ and $\zeta_j$, and a $\bbC[z]\Biglaurent{Q_{X_-}^{\frac{a}{2(r_--r_+)}}}\formal{Q_{X_-},\tau_{X_-}}$-module isomorphism 
$$
    \Psi \colon \QDM(X_-)^{\mathrm{La,red}}\longrightarrow(\tau_{X_+})^*\QDM(X_+)^{\mathrm{La,red}}\oplus\bigoplus_{j=0}^{r_--r_+-1}(\zeta_j)^*\QDM(S)^{\mathrm{La,red}}
$$ 
that are compatible with the quantum connections and the pairings.
\end{conjecture}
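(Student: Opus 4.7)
The plan is to reduce a general standard flip to the simple $G$-VGIT setting of Theorem~\ref{thm:IntroGeneralQDMDecomp} by constructing a projective VGIT compactification of the local geometry near the exceptional loci $P_\pm$, and then transferring the decomposition back to the global flip via degeneration to the normal cone together with the $F$-bundle reconstruction formalism.

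First, I would build the required local projective VGIT model. With notation as in Definition~\ref{def:Flip}, set $W \coloneqq \bbP_S(V_+ \oplus V_- \oplus \cO)$, equipped with the $\bbC^*$-action of weights $(+1, -1, 0)$ on the three summands. The $\bbC^*$-fixed locus decomposes as $P_+ \sqcup P_- \sqcup S$, and two adjacent chambers in the $\bbC^*$-linearization cone give GIT quotients isomorphic to the projective completions $\oX_\pm^{\mathrm{loc}} \coloneqq \bbP_{P_\pm}(N_{P_\pm/X_\pm} \oplus \cO)$ of tubular neighborhoods of $P_\pm \subset X_\pm$, using condition~(iii) of Definition~\ref{def:Flip}. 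This yields a simple $\bbC^*$-VGIT wall-crossing $\oX_-^{\mathrm{loc}} \dashrightarrow \oX_+^{\mathrm{loc}}$ of type $(r_+ - 1, r_- - 1)$ with wall $S$, and Theorem~\ref{thm:IntroGeneralQDMDecomp} applied to this model furnishes a decomposition
\[
    \Psi^{\mathrm{loc}}\colon \QDM(\oX_-^{\mathrm{loc}})^{\mathrm{La,red}} \xrightarrow{\ \sim\ } (\tau_{\oX_+^{\mathrm{loc}}}^{\red})^*\QDM(\oX_+^{\mathrm{loc}})^{\mathrm{La,red}} \ \oplus\ \bigoplus_{j=0}^{r_- - r_+ - 1} (\zeta_j^{\red})^*\QDM(S)^{\mathrm{La,red}}.
\]

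Next, I would promote $\Psi^{\mathrm{loc}}$ to a decomposition of the global quantum $D$-modules. Degenerating $X_\pm$ to the normal cone of $P_\pm$ produces central fibers $\oX_\pm^{\mathrm{loc}} \cup_{P_\pm} \tX$, with $\tX$ common to both sides by Definition~\ref{def:Flip}(iv); crucially, since $N_{P_\pm/X_\pm} \cong \psi_\pm^* V_\mp \otimes \cO(-1)$ both projectivize to $E \cong P_+ \times_S P_-$, the relative pairs $(\tX, E)$ coincide on the two sides. The degeneration formula then expresses the difference between $\QDM(X_+)$ and $\QDM(X_-)$ entirely in terms of relative invariants on the $\oX_\pm^{\mathrm{loc}}$ side, which $\Psi^{\mathrm{loc}}$ controls. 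To assemble the decomposition intrinsically, I would invoke the $F$-bundle and Birkhoff factorization framework of \cite{HYZZ_Decomposition,HLYZZ_Unfolding} referenced in Remark~\ref{rem:remformaintheorem}: initial data on the classical locus (supplied by the Chow motive and cohomology decomposition for standard flips from \cite{lee2010flops,jiang2023chow}, which holds even without a global VGIT structure, combined with $\Psi^{\mathrm{loc}}$) determines a unique extension to the full QDM via the quantum connection, bypassing explicit degeneration-formula bookkeeping.

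The main obstacle is the rigorous interface between the local and global invariants. The projective completion $\oX_\pm^{\mathrm{loc}}$ introduces a divisor at infinity $\bbP_S(V_+ \oplus V_-)$ whose relative GW contributions must be disentangled from those of the genuine normal-cone geometry, most naturally via equivariant localization along the fiberwise $\bbC^*$ followed by a non-equivariant limit. One must then verify that the formal changes of variables $\tau_{X_+}(\tau_{X_-})$ and $\zeta_j(\tau_{X_-})$ obtained locally converge in the global Novikov parameters, that the fractional structure $Q_{X_-}^{a/2(r_- - r_+)}$ emerges compatibly from the global theory, and that the Poincar\'e pairings and Euler vector fields (cf.\ Remark~\ref{rem:Eigenvalues}) are preserved under the global extension.
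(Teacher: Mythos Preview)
The statement you are addressing is presented in the paper as a \emph{conjecture} (Conjecture~\ref{conj:IntroGeneralFlip}); the paper does not prove it in general. What the paper does establish are two partial results: (i) the conjecture holds for the local models $X_{\pm,\loc}$ (Corollary~\ref{cor:qdmdec for local model}), obtained by realizing $X_{-,\loc}\dashrightarrow X_{+,\loc}$ as a simple VGIT wall-crossing for a rank-$2$ torus acting on the total space $V_+\oplus V_-\oplus\cO$ over $S$ (Section~\ref{sect:FlipLocalModel}) and then applying Theorem~\ref{thm:IntroGeneralQDMDecomp}; and (ii) the conjecture holds for a $D$-flip satisfying the technical hypotheses of Lemma~\ref{lem:DFlipSmoothVGIT} (Corollary~\ref{cor:FlipQDMDecomposition}), via the Reid--Thaddeus construction that realizes $X_\pm$ themselves as a $\bbC^*$-VGIT wall-crossing.

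Your Step~1 is essentially the paper's route to (i), though your construction differs: you take a $\bbC^*$-action with weights $(+1,-1,0)$ on $\bbP_S(V_+\oplus V_-\oplus\cO)$, whereas the paper uses a $(\bbC^*)^2$-action on the total space. In your setup the normal bundle of $P_+$ carries weights $-1$ and $-2$, so the highest GIT quotient is $\bbP_{P_+}(\psi_+^*V_-\oplus\cO)$ rather than the claimed $\bbP_{P_+}(\psi_+^*V_-(-1)\oplus\cO)=X_{+,\loc}$; the paper's rank-$2$ construction avoids this twist mismatch. Your Step~2, degeneration to the normal cone combined with $F$-bundle reconstruction, is exactly the strategy the paper describes as \emph{expected} to work (paragraph preceding Corollary~\ref{cor:intro qdmthm for local model}) but explicitly does not carry out. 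The obstacles you list in your final paragraph --- handling the divisor at infinity, matching formal changes of variables and the fractional Novikov structure across the degeneration, and preserving pairings --- are real and unresolved in this generality; the references in Remark~\ref{rem:remformaintheorem} concern reconstruction within a single QDM, not gluing across a degeneration. Your proposal is thus a reasonable outline of the anticipated proof, consistent with what the paper itself suggests, but it is not a proof and should not be presented as one.
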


In the case of ordinary flops, as mentioned in Remark~\ref{rem:remformaintheorem}(\ref{rem:flopcase}),
the invariance of quantum cohomology up to analytic continuation is proved in \cite{lee2010flops, lee2016ordinaryflops1, lee2016ordinaryflops2, lee2016ordinaryflops3}; for simple flops ($S = \pt$), the result is extended to all genera in \cite{iwao2012simpleflop}.

% Given a standard flip $X_-\dashrightarrow X_+$ with the exceptional locus $\bbP(V_{\pm})$, the local model is defined as a birational transformation $X_{-,\loc}\dashrightarrow X_{+,\loc}$, where

The main approach of the aforementioned works on ordinary flops is to consider the degeneration to the normal cone of the exceptional locus of $X_- \dashrightarrow X_+$. Recall that the exceptional loci in $X_\pm$ are projective bundles $\psi_\pm \colon \bbP(V_{\pm}) \to S$ for vector bundles $V_\pm$ over $S$ of ranks $r_\pm$ respectively. Via the degeneration formula in Gromov-Witten theory \cite{Ruan2001Symplectic, Li2002Degeneration}, the problem is reduced to the birational transformation $X_{-,\loc} \dashrightarrow X_{+,\loc}$ between the (projective) local models, where
$$
    X_{\pm,\loc} = \bbP_{\bbP(V_{\pm})}(N_{\bbP(V_{\pm})/X_{\pm}} \oplus \cO) = \bbP_{\bbP(V_{\pm})}(\psi_\pm^*(V_\mp) (-1) \oplus \cO) 
$$
are double projective bundles over $S$. It is then shown that $X_{\pm, \loc}$ have isomorphic quantum cohomology rings up to analytic continuation.

% In particular, the Gromov-Witten potential of $X_{\pm}$ decomposes into a sum of some relative Gromov-Witten invariants of the pairs $(\tX,E)$ and $(X_{\pm,\loc},E)$, where $\tX$ is the common blowup of $X_{\pm}$, $E$ is the common exceptional divisor, and $X_{\pm,\loc}$ are the local models of $X_{\pm}$, defined in the following Section~\ref{subsubsec:local model}. Therefore, one can study the quantum cohomology rings of $X_{\pm}$ from the resulting flip $X_{-,\loc}\dashrightarrow X_{+,\loc}$. 

% \subsubsection{Local model for standard flips}\label{subsubsec:local model}

In the case of flips, this approach has been applied by \cite{lee2021flips} to simple $(2,1)$-flips and adopted by \cite{shen2025quantum} to obtain a decomposition of the quantum spectrum. 
% (of quantum multiplication by $c_1$). 
It is expected that this approach can be applied to general standard flips and lead to a proof of Conjecture~\ref{conj:IntroGeneralFlip}. Here, following \cite[Section 3.1]{shen2025quantum}, we observe that
the local models $X_{\pm,\loc}$ arise from a simple VGIT wall-crossing (see Section~\ref{sect:FlipLocalModel} for details). Therefore, Theorem~\ref{thm:IntroGeneralQDMDecomp} has in addition the following corollary.

% Now, we introduce the local models for standard flips and deduce the quantum $D$-module decomposition for local models.

\begin{corollary}[Corollary~\ref{cor:qdmdec for local model}]\label{cor:intro qdmthm for local model}
For a standard flip $X_- \dashrightarrow X_+$ of type $(r_+-1, r_--1)$ with $r_+ < r_-$, Conjecture~\ref{conj:IntroGeneralFlip} holds for the local model $X_{-,\loc} \dashrightarrow X_{+,\loc}$.
\end{corollary}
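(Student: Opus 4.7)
The plan is to exhibit the local model birational transformation $X_{-,\loc}\dashrightarrow X_{+,\loc}$ as a simple VGIT wall-crossing whose wall is $S$, after which Corollary~\ref{cor:intro qdmthm for local model} becomes an immediate application of Theorem~\ref{thm:IntroGeneralQDMDecomp}. Following \cite[Section 3.1]{shen2025quantum}, the natural candidate is to take $W$ to be the total space over $S$ of a rank $r_++r_-+1$ bundle built from $V_+$, $V_-$, and a trivial line summand, equipped with a $\bbC^*$-action whose weights on the three summands are chosen (up to equivalent normalization, as $(1,-1,0)$) so that the two chambers of the GIT cone adjacent to the wall given by the trivial character recover the two local models.

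First, I would write out $W$, the $\bbC^*$-action, and the two ample linearizations $L_\pm$ explicitly, and verify by a fiberwise computation over $S$ that the GIT quotients $W\gitquot_{L_\pm}\bbC^*$ are isomorphic to the double projective bundles $\bbP_{\bbP(V_\pm)}(\psi_\pm^*V_\mp(-1)\oplus\cO)$ defining $X_{\pm,\loc}$. Next, I would identify the wall linearization $L_0$, describe the strictly semistable locus and its $\bbC^*$-fixed part, and confirm that the good quotient of the fixed locus is isomorphic to $S$, so that $S$ plays the role of the wall as in the main theorem.

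The substantive check is that this wall-crossing is simple in the sense of Definition~\ref{def:SimpleWall}. This requires smoothness of the relevant loci, the expected codimensions of the exceptional sets, and, most importantly, that the $\bbC^*$-weights on the normal bundle to the fixed locus split into exactly $r_+$ positive and $r_-$ negative weights so that the VGIT type matches the flip type $(r_+-1,r_--1)$. Each of these follows from the projective bundle presentation of $X_{\pm,\loc}$ together with the identification $N_{P_\pm/X_\pm}\cong\psi_\pm^*V_\mp(-1)$ built into Definition~\ref{def:Flip}. Once simplicity of the VGIT is established, Theorem~\ref{thm:IntroGeneralQDMDecomp} applies directly and yields the claimed decomposition of $\QDM(X_{-,\loc})^{\La,\red}$.

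I expect the main obstacle to be bookkeeping: selecting the weights and linearizations so that both chambers simultaneously recover the intended local models with the normal bundles prescribed in Definition~\ref{def:Flip}, and matching the orientation of the wall-crossing so that the inequality $r_+<r_-$ on the flip translates to the corresponding hypothesis on the VGIT. No new conceptual ingredient beyond Theorem~\ref{thm:IntroGeneralQDMDecomp} is required.
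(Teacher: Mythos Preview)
Your overall strategy is the same as the paper's: exhibit the local model as a simple VGIT wall-crossing with wall $S$ and invoke Theorem~\ref{thm:IntroGeneralQDMDecomp}. However, the specific construction you propose does not work. With a single $\bbC^*$ acting on the total space of $V_+\oplus V_-\oplus\cO$ with weights $(1,-1,0)$, the GIT quotient for the $+$-chamber is computed fiberwise as $(V_+\setminus 0)\times_S(V_-\oplus\cO)/\bbC^*$, which is the \emph{total space} of the vector bundle $\psi_+^*V_-(-1)\oplus\cO$ over $\bbP(V_+)$, not its projectivization $X_{+,\loc}=\bbP_{\bbP(V_+)}(\psi_+^*V_-(-1)\oplus\cO)$. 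The same issue occurs on the $-$-side. In particular the quotients are not projective, so condition~(vi) of Definition~\ref{def:SimpleWall} fails and Theorem~\ref{thm:IntroGeneralQDMDecomp} does not apply. The ``fiberwise computation'' you outline would reveal this, but the plan as stated is built on the wrong $W$.

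The paper resolves this by taking $G=\bbC^*\times\bbC^*$ acting on the same bundle $W=V_+\oplus V_-\oplus\cO$, with the two factors scaling by weights $(1,0,1)$ and $(0,-1,-1)$ respectively; the linearizations are induced by the characters $\chi_+(t_1,t_2)=t_1$ and $\chi_-(t_1,t_2)=t_2^{-1}$ on the pullback of an ample line bundle on $S$. The extra torus factor is precisely what performs the second projectivization, and then $W\gitquot G(\pm)\cong X_{\pm,\loc}$ with wall $S$. Once you make this correction, the rest of your outline (checking simpleness, matching the type $(r_+-1,r_--1)$, and applying the main theorem) goes through as you describe.
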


The decomposition for local models of simple flips ($S = \pt$) is proved by \cite{lee2021flips}.

% \subsubsection{Flips connected by smooth VGIT}

%Notice that if the smooth projective varieties $X_{\pm}$ are connected by a VGIT wall-crossing satisfying Assumption~\ref{def:SimpleWall}, then the birational transformation $X_-\dashrightarrow X_+$ is a standard $D$-flip (see Definition~\ref{def:Flip}). 

An alternative approach to Conjecture~\ref{conj:IntroGeneralFlip} is to directly realize the standard flip $X_-\dashrightarrow X_+$ itself as a simple VGIT wall-crossing and apply Theorem~\ref{thm:IntroGeneralQDMDecomp}. We attempt this approach in Section~\ref{sect:GeneralFlips} and give a sufficient condition in Corollary~\ref{cor:FlipQDMDecomposition}, by examining a construction of Reid and Thaddeus \cite{thaddeus1996geometric}.

\begin{comment}
We notice that the construction in \cite[Proposition 1.7]{thaddeus1996geometric} guarantees the existence of a master space $W$ such that $X_-\dashrightarrow X_+$ arises as a $\bbC^*$-VGIT wall-crossing of $W$. In particular, Assumption~\ref{def:SimpleWall} is satisfied when $W$ is a smooth projective variety. In Section~\ref{sect:GeneralFlips}, we establish a sufficient condition (see Lemma~\ref{lem:DFlipSmoothVGIT}) ensuring the smoothness of the master space $W$, leading to the following result:

\begin{theorem}\label{thm:introqdmthm for flip with ass} (Corollary ~\ref{cor:FlipQDMDecomposition})    
For the standard $D$-flip $X_- \dashrightarrow X_+$ of type $(r_+, r_-)$ satisfying the assumption in Lemma~\ref{lem:DFlipSmoothVGIT}, when $r_+<r_-$, Conjecture~\ref{conj:IntroGeneralFlip} holds.
%we have the mirror maps $\Tilde{\tau}_{X_+}^{\red}$ and $\Tilde{\zeta}_j^{\red}$, and a $\bbC[z]\biglaurent{Q_{X_-}^{-\frac{a}{2c_F}}}\formal{Q_{X_-},\tau_{X_-}}$-module isomorphism 
%$$
    %\Phi\colon \QDM(X_-)^{\mathrm{La,red}}\longrightarrow(\Tilde{\tau}_{X_+}^{\red})^*\QDM(X_+)^{\mathrm{La,red}}\oplus_{j=0}^{\abs{c_F}-1}(\Tilde{\zeta}_j^{\red})^*\QDM(S)^{\mathrm{La,red}}
%$$ 
%that intertwines the quantum connections and intertwines the pairing %$P_{X_-}$ with $P_{X_+}\oplus_{j=0}^{\abs{c_F}-1}P_{S}$. 
\end{theorem}

\end{comment}

\subsection{Proof strategy of the main result}\label{sect:ProofStrategy}
In this subsection, we describe the proof strategy of Theorem~\ref{thm:IntroGeneralQDMDecomp}.

\subsubsection{Reduction to 3-component $\bbC^*$-VGIT wall-crossings}
A key step in proving Theorem~\ref{thm:IntroGeneralQDMDecomp} is the reduction from a general $G$-VGIT wall-crossing to a 3-component $\bbC^*$-VGIT wall-crossing. For the latter we mean the case where $W$ is projective, $G = \bbC^*$ acts with weights $\pm 1$ (see Definition~\ref{def:WtPm1}), and the $\bbC^*$-fixed locus only has three connected components $W^{\bbC^*}=F_+\sqcup F_0\sqcup F_-$ where $F_{\pm}$ are the highest/lowest components respectively and $F_0$ is the middle component; see Assumption~\ref{3-component assumption} for details and Section~\ref{sect:C*FixedLocus} for the partial order on the components. In this case, as discussed in Section~\ref{sect:3Component}, $W$ admits a unique simple $\bbC^*$-VGIT wall-crossing $X_-\dashrightarrow X_+$ of type $(r_{F_0,+}-1,r_{F_0,-}-1)$ with wall $F_0$, where $X_{\pm}$ are projective bundles over $F_{\pm}$ respectively, and $r_{F_0,\pm}$ is the rank of the eigenbundle of the normal bundle $N_{F_0/W}$ with $\bbC^*$-weight $\pm 1$. We denote $c_{F_0}=r_{F_0,+}-r_{F_0,-}$.

The reduction is made through the master space construction of Thaddeus \cite{thaddeus1996geometric} (see also \cite[Section 3]{liu2025invariance}). Specifically, for a smooth quasi-projective $G$-variety $W$ with a simple $G$-VGIT wall-crossing $X_-\dashrightarrow X_+$, we can construct a new smooth projective $\bbC^*$-variety $M$ with
three fixed component as above, where $X_-\dashrightarrow X_+$ also arises as the unique $\bbC^*$-VGIT wall-crossing for $M$. The simpleness condition (Definition~\ref{def:SimpleWall}) on the $G$-VGIT wall-crossing ensures that $M$ is smooth and the $\bbC^*$-action on $M$ has weights $\pm 1$. See Section~\ref{sect:MasterSpace} for details of the construction.

% a simple $\bbC^*$-VGIT wall-crossing of $M$, and the $\bbC^*$-action on $M$ has exactly three fixed components. Consequently, the highest/lowest GIT quotients of $M$ are related by a $\bbC^*$-VGIT wall-crossing.

% This reduction allows us to focus on studying quantum $D$-module decompositions for $W$ satisfying the 3-component Assumption~\ref{3-component assumption}. 
  
Therefore, we reduce Theorem~\ref{thm:IntroGeneralQDMDecomp} to the following.

\begin{theorem}[Theorem~\ref{thm:QDMDecompRed}]\label{thm:intro 3compqdmdecom}
Let $X_-\dashrightarrow X_+$ be a 3-component $\bbC^*$-VGIT wall-crossing (in the sense of Assumption~\ref{3-component assumption}). If $c_{F_0} < 0$, there exist formal changes of coordinates $\tau_{X_+}^{\red}$, $\zeta_{j,F_0}^{\red}$, $0\le j\le \abs{c_{F_0}}-1$, and a $\bbC[z]\Biglaurent{Q_{X_-}^{-\frac{a}{2c_{F_0}}}}\formal{Q_{X_-},\tau_{X_-}}$-module isomorphism 
$$
    \Phi^{\red}\colon\QDM(X_-)^{\red, \La}\longrightarrow(\tau_{X_+}^{\red})^*\QDM(X_+)^{\red, \La}\oplus \bigoplus_{j=0}^{\abs{c_{F_0}}-1}(\zeta_{j,F_0}^{\red})^*\QDM(F_0)^{\red, \La}
$$
that are compatible with the quantum connections and the pairings.  
\end{theorem}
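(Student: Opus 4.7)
The plan is to establish Theorem~\ref{thm:intro 3compqdmdecom} by the F-bundle extension strategy of \cite{HLYZZ_Unfolding, HYZZ_Decomposition}, adapted from Iritani's treatment of blowups \cite{iritani2023quantum} to the 3-component $\bbC^*$-VGIT setting. Concretely, I would construct the isomorphism $\Phi^{\red}$ by first specifying it on an initial locus $\Lambda_0$ where most Novikov variables and $\tau_{X_-}$ vanish but the fractional variable $Q_{X_-}^{-a/2c_{F_0}}$ is retained, and then extending uniquely to the full parameter ring by the extension-of-framing theorem for maximal $F$-bundles. The formal changes of coordinates $\tau_{X_+}^{\red}$ and $\zeta_{j,F_0}^{\red}$ are reconstructed simultaneously from this extension and are not specified in advance.

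The initial identification on $\Lambda_0$ should be built from the classical cohomology decomposition of Proposition~\ref{prop:deRhamIsoVGIT}, together with a computation of the restricted quantum connection of $X_-$. The projective bundle structures $X_\pm \to F_\pm$ and the explicit description of the wall-crossing give distinguished classes in $H^*(X_-)$ realizing the direct sum as a $\bbC$-linear splitting. The restricted quantum connection of $X_-$ along $\Lambda_0$ should then be computed via $\bbC^*$-equivariant fixed-point localization on $W$: because all parameters transverse to the fiber curve class vanish on $\Lambda_0$, the fundamental solution collapses to a small $J$-function in the single fractional variable $Q_{X_-}^{-a/2c_{F_0}}$, expressible as a hypergeometric series whose coefficients record the $\bbC^*$-weights of the normal bundle of $F_0$ in $W$.

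The key computation is then a Birkhoff-style residue decomposition of this restricted $J$-function, in the spirit of \cite{HYZZ_Decomposition}: the hypergeometric series should split as a regular part, matching $\QDM(X_+)|_{\Lambda_0}$ pulled back by the classical geometric identification coming from the wall-crossing, plus $|c_{F_0}|$ residue parts, each matching a shifted copy of $\QDM(F_0)|_{\Lambda_0}$ after suitable reparametrization in $\zeta_{j,F_0}^{\red}$. Compatibility of this decomposition with the Poincar\'e pairings follows from the symmetry properties of the hypergeometric kernel together with the pairing decomposition inherent in Proposition~\ref{prop:deRhamIsoVGIT}. Once both sides are matched as $F$-bundles with pairing along $\Lambda_0$, the uniqueness of the unfolding of a maximal $F$-bundle (as established in \cite{HLYZZ_Unfolding}) upgrades the initial match to the desired $R$-module isomorphism $\Phi^{\red}$ over the full ring $\bbC[z]\Biglaurent{Q_{X_-}^{-a/2c_{F_0}}}\formal{Q_{X_-},\tau_{X_-}}$, intertwining the quantum connections and preserving pairings.

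The main obstacle I expect lies in the residue/Birkhoff step of the middle paragraph: disentangling the $|c_{F_0}|$ residue contributions cleanly as honest quantum $D$-modules of $F_0$, rather than as their mere classical shadows. The multi-valuedness absorbed by the fractional base change $Q_{X_-}^{-a/2c_{F_0}}$ must be matched to the standard Novikov variable on $F_0$ through the correct choice of branch, and the Euler-type twists that appear from the normal weights of $F_0$ in $W$ must assemble into the untwisted Poincar\'e pairing on $F_0$. Verifying these cancellations requires a careful $\bbC^*$-localization bookkeeping on the master space, tracking equivariant weights along $F_\pm$ and $F_0$; in the ordinary flop case $c_{F_0}=0$ the analogous cancellation is what produces the analytic continuation statement of \cite{lee2010flops}, and a similar but discrepant version of that bookkeeping will produce the residue pieces here.
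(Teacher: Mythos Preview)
Your proposal takes a genuinely different route from the paper. The paper does not work directly on $X_-$ and then extend by $F$-bundle unfolding; instead it passes through the \emph{equivariant} quantum $D$-module of the ambient $W$. Concretely, the paper constructs discrete Fourier transformations $\FT_{X_\pm}$ (from $\QDM_{\bbC^*}(W)$ to $\QDM(X_\pm)$) and continuous Fourier transformations $\FT_{F_0,j}$ (to $\QDM(F_0)$), proves that $\FT_{X_\pm}$ preserve Givental cones by establishing Conjecture~\ref{conj:introReduction} for highest/lowest GIT quotients (Theorem~\ref{thm:IntroReduction}), and then shows that both $\FT_{X_-}$ and $\FT_{X_+}\oplus\bigoplus_j \FT_{F_0,j}$ are isomorphisms out of a suitable completion $\QDM_{\bbC^*}(W)^{\wedge,\La}_{X_-}$ over the large base $\tR=\bbC[z]\biglaurent{S_{F_0}^{-1/2c_{F_0}}}\formal{Q_W,\theta}$. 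The desired $\Phi$ is the composition $(\FT_{X_+}\oplus\bigoplus_j \FT_{F_0,j})\circ \FT_{X_-}^{-1}$; the reduction from $\tR$ to your ring $R$ is then a separate argument eliminating the extra Novikov variables and $\theta$-coordinates. The paper does note (Remark~\ref{rem:remformaintheorem}) that the mirror maps and $\Psi$ can be \emph{reconstructed} from initial data by $F$-bundle extension or Birkhoff factorization, which is the philosophy you invoke, but reconstruction presupposes the initial decomposition is already in hand.

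That is where your proposal has a gap. Your plan to obtain the initial decomposition on $\Lambda_0$ by ``$\bbC^*$-equivariant fixed-point localization on $W$'' is not a direct route: localization on $W$ computes equivariant Gromov--Witten invariants of $W$, not of $X_-$. Getting from the equivariant $J$-function of $W$ to a point on $\cL_{X_-}$ is precisely the content of the reduction conjecture (Conjecture~\ref{conj:introReduction}), and its proof for the highest/lowest quotients (Theorem~\ref{thm:IntroReduction}) is one of the paper's main technical inputs. Likewise, identifying the $|c_{F_0}|$ residue pieces with honest $\QDM(F_0)$'s is what the continuous Fourier transformations $\sF_{F_0,j}$ accomplish, via stationary-phase expansion of a Mellin--Barnes integral and the quantum Riemann--Roch operator, not by a bare hypergeometric manipulation. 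So the hard work you defer to the ``residue/Birkhoff step'' is essentially equivalent to building the Fourier machinery the paper develops; once that is done, the $F$-bundle extension you describe is indeed a clean way to package the final statement, but it does not bypass the analytic input.
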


\subsubsection{Fourier analysis on equivariant quantum cohomology}
The main technique used to prove Theorem~\ref{thm:intro 3compqdmdecom} is Fourier analysis on equivariant quantum cohomology developed by Iritani \cite{iritani2023quantum, iritani2025fourier} and Iritani-Koto \cite{iritani-koto2023quantum}, which is inspired by Teleman \cite{teleman2014gauge}. In this paper, we only consider the $\bbC^*$-equivariant setting, although the technique is developed for tori of higher ranks. For a smooth projective variety $W$ with a $\bbC^*$-action, Fourier transformations relate the equivariant Gromov-Witten theory of $W$ to the non-equivariant Gromov-Witten theories of its $\bbC^*$-fixed components and $\bbC^*$-GIT quotients. There are two types of Fourier transformations, described in the Givental formalism of genus-zero Gromov-Witten theory (see Section~\ref{subsec:Givental formalism} for preliminaries) as follows:
% We work with the equivariant Givental cone $\cL_{W}$, where $\cL_W \subset \cH_W = H^*(W)[z]\formal{Q_W}$ is viewed as a linearized version of the equivariant quantum $D$-module $\QDM_{\bbC^*}(W)$. The nonequivariant analogue is obtained similarly with $T=\{1\}$ being the trivial torus (see Section~\ref{subsec:Givental formalism} for definitions and propositions of $\cL_W$). In particular, there are two kinds of Fourier transformations:
\begin{enumerate}[wide]
    \item The continuous Fourier transformations $\sF_{F,j}$, which map the $\bbC^*$-equivariant Givental cone $\cL_W$ to the (nonequivariant) Givental space $\cH_F$ of a connected component $F \subset W^{\bbC^*}$ up to certain base changes. Here, the index $j$ varies from $0$ to $\abs{c_F}-1$, and the quantity $c_F$ depends on the weight decomposition of the $\bbC^*$-action on $N_{F/W}$ (see Definition~\ref{def:ContinuousFT} for details).

    \item The discrete Fourier transformation $\sfF_{X}$ (see Definition~\ref{def:DiscreteFT}), which maps $\cL_W$ to the an extension $\cH_X^{\ext}$ of the (nonequivariant) Givental space of a smooth GIT quotient $X = W \gitquot \bbC^*$. 
  
    % The Reduction Conjecture~\ref{conj:introReduction} (\cite[Conjecture 1.8]{iritani2023quantum}) stated in the following asserts that $\sfF_{W\gitquot_L T}$ maps $\cL_W$ into $\cL_{W\gitquot_L T}$. 
\end{enumerate}

It is known that continuous Fourier transformations $\sF_{F,j}$ in fact map $\cL_W$ to the Givental cone $\cL_F$ (see \cite[Corollary 4.9]{iritani2023quantum}). On the other hand, for the discrete Fourier transformation $\sfF_{X}$, it is not clear from the definition that it is defined on all of $\cL_W$, nor that it preserves the Givental cones (up to the same extension of $\cH_X^{\ext}$). Iritani conjectures that both properties should hold.

% that $\sfF_X$ is defined on the $\bbC^*$-equivariant Givental cone of $W$ and connects it to the Givental cone of the GIT quotient $X$, as a form of Fourier duality.

\begin{conjecture}[{\cite[Conjecture 1.8]{iritani2023quantum}}]\label{conj:introReduction}
Let $J_W(\theta)$ be the $\bbC^*$-equivariant $J$-function of $W$ and $X$ be a smooth GIT quotient of $W$. Then $z\sfF_X(J_W(\theta))$ is defined and lies on the Givental cone $\cL_X$ of $X$.
\end{conjecture}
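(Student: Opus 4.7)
The plan is to prove the conjecture by combining Atiyah--Bott virtual localization with a cone-membership criterion for the Givental cone, following the approach Iritani developed for blowups in \cite{iritani2023quantum}. First I would reduce to the 3-component setting of Assumption~\ref{3-component assumption}, where $X$ is a projective bundle over $F_+$ and the $\bbC^*$-action has weights $\pm 1$, so that virtual localization on the graph moduli spaces defining $J_W(\theta)$ yields a decomposition $J_W(\theta) = \sum_F J_W^F(\theta)$ indexed by $F \in \{F_+, F_0, F_-\}$. Each summand has $\theta$-poles supported at integers with multiplicities determined by the $\bbC^*$-weights of $N_{F/W}$.

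For well-definedness of $z\sfF_X(J_W(\theta))$, I would exploit the fact that $\sfF_X$ is essentially a sum over integer shifts of $\theta$ followed by restriction along the quotient map. For each fixed Novikov class $d \in \NE(X)$, only finitely many shifts should contribute, because effective curve classes in $W$ lifting $d$ form a finite set modulo the fiber class of $X \to F_+$, and each coefficient of $J_W$ has a controlled polynomial behavior in $\theta$. A careful comparison of the pole structure of each $J_W^F$ against the summation defining $\sfF_X$ should yield the desired finiteness, analogous to \cite[Section 6]{iritani2023quantum}.

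To show membership in $\cL_X$, I would characterize $\cL_X$ via flatness of the Dubrovin connection and then verify that $\sfF_X$ intertwines the $\bbC^*$-equivariant quantum connection on $\QDM(W)$ with the quantum connection on $\QDM(X)^{\ext}$. The key input is the shift operator formalism of Iritani--Koto \cite{iritani-koto2023quantum}: shift operators on $\QH^*_{\bbC^*}(W)$ descend via the quantum Kirwan map to Seidel-type operators on $\QH^*(X)$ that generate precisely the extra Novikov direction used in the definition of $\sfF_X$. Since $J_W(\theta)$ is a distinguished flat section of the equivariant quantum connection, this intertwining forces $z\sfF_X(J_W(\theta))$ to be a flat section of the quantum connection on $X$ after Novikov extension, which is equivalent to lying on $\cL_X$.

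The main obstacle I anticipate is controlling the interaction of Fourier contributions coming from different fixed components. The defining sum of $\sfF_X$ need not converge absolutely on each piece $J_W^F$ separately; only after summing over $F_+$, $F_0$, and $F_-$ do cancellations produce a finite answer. Handling this cleanly seems to require a Cousin-type decomposition of $J_W(\theta)$ into pieces supported in half-planes of $\theta$, followed by a residue computation at the integer poles corresponding to $F_0$. Beyond pure convergence, extending the shift-operator intertwining from the small to the big quantum regime (i.e., incorporating the $\tau_W$-insertions) will demand either a divisor-equation style argument direction by direction or an abstract argument via topological recursion relations and uniqueness of flat solutions. If this works out, the $F_0$-residue should produce exactly the $\bigoplus_{j} \QDM(F_0)$ summand of Theorem~\ref{thm:intro 3compqdmdecom}, while the $F_\pm$ contributions recover $\QDM(X_\pm)$.
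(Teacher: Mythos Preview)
First, note that the paper does \emph{not} prove this conjecture in general: it is stated as Conjecture~\ref{conj:introReduction} and only the special case of highest/lowest GIT quotients is established (Theorem~\ref{thm:Reduction}). So you are attempting something the paper does not claim to do.

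For the case the paper does handle, the argument is entirely different from your outline, and the difference matters. The paper never decomposes $J_W$ over all fixed components. Since the Kirwan map for the lowest quotient $X = \bbP(N_{F_r/W})$ factors as $H^*_{\bbC^*}(W) \to H^*_{\bbC^*}(F_r) \to H^*(X)$, everything is computed after restriction to the single component $F_r$. Well-definedness (Lemma~\ref{lem:DiscreteDefined}) comes from the elementary observation that $e_\lambda(N_{F_r/W})$ is sent to zero under $\lambda \mapsto p$, which kills the problematic shift terms. Cone membership is then proved by factorizing $\sfF_X^W$ as the composite
\[
\cH_W^{\rat} \xrightarrow{|_{F_r}} \cL_{F_r,(N,e_{\bbC^*}^{-1})} \xrightarrow{\Delta_{N\otimes L}/\Delta_N} \cL_{F_r,(N\otimes L,e_{\bbC^*}^{-1})} \xrightarrow{(|_{F_r})^{-1}} \cL_{N\otimes L} \xrightarrow{\sfF_X^{N\otimes L}} \cL_X,
\]
where each arrow is known to preserve Givental cones: the first by Brown/Fan (Proposition~\ref{prop:WRestriction}), the second by quantum Riemann--Roch, and the last by Iritani--Koto's theorem for vector bundle total spaces. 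The twist by the auxiliary line bundle $L$ is introduced solely to make $(N\otimes L)^\vee$ globally generated so that Iritani--Koto applies.

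Your proposal has a genuine gap at the cone-membership step. You write that the intertwining of quantum connections ``forces $z\sfF_X(J_W(\theta))$ to be a flat section \dots which is equivalent to lying on $\cL_X$.'' This is backwards. The intertwining relations in Proposition~\ref{prop:DiscreteProp} are operator identities on $\cH_W^{\rat}$ that hold conditionally on definedness; they do not by themselves single out $\cL_X$ among all Lagrangian submanifolds invariant under those operators. In the paper, the $\QDM$-level map $\FT_X$ and its compatibility with $\nabla$ (Proposition~\ref{prop:DiscreteFTDmodule}) are \emph{consequences} of knowing $z\sfF_X(J_W) \in \cL_X$, via Corollary~\ref{cor:MirrorMapDiscrete}, not a route to proving it. There is no general ``flatness characterization'' of $\cL_X$ that would let you run the implication in your direction.

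Two further confusions: the reduction to the 3-component setting is used in the paper for the QDM decomposition (Section~\ref{sect:Decomposition}), not for Conjecture~\ref{conj:introReduction}; and the discrete transform $\sfF_X$ targets only $H^*(X)$, so there is no ``$F_0$-residue'' contribution to $\sfF_X(J_W)$ --- the $\QDM(F_0)$ summands in Theorem~\ref{thm:intro 3compqdmdecom} come from the separate \emph{continuous} Fourier transformations $\sF_{F_0,j}$, not from any decomposition of $\sfF_X$.
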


% If $\bff \in \cH_W$ lies on $\cL_W$, then $\sfF_X(\bff)$ is defined and lies on the Givental cone of $X$ (with the same extension as in the definition of $\cH_X^{\ext}$).

% We note that \cite[Conjecture 1.8]{iritani2023quantum} is formulated more generally for smooth GIT quotients of smooth projective varieties by tori of higher rank. 

In the case where $X$ is a $\bbC^*$-fixed divisor of $W$ whose normal bundle has weight $\pm 1$, such as for the divisors $X$ and $\tX$ in the blowup example (see Example~\ref{ex:Blowup}), Conjecture~\ref{conj:introReduction} was proved in \cite[Theorem 1.13]{iritani2023quantum}. In this case, the proof is based on an identification of the discrete Fourier transformation $\sfF_X$ with the continuous Fourier transformation $\sF_{X, 0}$ (up to a certain factor) which preserves the Givental cone (see \cite[Section 4.3, Appendix A]{iritani2023quantum}). We prove Conjecture~\ref{conj:introReduction} for the more general case where $X$ is the \emph{highest} (resp.\ \emph{lowest}) GIT quotient of $W$ (see Definition~\ref{def:highest/lowest GIT}) where $X$ is a projective bundle over the highest (resp.\ lowest) $\bbC^*$-fixed component of $W$ whose normal bundle has $\bbC^*$-weight $-1$ (resp.\ $1$).

%The Continuous (resp.\ Discrete) Fourier Transformation on the level of Givental cone (see Definition~\ref{def:ContinuousFT},~\ref{def:DiscreteFT}) is a map from the equivariant Givental space $\cH_W$ to the unequivariant Givental space $\cH_F$ of a fixed component $F\subset W^{T}$ (resp.\ the unequivariant Givental space $\cH_{W\gitquot_LT}$ of a GIT quotient $W\gitquot_{L} T$) up to some base change. 

% We specialize to a $\bbC^*$-variety $W$ with weight $\pm 1$ actions. While prior work proved this conjecture for $\bbC^*$-fixed divisors \cite{iritani2023quantum}, we prove it for the highest/lowest GIT quotients (see Definition~\ref{def:highest/lowest GIT}). These quotients are projective bundles over the highest/lowest fixed components $F_{\pm}$ respectively, mildly generalizing \cite[Theorem 1.13]{iritani2023quantum}.

\begin{theorem}[Theorem~\ref{thm:Reduction}] \label{thm:IntroReduction}
Conjecture~\ref{conj:introReduction} holds for the highest/lowest GIT quotients of $W$.
\end{theorem}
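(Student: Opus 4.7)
The plan is to generalize Iritani's strategy for the divisor case (rank $r=1$, treated in \cite[Theorem~1.13]{iritani2023quantum}) to the general projective bundle case. Specifically, I aim to identify the discrete Fourier transformation $\sfF_X$, up to explicit twists, with a combination of the continuous Fourier transformations $\sF_{F,j}$ for $j=0,\ldots,r-1$, where $F$ is the highest (resp.\ lowest) $\bbC^*$-fixed component, $X=\bbP(N_{F/W,-})$ (resp.\ $\bbP(N_{F/W,+})$), and $r=\rank N_{F/W,-}$ (resp.\ $\rank N_{F/W,+}$). Combined with the known inclusion $\sF_{F,j}(\cL_W)\subset \cL_F$ from \cite[Corollary~4.9]{iritani2023quantum} and a projective bundle description of $\cL_X$ in terms of $\cL_F$, this will yield the desired statement.

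First, I would work out the precise structure of the highest GIT quotient; the lowest case is symmetric. Because $F$ is the highest fixed component and the $\bbC^*$-action has weights $\pm 1$, the normal bundle splits as $N_{F/W}=N_{F/W,0}\oplus N_{F/W,-}$, so $X=\bbP(N_{F/W,-})\to F$ is a smooth projective bundle and the unstable locus of $W$ for the highest GIT chamber is the complement of the total space of $N_{F/W,-}$ over $F$. This yields a clean description of the equivariant Euler class that enters the definition of $\sfF_X$.

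The main step is the residue computation. Unwinding the definition, $\sfF_X(J_W(\theta))$ is a sum of residues in $\theta$ of an equivariant integrand built from the Euler class of the unstable locus. Since $F$ is the highest fixed component, the poles of this integrand are confined to one side of the $\theta$-axis, and Atiyah--Bott localization of $J_W(\theta)$ at $F$ allows the residue sum to be evaluated explicitly. The expected identity takes the schematic form
\[
z\,\sfF_X(J_W(\theta)) \;=\; \sum_{j=0}^{r-1} T_j\bigl(\pi^*\sF_{F,j}(J_W(\theta))\bigr),
\]
where $\pi\colon X\to F$ is the projection and each $T_j$ is an explicit operator in $\xi=c_1(\cO_X(1))$, $z$, and the Chern roots of $N_{F/W,-}$. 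This mirrors the decomposition $H^*(X)=\bigoplus_{j=0}^{r-1}\xi^j\pi^*H^*(F)$ from the projective bundle theorem, and specializes for $r=1$ to the identification in \cite[Appendix~A]{iritani2023quantum}.

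Finally, I would combine this residue identity with $\sF_{F,j}(\cL_W)\subset \cL_F$ and a quantum Lefschetz / projective bundle theorem relating $\cL_X$ to $\cL_F$ (in the spirit of the twisted-theory formulas used in \cite{iritani2020discrepant}) to conclude $z\,\sfF_X(J_W(\theta))\in\cL_X$. The main obstacle is the residue step: reorganizing the polar contributions of the equivariant integrand into exactly $r$ pieces indexed by $j$, and checking that the twist operators $T_j$ produced by the residues match those required by the projective bundle presentation of $\cL_X$. Extending the rank-$1$ analysis of \cite[Appendix~A]{iritani2023quantum} to arbitrary $r$ is essentially a matter of tracking this additional finite index, but verifying compatibility with the $\bbP^{r-1}$-bundle combinatorics is the delicate technical point.
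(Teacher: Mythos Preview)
Your approach is genuinely different from the paper's, and while the overall strategy is plausible, it is considerably more laborious and leaves the hardest step unresolved.

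The paper does \emph{not} attempt to identify $\sfF_X$ with a sum of continuous Fourier transformations $\sF_{F,j}$. Instead, it factorizes $\sfF_X^W$ through the discrete Fourier transformation $\sfF_X^{N\otimes L}$ on the total space of a twisted normal bundle, for which Conjecture~\ref{conj:introReduction} is already known by Iritani--Koto \cite{iritani-koto2023quantum}. Concretely, writing $N=N_{F_r/W}$ and choosing a line bundle $L$ on $F_r$ so that $N\otimes L$ is semi-projective, one has $\bbP(N)\cong\bbP(N\otimes L)=X$, and the paper shows
\[
\sfF_X^W \;=\; \sfF_X^{N\otimes L}\circ(|_{F_r})^{-1}\circ(\Delta_{N\otimes L}/\Delta_N)\circ(|_{F_r}),
\]
where each factor preserves the relevant (twisted) Givental cone: restriction to $F_r$ lands in $\cL_{F_r,(N,e_{\bbC^*}^{-1})}$ by Proposition~\ref{prop:WRestriction}, quantum Riemann--Roch moves between twisted cones, the lift is canonical by virtual localization on the total space, and the final map preserves cones by \cite[Theorem~3.3]{iritani-koto2023quantum}. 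The change of equivariant parameters $\lambda'=\lambda-c_1(L)$ makes the Kirwan maps and the $\Delta$-operators match, so the composition collapses to $\sfF_X^W$. No residue identity or decomposition into $r$ pieces is needed.

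By contrast, your proposed identity $z\,\sfF_X(J_W)=\sum_j T_j(\pi^*\sF_{F,j}(J_W))$ together with a ``quantum projective bundle theorem'' relating $\cL_X$ to $\cL_F$ is essentially a reproof of the Iritani--Koto theorem in disguise: the content of \cite{iritani-koto2023quantum} is precisely that the discrete Fourier transformation on a vector bundle total space, which packages exactly this projective bundle combinatorics, preserves the Givental cone. So your ``main obstacle'' (matching the residue pieces with the $\bbP^{r-1}$-bundle combinatorics) is not a technicality but the whole theorem in a different guise. The paper's factorization trick buys you that step for free. Two minor points: for the highest component the normal bundle has only negative weights, so there is no $N_{F/W,0}$ summand; and the discrete Fourier transformation is defined as a sum over shift operators $\cS^k$, not as a residue integral, so the ``residue'' language needs justification before it can be unwound.
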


As a consequence, for the highest/lowest GIT quotient $X$, the discrete Fourier transformation $\sfF_{X}$ acting on the Givental cone induces a transformation $\FT_{X}$ on the level of quantum $D$-modules (see Proposition~\ref{prop:DiscreteFTDmodule}). Similarly, for each fixed component $F\subset W^{\bbC^*}$, the continuous Fourier transformations $\sF_{F,j}$ on the cone level induce transformations $\FT_{F,j}$ on the level of quantum $D$-modules (see Proposition~\ref{prop:ContFTDmodule}). 

Now, in the 3-component $\bbC^*$-VGIT context of Theorem~\ref{thm:intro 3compqdmdecom}, we assemble the Fourier transformations to construct the desired isomorphism $\Phi^{\red}$ by passing through the equivariant quantum $D$-module of $W$. Specifically, our analysis in Section~\ref{sect:Decomposition} gives the following relationships among the extended quantum $D$-modules $\QDM(X_{\pm})^{\La}$, $\QDM(F_0)^{\La}$, and the completed equivariant quantum $D$-module $\QDM_{\bbC^*}(W)_{X_-}^{\wedge,\La}$ of $W$ (see Definition~\ref{def:completion of qdm}).

\begin{theorem}[Theorem~\ref{thm:QDMDecompExt}] \label{thm:introQDMDECompEXt}
In the context of Theorem~\ref{thm:intro 3compqdmdecom}, the Fourier transformations induce $\bbC[z]\Biglaurent{S_{F_0}^{-\frac{1}{2c_{F_0}}}}\formal{Q_W,\theta}$-module isomorphisms
$$
    \FT_{X_-}\colon\QDM_{\bbC^*}(W)^{\wedge,\La}_{X_-}\longrightarrow \tau_{X_-}^*\QDM(X_-)^{\La}
$$ 
and
$$
    \FT_{X_+}\oplus \bigoplus_{j=0}^{\abs{c_{F_0}}-1}\FT_{F_0,j}\colon\QDM_{\bbC^*}(W)^{\wedge,\La}_{X_-}\longrightarrow \tau_{X_+}^*\QDM(X_+)^{\La}\oplus \bigoplus_{j=0}^{\abs{c_{F_0}}-1}\zeta_j^*\QDM(F_0)^{\La}.
$$
Moreover, the isomorphism $\Phi$ defined by the composition
$$
    \left( \FT_{X_+} \oplus\bigoplus_{j=0}^{\abs{c_{F_0}}-1}\FT_{F_0,j} \right)\circ \FT_{X_-}^{-1}\colon\tau_{X_-}^*\QDM(X_-)^{\La}\to\tau_{X_+}^*\QDM(X_+)^{\La}\oplus \bigoplus_{j=0}^{\abs{c_{F_0}}-1} \zeta_j^*\QDM(F_0)^{\La}
$$
is compatible with the quantum connections and the pairings.
\end{theorem}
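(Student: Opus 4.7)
The plan is to assemble the asserted $D$-module maps directly from their cone-level counterparts, using Theorem~\ref{thm:IntroReduction} as the critical input that promotes the discrete Fourier transformations to cone-preserving maps, and then to verify bijectivity by a reduction to the classical limit. In the 3-component setup, $X_+$ is the highest GIT quotient of $W$ (a projective bundle over $F_+$) and $X_-$ is the lowest (over $F_-$), so Theorem~\ref{thm:IntroReduction} applies to both and shows that $\sfF_{X_\pm}$ send the equivariant Givental cone $\cL_W$ into $\cL_{X_\pm}$ after the Laurent extension. Combined with Proposition~\ref{prop:DiscreteFTDmodule}, this produces $D$-module maps $\FT_{X_\pm}\colon \QDM_{\bbC^*}(W)^{\wedge,\La}_{X_-}\to \tau_{X_\pm}^*\QDM(X_\pm)^{\La}$ that intertwine the quantum connections and the Poincaré pairings. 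For the middle component, \cite[Corollary~4.9]{iritani2023quantum} already gives that the continuous transformations $\sF_{F_0,j}$ land on $\cL_{F_0}$; Proposition~\ref{prop:ContFTDmodule} then yields $\FT_{F_0,j}\colon \QDM_{\bbC^*}(W)^{\wedge,\La}_{X_-}\to \zeta_j^*\QDM(F_0)^{\La}$, again respecting connections and pairings.

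The bulk of the work is to verify that both $\FT_{X_-}$ and the combined map $\FT_{X_+}\oplus\bigoplus_{j=0}^{\abs{c_{F_0}}-1}\FT_{F_0,j}$ are isomorphisms. I would argue this by a Nakayama-type reduction: source and target are free modules over the appropriate extended base ring, so it suffices to check the induced map at the classical fiber $Q_W=0$, $\theta=0$ (together with the relevant Laurent-extension specializations). At this level, the completion $\QDM_{\bbC^*}(W)^{\wedge,\La}_{X_-}$ reduces to a Laurent localization of the equivariant cohomology $H^*_{\bbC^*}(W)$ completed at the locus associated with $X_-$, and the Fourier transformations become residue maps extracting fixed-component contributions via Atiyah-Bott localization. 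One matches: $\FT_{X_+}$ picks out the $F_+$-piece, which becomes $H^*(X_+)$ by the projective-bundle identification $X_+=\bbP(N_{F_+,-})$; $\FT_{X_-}$ symmetrically captures $H^*(X_-)$; and the $\abs{c_{F_0}}$ continuous transformations $\FT_{F_0,j}$ each contribute one copy of $H^*(F_0)$, corresponding to the $\abs{c_{F_0}}$ distinct residues introduced by the Laurent extension in $S_{F_0}^{-1/(2c_{F_0})}$. A rank count using Proposition~\ref{prop:deRhamIsoVGIT}, equivalently $\dim H^*(X_-)=\dim H^*(X_+)+\abs{c_{F_0}}\dim H^*(F_0)$ together with the Bialynicki-Birula identity $\dim H^*(W)=\sum_F \dim H^*(F)$, then confirms bijectivity; flatness of the quantum $D$-modules propagates the isomorphism away from the classical fiber.

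Compatibility of the composition $\Phi = (\FT_{X_+}\oplus\bigoplus_j \FT_{F_0,j})\circ \FT_{X_-}^{-1}$ with the quantum connections and pairings is automatic, inherited from the corresponding properties of each factor supplied by Propositions~\ref{prop:DiscreteFTDmodule} and \ref{prop:ContFTDmodule}. The hardest step will be the residue matching in the isomorphism check: one must track how the fractional-power Laurent extension at $S_{F_0}^{-1/(2c_{F_0})}$ interacts with the poles of the Fourier kernel, and verify that the $\abs{c_{F_0}}$ continuous Fourier residues provide precisely the extra $H^*(F_0)$-contributions missing from $H^*(X_+)$ relative to $H^*(X_-)$, without cancellations or redundancies with the discrete $\FT_{X_\pm}$. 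This is a conceptually clean but technically delicate residue analysis, made tractable by the weight-$\pm 1$ hypothesis of the 3-component $\bbC^*$-VGIT setup, which forces all residues to appear at roots of unity with transparent multiplicities.
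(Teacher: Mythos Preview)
Your overall strategy—assemble the maps from the cone-level Fourier transformations and verify bijectivity by reduction to the classical fiber—matches the paper's approach, and your identification of Theorem~\ref{thm:IntroReduction} as the key input for the discrete side is correct. However, there is a genuine gap in your treatment of the pairings.

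You claim that compatibility of $\Phi$ with the pairings is ``automatic, inherited from the corresponding properties of each factor supplied by Propositions~\ref{prop:DiscreteFTDmodule} and~\ref{prop:ContFTDmodule}.'' But those propositions establish only connection compatibility and homogeneity; they say nothing about pairings, and indeed the individual maps $\FT_{X_\pm}$, $\FT_{F_0,j}$ cannot simply intertwine pairings, since the equivariant pairing $P_W$ on the source and the non-equivariant $P_{X_\pm}$, $P_{F_0}$ on the targets live on varieties of different dimensions. In the paper, pairing compatibility is a separate argument (Proposition~\ref{prop:IntertwinePairings}): one computes $\Phi^{-1}$ at leading order in $S_{F_0}^{-1/(2c_{F_0})}$ via a sharpened leading-term calculation for $\FT_{X_-}$ (Lemma~\ref{lem:SpecialLTPairing}), checks the pairings match at that order using the classical identities of Lemma~\ref{lem:DecompClassicalPairing} (which relate $P_{X_-}$ to $P_{X_+}$ and $P_{F_0}$ through the cohomology decomposition of Proposition~\ref{prop:deRhamIso3Comp}), and then propagates to all orders by an argument as in \cite[Section~5.6.2]{iritani2023quantum}. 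None of this is automatic.

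A secondary point: your isomorphism argument is in the right spirit but looser than what is actually needed. The paper (Proposition~\ref{prop:tRModIso}) chooses an explicit basis $\{s_m, s_{l,i}\}\subset H^*_{\bbC^*}(W)$ adapted to the Kirwan maps (Section~\ref{sect:RefBases}), computes leading terms of $\FT_{X_\pm}$ and $\FT_{F_0,j}$ on this basis (Lemmas~\ref{lem:LTContFT}, \ref{lem:LTDiscreteFT}), and for $\Psi_+$ shows the resulting block matrix is invertible via a Vandermonde-type argument in the roots of unity $e^{2\pi\sqrt{-1}j/c_{F_0}}$. Your rank count and residue heuristic point toward this, but the rank equality alone does not give bijectivity; you still need to exhibit why the $\abs{c_{F_0}}$ continuous transformations are linearly independent at leading order, which is exactly what the Vandermonde structure provides.
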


The isomorphism $\Phi$ in Theorem~\ref{thm:introQDMDECompEXt} is defined over the extended base ring $\bbC[z]\Biglaurent{S_{F_0}^{-\frac{1}{2c_{F_0}}}}\formal{Q_W,\theta}$ which involves both the Novikov variables and coordinates of $W$. In particular, $S_{F_0}$ is the Novikov variable for an equivariant curve class in $W$ that is associated to $F_0$ and is a lift of the curve class $a$. We show that the extra Novikov variables and coordinates can in fact be eliminated so that the isomorphism can be established over the reduced base $\bbC[z]\Biglaurent{Q_{X_-}^{-\frac{a}{2c_{F_0}}}}\formal{Q_{X_-},\tau_{X_-}}$ in Theorem~\ref{thm:intro 3compqdmdecom}.

\subsection{Outline for this paper}
In Section~\ref{sect:Prelim}, we review the preliminaries of quantum cohomology and quantum $D$-modules.
% backgrounds of Gromov-Witten theory, including quantum $D$-modules, the Givental formalism, shift operators, and quantum Riemann-Roch operators. 
In Section~\ref{sect:GIT}, we discuss variations of GIT quotients and focus on the geometry in the case of $\bbC^*$-actions. In Section~\ref{sect:Fourier}, we discuss the technique of Fourier analysis on the levels of Givental cones and quantum $D$-modules, and prove Theorem~\ref{thm:IntroReduction}.
In Section~\ref{sect:Decomposition}, we prove the decomposition of quantum $D$-modules for 3-component $\bbC^*$-VGIT wall-crossings (Theorems~\ref{thm:intro 3compqdmdecom},~\ref{thm:introQDMDECompEXt}). Finally, in Section~\ref{sect:General}, we prove the decomposition for general $G$-VGIT wall-crossings (Theorem~\ref{thm:IntroGeneralQDMDecomp}) based on the master space reduction, and discuss the application to the decomposition for general standard flips.

\subsection{Acknowledgments}
We would like to thank Yassine El Maazouz, Shuai Guo, Thorgal Hinault, Hiroshi Iritani, Ludmil Katzarkov, Maxim Kontsevich, Y.P.\ Lee, Henry Liu, Tony Pantev, Mark Shoemaker, Yefeng Shen, Weihong Xu, Haosen Wu, Chi Zhang, and Shaowu Zhang for the support and valuable discussions.
Z.\ Gu and T.Y.\ Yu were partially supported by NSF grants DMS-2302095 and DMS-2245099.

\section{Preliminaries}\label{sect:Prelim}
\begin{comment}
In this section, we first introduce the background and notations of Gromov-Witten invariants, quantum cohomology, and their equivariant versions. In particular, we will explain the relation between the quantum $D$-module and the Givental Lagrangian cone, which are two packages of genus $0$ Gromov-Witten invariants. Furthermore. we introduce twisted Gromov-Witten invariants and the quantum Riemann-Roch theorem\cite{coates2007quantum}. In order for later use, we always introduce shift operators\cite{okounkov2010quantum} on the Givental cones and the quantum $D$-modules. 
\end{comment}

In this section, we introduce the preliminaries for this paper and set up notation.
We work over $\bbC$.
The discussion in this section includes both the nonequivariant and the $\bbC^*$-equivariant settings. To distinguish the two settings notationally, we use $X$ to denote a smooth projective variety in the nonequivariant case, and $W$ to denote a smooth projective variety with a $\bbC^*$-action in the equivariant case.

\subsection{Formal series rings and graded completions}\label{sect:GradedCompletion}
In this subsection, we introduce the conventions for formal series rings, following \cite[Section 2.2]{iritani2023quantum}.
Let $M=\bigoplus_{n\in\bbZ}M_n$ be a graded $\bbZ$-module with a descending chain of graded submodules $N_k=\bigoplus_{n\in\bbZ}N_{k,n}$. We denote the graded completion by $\widehat{M}=\bigoplus_{n\in \bbZ}\widehat{M}_n$ , where $\widehat{M}_n=\varprojlim_kM_n/N_{k,n}$. Throughout this paper, completions are always taken in the graded sense. 

For a graded $\bbZ$-module $K$ and a family of formal variables with assigned gradings $x=(x_1,x_2,\cdots)$, we define $K\formal{x}$ as the completion of $K[x]$ with respect to the descending chain of graded submodules $(x_1^n,\cdots,x_n^n,x_{n+1},\cdots)K[x]$. If $x$ is a single graded variable, $K\formal{x}$ denotes the formal completion of $K[x]$ with respect to the descending chain $x^nK[x]$. We also use the formal Laurent series ring $K\dbp{x}$, defined as the completion of $K[x,x^{-1}]$ with respect to the descending chain $x^nK[x]$. A degree-$d$ homogeneous element of $K\dbp{x}$ has form $\sum\limits_{m=n}^{\infty}a_mx^m$, where $m\deg(x)+\deg(a_m)=d$ for all $m$. 

Additionally, given a monoid $C_{\bbN}$ in a lattice $N$ such that $C=C_{\bbN}\otimes \bbR$ is a strict convex cone in $N\otimes \bbR$, we choose an interior lattice point $\omega\in C^{\vee}$ in the dual cone and define $\bbC\formal{C_{\bbN}}$ as the completion of $\bbC[C_{\bbN}]$ with respect to the descending chain of ideals 
$$
    I_k=\langle\beta\in  C_{\bbN}\ \big|\ (\beta, \omega) > k \rangle\subset \bbC[C_{\bbN}].
$$
The completion is independent of the choice of $\omega$.

\subsection{Cohomology groups}
We use $H^*(-)$ (resp. $H^*_{\bbC^*}(-)$) to denote the singular cohomology group (resp. $\bbC^*$-equivariant singular cohomology group). When the coefficients are not specified, we take $\bbC$-coefficients.

We denote $n_X\coloneqq \dim_{\bbC}H^*(X)$. Fix a homogeneous $\bbC$-basis $\{\phi_0, \cdots,\phi_{n_X-1}\}$ of $H^*(X)$ with $\phi_0=1$ and let $\{\phi^0, \cdots, \phi^{n_X - 1}\}$ be the dual basis. We also write $\phi^i = \tau_X^i$ and refer to them as \emph{coordinates} hereafter. We assign the degree $(2-\deg(\phi_i))$ to $\tau_X^i$. We will always distinguish these coordinates $\tau_{X}^i$ for different algebraic varieties $X$ via their subscripts. We use $\int_X$ to denote the Poincar\'e pairing.

Now consider the $\bbC^*$-variety $W$. % we denote the fixed locus of by $W^{\bbC^*}$. 
Let $\lambda \in H^2_{\bbC^*}(\pt; \bbZ) \cong \Hom(\bbC^*, \bbC^*)$ be the generator corresponding to the character $\Id$. We have $H^*_{\bbC^*}(\pt) \cong \bbC[\lambda]$. We fix a homogeneous $H_{\bbC^*}^*(\pt)$-basis $\{\varphi_0, \cdots,\varphi_{n_W-1}\}$ of $H_{\bbC^*}^*(W)$ with $\varphi_0=1$ and let $\{\varphi^0, \cdots, \varphi^{n_W-1}\}$ be the dual basis. Then $\{\varphi_i\lambda^n\}_{0\le i\le n_W-1,n\in\bbN}$ is a $\bbC$-basis of $H_{\bbC^*}^*(W)$. The $\bbC$-dual basis is denoted by $\{\theta^{i,n}\}_{0\leq i\leq n_W-1,n\in\bbN}$, where each $\theta^{i,n}$ is assigned the degree $(2-\deg(\varphi_i)-2n)$. We will consistently use $\theta^{i,n}$ to represent the equivariant coordinates of $W$ without emphasizing it via a subscript. We use $\int_W$ to denote the $\bbC^*$-equivariant Poincar\'e pairing.

\subsection{Curve classes and Novikov rings}
Let $\NS(X)$ be the N\'eron-Severi group of line bundles on $X$ modulo algebraic equivalence. We define $N^1(X)\subset H^2(X,\bbZ)$ as the image of $\NS(X)$ under the first Chern class homomorphism $c_1\colon \NS(X)\to H^2(X,\bbZ)$.

Dually, let $N_1(X)\subset H_2(X,\bbZ)$ be the group generated by the homology classes of algebraic curves in $X$. We use $(-, -)$ to denote the pairing between $H_2(X)$ and $H^2(X)$ as well as the induced pairing between $N_1(X)$ and $N^1(X)$. For $\bbF=\bbQ,\bbR,\bbC$, we denote the $\bbF$-vector spaces 
$$
    N_1(X)_{\bbF} = N_1(X)\otimes_{\bbZ} \bbF, \quad N^1(X)_{\bbF} = N^1(X) \otimes_{\bbZ} \bbF.
$$
The induced pairing between $N_1(X)_{\bbF}$ and $N^1(X)_{\bbF}$ is perfect over ${\bbF}$.

Let $\NE_{\bbN}(X) \subset N_1(X)$ be the monoid of effective curve classes in $X$. The \emph{Novikov ring} $\bbC\formal{\NE_{\bbN}(X)}$ of $X$ is the completed monoid ring of $\NE_{\bbN}(X)$ (see Section~\ref{sect:GradedCompletion}). We denote it by $\bbC\formal{Q_X}$, where the formal variable $Q_X^\beta$ is the Novikov variable for $\beta \in \NE_{\bbN}(X)$ and has degree $2(\beta, c_1(X))$. 

% Throughout this paper, we also distinguish Novikov variables for different algebraic varieties using subscripts.

% We denote by $\overline{\NE}(X)$ the closure of the cone $\NE(X)=\NE_{\bbN}(X)\otimes_{\bbZ}\bbR$ in $N_1(X)_{\bbR}$. 

Let $\NE(X) \subset N_1(X)_{\bbR}$ be the cone generated by $\NE_{\bbN}(X)$ over $\bbR_{\ge 0}$. 
% By Kleiman’s ampleness criterion, 
The cone $\NE(X)$ is dual to the nef cone $\Nef(X) \subset N^1(X)_{\bbR}$ of $X$, which is the closure of the cone generated by ample classes. In particular, the pairing $(\beta, \alpha)$ between a curve class $\beta \in \NE_{\bbN}(X)$ and a nef (or ample) divisor class $\alpha \in \Nef(X)$ is nonnegative.

\subsection{Quantum cohomology and quantum $D$-modules}\label{subsec:quantum cohomology and qdm}
For $n, k_1,\dots,k_n \in\bbN$, $\beta\in \NE_{\bbN}(X)$, and $\alpha_1,\cdots,\alpha_n\in H^*(X)$, let 
$$
    \langle\alpha_1\psi^{k_1},\cdots,\alpha_n\psi^{k_n}\rangle^X_{0,n,\beta}
$$
denote the corresponding genus-zero, degree-$\beta$, $n$-pointed (descendant) \emph{Gromov-Witten invariant} of $X$, where $\psi$ denotes the $\psi$-class on the moduli space of stable maps. The \emph{quantum cohomology ring} $\QH^*(X)$ of $X$ is the graded module $H^*(X)\formal{Q_X,\tau_X}$ equipped with the quantum product $\star_{\tau_X}$, where $H^*(X)\formal{Q_X,\tau_X}=H^*(X)\formal{Q_X}\formal{\tau_X^0,\tau_X^1,\cdots,\tau_X^{n_X-1}}$, $\tau_X=\sum\limits_{i=0}^{n_X-1}\tau^i_X\phi_i$ is a general point of $H^*(X)$, and $\star_{\tau_X}$ is a $\bbC\formal{Q_X,\tau_X}$-bilinear, associative, and super-commutative product on $\QH^*(X)$ defined by
$$
    \int_X (\phi_i \star_{\tau_X} \phi_j) \cup \phi_k  = \sum_{\substack{\beta \in \NE_{\bbN}(X) \\ n \geq 0}} 
    \big{\langle} \phi_i, \phi_j, \phi_k, \tau_X, \ldots, \tau_X \big{\rangle}^{X}_{0,n+3,\beta} 
    \frac{Q_X^\beta}{n!}.
$$

Let $z$ be a formal variable with degree $2$. The \emph{quantum $D$-module} $\QDM(X)$ of $X$ is the graded module $H^*(X)[z]\formal{Q_X,\tau_X}$, which is a trivial $H^*(X)$-bundle over the $(z,\tau_X,Q_X)$-space, equipped with a flat quantum connection $\nabla$ and a $z$-sesquilinear pairing $P_X$. The quantum connection $\nabla\colon\QDM(X)\to z^{-1}\QDM(X)$ is the flat connection given by the following operators:
\begin{align*}
&\nabla_{\tau_X^i} = \partial_{\tau_X^i} + z^{-1} \left( \phi_i \star_{\tau_X} \right),\\
&\nabla_{z \partial_z} = z \partial_z - z^{-1} \left( E_X \star_{\tau_X} \right) + \mu_X ,\\
&\nabla_{\xi Q \partial_Q} = \xi Q \partial_Q + z^{-1} \left( \xi \star_{\tau_X} \right).
\end{align*}
Here, $E_X$ is the Euler vector field and $\mu_X$ is the grading operator, defined respectively by 
\begin{equation}\label{Euler vector field}
E_X=c_1(X)+\sum\limits_{i}\bigg(1-\frac{\deg\phi_i}{2}\bigg)\tau_X^i\phi_i,\text{ } \mu_X(\phi_i)=\frac{\deg\phi_i-\dim_{\bbC}X}{2}\phi_i.
\end{equation}
Moreover, $\xi Q\partial_Q\cdot Q_X^\beta = (\beta, \xi)Q_X^\beta$ is the derivative in the direction of $\xi\in H^2(X)$. For $f,g\in \QDM(X)$, the $z$-sesquilinear pairing $P_X$ is defined by the Poincar\'e pairing as
$$
    P_X(f,g) = \int_Xf(-z)\cup g(z)
$$
and satisfies $dP_X(f,g)=P_X(f,\nabla g)+P_X(\nabla f,g)$.

The flat sections of the quantum connection can be given by \emph{fundamental solutions} \cite{givental1996equivariant}. Specifically, the operator $M_X(\tau_X)\in \End(H^*(X))[z^{-1}]\formal{Q_X,\tau_X}$ given by
% \begin{equation}\label{Formula for fundamental solution}
$$
    \int_X M_X(\tau_X)\phi_i \cup \phi_j = \int_X \phi_i \cup \phi_j + \sum_{\substack{\beta \in \NE_{\bbN}(X),\, n \geq 0 \\ (n,\beta) \ne (0,0)}} 
    \Big{\langle} \phi_i, \tau_X, \ldots, \tau_X, \frac{\phi_j}{z-\psi} \Big{\rangle}^{X}_{0, n+2, \beta} \frac{Q_X^\beta}{n!}
$$
satisfies that 
$$
\begin{aligned}
M_X(\tau_X) \circ \nabla_{\tau_X^i} &= \partial_{\tau_X^i} \circ M_X(\tau_X), \\
M_X(\tau_X) \circ \nabla_{z \partial_z} &= \left(z \partial_z - z^{-1} c_1(X) + \mu_X \right) \circ M_X(\tau_X), \\
M_X(\tau_X) \circ \nabla_{\xi Q \partial_Q} &= \left(\xi Q \partial_Q + z^{-1} \xi \right) \circ M_X(\tau_X), \\
P_X(M_X(\tau_X)f,M_X(\tau_X)g)&= P_X(f,g).
\end{aligned}
$$
By the string equation, the \emph{$J$-function} of $X$ defined by  $J_X(\tau_X)=M_X(\tau_X)1$ expands as 
$$
    J_X(\tau_X) = 1+\frac{\tau_X}{z}+\sum_{\substack{\beta \in \NE_{\bbN}(X),\, n \geq 0 \\ (n,\beta) \ne (0,0),(1,0)\\ i=0,1,\ldots,n_X-1}} 
    \phi^i\Big{\langle} \tau_X, \ldots, \tau_X, \frac{\phi_i}{z(z-\psi)} \Big{\rangle}^{X}_{0, n+1, \beta} \frac{Q_X^\beta}{n!}.
$$
% where $\{\phi^i\}\subset H^*(X)$ is the dual basis of $\{\phi_i\}$ under the Poincar\'e pairing \blue{$\int_X$}. 

\subsection{Equivariant quantum cohomology and equivariant quantum $D$-modules}
For the $\bbC^*$-variety, the $\bbC^*$-equivariant quantum cohomology $\QH_{\bbC^*}(W)$ and the $\bbC^*$-equivariant quantum $D$-module $\QDM_{\bbC^*}(W)$ of $W$ may be defined in a similar way. 
% The structures of the quantum product, the quantum connection, the $z$-sesquilinear pairing, and the fundamental solution in $\QH_{\bbC^*}(W)$ and $\QDM_{\bbC^*}(W)$ parallel their nonequivariant counterparts. 
More precisely, for $\alpha_1,\cdots,\alpha_n\in H^*_{\bbC^*}(W)$, let 
$$
    \langle \alpha_1\psi^{k_1},\cdots,\alpha_n\psi^{k_n} \rangle^{W,\bbC^*}_{0,n,\beta}
$$
denote the corresponding genus-zero, degree-$\beta$, $n$-pointed $\bbC^*$-equivariant (descendant) Gromov-Witten invariant of $W$. 
% Recall that for a variety $W$ with a $\bbC^*$-action, where $\{\varphi_i\lambda^n\}$ forms a $\bbC$-basis of $H_{\bbC^*}^*(W)$ with dual basis $\{\theta^{i,n}\}$, 
We represent a general point in $H_{\bbC^*}^*(W)$ by $\theta=\sum\limits_{i,n}\theta^{i,n}\varphi_i\lambda^n$. Then $\QH_{\bbC^*}(W)$ is the graded module $H_{\bbC^*}^*(W)\formal{Q_W,\theta} = H_{\bbC^*}^*(W)\formal{Q_W}\formal{\theta^{i,n}}$ equipped with the $\bbC[\lambda]\formal{Q_W,\theta}$-bilinear, associative, and super-commutative quantum product $\star_{\theta}$ defined by 
$$
    \int_W (\varphi_i \star_\theta \varphi_j) \cup \varphi_k  = \sum_{\substack{\beta \in \NE_{\bbN}(W) \\ n \geq 0}} 
    \big{\langle} \varphi_i, \varphi_j, \varphi_k, \theta, \ldots, \theta \big{\rangle}^{W,\bbC^*}_{0,n+3,\beta} 
\frac{Q_W^\beta}{n!}.
$$

Moreover, $\QDM_{\bbC^*}(W)$ is the graded module $H_{\bbC^*}^*(W)[z]\formal{Q_W,\theta}$ which is a trivial $H_{\bbC^*}^*(W)$ bundle over the $(z,\theta,Q_W)$-space. The quantum connection $\nabla\colon\QDM_{\bbC^*}(W)\to z^{-1}\QDM_{\bbC^*}(W)$ is the flat connection given by the following operators:
\begin{align*}
    &\nabla_{\theta^{i,n}} = \partial_{\theta^{i,n}} + z^{-1} \left( \varphi_i\lambda^n \star_{\theta} \right),\\
    &\nabla_{z \partial_z} = z \partial_z - z^{-1} \big( E_W^{\bbC^*} \star_{\theta} \big) + \mu_W^{\bbC^*} ,\\
    &\nabla_{\xi Q \partial_Q} = \xi Q \partial_Q + z^{-1} ( \xi \star_{\theta} ).
\end{align*}
Here, $E_W^{\bbC^*}$ is the equivariant Euler vector field and $\mu_W^{\bbC^*}$ is the equivariant grading operator, defined respectively by 
$$
    E_W^{\bbC^*}=c_1^{\bbC^*}(W)+\sum_{i,n}\bigg(1-\frac{\deg\varphi_i}{2}-n\bigg)\theta^{i,n}\varphi_i\lambda^n, \quad \mu_W^{\bbC^*}(\varphi_i\lambda^n)=\frac{\deg\varphi_i+2n-\dim_{\bbC}W}{2}\varphi_i\lambda^n.
$$
Moreover, we have $\xi Q\partial_Q\cdot Q_W^\beta = (\beta, \xi)Q_W^\beta$ for any $\xi \in H^2_{\bbC^*}(W)$. For $f,g\in \QDM_{\bbC^*}(W)$, the $z$-sesquilinear pairing $P_W$ is defined by 
$$
    P_W(f,g) = \int_W f(-z)\cup g(z)
$$
and satisfies $dP_W(f,g)= P_W(f,\nabla g) + P_W(\nabla f,g)$. 

The equivariant fundamental solution $M_W(\theta)\in \End_{\bbC[\lambda]}(H_{\bbC^*}^*(W))\formal{z^{-1}}\formal{Q_W,\theta}$ is defined by 
$$
    \int_W M_W(\theta)\varphi_i \cup \varphi_j = (\varphi_i, \varphi_j)_W^{\bbC^*} + \sum_{\substack{\beta \in \NE_{\bbN}(W),\, n \geq 0 \\ (n,\beta) \ne (0,0)}} 
    \Big{\langle} \varphi_i, \theta, \ldots, \theta, \frac{\varphi_j}{z-\psi} \Big{\rangle}^{W,\bbC^*}_{0, n+2, \beta} \frac{Q_W^\beta}{n!}
$$
and satisfies that 
\begin{align*}
    M_W(\theta) \circ \nabla_{\theta^{i,n}} &= \partial_{\theta^{i,n}} \circ M_W(\theta), \\
    M_W(\theta) \circ \nabla_{z \partial_z} &= \left(z \partial_z - z^{-1} c_1^{\bbC^*}(W) + \mu_W^{\bbC^*} \right) \circ M_W(\theta), \\
    M_W(\theta) \circ \nabla_{\xi Q \partial_Q} &= \left(\xi Q \partial_Q + z^{-1} \xi \right) \circ M_W(\theta), \\
    P_W^{\bbC^*}(M_W(\theta)f,M_W(\theta)g)&= P_W^{\bbC^*}(f,g).
\end{align*}
In particular, the $z$-direction is formal because the $\psi$-class may carry a nontrivial $\bbC^*$-weight and may not be nilpotent. However, through virtual localization \cite{graber1999localization}, we obtain $M_W(\theta)\in \End_{\bbC[\lambda]}(H_{\bbC^*}^*(W))\otimes_{\bbC[\lambda]}\bbC(\lambda,z)_{\mathrm{hom}}\formal{Q_W,\theta}$ where $\bbC(\lambda,z)_{\mathrm{hom}}=\bbC(\lambda/z)[z,z^{-1}]$ is the localization of $\bbC[\lambda,z]$ with respect to the nonzero homogeneous elements. 

The equivariant $J$-function $J_W(\theta)=M_W(\theta)1$ expands as 
$$
    J_W(\theta)=1+\frac{\theta}{z}+\sum_{\substack{\beta \in \NE_{\bbN}(W),\, n \geq 0 \\ (n,\beta) \ne (0,0),(1,0)\\ i=0,1,\ldots,n_W-1}} 
    \varphi^i\Big{\langle} \theta, \ldots, \theta, \frac{\varphi_i}{z(z-\psi)}  \Big{\rangle}^{W,\bbC^*}_{0, n+1, \beta} \frac{Q_W^\beta}{n!}.
$$

Beyond these structures, the equivariant quantum $D$-module $\QDM_{\bbC^*}(W)$ also carries the action of the \emph{shift operators} parameterized by equivariant curve classes, which we will define below.

\subsection{Notation on $\bbC^*$-fixed loci}\label{sect:C*FixedLocus}
The $\bbC^*$-fixed locus of $W$ is nonempty and can be decomposed into connected components as
$$
    W^{\bbC^*} = F_1 \sqcup \cdots \sqcup F_r.
$$
For each $i = 1, \dots, r$, the component $F_i$ is smooth and projective, and the $\bbC^*$-action on $W$ induces a globally diagonalizable $\bbC^*$-action on the normal bundle $N_{F_i/W}$ of $F_i$ in $W$ \cite{borel1966linear}. We have a $\bbC^*$-weight decomposition 
\begin{equation}\label{eqn:NormalWtDecomp}
    N_{F_i/W} = \bigoplus_{c \in \bbZ}  N_{F_i, c}
\end{equation}
where $N_{F_i, c}$ is the eigenbundle with weight $c \in \bbZ$.

Let $\omega$ be a K\"ahler form on $W$ that is invariant under the action of the maximal compact subgroup $S^1$ of $\bbC^*$. By Frankel's theorem \cite{frankel1958fixed}, the $S^1$-action on the K\"ahler manifold $(W, \omega)$ is Hamiltonian and induces a moment map
$$
    \mu: W \to \bbR.
$$
The map $\mu$ is a Morse-Bott function with critical locus $W^{\bbC^*}$ \cite{frankel1958fixed,carrell1979some}. Its value on each component $F_i$ is constant, and thus induces a partial order among the components $F_1, \dots, F_r$. In particular, there is a unique \emph{highest} (resp.\ \emph{lowest}) component on which $\mu$ achieves maximum (resp.\ minimum), and we assume that this component is $F_1$ (resp.\ $F_r$).

\subsection{Equivariant curve classes}\label{sect:EquivCurveClass}
Let $\NS^{\bbC^*}(W)$ be the $\bbC^*$-equivariant N\'eron-Severi group of $\bbC^*$-linearized line bundles on $W$ modulo algebraic equivalence, and define $N^1_{\bbC^*}(W)\subset H^2_{\bbC^*}(W,\bbZ)$ as the image of $\NS^{\bbC^*}(W)$ under the $\bbC^*$-equivariant first Chern class $c_1^{\bbC^*}\colon \NS^{\bbC^*}(W)\to H^2_{\bbC^*}(W,\bbZ)$.

The dual picture is introduced in \cite[Section 2.5]{iritani2023quantum}. Let $H_2^{\bbC^*}(W, \bbZ)$ denote the second integral homology group $H_2(W_{\bbC^*}, \bbZ)$ of the Borel construction $W_{\bbC^*} = (W \times E\bbC^*) / \bbC^*$, where $E\bbC^* \to B\bbC^*$ denotes the universal $\bbC^*$-bundle. In particular, the group $H_2^{\bbC^*}(\pt, \bbZ)$ is identified with the cocharacter group $\Hom(\bbC^*, \bbC^*) \cong \bbZ$ of $\bbC^*$. Let $N_1^{\bbC^*}(W)$ be the subgroup of $H_2^{\bbC^*}(W, \bbZ)$ generated by the following two types of classes:
\begin{enumerate}[wide]
    \item The classes of algebraic curves contained in a fiber of the $W$-bundle $W_{\bbC^*} \to B\bbC^*$.

    \item The classes $s_{x,*}(\beta)$, $\beta \in H_2(B\bbC^*, \bbZ)$, resulting from the pushforward along a section $s_x \colon B\bbC^* = \{x\}\times B\bbC^* \subset W_{\bbC^*}$ induced by a $\bbC^*$-fixed point $x \in W^{\bbC^*}$. 
\end{enumerate}
There is a commutative diagram
\begin{equation}
\begin{tikzcd}
\label{N_1-exact sequence}
    0 \arrow[r] & N_1(W) \arrow[r] \arrow[d] & N_1^{\bbC^*}(W) \arrow[r] \arrow[d] & H_2^{\bbC^*}(\pt, \bbZ) \arrow[r] & 0 \\
    0 \arrow[r] & H_2(W, \bbZ) \arrow[r] & H_2^{\bbC^*}(W, \bbZ) \arrow[r]  & H_2^{\bbC^*}(\pt, \bbZ) \arrow[r] \arrow[u, equal] & 0
\end{tikzcd}
\end{equation}
where each row is a short exact sequence \cite[Lemma 2.4]{iritani2023quantum}. For any point $x\in W^{\bbC^*}$, the pushforward $H_2^{\bbC^*}(\{x\},\bbZ)\to H_2^{\bbC^*}(W,\bbZ)$ along the inclusion $x \in W$ induces a splitting of both rows of \eqref{N_1-exact sequence}.
%  since its image has the form $s_{x,*}(\sigma)$ and therefore lies in $N_1^{\bbC^*}(W)$. Different choices of $x$ yield different splittings. However, we will fix the choice with respect to the Bia{\l}ynicki-Birula decomposition of $W$ (Theorem~\ref{BB decomposition}) in the next section.

For each connected component $F_i$ of $W^{\bbC^*}$ and $k \in H_2^{\bbC^*}(\pt, \bbZ) \cong \bbZ$, we define the section class
$$
    \sigma_{F_i}(k) \coloneqq s_{x,*}(k) \in N_1^{\bbC^*}(W)
$$
where $x \in F_i$.

We use $(-, -)$ to denote the pairing between $H_2^{\bbC^*}(W)$ and $H^2_{\bbC^*}(W)$ as well as the induced pairing between $N_1^{\bbC^*}(W)$ and $N^1_{\bbC^*}(W)$. For $\bbF=\bbQ,\bbR,\bbC$, we denote the $\bbF$-vector spaces 
$$
    N_1^{\bbC^*}(W)_{\bbF} = N_1^{\bbC^*}(W) \otimes_{\bbZ} \bbF, \quad N^1_{\bbC^*}(W)_{\bbF} = N^1_{\bbC^*}(W) \otimes_{\bbZ} \bbF.
$$
The induced pairing between $N_1^{\bbC^*}(W)_{\bbF}$ and $N^1_{\bbC^*}(W)_{\bbF}$ is perfect over ${\bbF}$.

\subsection{Shift operators on equivariant quantum $D$-modules}\label{sect:ShiftOperator}
In this subsection, we introduce the shift operators on the equivariant quantum $D$-module $\QDM_{\bbC^*}(W)$. The shift operators were first introduced by Okounkov-Pandharipande \cite{okounkov2010quantum} for cocharacters of $\bbC^*$, which generalize the Seidel representations from quantum cohomology. Iritani \cite{iritani2023quantum} subsequently lifted this construction to equivariant classes in $N_1^{\bbC^*}(W)$.

For any cocharacter $k\in\Hom(\bbC^*,\bbC^*)\cong \bbZ$, there is an induced action of $k(\bbC^*) \cong \bbC^*$ on $W$. Let $F_{k,\max}\subset W^{k(\bbC^*)}$ denote the highest fixed component of this action and set $\sigma_{\max}(k) = \sigma_{F_{k,\max}}(k)$. 
% This action has a distinguished fixed component ,  where the induced action acts on the normal bundle of $F_{k,\max}$ with strictly negative weights. Fix a point $x$ in $F_{k,\max}$ and consider the induced pushforward $i_{x,*}\colon H_2^{\bbC^*}(\{x\},\bbZ)\to N_1^{\bbC^*}(W)$. We denote the image $i_{x,*}(k)$ in $N_1^{\bbC^*}(W)$ by $\sigma_{\max}(k)$.
Given any $\beta\in N_1^{\bbC^*}(W)$, let $\Bar{\beta}$ denote its image under the pushforward to $H_2^{\bbC^*}(\pt; \bbZ)$. It holds that $\beta+\sigma_{\max}(-\Bar{\beta})$ lies in the image of $N_1(W)\to N_1^{\bbC^*}(W)$. We therefore identify $\beta+\sigma_{\max}(-\Bar{\beta})$ with its preimage in $N_1(W)$. 

\begin{proposition}[\cite{okounkov2010quantum, iritani2023quantum}]\label{Shift operator proposition}
There is a family of $\bbC[z]\formal{Q_W,\theta}$-linear operators
$$
    \hS^{\beta}=\hS^{\beta}(\theta)\colon \QDM_{\bbC^*}(W)\to Q_W^{\beta+\sigma_{\max}(-\Bar{\beta})}\QDM_{\bbC^*}(W)$$
parameterized by $ \beta\in N_1^{\bbC^*}(W)$, called the \emph{extended shift operators} of $W$, that satisfies the following properties:
\begin{enumerate}[wide]
    \item $\hS^{\beta}(f(\lambda,z)\alpha)=f(\lambda-\Bar{\beta}z,z)(\hS^{\beta}\alpha)=e^{-\Bar{\beta}z\partial_{\lambda}}(f(\lambda,z))(\hS^{\beta}\alpha)$ for any $f(\lambda,z)\in \bbC[\lambda,z]$ and $\alpha\in \QDM_{\bbC^*}(W)$.
    
    \item \label{shift operator (2)} $\hS^{\beta_1+\beta_2}=\hS^{\beta_1}\circ \hS^{\beta_2}$ for any $\beta_1,\beta_2\in N_1^{\bbC^*}(W)$.
    
    \item \label{shift operator (3)} $\hS^{\beta}=Q_W^{\beta}$ when $\beta\in N_1(W)\subset N_1^{\bbC^*}(W)$. In particular, $\hS^0=\Id$.
    
    \item $[\nabla_{\theta^{i,n}},\hS^{\beta}]=[\nabla_{z\partial_z},\hS^{\beta}]=0$ and $[\nabla_{\xi Q\partial_Q},\hS^{\beta}] = (\beta, \xi) \hS^{\beta}$ for any $\xi \in H^2_{\bbC^*}(W)$.
    
    \item $e^{-\Bar{\beta}z\partial_{\lambda}}P_{W}^{\bbC^*}(f,g)=P_W^{\bbC^*}(\hS^{-\beta}f,\hS^{\beta}g)$ for any $f, g \in  \QDM_{\bbC^*}(W)$.

    \item $\hS^{\beta}$ is homogeneous of degree $2(\beta, c_1^{\bbC^*}(W))$.
\end{enumerate}
\end{proposition}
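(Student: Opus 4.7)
The plan is to construct the operators $\hS^\beta$ via a Seidel-type construction generalized to the equivariant setting, following Okounkov-Pandharipande for cocharacters and Iritani for the full extension to $N_1^{\bbC^*}(W)$. First I would handle the case of a cocharacter $k \in \Hom(\bbC^*,\bbC^*) \cong \bbZ$, by introducing the Seidel space $E_k = (\bbC^2 \setminus 0) \times_{\bbC^*} W$, where $\bbC^*$ acts diagonally via the standard scaling on $\bbC^2 \setminus 0$ and via $k$ on $W$. The projection $\pi \colon E_k \to \bbP^1$ is a $W$-bundle whose fibers over $0, \infty \in \bbP^1$ are canonically identified with $W$; and there is a residual $\bbC^*$-action on $E_k$ coming from scaling the second coordinate on $\bbC^2 \setminus 0$ which acts fiberwise and restricts to the original action on the fibers over $0$ and $\infty$. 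The shift operator for $\sigma_{\max}(k)$ is defined via an equivariant integral over the moduli of $\bbC^*$-invariant stable sections of $\pi$ with two marked points lying over $0$ and $\infty$, where the evaluation at $\infty$ is paired against the input class and the evaluation at $0$ (twisted by a $1/(z-\psi)$ class) defines the output. I would then extend the definition $\bbC[z]\formal{Q_W,\theta}$-linearly.

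For the algebraic properties, property~(\ref{shift operator (3)}) follows from a dimension and string-equation argument: when $\beta \in N_1(W)$, the $\bbP^1$-bundle becomes trivial and the moduli of sections degenerates to $W_{0,2,\beta} \times \bbP^1$, so the integral collapses to $Q_W^\beta$ times the identity. Property~(\ref{shift operator (2)}) is the composition law; I would prove it by a degeneration argument, expressing $E_{k_1+k_2}$ as a degeneration into $E_{k_1}$ and $E_{k_2}$ glued along the fibers over $\infty$ and $0$ respectively, then applying the equivariant degeneration formula for virtual fundamental classes. Property~(1), the $\lambda$-covariance, is a direct computation in equivariant localization: the residual $\bbC^*$ acts on the two boundary copies of $W \subset E_k$ with equivariant parameters that differ by $\bar\beta z$ (after the tangent-direction shift from the $\psi$-class), giving the factor $e^{-\bar\beta z\partial_\lambda}$. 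Together with~(\ref{shift operator (3)}), this reduces the construction for arbitrary $\beta \in N_1^{\bbC^*}(W)$ to the cocharacter case via the splitting of the exact sequence \eqref{N_1-exact sequence}.

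For the differential and geometric properties, compatibility with $\nabla_{\theta^{i,n}}$ and $\nabla_{z\partial_z}$ (property (4)) follows from topological recursion relations applied to the insertions on the universal section and a careful bookkeeping of virtual degrees; the commutator with $\nabla_{\xi Q\partial_Q}$ reduces to observing that $\hS^\beta$ raises the Novikov variable by $Q_W^{\beta + \sigma_{\max}(-\bar\beta)}$ and the pairing of $\xi$ with $\sigma_{\max}(-\bar\beta)$ vanishes since this section class is supported over a single $\bbC^*$-fixed point. Property (5), the pairing identity, comes from the involution of Seidel spaces $E_k \leftrightarrow E_{-k}$ swapping $0$ and $\infty$, which intertwines $\hS^\beta$ with $\hS^{-\beta}$ and produces the $e^{-\bar\beta z\partial_\lambda}$ prefactor. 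Property (6) is a Riemann-Roch computation on the universal section, tracking the equivariant degree contribution from $c_1^{\bbC^*}(W)$ paired with $\beta$.

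The main obstacle will be the composition property~(\ref{shift operator (2)}) and, relatedly, the equivariant lifting to arbitrary $\beta \in N_1^{\bbC^*}(W)$ rather than just cocharacters. The degeneration of $E_{k_1+k_2}$ into a chain of Seidel spaces requires a careful equivariant virtual degeneration formula and a verification that the gluing of sections across the node exactly matches the sum $\beta_1 + \beta_2$ in $N_1^{\bbC^*}(W)$ (not merely in $N_1(W)$), accounting for the twist by $\sigma_{\max}$ on each piece. Once this is set up, properties (1)--(6) follow from the construction essentially formally.
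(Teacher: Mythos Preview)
The paper does not prove this proposition; it is stated as a result of \cite{okounkov2010quantum, iritani2023quantum} and the reader is referred to \cite[Definition 2.6]{iritani2023quantum} for the construction. Your Seidel-space approach via $E_k = (\bbC^2\setminus 0)\times_{\bbC^*} W$ is precisely the construction in those references, so in that sense your plan agrees with the paper's (outsourced) proof.

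One step in your sketch is wrong, however. For the commutator with $\nabla_{\xi Q\partial_Q}$ in property~(4), you say it ``reduces to observing that $\hS^\beta$ raises the Novikov variable by $Q_W^{\beta + \sigma_{\max}(-\bar\beta)}$ and the pairing of $\xi$ with $\sigma_{\max}(-\bar\beta)$ vanishes since this section class is supported over a single $\bbC^*$-fixed point.'' That vanishing is false: for $\xi \in H^2_{\bbC^*}(W)$ one has $(\sigma_{\max}(-\bar\beta), \xi) = (-\bar\beta, i_x^*\xi)$ with $i_x^*\xi \in H^2_{\bbC^*}(\pt) = \bbC\lambda$, which is nonzero in general. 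Moreover $\hS^\beta$ is not a single Novikov shift but a sum over section classes in $E_k$, so $[\xi Q\partial_Q, \hS^\beta]$ alone does not produce $(\beta,\xi)\hS^\beta$ either. The correct computation uses both pieces of $\nabla_{\xi Q\partial_Q}$: once you pass the $z^{-1}(\xi\star_\theta)$ term through $\hS^\beta$ using property~(1), the $\lambda$-shift of $\xi$ contributes exactly the missing $\bar\beta$-dependent piece. Equivalently, apply the divisor equation on $E_k$ to a lift $\hat\xi \in H^2_{\bbC^*}(E_k)$ of $\xi$ and use that the equivariant pairing of a section class with $\hat\xi$ is $(\beta,\xi)$. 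This is a local error in your outline, not a flaw in the strategy.
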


We refer to \cite[Definition 2.6]{iritani2023quantum} for the explicit construction of the extended shift operators. By Proposition~\ref{Shift operator proposition}\eqref{shift operator (2)}\eqref{shift operator (3)}, these operators generalize the action of the Novikov variables to the equivariant context. For this reason, we will also refer to them as \emph{equivariant Novikov variables} throughout the paper.

\subsection{Givental formalism}\label{subsec:Givental formalism}
In this subsection, we review the Givental formalism of genus-zero Gromov-Witten theory. The \emph{Givental space} $\cH_X$ of $X$ is the symplectic vector space $H^*(X)\dbp{z^{-1}}\formal{Q_X}$ equipped with the symplectic form 
$$
    \Omega(f,g) = -\mathrm{Res}_{z=\infty} \bigg(\int_X f(-z)\cup g(z)\bigg) dz.
$$
It admits a maximal isotropic subspace decomposition $\cH_X=\cH_X^+\oplus \cH_X^-$ based on the degree of $z$, where $\cH_X^+=H^*(X)[z]\formal{Q_X}$ and $\cH_X^-=z^{-1}H^*(X)\formal{z^{-1}}\formal{Q_X}$. The symplectic form $\Omega$ identifies $\cH_X^{-}$ with the dual of $\cH_X^{+}$ and identifies $\cH_X$ with $T^*\cH_{X}^+$. Similarly, the equivariant Givental space $\cH_W$ of $W$ is the symplectic vector space $H^*_{\bbC^*}(W)\dbp{z^{-1}}\formal{Q_W}$) equipped with the symplectic form 
$$
    \Omega(f,g) = -\mathrm{Res}_{z=\infty} \bigg(\int_W f(-z)\cup g(z) \bigg) dz.
$$
It admits a maximal isotropic subspace decomposition $\cH_W = \cH_W^{+}\oplus \cH_W^{-}$ where $\cH_W^{+} = H^*_{\bbC^*}(W)[z]\formal{Q_W}$ and $\cH_W^{-}=z^{-1}H^*_{\bbC^*}(W)\formal{z^{-1}}\formal{Q_W}$. The symplectic form $\Omega$ identifies $\cH_{W}^{-}$ with the dual of $\cH_W^{+}$ and identifies $\cH_W$ with $T^*\cH_{W}^+$. 

The genus-zero descendant Gromov-Witten potential $\cF_X$ and its equivariant counterpart $\cF_W$ are respectively defined by
$$
    \cF_X(z + \bft(z)) = 
    \sum_{\substack{n \geq 0,\, \beta \in \NE_{\bbN}(X) \\ (n,\beta) \ne (0,0), (1,0), (2,0)}}
    \big\langle \bft(-\psi), \ldots, \bft(-\psi) \big\rangle^{X}_{0,n,\beta} \frac{Q_X^\beta}{n!},
$$
$$
    \cF_W(z + \bft(z)) = 
    \sum_{\substack{n \geq 0,\, \beta \in \NE_{\bbN}(W) \\ (n,\beta) \ne (0,0), (1,0), (2,0)}}
    \big\langle \bft(-\psi), \ldots, \bft(-\psi) \big\rangle^{W,\bbC^*}_{0,n,\beta} \frac{Q_W^\beta}{n!}.
$$
In the former case, $\bft(z)=\sum\limits_{i=0}^{\infty}t_nz^n\in \cH_X^{+}$ is a formal coordinate of $\cH_{X}^{+}$ centered at $z=0$, with $t_n\in H^*(X)\formal{Q_X}$. In the latter case, $\bft(z)=\sum\limits_{i=0}^{\infty}t_nz^n\in \cH_W^{+,\bbC^*}$ is a formal coordinate of $\cH_W^{+,\bbC^*}$ centered at $z=0$, with $t_n \in H^*_{\bbC^*}(W)\formal{Q_W}$.

% We now define the Givental cone (resp.\ equivariant Givental cone) of $X$ (resp.\ $W$).

The \emph{Givental cone} $\cL_X\subset \cH_X$ of $X$ is the graph of $d\cF_X$ under the identification $\cH_X \cong T^*\cH_{X}^+$. The equivariant Givental cone $\cL_W \subset \cH_W$ of $W$ is the graph of $d\cF_{W}$ under the identification $\cH_W \cong T^*\cH_{W}^{+}$. Explicitly, the Givental cones consist of points of form 
\begin{equation}
\label{expression of big J-function}
\begin{split}
    \cJ(\bft(z))&= z + \bft(z) + \sum_{i=0}^{n_X-1} \sum_{\substack{n \geq 0,\, \beta \in \NE_{\bbN}(X) \\ (n,\beta) \ne (0,0), (1,0)}}
    \phi^i \Big{\langle} \bft(-\psi), \ldots, \bft(-\psi), \frac{\phi_i}{z - \psi} \Big{\rangle}^{X}_{0,n+1,\beta} \frac{Q_X^\beta}{n!},\\
    \cJ(\bft(z))&=z + \bft(z) + \sum_{i=0}^{n_W-1} \sum_{\substack{n \geq 0,\, \beta \in \NE_{\bbN}(W) \\ (n,\beta) \ne (0,0), (1,0)}}
    \varphi^i \Big{\langle} \bft(-\psi), \ldots, \bft(-\psi), \frac{\varphi_i}{z - \psi} \Big{\rangle}^{W,\bbC^*}_{0,n+1,\beta} \frac{Q_W^\beta}{n!}
\end{split}
\end{equation}
respectively. The $J$-functions satisfy that $zJ_X(\tau)\in \cL_X$ for any $\tau\in H^*(X)\formal{Q_X}$ and $zJ_W^{\bbC^*}(\theta)\in \cH_{W}^{\rat} \cap \cL_{W}$ for any $\theta\in H^*_{\bbC^*}(W)\formal{Q_W}$. Here, $\cH_W^{\rat}$ is the \emph{rational} Givental space $H_{\bbC^*}^*(W)\otimes_{\bbC[\lambda]}\bbC(\lambda,z)_{\mathrm{hom}}\formal{Q_W}$ and the statement that $zJ_W(\theta)\in \cH_{W}^{\rat}$ follows from virtual localization.

\begin{proposition}[see \cite{givental2004symplectic}]\label{proposition of Givental cone}
The Givental cones satisfy the following properties:
\begin{enumerate}[wide]
    \item $\cL_{W}\subset\cH_W^{\bbC^*}\cap \cH_W^{\rat}$.
    \item \label{prop: Givental cone 2}Given any $\tau\in H^*(X)\formal{Q_X}$ (resp.\ $\theta\in H_{\bbC^*}^*(W)\formal{Q_W}$), we denote the tangent space of $zJ_X(\tau)$ (resp.\ $zJ_W(\theta)$) in $\cL_{X}$ (resp.\ $\cL_{W}$) by $T_{\tau}$ (resp.\ $T_{\theta}$). Then $T_{\tau}$ (resp.\ $T_\theta$) is a $\bbC[z]\formal{Q_X}$-module (resp.\ $\bbC[z]\formal{Q_W}$-module) freely generated by $M_{X}(\tau)\phi_i=z\partial_{\tau^i}J_X(\tau)$ (resp.\ $M_{W}(\theta)\varphi_i\lambda^k=z\partial_{\theta^{i,k}}J_W(\theta)$). We have $T_{\tau}=M_{X}(\tau)\cH_{X}^{+}$ and $T_{\theta}=M_{W}(\tau)\cH_{W}^{+}$.
    
    \item \label{prop:Givental cone 3} $\cL_X=\cup_{\tau\in H^*(X)\formal{Q_X}}zT_{\tau}$, $\cL_{W}=\cup_{\theta\in H^*_{\bbC^*}(W)\formal{Q_W}}zT_{\theta}$.
    \item Given any $\tau\in H^*(X)\formal{Q_X}$ (resp.\ $\theta\in H^*_{\bbC^*}(W)\formal{Q_W}$), the tangent space $T_{\tau}$ (resp.\ $T_\theta$) is preserved by the differential operators $z\partial_{\tau^i}$, $z\xi Q\partial_Q+\xi$, and $z^2\partial_z-c_1(X)+z\mu_X$ (resp.\ $z\partial_{\theta^i}$, $z\xi Q\partial_Q+\xi$, and $z^2\partial_z-c_1^{\bbC^*}(W)+z\mu_W^{\bbC^*}$), where $\xi$ is any element in $H^2(X)$ (resp.\ in $H^2_{\bbC^*}(W)$).
    
    \item Any point in the (equivariant) Givental cone is uniquely determined by its part with nonnegative $z$-powers. Specifically, for any point $\cJ$ in $\cL_{X}$ (resp.\ $\cL_{W}$) considered as a Laurent series in $z^{-1}$, if $z+\bft(z)$ denotes the part with nonnegative $z$-powers of $\cJ$, we have $\cJ=\cJ(\bft(z))$ as in the expression \eqref{expression of big J-function}. 
\end{enumerate}
\end{proposition}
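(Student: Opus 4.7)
The plan is to establish each of the five properties in turn by combining the definition of $\cL$ as the graph of $d\cF$ with the classical universal equations (string, divisor, dilaton, and topological recursion relations, abbreviated TRR) satisfied by descendant Gromov-Witten invariants, following Givental's framework. For property (1), the rationality of the equivariant cone follows from virtual localization: every $\bbC^*$-equivariant integral appearing in $\cF_W$ reduces to integrals over $\bbC^*$-fixed loci in the moduli stack, and inverting the equivariant Euler classes of their normal bundles produces rational functions in $(\lambda, z)$, so that every point of $\cL_W$ lies automatically in $\cH_W^{\rat}$.

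For properties \eqref{prop: Givental cone 2} and \eqref{prop:Givental cone 3}, I would first compute $z\partial_{\tau^i} J_X(\tau)$ directly from the defining series of $J_X$ and apply the string equation to identify $z\partial_{\tau^i} J_X(\tau) = M_X(\tau)\phi_i$, which yields $T_\tau = M_X(\tau)\cH_X^+$ since $M_X(\tau)$ is an invertible operator (the constant-$Q$, constant-$\tau$ term is the identity). The identity $\cL_X = \bigcup_\tau z T_\tau$ is then Givental's characterization of the Lagrangian cone: applying TRR on $\overline{M}_{0,n}$, which expresses $\psi_1$ as a sum of boundary divisor classes, one produces a generating series identity showing that every point of $\cL_X$ takes the form $z \cdot M_X(\tau) f(z)$ for some $\tau \in H^*(X)\formal{Q_X}$ and some $f(z) \in \cH_X^+$. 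In the equivariant setting, the same argument works verbatim, provided one additionally verifies that the manipulations preserve rationality in $(\lambda, z)$, which reduces back to the localization argument used in (1).

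For property (4), I would verify preservation under each operator separately. Tangency of $z\partial_{\tau^i}$ is immediate from (2) and (3). For $z\xi Q\partial_Q + \xi$ with $\xi \in H^2(X)$, the divisor equation converts the Novikov derivative into insertion of $\xi$ at an additional marked point, and a careful bookkeeping of the $z^{-1}$-terms produces exactly this operator acting on $M_X(\tau)\cH_X^+$. For $z^2\partial_z - c_1(X) + z\mu_X$, the argument combines the dilaton equation with the virtual dimension formula: the homogeneity of descendant invariants under the grading determined by $\mu_X$, together with the shift by $c_1(X)$ coming from the $Q$-grading of the Novikov variables, yields precisely this operator. The equivariant analogues follow from the equivariant string, divisor, and dilaton equations together with equivariant homogeneity. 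Property (5) is then immediate from $\cL$ being the graph of $d\cF$ inside $\cH \cong T^*\cH^+$: the projection onto the nonnegative-$z$ part is a bijection from $\cL$ onto its image, with inverse given by adding $d\cF$; this uses only that the symplectic form $\Omega$ makes $\cH^+$ and $\cH^-$ mutually dual, which is clear from the residue formula.

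The main obstacle I anticipate is the TRR-based step underlying (3). It requires careful tracking of how differentiating the generating series of descendant invariants interacts with the $\frac{1}{z - \psi}$-kernels in $J_X$ and $J_W$, and in the equivariant setting one must simultaneously monitor rationality in $(\lambda, z)$ and convergence of the resulting formal series in the Novikov variables. Once (3) is in hand, the remaining properties are essentially formal consequences of the universal equations and the symplectic geometry of $(\cH, \Omega)$.
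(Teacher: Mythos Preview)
The paper does not give its own proof of this proposition: it is stated with the attribution ``see \cite{givental2004symplectic}'' and no argument is supplied. Your outline is a reasonable sketch of the standard Givental-style derivation (string/divisor/dilaton equations, TRR, virtual localization for rationality), which is exactly the content one would find by following the citation; there is nothing to compare against in the paper itself.
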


\subsection{Shift operators on equivariant Givental cones}
The shift operators defined in Section~\ref{sect:ShiftOperator} also have an analogue on the equivariant Givental cone $\cL_{W}$. In fact, they are defined on the rational Givental space $\cH_{W}^{\rat}$ in the following explicit way, following \cite[Definition 2.8]{iritani2023quantum}.

\begin{definition}% [Shift operators on the rational Givental space $\cH_{W}^{\rat}$ ]
\label{formula for shift operator on Givental cone}
Given any $\beta\in N_1^{\bbC^*}(W)$, the shift operator $\hcS^{\beta} \colon \cH_W^{\rat}\to Q_W^{\beta+\sigma_{\max}(-\Bar{\beta})}\cH_W^{\rat}$ is defined via localization by
$$
    \hcS^{\beta}(\bff)|_{F} = Q_W^{\beta+\sigma_F(-\Bar{\beta})} \prod\limits_{c \in \bbZ} \frac{\prod_{m=-\infty}^{0}e_{\lambda+mz}(N_{F,c})}{\prod_{m=-\infty}^{-(\Bar{\beta}, c\lambda)}e_{\lambda+mz}(N_{F,c})}e^{-z\Bar{\beta}\partial_{\lambda}} \bff|_F,
$$
where $\bff \in \cH_{W}^{\rat}$, $F$ ranges through connected components of $W^{\bbC^*}$, $N_{F,c}$ is the eigenbundle in \eqref{eqn:NormalWtDecomp}, and $e_{\lambda}$ is the equivariant Euler class.
\end{definition}

\begin{proposition}[\cite{iritani2023quantum}]\label{proposition of shift operators on Givental cone}
The shift operators $\hcS^{\beta}$ are $\bbC[z]\formal{Q_W}$-module homomorphisms with the following properties:
    \begin{enumerate}[wide]
        \item $\hcS^{\beta_1+\beta_2}=\hcS^{\beta_1}\circ \hcS^{\beta_2}$ for any $\beta_1,\beta_2\in N_1^{\bbC^*}(W)$. In particular, $\hcS^{0}=\Id$.
        
        \item \label{shift operator commute with funda} $\hcS^{\beta}\circ M_W(\theta)=M_W(\theta)\circ \hS^{\beta}$, where $\hS^{\beta}$ is the shift operator on $\QDM_{\bbC^*}(W)$ defined in Proposition~\ref{Shift operator proposition}.
        
        \item $[\partial_{\theta^{i,n}},\hcS^{\beta}]=[z\partial_z-z^{-1}c_1^{\bbC^*}(W) +\mu_W^{\bbC^*}, \hcS^{\beta}]=0, [\xi Q\partial_Q+z^{-1}\xi, \hcS^{\beta}]=(\beta, \xi)\hcS^{\beta}$ for any $\xi \in H^2_{\bbC^*}(W)$.
        
        \item $\hcS^{\beta}(T_{\theta})\subset Q_W^{\beta+\sigma_{\max}(-\Bar{\beta})}T_{\theta}$ for any $\theta \in H^*_{\bbC^*}(W)\formal{Q_W}$. In particular, for $\beta+\sigma_{\max}(-\Bar{\beta})\in \NE_{\bbN}(W)$, $\hcS^{\beta}(\cL_{W})\subset \cL_{W}$, i.e.\ $\hcS^{\beta}$ preserves the Givental cone.
    \end{enumerate}
\end{proposition}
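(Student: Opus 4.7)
My plan is to derive parts (1), (3), and (4) as formal consequences of the localization formula in Definition~\ref{formula for shift operator on Givental cone} and the tangent-space description in Proposition~\ref{proposition of Givental cone}, while isolating part (2) as the genuine technical input linking the cone-level operators with the quantum $D$-module operators of Proposition~\ref{Shift operator proposition}.

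For (1), I would compute $\hcS^{\beta_1}\circ\hcS^{\beta_2}$ by restricting to each fixed component $F$: the Novikov monomials combine via the $\bbZ$-linearity $\sigma_F(-\Bar{\beta}_1)+\sigma_F(-\Bar{\beta}_2)=\sigma_F(-\Bar{\beta}_1-\Bar{\beta}_2)$, the equivariant Euler-class ratios recombine after the substitution $\lambda\mapsto \lambda-z\Bar{\beta}_1$ introduced by the outer operator, and the $\lambda$-shifts $e^{-z\Bar{\beta}\partial_\lambda}$ compose additively. The $\bbC[z]\formal{Q_W}$-linearity and $\hcS^0=\Id$ are immediate from the formula. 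For (3), the commutation with $\partial_{\theta^{i,n}}$ is automatic since the formula carries no $\theta$-dependence; the commutation with $z\partial_z-z^{-1}c_1^{\bbC^*}(W)+\mu_W^{\bbC^*}$ is the assertion that the formula is homogeneous of degree $2(\beta,c_1^{\bbC^*}(W))$, checked against the assigned degrees of $\lambda$, $z$, the Euler-class ratio, and the Novikov monomial; and the commutation with $\xi Q\partial_Q+z^{-1}\xi$ requires balancing the $F$-dependent exponent $(\beta+\sigma_F(-\Bar{\beta}),\xi)$ against the $F$-dependent piece of $\xi|_F$ produced by the shift $\lambda\mapsto \lambda-\Bar{\beta}z$, with the two contributions combining to the component-independent scalar $(\beta,\xi)$ via the splitting of \eqref{N_1-exact sequence}.

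The main obstacle is (2), the intertwining $\hcS^{\beta}\circ M_W(\theta)=M_W(\theta)\circ \hS^{\beta}$. Since $\partial_{\theta^{i,n}}$ commutes with both sides by (3) and Proposition~\ref{Shift operator proposition}, and since $T_\theta=M_W(\theta)\cH_W^+$ is freely generated as a $\bbC[z]\formal{Q_W}$-module by $z\partial_{\theta^{i,n}}J_W(\theta)$ by Proposition~\ref{proposition of Givental cone}\eqref{prop: Givental cone 2}, the identity reduces to checking it on $M_W(\theta)\mathbf{1}=J_W(\theta)$. The route I would follow is to use the Seidel/graph-space construction of $\hS^{\beta}$ underlying Proposition~\ref{Shift operator proposition}, following \cite[Section~2]{iritani2023quantum}, and to recognize the ratio of equivariant Euler classes in Definition~\ref{formula for shift operator on Givental cone} as precisely the virtual-normal contribution at the $\bbC^*$-fixed loci of the Seidel space associated to the cocharacter $\Bar{\beta}$ under virtual localization; the equivariant string and divisor equations then pin down the remaining generating-series identity. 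This is the step that genuinely uses the geometry rather than the formal package.

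Given (2), part (4) follows quickly. We have $\hcS^{\beta}(T_\theta)=\hcS^{\beta}M_W(\theta)\cH_W^+=M_W(\theta)\hS^{\beta}\cH_W^+$, and Proposition~\ref{Shift operator proposition} together with the identification of $\beta+\sigma_{\max}(-\Bar{\beta})$ with its lift to $N_1(W)$ gives $\hS^{\beta}\cH_W^+\subset Q_W^{\beta+\sigma_{\max}(-\Bar{\beta})}\cH_W^+$, yielding the stated inclusion on $T_\theta$. The assertion $\hcS^{\beta}(\cL_W)\subset \cL_W$ under the hypothesis $\beta+\sigma_{\max}(-\Bar{\beta})\in \NE_{\bbN}(W)$ is then immediate from Proposition~\ref{proposition of Givental cone}\eqref{prop:Givental cone 3}, since the Novikov prefactor becomes a genuine element of $\bbC\formal{Q_W}$ and each $zT_\theta$ lies in $\cL_W$.
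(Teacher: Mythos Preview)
The paper does not supply a proof of this proposition at all: it is stated with the citation \cite{iritani2023quantum} and no argument is given. So there is nothing in the paper to compare your proposal against; the authors simply defer to Iritani's construction in \cite[Section~2]{iritani2023quantum}.

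That said, your sketch is a reasonable outline of how the cited reference proceeds. You have correctly isolated (2) as the non-formal input: in \cite{iritani2023quantum} the intertwining relation is essentially the \emph{definition} of the cone-level operator $\hcS^{\beta}$ (it is constructed via the Seidel space and virtual localization so that the intertwining holds), with the explicit formula of Definition~\ref{formula for shift operator on Givental cone} then read off from the fixed-point contributions. Your derivations of (1), (3), (4) from the formula and from (2) are along the right lines, and your argument for (4) via $T_\theta = M_W(\theta)\cH_W^+$ is exactly the standard one. One minor point: in (3), the homogeneity of $\hcS^\beta$ is degree~$0$ as an endomorphism of the graded module (the degree $2(\beta,c_1^{\bbC^*}(W))$ is absorbed by the Novikov prefactor), so phrase the check accordingly.
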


% Since then, without ambiguity, we always omit the superscript $\bbC^*$ in $\cL^{\bbC^*}_W$ and $\cH_{W}^{\bbC^*}$ and denote them by $\cL_W$ and $\cH_W$ for short.

\begin{comment}
\begin{remark}
    The explicit formulation of shift operators on $\cH_W^{\rat}$ provides a computational advantage over their counterparts on $\QDM_{\bbC^*}(W)$. In addition, the commutative relation in Proposition~\ref{proposition of shift operators on Givental cone}\eqref{shift operator commute with funda} allows us to change the calculations on $\QDM_{\bbC^*}(W)$ to the calculations on the level of Givental cones. We will extensively use this trick in Section~\ref{sect:Decomposition}.
\end{remark}
\end{comment}

\subsection{Twisted Gromov-Witten invariants and quantum Riemann-Roch theorem}
In this section, we review the twisted Gromov-Witten invariants and the quantum Riemann-Roch theorem of \cite{coates2007quantum}.
% Specifically, we will explore the relationship between the twisted Gromov-Witten theory with respect to the equivariant Euler class and the untwisted equivariant Gromov-Witten theory in Proposition~\ref{prop:WRestriction}.
Let $\bfs(-) = \mathrm{exp}\big(\sum\limits_{i=0}^{\infty}s_i\mathrm{ch}_i(-)\big)$ be the universal invertible multiplicative characteristic class, where $s_i$ is a formal variable with degree $-2i$. For instance, the inverse equivariant Euler class $e_{\lambda}^{-1}$ corresponds to the parameters 
$$
    s_i = \begin{cases}
        -\mathrm{log}\:\lambda & \text{for } i = 0,\\
        (-1)^i(i-1)!\lambda^{-i} & \text{for } i>0. 
    \end{cases}
$$

Let $V$ be a vector bundle on $X$. Let
$$
    \langle\alpha_1\psi^{k_1},\cdots,\alpha_n\psi^{k_n}\rangle^{X,(V,\bfs)}_{0,n,\beta}
$$
denote the genus-zero, degree-$\beta$, $n$-pointed, $(V,\bfs)$-twisted (descendant) Gromov-Witten invariant of $X$. The $(V,\bfs)$-twisted Givental space of $X$ is the symplectic vector space 
$$
    \cH^{\tw}_X=H^*(X)[z,z^{-1}]\formal{Q_X,\bfs}=H^*(X)[z,z^{-1}]\formal{Q_X}\formal{s_0,s_1,\ldots}
$$ 
equipped with the symplectic form 
$$
    \Omega^{\tw}(f,g) = -\mathrm{Res}_{z=\infty} \bigg( \int_{X, (V,\bfs)} f(-z) \cup g(z) \bigg) dz,
$$
where $\int_{X, (V,\bfs)}$ is the $(V,\bfs)$-twisted Poincar\'e pairing such that $\int_{X, (V,\bfs)} \alpha \cup \beta = \int_X \alpha \cup \beta \cup \bfs(V)$. There is a maximal isotropic decomposition $\cH_X^{\tw} = \cH_{X}^{+,\tw}\oplus\cH_{X}^{-,\tw}$ where $\cH_{X}^{+,\tw} = H^*(X)[z]\formal{Q_X,\bfs}$ and $\cH_{X}^{-,\tw} = z^{-1}H^*(X)[z^{-1}]\formal{Q_X,\bfs}$. The $(V,\bfs)$-twisted Givental cone $\cL^{\tw}_X\subset \cH_X^{\tw}$ is defined as the graph of the genus-zero twisted Gromov-Witten potential of $X$ under the identification $\cH^{\tw}_X \cong T^*\cH_{X}^{+,\tw}$. Explicitly, $\cL_X^{\tw}$ consists of points of form
$$
    \cJ^{\tw}_X(\bft(z))=
    z + \bft(z) + \sum_{i=0}^{n_X-1} \sum_{\substack{n \geq 0,\, \beta \in \NE_{\bbN}(X) \\ (n,\beta) \ne (0,0),(1,0)}} \frac{\phi^i_{\tw}}{\bfs(V)} \Big{\langle} \bft(-\psi), \ldots, \bft(-\psi), \frac{\phi_i}{z - \psi} \Big{\rangle}^{X,(V,\bfs)}_{0, n+1, \beta} \frac{Q_X^\beta}{n!}
$$
where $\{\phi^i_{\tw}\}$ is the $\bbC$-dual basis of $\{\phi_i\}$ with respect to the pairing $\int_{X, (V,\bfs)}$. We will also use the notation $\cL_{X, (V,\bfs)}$ for $\cL^{\tw}_X$.

% and we treat the twisted Gromov-Witten invariants as a formal series of $\bfs$. In particular, we can also evaluate $\bfs$ with explicit parameters. For instance, the inverse equivariant Euler class $e_{\lambda}^{-1}$ corresponds to parameters $s_0=-\mathrm{log}\:\lambda$ and $s_k=(-1)^k(k-1)!\lambda^{-k}$ for $k>0$. The refined inverse equivariant Euler class $\Tilde{e}_{\lambda}^{-1}$, which corresponds to the parameters $s_0=0$ and $s_k=(-1)^k(k-1)!\lambda^{-k}$ for $k>0$, is also introduced to avoid discussions of the $\mathrm{log}\:\lambda$-adic convergence problem.

The quantum Riemann-Roch theorem relates the $(V,\bfs)$-twisted theory to the untwisted theory through the operator 
$$
    \Delta_{(V,\bfs)}=\mathrm{exp}\Bigg(\sum_{\substack{l\ge 0,m\ge 0,l+m\ge 1}}s_{l+m-1}\frac{B_m(-z)^{m-1}}{m!}\mathrm{ch}_l(V)\Bigg)\colon\cH_X \longrightarrow \cH^{\tw}_X
$$ 
where $B_m$ are the Bernoulli numbers given by $\frac{x}{e^x-1}=\sum\limits_{m=0}^\infty \frac{B_m}{m!}x^m$.

\begin{theorem}[{\cite[Corollary 4]{coates2007quantum}}] \label{thm:QRR}
We have $\Delta_{(V,\bfs)}(\cL_X)=\cL_X^{\tw}$.
\end{theorem}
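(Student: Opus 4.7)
The plan is to establish the equality by showing that both $\Delta_{(V,\bfs)}(\cL_X)$ and $\cL_X^{\tw}$, viewed as families of Lagrangian cones in $\cH_X^{\tw}$ parametrized by $\bfs = (s_0, s_1, \dots)$, satisfy the same first-order evolution equation in each $s_k$ and agree at the initial value $\bfs = 0$. At $\bfs = 0$ the multiplicative class $\bfs(V)$ equals $1$, so $\cL_X^{\tw}|_{\bfs=0} = \cL_X$; and the exponent defining $\Delta_{(V,\bfs)}$ vanishes, so $\Delta_{(V,\bfs)}|_{\bfs = 0} = \mathrm{Id}$. Reducing to an infinitesimal comparison therefore suffices.

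Next, I would compute $\partial_{s_k}$ on each side. On the left, differentiating the exponential produces an explicit operator on $\cH_X$ of the form $\sum_{m \ge 0} \frac{B_m(-z)^{m-1}}{m!} \mathrm{ch}_{k-m+1}(V)$ acting by cup product, which acts as an infinitesimal symplectic transformation preserving $\cL_X$ up to the deformation of the pairing. On the right, differentiating the twisted potential inserts a factor $\mathrm{ch}_k(V_{0,n,\beta})$ into each twisted correlator, where $V_{0,n,\beta} = R\pi_* f^* V \in K(X_{0,n,\beta})$ is built from the universal curve $\pi \colon C_{0,n,\beta} \to X_{0,n,\beta}$ and universal stable map $f \colon C_{0,n,\beta} \to X$.

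The crux is a virtual Grothendieck--Riemann--Roch computation, in the spirit of Mumford's formula, for the class $\mathrm{ch}(R\pi_* f^* V)$. Applying GRR to $\pi$ yields $\mathrm{ch}(V_{0,n,\beta}) = \pi_* \bigl( \mathrm{ch}(f^* V) \cup \mathrm{Td}^{\vee}(\omega_\pi) \bigr)$ plus correction terms supported on the nodal and marked-point loci of the universal curve. Expanding the inverse Todd class of the relative dualizing sheaf produces Bernoulli numbers via the identity $\frac{x}{1-e^{-x}} = \sum_{m \ge 0} \frac{B_m}{m!} x^m$, while the boundary corrections at nodes and marked points translate, under the string/dilaton-type identities for the $\psi$-classes, into precisely the contributions appearing when one quantizes the operator $\partial_{s_k} \log \Delta_{(V, \bfs)}$ acting on elements of $\cH_X$. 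This matches the infinitesimal generator of $\Delta$-translation with the infinitesimal deformation of $\cL_X^{\tw}$.

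The main obstacle, as always in such proofs, is the bookkeeping of the boundary contributions from nodal strata in the universal curve: one must show that the cotangent line classes at the branches of each node recombine into the Bernoulli polynomials $B_m((-z)^{m-1})$ that appear in $\Delta_{(V,\bfs)}$, and that gluing nodes produces the symplectic (as opposed to merely linear) nature of the transformation that guarantees preservation of the Lagrangian cone. Once this identification is carried out and combined with the vanishing $\mathrm{ch}_k(V)$ terms at marked points handled via the string/dilaton equations for twisted invariants, the two evolution equations match termwise, and the initial condition at $\bfs = 0$ closes the argument.
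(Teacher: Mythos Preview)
The paper does not supply its own proof of this statement: Theorem~\ref{thm:QRR} is simply quoted as \cite[Corollary~4]{coates2007quantum} and used as a black box, so there is nothing in the paper to compare against.

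That said, your outline is a fair sketch of the Coates--Givental argument. The strategy of matching the $\bfs$-evolution of both sides, with the initial condition at $\bfs = 0$ and the Grothendieck--Riemann--Roch computation of $\mathrm{ch}(R\pi_* f^* V)$ producing the Bernoulli-weighted operator $\partial_{s_k}\log\Delta_{(V,\bfs)}$, is exactly what happens in \cite{coates2007quantum}. The vague part of your write-up is the ``bookkeeping of boundary contributions'': in the actual proof one must separate the contributions from the marked sections, the smooth locus of fibers, and the nodal locus, and then recognize that the nodal contribution is what makes the infinitesimal generator a \emph{quadratic Hamiltonian} on $\cH_X$ (hence an infinitesimal symplectomorphism preserving the overruled Lagrangian cone structure), not merely a linear operator. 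You allude to this but do not say how it works; in Coates--Givental this is handled by the quantization formalism, where the nodal term becomes the $:pq:$ part of the quantized quadratic Hamiltonian and the marked-point/smooth-fiber terms become the $q^2$ and $p^2$ parts respectively. Without that identification the argument is incomplete, but as a roadmap your proposal is on target.
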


\begin{comment}
, where $\cL_X\subset \cH^{\tw}_X=\cH_X\formal{\bfs}$ is the untwisted Givental cone consisting of all the points
\begin{equation}\label{expression of formal J-function} \cJ(\bft(z))= z + \bft(z) + \sum_{i=0}^{n_X-1} \sum_{\substack{n \geq 0,\, d \in \NE_{\bbN}(X) \\ (n,d) \ne (0,0), (1,0)}}
\phi^i \Big{\langle} \bft(-\psi), \ldots, \bft(-\psi), \frac{\phi_i}{z - \psi} \Big{\rangle}^{X}_{0,n+1,d} \frac{Q_X^d}{n!},\end{equation} 
and $\bft(z)$ instead is a point in $\cH_X^+\formal{\bfs}$ and $\bft(z)|_{Q_X=\bfs=0}=0$ (guaranteeing the convergence of \eqref{expression of formal J-function} when the formal parameter $\bfs$ is involved).
\end{comment}

% !TEX root = vgit.tex

\section{Geometry of VGIT wall-crossings and $\bbC^*$-actions}\label{sect:GIT}
In this section, we discuss the framework of variations of GIT quotients. We further specialize to the case of $\bbC^*$-actions and describe the geometry.
% \todo{add review and follow in subsections}
% we study the geometry and cohomology relations of fixed locus and GIT quotients for a smooth projective variety $W$ with a $\bbC^*$-action. We also introduce the VGIT (Variation of Geometric Invariant Theory) framework for general $G$-actions.

\subsection{GIT setup}\label{sect:Setup}
Let $W$ be a smooth, quasi-projective variety that is projective over an affine variety, and $G$ be a reductive algebraic group with a linear action on $W$. Let $\NS^G(W)$ be the $G$-equivariant N\'eron-Severi group of $G$-linearized line bundles on $W$ modulo algebraic equivalence. Given an ample $G$-linearization $L$ in $\NS^G(W)$, or more generally $\NS^G(W) \otimes \bbQ$ or $\NS^G(W) \otimes \bbR$, let $W^s(L)$ and $W^{ss}(L)$ denote the stable and semistable loci respectively, and let $W \gitquot_L G$ denote the induced GIT quotient.

Let
$$
    E^G(W) \coloneqq \{ L \in \NS^G(W) \otimes \bbR \ \big|\ W^{ss}(L) \neq \emptyset\}
$$
which is a convex cone. It is shown in \cite{dolgachev1998variation, thaddeus1996geometric} that $E^G(W)$ admits a finite wall-and-chamber structure, where the union of all walls consists of all linearizations $L$ such that $W^{ss}(L) \neq W^{s}(L)$, and each chamber is an equivalence class of linearizations $L$ for which $W^{ss}(L) = W^{s}(L)$ is the same. In particular, within each chamber, the GIT quotient is invariant.

Let $L \in E^G(W)$ such that $W^{ss}(L) = W^{s}(L)$ and let $X = W\gitquot_L G$. The associated \emph{Kirwan map} is defined by the pullback
$$
     \kappa_{X} \colon H_G^*(W) \xrightarrow{i^*_{W^s(L)}} H_G^*(W^{s}(L)) \cong H^*(X)
$$
along the inclusion $i_{W^s(L)} \colon W^s(L) \to W$ and is surjective \cite{kirwan1984cohomology}. 
% where the isomorphism is due to the freeness of the $\bbC^*$-action on $W^{s}(L)$.
% Hence, the duality of the Kirwan map is always injective. 
We make the following simple observation.

\begin{lemma}\label{lem:KirwanC1}
Suppose $G$ acts freely on $W^{s}(L)$ and $X$ is smooth. We have $\kappa_{X}(c_1^G(W)) = c_1(X)$.
\end{lemma}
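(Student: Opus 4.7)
The plan is to reduce the computation to the stable locus $W^{s}(L)$, where the free action of $G$ gives direct geometric access to $X$. By definition, $\kappa_X$ factors as the restriction $i^*_{W^s(L)} \colon H^*_G(W) \to H^*_G(W^s(L))$ followed by the isomorphism $H^*_G(W^s(L)) \cong H^*(X)$ induced by the principal $G$-bundle $\pi \colon W^s(L) \to X$. Since restriction commutes with equivariant Chern classes, the class $c_1^G(W) = c_1^G(TW)$ maps to $c_1^G(TW^s(L))$, and I only need to show that the latter corresponds to $c_1(X)$ under the identification with $H^*(X)$.

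The next step would be to extract $c_1(X)$ via the $G$-equivariant short exact sequence
$$
    0 \longrightarrow T_\pi \longrightarrow TW^s(L) \longrightarrow \pi^* TX \longrightarrow 0,
$$
where $T_\pi$ denotes the vertical tangent bundle of the quotient $\pi$. Freeness of the action trivializes $T_\pi$ as the equivariant bundle $W^s(L) \times \mathfrak{g}$ with $G$ acting via the adjoint representation on $\mathfrak{g}$. Taking equivariant first Chern classes then yields
$$
    c_1^G(TW^s(L)) = c_1^G(T_\pi) + \pi^* c_1(TX),
$$
and under the isomorphism $H^*_G(W^s(L)) \cong H^*(X)$ the second summand corresponds precisely to $c_1(X)$. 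Consequently the lemma reduces to the claim $c_1^G(T_\pi) = 0$.

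The main point, and the only nontrivial step, is to verify this vanishing. Under the isomorphism above, $c_1^G(T_\pi)$ corresponds to the first Chern class of the adjoint bundle $W^s(L) \times_G \mathfrak{g}$ on $X$, which agrees with the Chern class of the line bundle associated to the character $\det \circ \mathrm{Ad} \colon G \to \bbC^*$. For reductive $G$, the center $Z(G)$ acts trivially on $\mathfrak{g}$ by conjugation, so $\mathrm{Ad}$ factors through $G/Z(G)$. The quotient $G/Z(G)$ is semisimple, and therefore admits no nontrivial characters to $\bbC^*$. Hence $\det \circ \mathrm{Ad}$ is trivial, the associated line bundle is trivial, and $c_1^G(T_\pi) = 0$, completing the identification $\kappa_X(c_1^G(W)) = c_1(X)$.
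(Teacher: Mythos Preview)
Your proof is correct and follows the same line as the paper: restrict $c_1^G(TW)$ to the open stable locus and then pass to $X$ via the free-action isomorphism $H^*_G(W^s(L)) \cong H^*(X)$. The paper simply asserts the final identification $c_1^G(TW^{s}(L)) = c_1(X)$ as a consequence of freeness, whereas you supply the details via the vertical tangent sequence and the vanishing of $c_1^G(T_\pi)$ through the adjoint bundle argument---a legitimate expansion of a step the paper leaves implicit.
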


\begin{proof}
We have 
$$
    \kappa_{X}(c_1^G(W)) = i_{W^{s}(L)}^*(c_1^G(TW)) =c_1^G(i^*_{W^{s}(L)}TW) =c_1^G(TW^{s}(L)) =c_1(X)
$$
where the third equality uses that $TW^{s}(L)=i^*_{W^{s}(L)}TW$, since $W^{s}(L)$ is open dense in $W$, and the last equality follows from the freeness of $G$-action on $W^{s}(L)$.
\end{proof}

\subsection{VGIT wall-crossings and standard flips}\label{sect:VGIT}
We now consider the birational transformation between two GIT quotients that are related by a single wall-crossing in the space of ample linearizations. 
% Moreover, we state the decomposition of Chow motives and cohomology for the two quotients. 
Let $L_+, L_- \in \NS^G(W)$ be two ample $G$-linearized line bundles that lie in adjacent chambers of $E^G(W)$. For any $t \in [-1, 1]\cap \bbQ$, write $L_t \coloneqq L_+^{(1+t)/2} \otimes L_-^{(1-t)/2}$. We refer to the birational map $W\gitquot_{L_-}G\dashrightarrow W\gitquot_{L_+}G$ as a \emph{($G$-)VGIT wall-crossing}. In this paper, we focus on the following type of wall-crossings, defined following \cite{dolgachev1998variation,thaddeus1996geometric,liu2025invariance}.

% Now we introduce the notion of simple ($G$-)VGIT wall-crossing following \cite{dolgachev1998variation,thaddeus1996geometric,liu2025invariance}.

\begin{definition}\label{def:SimpleWall}
The $G$-VGIT wall-crossing $W\gitquot_{L_-}G\dashrightarrow W\gitquot_{L_+}G$ is said to be \emph{simple} if it satisfies the following conditions:
\begin{enumerate}[wide]
    \item \label{cond:t0} There exists $t_0 \in (-1, 1)\cap\bbQ$ such that $W^{ss}(L_t) = W^{ss}(L_+)$ for any $t > t_0$ and $W^{ss}(L_t) = W^{ss}(L_-)$ for any $t < t_0$.
    
    \item The set $W^0 \coloneqq W^{ss}(L_{t_0}) \setminus (W^{ss}(L_+) \cup W^{ss}(L_-))$ is nonempty and connected.
    
    \item \label{cond:SimpleWall} (\cite[Hypothesis 4.4]{thaddeus1996geometric})   For any $x\in W^0$, we have $G_{x} \cong \bbC^*$.
    
    \item \label{cond:WtGcd} For any $x\in W^0$, if $v_{\pm}$ are the weights of the $G_x$-action on the fiber of $L_{\pm}$ at $x$, we have $\mathrm{gcd}(v_+,v_-)=1$.
    
    \item \label{cond:WtPm1} For any $x \in W^0$, the nonzero weights of the $G_x$-action on $N_{G\cdot x/W}$ can only be $\pm 1$.
    
    \item The GIT quotients $W \gitquot_{L_\pm} G$ are smooth and \emph{projective}.

\end{enumerate}
\end{definition}

% In this paper, we refer to the above situation as a \emph{($G$-)VGIT wall-crossing}. 
When $G$ is an algebraic torus, conditions \eqref{cond:SimpleWall} and \eqref{cond:WtGcd} in Definition~\ref{def:SimpleWall} always hold (see \cite{thaddeus1996geometric}). We follow the notation of \cite{thaddeus1996geometric} and set $W^{ss}(L_\pm) =\colon W^{ss}(\pm)$, $W^{ss}(L_{t_0}) =\colon W^{ss}(0)$, and $W^{\pm}=W^{ss}(0)\backslash W^{ss}(\mp)$. For the quotients, we denote $W \gitquot_{L_\pm} G =\colon W \gitquot G(\pm)$ and $W \gitquot_{L_{t_0}} G =\colon W \gitquot G (0)$. The geometry of the situation can be described as follows.

\begin{proposition}[\cite{dolgachev1998variation,thaddeus1996geometric}]\label{prop:VGITGeometry}
For a simple VGIT wall-crossing $W\gitquot G(-)\dashrightarrow W\gitquot G(+)$, we have:
\begin{enumerate}[wide]
    \item The natural morphisms $\pi_\pm\colon W\gitquot G(\pm)\to W\gitquot G(0)$ are proper and birational.
    
    \item $W^{\pm}\gitquot G(\pm)$ is isomorphic to $\bbP(N_{\pm})$ for some vector bundle $N^{\pm}$ over $W^0\gitquot G(0)$.
    
    \item $W^{\pm}\gitquot G(\pm)$ admits a regular embedding into $W\gitquot G(\pm)$. The normal bundle of $W^{\pm}\gitquot G(\pm)$ in $W\gitquot G$ is $\psi_{\pm}^*(N^{\mp})(-1)$, where $\psi_{\pm}$ is the projection $W^{\pm}\gitquot G(\pm)\to W^0\gitquot G(0)$.
    
    \item The blowups of $W \gitquot G(\pm)$ at $W^{\pm}\gitquot G(\pm)$ and the blowup of $W\gitquot G(0)$ at $W^0\gitquot G(0)$ are all isomorphic to $W \gitquot G(+)\times_{W \gitquot G(0)} W \gitquot G(-)$.
    
    \item The normal bundle of the exceptional locus $W^+\gitquot G(+)\times_{W^0\gitquot G(0)}W^-\gitquot G(-) \cong \bbP(N_+) \times_{W^0\gitquot G(0)} \bbP(N_-)$ in $W \gitquot G(+)\times_{W \gitquot G(0)} W \gitquot G(-)$ is identified with $\cO(-1,-1)$.
\end{enumerate}
\end{proposition}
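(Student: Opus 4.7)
The plan is to follow the strategy of \cite{thaddeus1996geometric,dolgachev1998variation}, reducing to a local model around the wall stratum $W^0$. First, I would invoke Luna's étale slice theorem at a point $x\in W^0$ with stabilizer $G_x \cong \bbC^*$ to obtain a $G$-equivariant étale neighborhood of $G\cdot x$ modeled on $G\times^{G_x}\! N_{G\cdot x/W, x}$. By condition \eqref{cond:WtPm1} of Definition~\ref{def:SimpleWall}, the $G_x$-action on $N_{G\cdot x/W, x}$ splits into weight eigenspaces of weights $-1,0,+1$, with the weight-zero piece corresponding to the tangent directions of $W^0$ inside $W^{ss}(0)$.

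Next, applying the Hilbert--Mumford criterion to $L_\pm$ in this slice (together with condition \eqref{cond:WtGcd} fixing the weights of $L_\pm$ on the fiber at $x$ to be $v_+>0>v_-$), one sees that the strata $W^\pm$ are, locally around $G\cdot x$, the total spaces over $W^0$ of the weight-$(\pm 1)$ eigenbundles of $N_{W^0/W}|_{W^0}$. Passing to $G$-quotients, $W^{\pm}\gitquot G(\pm)$ becomes the projectivization $\bbP(N_\pm)$ of a vector bundle $N_\pm$ over $W^0\gitquot G(0)$ formed by descending the weight-$(\pm 1)$ subbundle of $N_{W^0/W}$, giving (ii). The maps $\pi_\pm$ are isomorphisms away from $\bbP(N_\pm)$ and contract $\bbP(N_\pm)$ onto $W^0\gitquot G(0)$, so they are proper and birational, establishing (i). Part (iii) follows from the same local model: the normal directions to $W^\pm$ inside $W^{ss}(\pm)$ come from the opposite weight eigenspace, and the standard computation of the normal bundle of $\bbP(V)\hookrightarrow \bbP(V\oplus V')$ twisted by the tautological line bundle yields $N_{\bbP(N_\pm)/W\gitquot G(\pm)} \cong \psi_\pm^*(N_\mp)(-1)$.

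For (iv), since $\bbP(N_\pm)\subset W\gitquot G(\pm)$ is a smooth subvariety with normal bundle from (iii), the blowup $\Bl_{\bbP(N_\pm)}\!W\gitquot G(\pm)$ is smooth. The local model identifies both blowups with the total transform in $G\times^{G_x}\!(N_{+,x}\oplus N_{-,x})$, which is the same space. Concretely, I would check that $W\gitquot G(+)\times_{W\gitquot G(0)}W\gitquot G(-)$ is smooth (by computing its dimension and verifying transversality of $\pi_+\sqcup\pi_-$ in the slice) and then use the universal property of blowing up the ideal sheaf of $\bbP(N_\pm)$ to produce the identifying isomorphisms with each $\Bl_{\bbP(N_\pm)}\!W\gitquot G(\pm)$; the three identifications match because they all coincide with the common partial resolution of $W^{ss}(0)\gitquot G$ given by the slice picture. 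Part (v) then drops out: in the slice, the exceptional divisor of the blowup is $\bbP(N_+)\times_{W^0\gitquot G(0)}\bbP(N_-)$, and its normal bundle in the total transform of $N_+\oplus N_-$ is the tensor product $\cO_{\bbP(N_+)}(-1)\boxtimes \cO_{\bbP(N_-)}(-1)$.

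The main obstacle is ensuring that the local weight decomposition of $N_{W^0/W}$, which is only canonical up to the $G$-conjugation of $G_x$, descends to honest global vector bundles $N_\pm$ on $W^0\gitquot G(0)$, and that the isomorphisms between the various local models glue consistently; this requires that the simpleness hypotheses in Definition~\ref{def:SimpleWall} genuinely force the stabilizer type along $W^0$ to be constant (so that the weight decomposition is $G$-equivariant in a uniform way). Once this descent is established, the rest of the proposition is a routine intersection-theoretic bookkeeping exercise, and the statements match those recorded in \cite[Section~4]{thaddeus1996geometric} and \cite[Section~4]{dolgachev1998variation}.
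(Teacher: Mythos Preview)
The paper does not supply its own proof of this proposition; it is stated as a citation of \cite{dolgachev1998variation,thaddeus1996geometric} and used as input for the rest of the argument. Your sketch is essentially the argument of \cite[\S4]{thaddeus1996geometric}: Luna's slice theorem reduces everything to a $\bbC^*$-action on a vector space with weights $0,\pm1$, after which (i)--(v) are read off from the projectivized eigenbundle picture. The descent issue you flag is handled in Thaddeus by the constancy of the stabilizer type along $W^0$ (condition~\eqref{cond:SimpleWall} of Definition~\ref{def:SimpleWall}), so there is no real obstruction there, and your outline is correct.
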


Therefore, the birational transformation $W \gitquot G(-) \dashrightarrow W \gitquot G(+)$ is a \emph{standard flip} of type $(\rank(N_+)-1, \rank(N_-)-1)$ with wall $W^0\gitquot G(0)$, in the following sense.

\begin{definition}
\label{def:Flip}
A birational transformation $f\colon X_- \dashrightarrow X_+$ between smooth projective varieties is a \emph{standard flip} of type $(r_+-1, r_--1)$ with \emph{wall} $S$ if it fits into the diagram
$$
    \begin{tikzcd}
        && E \arrow[rrdd, "p_+"]\arrow[lldd, "p_-", swap]\arrow[d, hookrightarrow]&&\\
        && \tX \arrow[ld, "\pi_-",swap]\arrow[rd, "\pi_+"]&&\\
        P_- \arrow[r,hookrightarrow, "j_-"]\arrow[rrdd,"\psi_-",swap] & X_- \arrow[rr,"f",dashed]\arrow[rd,"q_-",swap]&& X_+ \arrow[ld,"q_+"] & P_+ \arrow[l,hookrightarrow, "j_+",swap]\arrow[lldd,"\psi_+"]\\
        && X_0 &&\\
        && S \arrow[u,hookrightarrow]&&
    \end{tikzcd}
$$
where:
\begin{enumerate}[wide]
    \item The morphisms $q_\pm\colon X_\pm \to X_0$ are proper and birational with exceptional locus $\psi_\pm\colon P_\pm \to S$ respectively.
    
    \item There exist vector bundles $V_\pm$ over $S$ of rank $r_\pm $ such that $P_\pm \cong \bbP(V_\pm)$ and $\psi_\pm$ are identified with the projections.

    \item The normal bundles $N_{P_\pm/X_\pm}$ are identified with $\psi_\pm^*(V_\mp) (-1)$ respectively.
    
    \item $\tX$ is the common blowup of $X_\pm$ at $P_\pm$ and $X_0$ at $S$, with the exceptional locus $E \cong P_+ \otimes_S P_-$ whose normal bundle $N_{E/\tX}$ is identified with $\cO(-1, -1)$. We denote the projections $\tX\to X_{\pm}$ by $\pi_{\pm}$ and $E\to P_{\pm}$ by $p_{\pm}$.

\end{enumerate}
\end{definition}
There are two special cases. When $r_+ = 1$ (or $r_- = 1$), we recover the case of a blowup. When $r_+ = r_- (\neq 1)$, the birational transformation $f$ is referred to as an \emph{ordinary flop}.

% To explain the namesake, suppose the contractions $q_\pm$ are \emph{small} in the sense that the exceptional loci have codimension at least 2. Let $D$ be a $\bbQ$-Cartier divisor class on $X_-$ such that $\cO(-D)$ is relatively ample over $X_0$ and induces the contraction $q_-\colon X_- \to X_0$. Then a standard $D$-flip requires that $f_*(D)$ is a $\bbQ$-Cartier divisor class on $X_+$ such that $\cO(f_*(D))$ is relatively ample over $X_0$ and induces the contraction $q_+\colon X_+ \to X_0$. 

\begin{remark}
\label{rem:definition of standard flip}
% Our definition of standard flips and ordinary flops differ slightly from those in \cite{lee2010flops, jiang2023chow, belmans2022derived, shen2025quantum}
In the definition of standard flips and ordinary flops in the literature, e.g. \cite{lee2010flops, jiang2023chow, belmans2022derived, shen2025quantum}, the contraction $q_-$ is often required to be log-extremal, % \orange{(in the sense of minimal model theory)} 
which implies that $q_+$ is also log-extremal (see \cite[Section 1]{lee2010flops}).
Our Definition~\ref{def:Flip} does not impose this condition. Nevertheless, the previous results on standard flips and ordinary flops used in this paper, specifically, the Chow motives decomposition (Proposition~\ref{prop:Chow motive decom of flips}) for standard flips and the invariance of quantum cohomology under ordinary flops up to analytic continuation (Proposition~\ref{Prop: big quantum ring identify in flop case}), remain valid without this condition.
\end{remark}

Jiang \cite[Corollary 3.10]{jiang2023chow} proved that for a standard flip, the correspondence given by graph closure of $f$ induces a decomposition of Chow motives, extending the results of Lee-Lin-Wang \cite{lee2010flops}. % We state this result below.

\begin{proposition}[\cite{lee2010flops,jiang2023chow}]\label{prop:Chow motive decom of flips}
Let $f\colon X_- \dashrightarrow X_+$ be a standard flip of type $(r_+-1, r_--1)$ with wall $S$ where $X_\pm$ are smooth and quasi-projective. Assume without loss of generality that $r_+ \le r_-$. There is an isomorphism of Chow motives
% \begin{equation}\label{Chow motives decomposition of flip}
$$
    [\Gamma] \oplus \left( \bigoplus_{i = r_+ }^{r_--1} j_{-,*} \circ h_-^{r_--1 -i} \circ \psi^* \right)\colon 
    \mathfrak{h}(X_+) \oplus \left( \bigoplus_{i = r_+}^{r_--1} \mathfrak{h}(S)(i) \right)
    \overset{\sim}{\longrightarrow} \mathfrak{h}(X_-)
$$
%  \end{equation}
where $[\Gamma]$ is the correspondence given by the graph closure $\Gamma\subset X_+ \times X_-$ of $f$, $j_-\colon P_- \to X_-$ is the inclusion, $h_-$ is the fiber class of $P_-$, and $\mathfrak{h}(-)$ denotes the Chow motive class.
\end{proposition}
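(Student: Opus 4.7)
The plan is to exploit the common resolution $\tX$ from Definition~\ref{def:Flip}, which simultaneously realizes $\tX$ as the blowup of $X_+$ along $P_+$ of codimension $r_-$ and as the blowup of $X_-$ along $P_-$ of codimension $r_+$. Applying Manin's blowup formula for Chow motives to both projections $\pi_{\pm}$ yields two decompositions of $\mathfrak{h}(\tX)$:
$$
\mathfrak{h}(X_-) \oplus \bigoplus_{i=1}^{r_+ - 1} \mathfrak{h}(P_-)(i) \;\cong\; \mathfrak{h}(\tX) \;\cong\; \mathfrak{h}(X_+) \oplus \bigoplus_{i=1}^{r_- - 1} \mathfrak{h}(P_+)(i).
$$
Since $\psi_{\pm}\colon P_{\pm} \to S$ are projective bundles of rank $r_{\pm}-1$, the projective bundle formula further gives $\mathfrak{h}(P_{\pm}) \cong \bigoplus_{j=0}^{r_{\pm}-1} \mathfrak{h}(S)(j)$.

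Substituting and expanding, I would compare the two resulting expressions for $\mathfrak{h}(\tX)$ as direct sums of $\mathfrak{h}(X_{\pm})$ and Tate twists of $\mathfrak{h}(S)$. A short generating-function computation for the multiplicity of $\mathfrak{h}(S)(k)$ on each side, using $1 + x + \cdots + x^{r-1} = (1-x^r)/(1-x)$, shows that the numerator of the difference factors as $(1-x)(x^{r_+-1}-x^{r_--1})$. Hence after cancellation the surplus on the $X_-$-side is exactly $\bigoplus_{k=r_+}^{r_- - 1} \mathfrak{h}(S)(k)$, matching the range in the statement. This establishes an abstract isomorphism of the stated shape; when $r_+ = r_-$ this reduces to $[\Gamma]$ alone, recovering the ordinary flop result of \cite{lee2010flops}.

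To promote the abstract count to the explicit correspondence in the statement, I would track which splitting in each blowup-formula decomposition realizes which motivic map. The graph closure $\Gamma \subset X_+ \times X_-$ is the image of $(\pi_-, \pi_+)\colon \tX \to X_- \times X_+$, so the correspondence $[\Gamma]$ is computed by $\pi_{-,*} \circ \pi_+^*$ up to contributions supported on the exceptional divisor $E$. For the residual summands, $\psi_-^*$ realizes one factor of the projective bundle formula for $\mathfrak{h}(P_-)$, cupping with $h_-^{r_--1-i}$ singles out the Tate-twist component of the correct index, and $j_{-,*}$ provides the Gysin embedding into $\mathfrak{h}(X_-)$; a careful index shift then confirms these land precisely in the $r_- - r_+$ surplus summands with $r_+ \le i \le r_- - 1$ that are not cancelled by the $X_+$-side decomposition.

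The main obstacle will be verifying compatibility of these choices, namely that the explicit correspondences assemble into a single splitting of $\mathfrak{h}(X_-)$ rather than a collection of abstract injections into distinct summands. This requires combining Manin's orthogonality relations for the blowup pieces with projection-formula calculations controlling how $\pi_{-,*}$ interacts with pullbacks through $E \cong P_+ \times_S P_-$ and its normal bundle $\cO(-1,-1)$; in particular, the excess intersection formula computes $\pi_{-,*}\pi_+^*$ on the exceptional piece and shows the off-diagonal terms cancel against the surplus $\mathfrak{h}(S)(i)$ factors. Carrying out this diagram chase, as done by Jiang \cite{jiang2023chow}, completes the identification.
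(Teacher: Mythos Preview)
The paper does not supply its own proof of this proposition; it is stated as a citation of \cite{lee2010flops} and \cite[Corollary 3.10]{jiang2023chow}, and the paper only uses the cohomological consequence (via Manin's identity principle) recorded immediately afterward. So there is no in-paper argument to compare against.

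Your sketch is a reasonable outline of the cited approach: apply Manin's blowup formula to the two projections $\pi_\pm\colon\tX\to X_\pm$, expand $\mathfrak{h}(P_\pm)$ via the projective bundle formula, and cancel common Tate twists of $\mathfrak{h}(S)$ to see the surplus $\bigoplus_{i=r_+}^{r_--1}\mathfrak{h}(S)(i)$. The part you correctly flag as the real work---upgrading the abstract multiplicity count to the explicit correspondence $[\Gamma]\oplus(\bigoplus j_{-,*}\circ h_-^{r_--1-i}\circ\psi^*)$---is exactly what Jiang carries out, using excess intersection on $E\cong P_+\times_S P_-$ with normal bundle $\cO(-1,-1)$ to control the cross-terms. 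Your proposal acknowledges this and defers to \cite{jiang2023chow} for the details, which is the same stance the paper takes. One small caution: cancellation of motivic summands is not automatic in general, so the argument must proceed by exhibiting explicit splittings rather than by ``subtracting'' isomorphic pieces; this is precisely why the correspondence-level verification you describe is essential and not merely cosmetic.
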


%Because the isomorphism of Chow motives induces the isomorphism of De Rham cohomology, by taking the ground field $k=\bbC$, we have the following corollary.

When additionally $X_\pm$ are \emph{projective} and $S$ is smooth and projective, Manin's identity principle implies an isomorphism of cohomology groups
$$
    \Bigg( \phi,\bigoplus\limits_{i=0}^{r_- - r_+ -1}j_{-,*}\circ (\cup h_-^{i})\circ \psi^* \Bigg)\colon H^*(X_+)\oplus\bigoplus\limits_{i=0}^{r_- - r_+ -1}H^{*-2i-2r_+}(S)\longrightarrow H^*(X_-)
$$
where $\phi=\pi_{-,*}\circ \pi_+^*$ is the injective map induced by the correspondence $[\Gamma]^t$. Applying this to the VGIT wall-crossing framework, we have the following statement.

\begin{comment}

\begin{corollary}\label{Derham cohomology decomposition for flip}
    Under the assumption of (\ref{Chow motives decomposition of flip}), we have an isomorphism of De Rham cohomology:
\begin{equation}\label{Cohomology decomposition of flip}
(\phi,\bigoplus\limits_{i=0}^{n-m-1}j_*\circ (\cup h^{i})\circ \psi^*):H^*(X')\oplus\bigoplus\limits_{i=0}^{n-m-1}H^{*-2i-2m}(S)\to H^*(X),
\end{equation} where $\phi=\pi'_*\circ\pi^*$ is an injective map induced from $[\Gamma]^t$.
\end{corollary}

\end{comment}

\begin{corollary}\label{cor:deRhamIsoVGIT}
    For a simple VGIT wall-crossing $W\gitquot G(-)\dashrightarrow W\gitquot G(+)$, let $r_\pm \coloneqq \rank(N_{\pm})$ and assume $r_+ \le r_-$ without loss of generality. We have an isomorphism of cohomology groups
% \begin{equation}\label{Cohomology decomposition for VGIT}
$$
    \Bigg(\phi_1,\bigoplus\limits_{i=0}^{r_- - r_+ -1}j_{-,*}\circ (\cup h_-^{i})\circ \psi_-^*\Bigg)\colon H^*(W\gitquot G(+))\oplus\bigoplus\limits_{i=0}^{r_- - r_+-1}H^{*-2i-2r_+}(W^0\gitquot G(0))\longrightarrow H^*(W\gitquot G(-))
$$ 
where $\phi_1$ is the injective map induced by the graph closure of $W\gitquot G(-) \dashrightarrow W\gitquot G(+)$, $j_{\pm}$ is the inclusion $W^{\pm}\gitquot G(\pm)\hookrightarrow W\gitquot G(\pm)$, and $h_{\pm}$ is the fiber class of $W^{\pm}\gitquot G(\pm)$.
% and $\psi_{\pm}$ is the projection $W^{\pm}\gitquot G(\pm)\to W^0\gitquot G(0)$.
\end{corollary}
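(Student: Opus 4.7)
The plan is to reduce the statement directly to Proposition~\ref{prop:Chow motive decom of flips} via the geometric description of a simple VGIT wall-crossing provided by Proposition~\ref{prop:VGITGeometry}. First, I would apply Proposition~\ref{prop:VGITGeometry} to identify the birational map $W \gitquot G(-) \dashrightarrow W \gitquot G(+)$ as a standard flip in the sense of Definition~\ref{def:Flip}, with $X_\pm = W \gitquot G(\pm)$, the exceptional loci $P_\pm = W^\pm \gitquot G(\pm) \cong \bbP(N_\pm)$, and wall $S = W^0 \gitquot G(0)$. In particular, $r_\pm = \rank(N_\pm)$ matches the type $(r_+ - 1, r_- - 1)$ of the flip, and $X_\pm$ are smooth and projective by Definition~\ref{def:SimpleWall}\eqref{cond:t0}--(vi). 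Smoothness and projectivity of $S$ follow from the fact that $\psi_\pm \colon P_\pm \to S$ is a projective bundle with smooth and projective total space.

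Next, I would invoke Proposition~\ref{prop:Chow motive decom of flips} to obtain the Chow motive decomposition
\[
\mathfrak{h}(W\gitquot G(+)) \oplus \bigoplus_{i=r_+}^{r_- - 1} \mathfrak{h}(W^0 \gitquot G(0))(i) \xrightarrow{\ \sim\ } \mathfrak{h}(W\gitquot G(-)),
\]
where the summand indexed by $i$ is realized by the correspondence $j_{-,*}\circ h_-^{r_--1-i}\circ \psi_-^*$ and the first summand is realized by $[\Gamma]$. Passing to the contravariant realization via singular cohomology (Manin's identity principle), the transpose of $[\Gamma]$ induces $\phi_1 = \pi_{-,*}\circ \pi_+^*$, and re-indexing $i \mapsto r_- - 1 - i$ turns the powers $h_-^{r_- - 1 - i}$ into $h_-^i$ with $0 \le i \le r_- - r_+ - 1$. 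Together with the Tate shift $(i)$ becoming a degree shift by $2i + 2r_+$ on cohomology, this yields exactly the stated isomorphism
\[
H^*(W\gitquot G(+)) \oplus \bigoplus_{i=0}^{r_- - r_+ - 1} H^{*-2i-2r_+}(W^0\gitquot G(0)) \xrightarrow{\ \sim\ } H^*(W\gitquot G(-)).
\]

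Since the work of verifying the hypotheses of Proposition~\ref{prop:Chow motive decom of flips} is handled by Proposition~\ref{prop:VGITGeometry}, and the realization of Chow motive maps as cohomological correspondences is standard, there is no substantial obstacle. The only point requiring mild care is bookkeeping the re-indexing of the projective-bundle summands so that the cohomological degree shift reads as $2i + 2r_+$ rather than $2(r_- - 1 - i)$, but this is purely combinatorial.
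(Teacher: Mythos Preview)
Your proposal is correct and follows essentially the same approach as the paper: the corollary is stated immediately after Proposition~\ref{prop:Chow motive decom of flips}, and the paper derives it by observing (via Proposition~\ref{prop:VGITGeometry}) that the simple VGIT wall-crossing is a standard flip, then applying Manin's identity principle to pass from the Chow motive decomposition to cohomology. The re-indexing you describe matches the paper's transition from the $h_-^{r_--1-i}$ in Proposition~\ref{prop:Chow motive decom of flips} to the $h_-^i$ in the corollary.
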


\begin{remark}\label{rem:KDomination}
Assuming $r_+ \le r_-$, \cite{halpern2015derived,ballard2019variation} provided a semi-orthogonal decomposition of the derived category of coherent sheaves $D^b(\mathrm{Coh}(W\gitquot G(-)))$ in which $D^b(\mathrm{Coh}(W\gitquot G(+)))$ is a summand. In particular, $W\gitquot G(-)$ \emph{$D$-dominates} $W\gitquot G(+)$ (see \cite[Section 4.3]{ballard2019variation}). Applying the Hochschild-Kostant-Rosenberg isomorphism provides an alternative decomposition of the cohomology groups. It is further shown in \cite[Section 4.3]{ballard2019variation} that $W\gitquot G(-)$ also \emph{$K$-dominates} $W\gitquot G(+)$, and hence $D$-dominance is equivalent to $K$-dominance in this VGIT wall-crossing setting. In particular, when $r_+ = r_-$, the varieties $W\gitquot G(\pm)$ are both $D$-equivalent and $K$-equivalent (see \cite{wang1998birational,kawamata2002Dequivalence,wang2003Kequivalence}). We also note that in the case of a standard flip, a semi-orthogonal decomposition of the derived categories of coherent sheaves is obtained by \cite{belmans2022derived}.
% The decompositions of derived category of coherent sheaves for variation of GIT quotients are also well-established in \cite{halpern2015derived, ballard2019variation}. By taking the Hochschild cohomology of derived categories and applying the HKR isomorphism, we also have a cohomology decomposition for a VGIT process. However, that relation is not obviously the same as ours. In \cite{shen2025quantum}, Shen and Shoemaker apply this isomorphism to connect the gamma classes of the GIT quotients of the different chambers.
\end{remark}

\begin{comment}
\begin{remark}
    An analogous statement holds in the derived category of coherent sheaves\cite{ballard2019variation}. Specifically, in our setup, $\text{D}^b\text{coh}(W\gitquot_+\bbC^*)$ admits a semi-orthogonal decomposition consisting of $\text{D}^b\text{coh}(W\gitquot_-\bbC^*)$ and $\lambda_Z^+-\lambda_Z^-$ copies of $\text{D}^b\text{coh}(Z)$ with a suitable degree shift. If we take the Hochschild cohomology of the derived categories and apply the HKR isomorphism, we can also get a relation of the DeRham cohomology for a GIT wall-crossing process. That cohomology relation is not essentially the same as ours.  In \cite{shen2025quantum}, they apply this isomorphism to connect the gamma classes of the GIT quotients of the different chambers. 
\end{remark}
\end{comment}

\subsection{$\bbC^*$-actions and Bia{\l}ynicki-Birula decompositions}\label{subsec:BB decom}
For the rest of Section \ref{sect:GIT}, we specialize to the case where $W$ is a smooth \emph{projective} variety equipped with an action of $G= \bbC^*$.

Consider the decomposition 
$$
    W^{\bbC^*} = F_1 \sqcup \cdots \sqcup F_r
$$
as in Section~\ref{sect:C*FixedLocus}, where $F_1$ and $F_r$ are the highest and lowest components respectively. For $i = 1, \dots, r$, in view of \eqref{eqn:NormalWtDecomp}, we have a decomposition
$$
    N_{F_i/W} = N_{F_i,+} \oplus N_{F_i,-}
$$
where $N_{F_i,+}$ (resp.\ $N_{F_i,-}$) is the eigenbundle with positive (resp.\ negative) weights. We denote the rank of $N_{F_i,\pm}$ by $r_{F_i,\pm}$. In particular, we have $r_{F_1, +} = r_{F_r, -} = 0$.

The \emph{Bia{\l}ynicki-Birula decomposition} of $W$ with respect to the $\bbC^*$-action is a decomposition into the attractive/repelling sets of the fixed components:
$$
    W_{F_i,\pm} \coloneqq \bigg\{ x \in W \ \bigg|\ \lim_{t \in \bbC^*, t^{\pm 1} \to 0} t \cdot x \in F_i \bigg\}.
$$

% is determined by the Morse flow of $\bbR^*_{>0}\subset\bbC^*$, inducing a partial order on the fixed components, and a decomposition of $W$ into the collections:

\begin{theorem}[Bia{\l}ynicki-Birula decomposition \cite{bialynicki1973some}]\label{BB decomposition} % (Theorem 4.3)
% Let \(W\) be a complete nonsingular algebraic variety. Each set of the decomposition 
We have two decompositions
$$
    W = \bigsqcup_{i=1}^r W_{F_i,+} = \bigsqcup_{i=1}^r W_{F_i,-}
$$
of $W$ into locally closed algebraic subvarieties. Moreover, taking the limit $x \mapsto \lim_{t^{\pm 1} \to 0} t \cdot x$ gives a morphism $W_{F_i,\pm} \to F_i$ of algberaic varieties for each $i$ that is an affine bundle of rank $r_{F_i,\pm}$.
% The map \( W_{F_i,+} \to F_i \) is an algebraic fibration with the fiber isomorphic to the affine space of dimension $r_{F_i,+}$. The similar statement holds for $W_{F_i,-}$.
\end{theorem}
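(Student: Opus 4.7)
The plan has two stages: first establish the set-theoretic decomposition using projectivity of $W$, then upgrade to the algebraic affine-bundle statement via local linearization around fixed points.

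For the set-theoretic part, given any $x \in W$, the orbit map $\bbC^* \to W$, $t \mapsto t \cdot x$, extends by the valuative criterion of properness (applied to $\bbP^1 \supset \bbC^*$ and the projective variety $W$) to a morphism $\varphi_x \colon \bbP^1 \to W$. The points $\varphi_x(0)$ and $\varphi_x(\infty)$ are $\bbC^*$-fixed and hence each lies in a unique component $F_i$. This gives the maps $\lim_{t \to 0}$ and $\lim_{t \to \infty}$ on all of $W$ and the two set-theoretic decompositions $W = \bigsqcup_i W_{F_i,+} = \bigsqcup_i W_{F_i,-}$.

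For the algebraic structure, I would work locally around each fixed component. By Sumihiro's theorem, every point $p \in F_i$ has a $\bbC^*$-invariant affine open neighborhood in $W$. Since $p$ is smooth and fixed, the action can be linearized: there is a $\bbC^*$-equivariant \'etale (or formal) neighborhood of $p$ that is isomorphic to a neighborhood of the origin in $T_p W \cong T_p F_i \oplus N_{F_i,+}|_p \oplus N_{F_i,-}|_p$ with the induced linear action. In this linear model, the attractive set for $t \to 0$ is exactly $T_p F_i \oplus N_{F_i,+}|_p$, the limit map is the projection to $T_p F_i$, and the fiber of this projection is affine of rank $r_{F_i,+}$; the same analysis with weights reversed handles $W_{F_i,-}$.

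To globalize, I would define the limit morphism $W_{F_i,\pm} \to F_i$ by extending the $\bbC^*$-action map $\bbC^* \times W_{F_i,+} \to W_{F_i,+}$ across $\{0\} \subset \bbA^1$ using the set-theoretic limit, check that this extension is algebraic (by patching the local linearized models), and then identify the fibers with affine spaces modeled on $N_{F_i,+}$. The upshot is that $W_{F_i,+}$ is a $\bbC^*$-invariant locally closed subscheme of $W$ and the limit morphism realizes it as a Zariski-locally trivial $\bbA^{r_{F_i,+}}$-bundle over $F_i$. The main obstacle will be this gluing step: local $\bbC^*$-equivariant trivializations differ by equivariant automorphisms of the linear model, and one must verify that the resulting transition cocycle is algebraic and lands in the (unipotent) group of $\bbC^*$-equivariant affine automorphisms of $N_{F_i,\pm}$, so that the bundle is algebraic and has the claimed rank. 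Once this is done, the disjointness of the decomposition together with the local closedness of each cell completes the proof.
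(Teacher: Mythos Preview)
The paper does not give its own proof of this theorem: it is stated with a citation to Bia{\l}ynicki-Birula's original paper \cite{bialynicki1973some} and used as a black box, so there is nothing to compare your argument against on the paper's side.

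That said, your outline is a reasonable sketch of the standard proof. The set-theoretic part via the valuative criterion is correct. For the local structure, Sumihiro's theorem plus linearization at a fixed point is indeed the key input. One point to be careful about: you write that the result is a ``Zariski-locally trivial $\bbA^{r_{F_i,+}}$-bundle,'' which is correct, but your phrasing ``fibers with affine spaces modeled on $N_{F_i,+}$'' could be read as saying the bundle is the vector bundle $N_{F_i,+}$. As the paper remarks immediately after the theorem, this is false in general: the transition functions are $\bbC^*$-equivariant \emph{affine} automorphisms of the fiber, not linear ones, so $W_{F_i,+} \to F_i$ is only an affine bundle. It becomes the vector bundle $N_{F_i,+}$ precisely when all weights on $N_{F_i,+}$ are equal to $1$ (then equivariant affine automorphisms are forced to be linear). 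Your gluing obstacle is real, and Bia{\l}ynicki-Birula's original argument handles it by working with the functor of attracting subschemes rather than patching local models by hand; if you want to flesh out your approach, that is where the work lies.
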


In particular, the cells $W_{F_1, -}$ and $W_{F_r, +}$ are Zariski open and dense in $W$.

% \begin{remark}\label{BB cell=bundle}
The fibration $W_{F_i,\pm} \to F_i$ is in general not a vector bundle, as the $\bbC^*$-action on fibers may involve nonlinear algebraic automorphisms. However, if the $\bbC^*$-action on $N_{F_i,+}$ only has weight $1$, the fibration $W_{F_i,+}\to F_i$ may be identified with the vector bundle $N_{F_i,+}$, and similarly for $W_{F_i,-}\to F_i$.

\begin{definition}\label{def:WtPm1}
We say the $\bbC^*$-action on $W$ is a \emph{weight-($\pm 1$) action} if for any $i = 1, \dots, r$, the $\bbC^*$-action on $N_{F_i/W}=N_{F_i,+}\oplus N_{F_i,-}$ only has weights $\pm 1$.
\end{definition}

\subsection{$\bbC^*$-GIT quotients and curve classes}\label{sect:C*CurveClass}
We now consider the wall-and-chamber structure on $E^{\bbC^*}(W)$ for the $\bbC^*$-action on $W$ and characterize the image in $N^1_{\bbC^*}(W)_{\bbR}$. For $L \in E^{\bbC^*}(W)$ with image $\widehat{\omega} \in N^1_{\bbC^*}(W)_{\bbR}$, we write $W^{ss}(\widehat{\omega}) = W^{ss}(L)$ and $W^{s}(\widehat{\omega}) = W^{s}(L)$. Given such an equivariant ample class $\widehat{\omega}$, for any fixed component $F_i$, there exists a constant $\omega_i\in\bbR$ such that $\widehat{\omega}|_{x_i}=\omega_i\lambda$ for any point $x_i\in F_i$. On the other hand,  by the Hilbert-Mumford criterion, a point $x \in W$ belongs to $W^{s}(\widehat{\omega})$ if and only if $\widehat{\omega}|_{x_0}\in \bbR_{>0}\lambda$ and $\widehat{\omega}|_{x_{\infty}}\in \bbR_{<0}\lambda$, where $x_{0} = \lim_{t\to0} t\cdot x$ and $x_{\infty}=\lim_{t\to\infty}t\cdot x$. In other words, we have
$$
    W^{s}(\widehat{\omega}) = \{x \in W \ \big|\ x_0\in F_i, x_{\infty} \in F_j \text{ with } \omega_i>0, \omega_j<0 \}.
$$
Therefore, the wall-and-chamber structure is determined by the linear functions $\omega_i \colon N^1_{\bbC^*}(W)_{\bbR} \to \bbR$, $i = 1, \dots, r$.

% Hence, we have the criterion:
% \begin{equation}\label{H-M criterion for C^*-action}
% \text{For a point } x\in W \text{ where } x_0\in F_i \text{ and } x_{\infty}\in F_j, x\in W^{s}(\widehat{\omega}) \text{ if and only if } \omega_i>0, \omega_j<0.
% \end{equation}

% \orange{This implies that only finitely many GIT quotients of $W$ exist with empty strictly semistable locus.}

\begin{comment}
Again by the Hilbert-Mumford numerical criterion, the stable locus \( W^s(\widehat{\omega}) \) for an equivariant ample class \( \widehat{\omega} \in N_{\bbC^*}^1(W)_{\bbR} \) is given as follows:
\[
W^s(\widehat{\omega}) = \{ x \in W : -\widehat{\omega}|_{x_0} \in \bbR_{<0} \lambda, -\widehat{\omega}|_{x_\infty} \in \bbR_{>0} \lambda \}.
\]
Thus, we can see that 
\[
C_{W\gitquot_-\bbC^*} = \{ \widehat{\omega} : \widehat{\omega} \text{ is ample}, \widehat{\omega}|_X \in \bbR_{>0} \lambda, \widehat{\omega}|_Z \in \bbR_{<0} \lambda, \widehat{\omega}|_Y \in \bbR_{<0} \lambda \},
\]
\[
C_{W\gitquot_+\bbC^*} = \{ \widehat{\omega} : \widehat{\omega} \text{ is ample}, \widehat{\omega}|_X \in \bbR_{>0} \lambda, \widehat{\omega}|_Z \in \bbR_{>0} \lambda, \widehat{\omega}|_Y \in \bbR_{<0} \lambda \}.
\]
\end{comment}

There are two special GIT chambers whose corresponding stable loci are
$$
    W_{F_1,-}\backslash F_1 = \{x \in W \ \big|\ x_0 \notin F_1, x_{\infty} \in F_1\}, \quad
    W_{F_r,+}\backslash F_r = \{x \in W \ \big|\ x_0 \in F_r, x_{\infty} \notin F_r\}
$$
respectively. We give the following definition.

\begin{definition}\label{def:highest/lowest GIT}
We refer to the GIT quotient $X = W \gitquot_{L} \bbC^*$ as the \emph{highest} (resp.\ \emph{lowest}) GIT quotient of $W$ if $W^{ss}(L) = W^{s}(L)$ is equal to $W_{F_1,-}\backslash F_1$ (resp.\ $W_{F_r,+}\backslash F_r$).
\end{definition}

% By the criterion \eqref{H-M criterion for C^*-action}, for a GIT quotient $X$ of $W$ corresponding to the stable locus $W_X^{s}$, all equivariant ample classes $\widehat{\omega}$ making $W\gitquot_{\widehat{w}}\bbC^*=X$ form a cone $C_X\subset N^1_{\bbC^*}(W)_{\bbR}$. Explicitly, $$C_X=\{\widehat{\omega}\in N^1_{\bbC^*}(W)_{\bbR}|\widehat{w} \text{ is ample, }W^{s}(\widehat{\omega})=W^{ss}(\widehat{\omega})=W^s_X\}.$$ Specifically, for the highest/lowest GIT quotients $X$, we have  

For a GIT quotient $X$, let $C_X$ denote the corresponding chamber in $N^1_{\bbC^*}(W)_{\bbR}$. Then we have 
$$
    C_{X}=\begin{cases} 
        \{\widehat{\omega}\in N^1_{\bbC^*}(W)_{\bbR} \text{ ample} \ \big|\ \omega_1<0, \omega_i>0 \text{ for } i \ne 1\} & \text{if $X$ is the highest quotient,}\\ 
        \{\widehat{\omega}\in N^1_{\bbC^*}(W)_{\bbR} \text{ ample} \ \big|\ \omega_r>0, \omega_i<0 \text{ for } i \ne r\} & \text{if $X$ is the lowest quotient.}
    \end{cases}
$$

Moreover, note that if the action is weight-($\pm 1$), by Theorem~\ref{BB decomposition}, the highest (resp.\ lowest) GIT quotient is isomorphic to $\bbP(N_{F_1,-})$ (resp.\ $\bbP(N_{F_r,+})$).

Let $C_{X}^{\vee} \subset N_1^{\bbC^*}(W)_{\bbR}$ denote the closure of the dual cone of $C_X$. We now compute $C_{X}^{\vee}$ for the highest/lowest GIT quotients $X$. For simplicity, we assume that the action is weight-($\pm 1$). We fix a point $x_r \in F_r$ in the lowest component and let $i_{x_r}\colon\pt\to W$ denote the inclusion. As discussed in Section~\ref{sect:EquivCurveClass}, the pushforward % $i_{x_r,*} \colon H_2^{\bbC^*}(\pt,\bbZ) \to N_1^{\bbC^*}(W)\subset H_2^{\bbC^*}(W)$ 
along the inclusion induces a splitting of both rows of \eqref{N_1-exact sequence}. We define the section class
$$
    \lambda^* \coloneqq \sigma_{F_r}(1)
$$
where $1 \in \bbZ$ corresponds to the cocharacter $\Id \in \Hom(\bbC^*, \bbC^*)$. We introduce the shorthand notation
$$
    S = \hS^{\lambda^*}, \qquad \cS = \hcS^{\lambda^*}.
$$
In addition, for $i = 1, \dots, r$, we define
$$
    a_i \coloneqq \sigma_{F_r}(1) - \sigma_{F_i}(1).
$$
Since $F_r$ is the lowest component, for $i \ne r$, $a_i$ is the class of an effective $\bbC^*$-invariant curve connecting $F_i$ and $F_r$ and lies in $\NE_{\bbN}(W)$.

\begin{proposition}\label{dual ample cone} 
Suppose the $\bbC^*$-action on $W$ is weight-($\pm 1$). For the lowest GIT quotient $X$ of $W$, we have
$$
    C_{X}^{\vee} = \NE(W)+\bbR_{\ge0}\{a_1-\lambda^*, \dots, a_{r-1} - \lambda^*, \lambda^*\}.
$$
For the highest GIT quotient $X$ of $W$, we have
$$
    C_{X}^{\vee} = \NE(W)+\bbR_{\ge0}\{a_1-\lambda^*,\lambda^*-a_2, \dots, \lambda^* - a_r\}. 
$$
\end{proposition}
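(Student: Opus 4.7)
The plan is to prove each case by a direct pairing verification combined with a double-dual (bipolar) argument. The computational input is the explicit formula
\begin{equation*}
(\sigma_{F_i}(1), \widehat{\omega}) = \omega_i, \quad (\lambda^*, \widehat{\omega}) = \omega_r, \quad (a_i, \widehat{\omega}) = \omega_r - \omega_i,
\end{equation*}
which follows from the definition of section classes $\sigma_{F_i}(1) = s_{x,*}(1)$ via the constant section $B\bbC^* \to W_{\bbC^*}$ at a point $x \in F_i$, together with the standard identification $(\lambda, 1) = 1$ between generators of $H^2_{\bbC^*}(\pt)$ and $H_2^{\bbC^*}(\pt)$. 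The same reasoning gives $(\beta, \widehat{\omega}) = (\beta, \omega)$ whenever $\beta \in N_1(W) \subseteq N_1^{\bbC^*}(W)$, since then $\bar{\beta} = 0$.

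For the lowest GIT case, I first check that each listed generator lies in $C_X^\vee$: for $\widehat{\omega} \in C_X$, one has $(\lambda^*, \widehat{\omega}) = \omega_r > 0$, $(a_i - \lambda^*, \widehat{\omega}) = -\omega_i > 0$ for $i \ne r$ since $\omega_i < 0$, and $(\beta, \widehat{\omega}) = (\beta, \omega) \ge 0$ for $\beta \in \NE(W)$ since $\omega$ is ample and $\beta$ effective. For the reverse inclusion, I compute the dual of the right-hand side directly using the same pairing formulas: $\widehat{\omega}$ pairs non-negatively with all listed generators iff $\omega_r \ge 0$, $\omega_i \le 0$ for every $i \ne r$, and $\omega$ is nef on $W$. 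The key observation will be that this cone is precisely $\overline{C_X}$, using that the equivariant ample cone in $N^1_{\bbC^*}(W)_{\bbR}$ is the preimage of the non-equivariant ample cone under the forgetful map (since $\bbC^*$ is reductive, every ample bundle on $W$ admits a linearization, and different linearizations shift all $\omega_i$ by a common constant, preserving chamber membership). The bipolar theorem in the finite-dimensional space $N_1^{\bbC^*}(W)_{\bbR}$ then yields $C_X^\vee = (\overline{C_X})^\vee$, which equals the stated right-hand side after one replaces $\NE(W)$ by its closure if necessary to guarantee a closed cone; this closure technicality does not affect the equality of dual cones. The highest GIT case is entirely analogous, with signs reversed: the generator $a_1 - \lambda^* = -\sigma_{F_1}(1)$ detects $\omega_1 < 0$, while $\lambda^* - a_i = \sigma_{F_i}(1)$ for $i \ne 1$ detects $\omega_i > 0$.

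The step I expect to require the most care is the identification between the cone dual to the proposed right-hand side and the closure $\overline{C_X}$. This hinges on two points: the correspondence between equivariant and non-equivariant ample cones mentioned above, and the standard convex-analytic fact that the interior of an intersection of finitely many closed half-spaces equals the intersection of their interiors whenever the latter is non-empty. Non-emptiness of $C_X$ follows from the hypothesis that the relevant highest/lowest GIT quotient exists, placing $C_X$ as a genuine open chamber of $E^{\bbC^*}(W)$.
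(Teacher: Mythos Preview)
Your argument is correct and takes a genuinely different route from the paper's. The paper proves the reverse inclusion $C_X^\vee \subseteq R$ by picking an arbitrary $u \in C_X^\vee$, decomposing it as $u = u_0 + c(u)\lambda^*$ using the splitting of the exact sequence, and then doing a three-way case analysis on the sign of $c(u)$: the cases $c(u)=0$ and $c(u)>0$ are handled by separating $u_0$ from $\NE(W)$ with an ample class, while the case $c(u)<0$ introduces an auxiliary simplex $\Delta(u) \subset \NE(W)$ and applies the Hahn--Banach separation theorem to $u_0 - \Delta(u)$ and $\NE(W)$. Your approach avoids this case analysis entirely by computing $R^\vee$ directly, identifying it with $\overline{C_X}$ via the observation that the equivariant ample cone is the $p$-preimage of the non-equivariant one, and then invoking the bipolar theorem. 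This is more structural and arguably cleaner; the trade-off is that it relies on a couple of auxiliary facts (preimage characterization of the equivariant ample cone, closure of an intersection of open convex cones) that the paper's hands-on argument sidesteps. Both approaches share the same closure technicality regarding $\NE(W)$ versus $\overline{\NE(W)}$, which you correctly flag; the paper's separation step ``if $u_0 \notin \NE(W)$, there exists an ample $\omega_0$ with $(u_0,\omega_0)<0$'' is in fact only valid for $u_0 \notin \overline{\NE(W)}$, so neither proof is more rigorous on this point.
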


We thus introduce the following two monoids in $N_1^{\bbC^*}(W)$:
\begin{equation}\label{dual ample cone's monoid}
    C_{X,\bbN}^{\vee}=\begin{cases}
    \NE_{\bbN}(W)+\bbN\{a_1-\lambda^*, \dots, a_{r-1} - \lambda^*, \lambda^*\} & \text{if $X$ is the lowest quotient}, \\
    \NE_{\bbN}(W)+\bbN\{a_1-\lambda^*,\lambda^*-a_2, \dots, \lambda^* - a_r\} & \text{if $X$ is the highest quotient}.
    \end{cases}
\end{equation}

\begin{proof}
We prove the proposition for the lowest GIT quotient $X$; the case of the highest quotient is similar. We denote the projection $N^1_{\bbC^*}(W)_{\bbR}\to N^1(W)_{\bbR}$ by $p$. Note that $\NE(W)\subset C_{X}^{\vee}$ because $p(\widehat{\omega})$ is ample for any $\widehat{\omega}\in C_X$. Moreover, under the splitting of \eqref{N_1-exact sequence} fixed by $i_{x_r}$, we have for any $\widehat{\omega}\in C_X$ that $(\lambda^*, \widehat{\omega}) = \omega_r>0$ and $(\lambda^*-a_i, \widehat{\omega}) = (\sigma_{F_i}(1), \widehat{\omega}) = \omega_i<0$ for $i \ne r$. Thus, we have shown that
$\NE(W)+\bbR_{\ge0}\{a_1-\lambda^*, \dots, a_{r-1} - \lambda^*, \lambda^*\} \subseteq C_{X}^{\vee}$.
    
Conversely, let $u\in C_{X}^{\vee}$, which can be decomposed as $u=u_0+c(u)\lambda^*$ for $u_0\in N_1(W)_{\bbR}$ and $c(u)\in \bbR$  through the fixed splitting of \eqref{N_1-exact sequence}. If $c(u)=0$, we must have $u=u_0\in \NE(W)$. For the other cases, we consider the dual exact sequence of the first row of \eqref{N_1-exact sequence} over $\bbQ$:
\begin{equation}\label{N^1-exact sequence}
    0\to H^2_{\bbC^*}(\pt; \bbQ)\to N^1_{\bbC^*}(W)_{\bbQ}\to N^1(W)_{\bbQ}\to 0.
\end{equation}
The pullback $i_{x_r}^*\colon N^1_{\bbC^*}(W)_{\bbQ} \to H^2_{\bbC^*}(\pt;\bbQ)$ induces a splitting of \eqref{N^1-exact sequence}. Thus, for any ample class $\widehat{\omega} \in N^1_{\bbC^*}(W)_{\bbQ}$, we can compute the pairing as 
$$
    (u, \widehat{\omega}) = c(u)\omega_r + (u_0, p(\widehat{\omega})).
$$

Suppose first that $c(u)>0$. If $u_0\notin \NE(W)$, there exists an ample class $\omega_0\in N^1(W)_{\bbQ}$ with $(u_0, \omega_0) <0$. Using the splitting of \eqref{N^1-exact sequence}, we can choose an ample class $\widehat{\omega} \in C_X$ for which $p(\widehat{\omega})=\omega_0$ and $\omega_r$ is a sufficiently small positive number such that $c(u)\omega_r + (u_0, \omega_0) < 0$. This contradicts the assumption $u\in C_{X}^{\vee}$. Hence, we have $u \in \NE(W)+\bbR_{\ge0}\lambda^*$. 

Now suppose that $c(u)<0$. Consider the simplex 
$$
    \Delta(u) = \bigg\{\sum_{i\ne r}s_ia_i \ \bigg|\  s_i\ge 0, \sum_{i\ne r}s_i=-c(u)\bigg\}\subset \NE(W).
$$
Then $u\in \NE(W)+\bbR_{\ge 0}\{a_i-\lambda^* \ |\  i\ne r\}$ if and only if $u_0\in \NE(W)+\Delta(u)$. Now, if $u_0\notin \NE(W)+\Delta(u)$, by the Hahn-Banach theorem under the condition that $u_0-\Delta(u)$ and $\NE(W)$ are disjoint convex sets in the finite-rank real vector space $N_1(W)_{\bbR}$, there exist an ample class $\omega_0\in N^1(W)_{\bbQ}$ and a constant $c<0$ such that $(u_0-\sum_i s_ia_i, \omega_0) <c$ for any $\sum_i s_ia_i \in \Delta(u)$. For any such $\sum_i s_ia_i$ and any  $\widehat{\omega}  \in N^1_{\bbC^*}(W)_{\bbQ}$ with $p(\widehat{\omega})=\omega_0$, we have
$$
    (u, \widehat{\omega}) = c(u)\omega_r + \bigg( u_0-\sum\limits_{i\ne r} s_ia_i, \omega_0 \bigg) + \sum\limits_{i\ne r}s_i(\omega_r-\omega_i) % = \Big( u_0-\sum\limits_{i\ne r} s_ia_i, \omega_0 \Big) - \sum\limits_{i\ne r}s_i\omega_i 
    < c - \sum\limits_{i\ne r}s_i\omega_i.
$$
However, using the splitting of \eqref{N^1-exact sequence}, we can choose $\widehat{\omega} \in C_X$ for which $p(\widehat{\omega})=\omega_0$ and $\max_{i\ne r}\{-\omega_i\}>0$ is sufficiently small such that $c - \sum\limits_{i\ne r}s_i\omega_i < 0$. Again, this contradicts the assumption $u\in C_{X}^{\vee}$. Hence, we have $u\in \NE(W)+\bbR_{\ge 0}\{a_i-\lambda^* \ \big|\ i\ne r\}$. 
\end{proof}

\subsection{3-component $\bbC^*$-VGIT wall-crossings}\label{sect:3Component}
Now, we further specialize the discussion to the case of 3-component $\bbC^*$-VGIT wall-crossings, in the sense of the following.

% By saying that we consider the 3-component case, we always mean that we make the following assumption.

\begin{assumption}\label{3-component assumption}
Let $W$ be a smooth projective variety with weight-($\pm 1$) $\bbC^*$-action such that the fixed locus only has three connected components 
$$
    W^{\bbC^*} = F_+ \sqcup F_0 \sqcup F_-
$$
where $F_{\pm}$ are the highest and lowest components respectively. Without loss of generality, we assume that $r_{F_0,+}\le r_{F_0,-}$ and denote $c_{F_0}=r_{F_0,+}-r_{F_0,-}$.
\end{assumption}

For instance, in the case of blowups in Example~\ref{ex:Blowup}, the variety $W$ together with the $\bbC^*$-action satisfies Assumption~\ref{3-component assumption}.

Under Assumption~\ref{3-component assumption}, the only Bia{\l}ynicki-Birula cells are
$$
    W_{F_\pm, \pm} = F_\pm, \quad W_{F_+,-} \cong N_{F_+/W}, \quad W_{F_-,+}\cong N_{F_-/W}, \quad W_{F_0,\pm}\cong N_{F_0,\pm}.
$$
% For $i=\pm,0$, we denote $\mathrm{dim}\:N_{F_i,\pm}$ by $r_{F_i,\pm}$ (then $r_{F_+,+}=r_{F_-,-}=0$). 
In addition, there are only two chambers in $E^{\bbC^*}(W)$ and we may take the ample linearizations $L_\pm$ in Section~\ref{sect:VGIT} in these chambers respectively, giving the wall-crossing between the two GIT quotients $X_\pm$ which are the highest and lowest GIT quotients respectively. Note that all conditions in Definition~\ref{def:SimpleWall} are satisfied and the wall-crossing is simple. In the notation set up there, we have
$$
    W^{ss}(\pm)=W_{F_{\pm},\mp} \setminus F_{\pm} %=W_{F_{\pm},\mp}\cap W_{F_0,\pm},
    , \quad
    X_\pm \cong \bbP(N_{F_\pm/W}), \quad W^0=W^0\gitquot \bbC^* (0)=F_0,
$$
$$
    W^{\pm}=W^{ss}(0)\backslash W^{ss}(\pm)=W_{F_0,\pm}, \quad W^{\pm}\gitquot \bbC^*\cong \bbP(N_{F_0,\pm}).
$$
We illustrate the geometry of $W$ in Figure~\ref{fig:3-component geometry}.

\begin{figure}[h]
\centering

\tikzset{every picture/.style={line width=0.75pt}} %set default line width to 0.75pt        

\begin{tikzpicture}[x=0.75pt,y=0.75pt,yscale=-0.75,xscale=0.75]
%uncomment if require: \path (0,413); %set diagram left start at 0, and has height of 413

%Shape: Polygon [id:ds44525605089862696] 
\draw   (121.5,50) -- (351.5,50) -- (390.5,90) -- (160.5,90) -- cycle ;
%Shape: Ellipse [id:dp039295794451274735] 
\draw   (243,69.5) .. controls (243,64.25) and (256.43,60) .. (273,60) .. controls (289.57,60) and (303,64.25) .. (303,69.5) .. controls (303,74.75) and (289.57,79) .. (273,79) .. controls (256.43,79) and (243,74.75) .. (243,69.5) -- cycle ;
%Shape: Circle [id:dp35641078676667526] 
\draw  [color={rgb, 255:red, 6; green, 0; blue, 0 }  ,draw opacity=0.98 ][fill={rgb, 255:red, 30; green, 2; blue, 6 }  ,fill opacity=1 ] (275.98,191.07) .. controls (276.25,190.56) and (276.05,189.92) .. (275.54,189.65) .. controls (275.02,189.39) and (274.39,189.59) .. (274.12,190.1) .. controls (273.86,190.62) and (274.06,191.25) .. (274.57,191.52) .. controls (275.08,191.78) and (275.72,191.58) .. (275.98,191.07) -- cycle ;
%Shape: Polygon [id:ds8243210807146699] 
\draw  [dash pattern={on 4.5pt off 4.5pt}] (91.5,110) -- (420.5,110) -- (459.5,152) -- (130.5,151) -- cycle ;
%Shape: Ellipse [id:dp9516900273852023] 
\draw  [dash pattern={on 4.5pt off 4.5pt}] (231.5,129.5) .. controls (231.5,123.15) and (250.98,118) .. (275,118) .. controls (299.02,118) and (318.5,123.15) .. (318.5,129.5) .. controls (318.5,135.85) and (299.02,141) .. (275,141) .. controls (250.98,141) and (231.5,135.85) .. (231.5,129.5) -- cycle ;
%Shape: Polygon [id:ds9766558079960215] 
\draw  [dash pattern={on 4.5pt off 4.5pt}] (81.5,217) -- (440.5,218) -- (491.5,269) -- (132.5,269) -- cycle ;
%Shape: Ellipse [id:dp9676528298071279] 
\draw  [dash pattern={on 4.5pt off 4.5pt}] (219,244) .. controls (219,236.27) and (245.3,230) .. (277.75,230) .. controls (310.2,230) and (336.5,236.27) .. (336.5,244) .. controls (336.5,251.73) and (310.2,258) .. (277.75,258) .. controls (245.3,258) and (219,251.73) .. (219,244) -- cycle ;
%Straight Lines [id:da5086806693720248] 
\draw    (101,300) -- (154.5,355) ;
%Straight Lines [id:da7536598389277662] 
\draw    (154.5,355) -- (484.5,355) ;
%Straight Lines [id:da4291301746166396] 
\draw    (429.5,300) -- (484.5,355) ;
%Straight Lines [id:da7596139418956199] 
\draw    (101,300) -- (224.5,300) ;
%Straight Lines [id:da5345569069418549] 
\draw    (331.5,300) -- (429.5,300) ;
%Straight Lines [id:da9532703979819552] 
\draw  [dash pattern={on 4.5pt off 4.5pt}]  (224.5,300) -- (325.5,300) ;
%Shape: Arc [id:dp72100627570027] 
\draw  [draw opacity=0][dash pattern={on 4.5pt off 4.5pt}] (239.79,330.6) .. controls (244.35,324.69) and (259.66,320.53) .. (277.91,320.82) .. controls (298.3,321.15) and (315.19,326.9) .. (317.58,334) -- (278.14,335.01) -- cycle ; \draw  [dash pattern={on 4.5pt off 4.5pt}] (239.79,330.6) .. controls (244.35,324.69) and (259.66,320.53) .. (277.91,320.82) .. controls (298.3,321.15) and (315.19,326.9) .. (317.58,334) ;  
%Shape: Polygon Curved [id:ds6337868149180966] 
\draw   (219,244) .. controls (219.5,220) and (235.5,192) .. (275.5,192) .. controls (315.5,192) and (336.5,219) .. (336.5,244) .. controls (336.5,269) and (327.5,325) .. (317.58,334) .. controls (307.66,342.99) and (289.5,344) .. (278.5,344) .. controls (267.5,344) and (252.29,341.81) .. (239.5,331) .. controls (226.71,320.19) and (218.5,268) .. (219,244) -- cycle ;
%Curve Lines [id:da6513123520147034] 
\draw    (231.5,129.5) .. controls (235.5,161) and (236.5,189) .. (275.5,192) ;
%Curve Lines [id:da38692732884744474] 
\draw    (275.5,192) .. controls (311.5,192) and (316.5,163) .. (318.5,129.5) ;
%Curve Lines [id:da3203274816672308] 
\draw    (235.5,90) .. controls (233.5,98) and (231.5,120) .. (231.5,129.5) ;
%Curve Lines [id:da9152551167782166] 
\draw  [dash pattern={on 4.5pt off 4.5pt}]  (235.5,90) .. controls (238.5,77) and (242.5,68) .. (243,69.5) ;
%Curve Lines [id:da20170429030298687] 
\draw    (318.5,129.5) .. controls (318.5,111) and (316.5,102) .. (312.5,90) ;
%Curve Lines [id:da5749780966394167] 
\draw  [dash pattern={on 4.5pt off 4.5pt}]  (303,69.5) .. controls (306.5,75) and (309.5,82) .. (312.5,90) ;
%Straight Lines [id:da9467272252219167] 
\draw  [dash pattern={on 0.84pt off 2.51pt}]  (296,134) -- (364,174) ;
%Straight Lines [id:da1949729735324791] 
\draw  [dash pattern={on 0.84pt off 2.51pt}]  (264,246) -- (188,193) ;
%Straight Lines [id:da41351270535059914] 
\draw  [dash pattern={on 0.84pt off 2.51pt}]  (260,98) -- (66,98) ;
\draw [shift={(64,98)}, rotate = 360] [color={rgb, 255:red, 0; green, 0; blue, 0 }  ][line width=0.75]    (10.93,-3.29) .. controls (6.95,-1.4) and (3.31,-0.3) .. (0,0) .. controls (3.31,0.3) and (6.95,1.4) .. (10.93,3.29)   ;
%Straight Lines [id:da6605903762129495] 
\draw  [dash pattern={on 0.84pt off 2.51pt}]  (258,284) -- (73,284.99) ;
\draw [shift={(71,285)}, rotate = 359.69] [color={rgb, 255:red, 0; green, 0; blue, 0 }  ][line width=0.75]    (10.93,-3.29) .. controls (6.95,-1.4) and (3.31,-0.3) .. (0,0) .. controls (3.31,0.3) and (6.95,1.4) .. (10.93,3.29)   ;
%Curve Lines [id:da8217272030612016] 
\draw [draw opacity=1]   (271.59,79.45) .. controls (260.59,89.45) and (259.59,126.45) .. (260.59,140.45) .. controls (261.59,154.45) and (262.59,182.45) .. (274.12,190.1) ;
%Curve Lines [id:da31113922603821254] 
\draw [draw opacity=1]   (278.5,344) .. controls (261,352) and (260,243) .. (261,258) .. controls (262,273) and (256,218) .. (274.57,191.52) ;
%Straight Lines [id:da8118270494466497] 
\draw    (661,347) -- (658.02,68) ;
\draw [shift={(658,66)}, rotate = 89.39] [color={rgb, 255:red, 0; green, 0; blue, 0 }  ][line width=0.75]    (10.93,-3.29) .. controls (6.95,-1.4) and (3.31,-0.3) .. (0,0) .. controls (3.31,0.3) and (6.95,1.4) .. (10.93,3.29)   ;

% Text Node
\draw (375,52) node [anchor=north west][inner sep=0.75pt]   [align=left] {$F_+$};
% Text Node
\draw (459,120) node [anchor=north west][inner sep=0.75pt]   [align=left] {$X_+=\bbP(N_{F_+/W})$};
% Text Node
\draw (481,235) node [anchor=north west][inner sep=0.75pt]   [align=left] {$X_-=\bbP(N_{F_-/W})$};
% Text Node
\draw (470,316) node [anchor=north west][inner sep=0.75pt]   [align=left] {$F_-$};
% Text Node
\draw (268,169) node [anchor=north west][inner sep=0.75pt]  [rotate=-0.02] [align=left] {$F_0$};
% Text Node
\draw (366,177) node [anchor=north west][inner sep=0.75pt]   [align=left] {$\bbP(N_{F_0,+})$};
% Text Node
\draw (124,174) node [anchor=north west][inner sep=0.75pt]   [align=left] {$\bbP(N_{F_0,-})$};
% Text Node
\draw (46,87) node [anchor=north west][inner sep=0.75pt]   [align=left] {$b$};
% Text Node
\draw (52,275) node [anchor=north west][inner sep=0.75pt]   [align=left] {$a$};
% Text Node
\draw (122,149) node [anchor=north west][inner sep=0.75pt]   [align=left] {\textsuperscript{}};
% Text Node
\draw (672,176) node [anchor=north west][inner sep=0.75pt]   [align=left] {$\mu$};

\end{tikzpicture}

\caption{Geometry of the $3$-component $\bbC^*$-VGIT wall-crossing}
\label{fig:3-component geometry}
\end{figure}
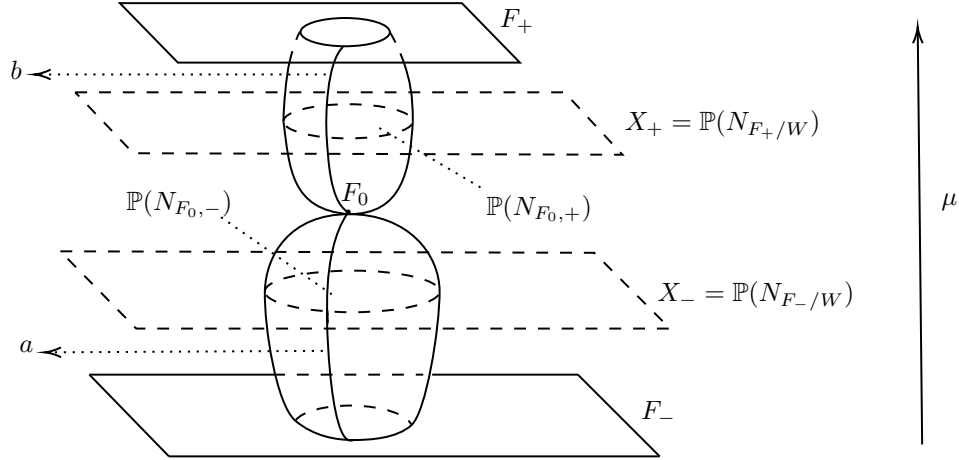

We denote the classes of the $\bbC^*$-invariant curves connecting $F_0$ and $F_{\mp}$ by $a$ and $b$ respectively. Then
$$
    a = \sigma_{F_-}(1) - \sigma_{F_0}(1), \quad b = \sigma_{F_0}(1) - \sigma_{F_+}(1), \quad \lambda^* = \sigma_{F_-}(1).
$$
Because $a+b-\lambda^*\in \NE_{\bbN}(W)+\bbN\{a-\lambda^*\}$ and $\lambda^*\in\NE_{\bbN}(W)+\bbN\{\lambda^*-a\}$, by Proposition~\ref{dual ample cone} and \eqref{dual ample cone's monoid}, the dual ample cones and the induced monoids are given by:
\begin{equation}\label{eqn:DualCone3Comp}
    \begin{aligned}
    & C_{X_-}^{\vee}=\NE(W)+\bbR_{\ge 0}\{a-\lambda^*,\lambda^*\}, && C_{X_-,\bbN}^{\vee}=\NE_{\bbN}(W)+\bbN\{a-\lambda^*,\lambda^*\},\\
    & C_{X_+}^{\vee}=\NE(W)+\bbR_{\ge 0}\{a+b-\lambda^*,\lambda^*-a\}, && C_{X_+,\bbN}^{\vee}=\NE_{\bbN}(W)+\bbN\{a+b-\lambda^*,\lambda^*-a\}.
    \end{aligned}
\end{equation}
See Figure~\ref{fig:dualamplecone}. We denote the Novikov variable for $\lambda^* - a = \sigma_{F_0}(1)$ by
$$
    Q_W^{-a}S = \widehat{S}^{\sigma_{F_0}(1)} =\colon S_{F_0}.
$$
% We will extend the base of the quantum $D$-modules using this extra variable.
We also introduce the following monoid: 
$$
    \NE_{\bbN}^{\bbC^*}(W) \coloneqq C_{X_-,\bbN}^{\vee}\cap C_{X_+,\bbN}^{\vee}=\text{NE}_{\bbN}(W)+\bbN\{a+b-\lambda^*,\lambda^*\}.
$$

\begin{figure}[h]
\centering
\begin{tikzpicture}[scale=1.2]

% Axes
\draw[->] (0,0) -- (2,0) node[below] {$\lambda^* - a$};
\draw[->] (0,0) -- (0,2) node[above] {$\NE(W)$};
\draw[->] (0,0) -- (-2, 0) node[below] {$a-\lambda^*$};
\draw[->] (0,0) -- (2,1.2) node[above] {$\lambda^*$};
\draw[->] (0,0) --(-2, 1.4) node[above] {$a+b-\lambda^*$}; 
\draw[thick, dotted, ->] (-0.9, 0.63) arc(145:0:1.099);
\node at (0.85, 1.2){$X_+$};
\draw[thick, dotted, ->] (0.8574, 0.5144) arc(31:180:1);
\node at (-0.4, 0.6){$X_-$};

\end{tikzpicture}
\caption{The dual ample cones $C_{X_{\pm}}^{\vee}$ in $N_1^{\bbC^*}(W)_{\bbR}$}
\label{fig:dualamplecone}
\end{figure}

In the 3-component case, the defining equation for the shift operator $\cS = \hcS^{\lambda^*}$ in Definition~\ref{formula for shift operator on Givental cone} specializes as follows. Given any $\bff \in \cH_{W}^{\rat}$, we have
\begin{equation}\label{eqn:ShiftOpLocal}
\begin{aligned}
    &\cS (\bff)|_{F_{+}}=Q_W^{a+b}\frac{1}{e_{-\lambda+z}(N_{F_+/W})}e^{-z\partial_{\lambda}} \bff|_{F_+},
    &&\cS^{-1} (\bff)|_{F_{+}}=Q_W^{-a-b}e_{-\lambda}(N_{F_+/W})e^{z\partial_{\lambda}} \bff|_{F_+},\\ 
    &\cS (\bff)|_{F_0}=Q_W^a\frac{e_{\lambda}(N_{F_0,+})}{e_{-\lambda+z}(N_{F_0,-})}e^{-z\partial_{\lambda}} \bff|_{F_0},
    &&\cS^{-1} (\bff)|_{F_0}=Q_W^{-a}\frac{e_{-\lambda}(N_{F_0,-})}{e_{\lambda+z}(N_{F_0,+})}e^{z\partial_{\lambda}} \bff|_{F_0},\\ 
    &\cS (\bff)|_{F_-} = e_{\lambda}(N_{F_-/W})e^{-z\partial_{\lambda}} \bff|_{F_-},
    &&\cS^{-1} (\bff)|_{F_-}=\frac{1}{e_{\lambda+z}(N_{F_-/W})}e^{z\partial_{\lambda}} \bff|_{F_-}.
\end{aligned}
\end{equation}
\begin{comment}
and 
\begin{equation}
\begin{split}
&\cS^{-1} f|_{F_{+}}=Q^{-a-b}e_{-\lambda}(N_{F_+/W})e^{z\partial_{\lambda}}f|_{F_+}, \\ &\cS^{-1} f|_{F_0}=Q^{-a}\frac{e_{-\lambda}(N_{F_0,-})}{e_{\lambda+z}(N_{F_0,+})}e^{z\partial_{\lambda}}f|_{F_0}, \\ &\cS^{-1} f|_{F_-}=\frac{1}{e_{\lambda+z}(N_{F_-/W})}e^{z\partial_{\lambda}}f|_{F_-}.
\end{split}
\end{equation}
\end{comment}

Furthermore, the cohomology decomposition in  Corollary~\ref{cor:deRhamIsoVGIT} specializes as follows.

\begin{proposition}\label{prop:deRhamIso3Comp}
Under Assumption~\ref{3-component assumption}, we have an isomorphism of cohomology groups
% \begin{equation}\label{eqn:deRhamIso3Comp}
$$
    %\Phi_1=
    \Big(\phi_2,\bigoplus\limits_{i=0}^{\abs{c_{F_0}}-1}j_{-,*}\circ (\cup h_-^{i})\circ \psi_-^*\Big)\colon H^*(X_+)\oplus\bigoplus\limits_{i=0}^{\abs{c_{F_0}}-1}H^{*-2i-2r_{F_0,+}}(F_0)\longrightarrow H^*(X_-)
$$
where $\phi_2$ is the injective map induced by the graph closure of $X_- \dashrightarrow X_+$, $j_{\pm}$ is the inclusion $\bbP(N_{F_0,\pm})\hookrightarrow X_{\pm}$, and $h_{\pm}$ is the fiber class of $\bbP(N_{F_0,\pm})$.
\end{proposition}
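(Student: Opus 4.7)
The plan is to apply Corollary~\ref{cor:deRhamIsoVGIT} directly to the 3-component $\bbC^*$-VGIT wall-crossing singled out in Section~\ref{sect:3Component}. The proposition is a pure specialization, so essentially no new geometric input is required beyond the identifications of data.

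First, I would verify that Assumption~\ref{3-component assumption} places us in the setting of Corollary~\ref{cor:deRhamIsoVGIT}. Since $W^{\bbC^*}$ has exactly three components, $E^{\bbC^*}(W)$ contains exactly two chambers, separated by a single wall whose interior of the strictly semistable locus is $F_0$; one may take ample linearizations $L_\pm$ in these two chambers with associated GIT quotients $X_\pm$. The weight-$(\pm 1)$ hypothesis guarantees condition~\eqref{cond:WtPm1} of Definition~\ref{def:SimpleWall}, and the other conditions are automatic in the torus case or were already noted in Section~\ref{sect:3Component}. Thus the wall-crossing $X_- \dashrightarrow X_+$ is simple in the sense of Definition~\ref{def:SimpleWall}.

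Next, I would translate the notation of Corollary~\ref{cor:deRhamIsoVGIT} into that of Assumption~\ref{3-component assumption}. Under the identifications of Section~\ref{sect:3Component} we have $W \gitquot G(\pm) = X_\pm$, $W^0 \gitquot G(0) = F_0$, and $W^{\pm} \gitquot G(\pm) \cong \bbP(N_{F_0,\pm})$, so the vector bundles $N_\pm$ appearing in Corollary~\ref{cor:deRhamIsoVGIT} are $N_{F_0,\pm}$, and $r_\pm = r_{F_0,\pm}$. The embeddings $W^{\pm}\gitquot G(\pm) \hookrightarrow W\gitquot G(\pm)$ become $j_\pm \colon \bbP(N_{F_0,\pm}) \hookrightarrow X_\pm$, the projections $\psi_\pm$ map $\bbP(N_{F_0,\pm}) \to F_0$, and the fiber classes $h_\pm$ agree with those in the statement. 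The convention $r_+ \le r_-$ of Corollary~\ref{cor:deRhamIsoVGIT} matches the convention $r_{F_0,+} \le r_{F_0,-}$ of Assumption~\ref{3-component assumption}, so $r_- - r_+ - 1 = \abs{c_{F_0}} - 1$ and $r_+ = r_{F_0,+}$ in the index bounds and degree shifts.

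Finally, substituting these identifications into the isomorphism of Corollary~\ref{cor:deRhamIsoVGIT} produces exactly the displayed map in Proposition~\ref{prop:deRhamIso3Comp}, where $\phi_2$ is the injective map induced by the graph closure of $X_- \dashrightarrow X_+$ (denoted $\phi_1$ in Corollary~\ref{cor:deRhamIsoVGIT}, renamed here to avoid collision with earlier notation). Since every step is a direct translation of established notation, there is no genuine obstacle; the only care needed is keeping the indexing conventions and the inequality $c_{F_0} \le 0$ consistent.
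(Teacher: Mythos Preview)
Your proposal is correct and matches the paper's approach exactly: the paper does not give a separate proof of this proposition but simply states that Corollary~\ref{cor:deRhamIsoVGIT} ``specializes as follows'' under Assumption~\ref{3-component assumption}, which is precisely the identification of data you carry out.
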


\subsection{Interactions between Kirwan maps and cohomology decomposition}\label{subsec:interaction between K and C} 
In this subsection, we study the Kirwan maps $\kappa_{X_{\pm}}$ and their interactions with the cohomology decomposition (Proposition~\ref{prop:deRhamIso3Comp}) in the situation of 3-component $\bbC^*$-VGIT wall-crossings (Assumption~\ref{3-component assumption}).

\begin{comment}Recall the commutative diagram (\ref{Total excess diagram}): 

    \begin{tikzcd}
     \cdots \arrow[r,"\partial_+"] & H^{k-2r_{F_0}^-}(\mathbb{P} 
     (N_{F_0,+});\bbC) \arrow[d, "\phi_1"] \arrow[r,"i_{F_0,*}^+"] & H^k(X_+;\bbC) \arrow[d, "\phi_2"] \arrow[r,"i_{U,+}^*"] & H^k(U;\bbC) \arrow[d, equal] \arrow[r,"\partial_+"] & \cdots \\
     \cdots \arrow[r,"\partial_-"] & H^{k-2r_{F_0}^+}(\mathbb{P}(N_{F_0,-});\bbC) \arrow[r, "i_{F_0,*}^-"] & H^k(X_-;\bbC) \arrow[r, "i_{U,-}^*"] & H^k(U;\bbC) \arrow[r, "\partial_-"] & \cdots
    \end{tikzcd}
\end{comment}

Recall that in this case, each Bia{\l}ynicki-Birula cell is identified with a vector bundle over the corresponding fixed component. Applying the Thom-Gysin sequence to the embeddings 
$$
    W_{F_0,+} \longrightarrow W_{F_0,+} \cup W_{F_-, +} = W \setminus F_+, \quad F_+ = W_{F_+,+} \longrightarrow  W_{F_+,+} \cup W_{F_0,+} \cup W_{F_-, +} = W, 
$$
% $W_{F_{\beta},+}\to \cup_{\gamma\le\beta}W_{F_{\gamma},+}.$
% Specifically, by taking $\beta=0,+$, 
of the positive cells, we obtain the following two long exact sequences:
% \begin{equation}\label{3-component Gysin 1}
$$
\begin{tikzcd}
    \cdots \arrow[r,"\partial"] & H^{*-2r_{F_0}^-}_{\bbC^*}(W_{F_0,+})  \arrow[r,"i_{F_0,*}^+"] & H^*_{\bbC^*}(W\backslash F_+)  \arrow[r,"i_{W_{F_-,+}^*}"] & H^*_{\bbC^*}(W_{F_-,+})  \arrow[r,"\partial"] & \cdots
\end{tikzcd}
$$
% \end{equation}
and
% \begin{equation}\label{3-component Gysin 2}
$$
\begin{tikzcd}
    \cdots \arrow[r,"\partial"] & H^{*-2r_{F_+}^-}_{\bbC^*}(F_+)  \arrow[r,"i_{F_+,*}^+"] & H^*_{\bbC^*}(W)  \arrow[r,"i_{W\backslash F_+}^*"] & H^*_{\bbC^*}(W\backslash F_+)  \arrow[r,"\partial"] & \cdots
\end{tikzcd}
$$
% \end{equation}
Moreover, since the equivariant Euler classes of $N_{W_{F_0,+}/W\backslash F_+}$ and $N_{F_+/W}$ are not zero divisors in $H^*_{\bbC^*}(W_{F_0,+})\cong H_{\bbC^*}^*(F_0)$ and $H_{\bbC^*}^*(F_+)$ respectively, the maps $i_{F_0,*}^+$ and $i_{F_+,*}^+$ above are injective (see \cite{kirwan1984cohomology}). Therefore, the long exact sequences split into the following short exact sequences:
\begin{equation}
\begin{tikzcd}\label{3-component Gysin 3}
    0 \arrow[r] & H^{*-2r_{F_0}^-}_{\bbC^*}(F_0)  \arrow[r,"i_{F_0,*}^+"] & H^*_{\bbC^*}(W\backslash F_+)  \arrow[r,"i_{F_-}^*"] & H^*_{\bbC^*}(F_-)  \arrow[r] & 0,
\end{tikzcd}
\end{equation}
\begin{equation}
\begin{tikzcd}\label{3-component Gysin 4}
    0 \arrow[r] & H^{*-2r_{F_+}^-}_{\bbC^*}(F_+)  \arrow[r,"i_{F_+,*}"] & H^*_{\bbC^*}(W)  \arrow[r,"i_{W\backslash F_+}^*"] & H^*_{\bbC^*}(W\backslash F_+)  \arrow[r] & 0.
\end{tikzcd}
\end{equation}
We also give the counterparts for the embeddings of the negative Bia{\l}ynicki-Birula cells below:
\begin{equation}\label{3-component Gysin 5}
\begin{tikzcd}
    0 \arrow[r] & H^{*-2r_{F_0}^+}_{\bbC^*}(F_0)  \arrow[r,"i_{F_0,*}^-"] & H^*_{\bbC^*}(W\backslash F_-)  \arrow[r,"i_{F_+}^*"] & H^*_{\bbC^*}(F_+)  \arrow[r] & 0,
\end{tikzcd}
\end{equation}
\begin{equation}\label{3-component Gysin 6}
\begin{tikzcd}
    0 \arrow[r] & H^{*-2r_{F_-}^+}_{\bbC^*}(F_-)  \arrow[r,"i_{F_-,*}"] & H^*_{\bbC^*}(W)  \arrow[r,"i_{W\backslash F_-}^*"] & H^*_{\bbC^*}(W\backslash F_-)  \arrow[r] & 0.
\end{tikzcd}
\end{equation}

Recall that the Kirwan map $\kappa_{X_{\pm}}$ is the pullback $H^*_{\bbC^*}(W)\to H_{\bbC^*}^*(W_{F_{\pm},\mp}\backslash F_{\pm})$, which factors through the composition 
$$
    H^*_{\bbC^*}(W) \longrightarrow H_{\bbC^*}^*(W_{F_{\pm},\mp}) \longrightarrow H_{\bbC^*}^*(W_{F_{\pm},\mp}\backslash F_{\pm}).
$$
This composition is the same as 
\begin{equation}\label{eqn:KirwanFactorize}
    H_{\bbC^*}^*(W) \longrightarrow H_{\bbC^*}^*(F_{\pm})\longrightarrow H_{\bbC^*}^*(F_{\pm})/e_{\bbC^*}(N_{F_{\pm}/W}).
\end{equation}
With this, we compute the kernel of the Kirwan map as follows.

\begin{lemma}\label{lem:KirwanKernel}
We have $\Ker(\kappa_{X_{\pm}}) = \big(i_{W\backslash F_{\mp}}^*\big)^{-1} \Im(i_{F_0,*}^{\mp}) + \Im(i_{F_{\pm},*})$.
\end{lemma}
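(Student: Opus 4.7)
The plan is to handle $\kappa_{X_-}$ in detail; the case of $\kappa_{X_+}$ will follow by a symmetric argument swapping \eqref{3-component Gysin 3} and \eqref{3-component Gysin 4} for \eqref{3-component Gysin 5} and \eqref{3-component Gysin 6}.  I will proceed in two steps.

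First, I will exploit the factorization \eqref{eqn:KirwanFactorize}, writing $\kappa_{X_-} = \pi \circ i_{F_-}^*$ where $\pi$ is the quotient by $e_{\bbC^*}(N_{F_-/W})$.  The kernel of $\pi$ is the ideal generated by this equivariant Euler class, which by the projection formula $i_{F_-}^* \circ i_{F_-,*}(\beta) = e_{\bbC^*}(N_{F_-/W}) \cdot \beta$ coincides with $i_{F_-}^*(\Im(i_{F_-,*}))$.  Thus $\Ker(\kappa_{X_-}) = (i_{F_-}^*)^{-1}\bigl(i_{F_-}^*(\Im(i_{F_-,*}))\bigr)$, and the elementary identity $f^{-1}(f(S)) = S + \Ker(f)$ for a linear map $f$ will give
\[
    \Ker(\kappa_{X_-}) \;=\; \Im(i_{F_-,*}) + \Ker(i_{F_-}^*).
\]

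Second, I will identify $\Ker(i_{F_-}^*)$ using \eqref{3-component Gysin 3} and \eqref{3-component Gysin 4}.  Since $F_- \subset W \setminus F_+$, the pullback factors as $i_{F_-}^* = i_{F_-}^{W\setminus F_+,*} \circ i_{W\setminus F_+}^*$; the second map is surjective by \eqref{3-component Gysin 4}, and the kernel of the first is exactly $\Im(i_{F_0,*}^+)$ by \eqref{3-component Gysin 3}.  Taking preimages yields $\Ker(i_{F_-}^*) = (i_{W\setminus F_+}^*)^{-1}(\Im(i_{F_0,*}^+))$, and substituting into the displayed identity above gives the claim.

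The argument is entirely formal once the factorization \eqref{eqn:KirwanFactorize} and the exact sequences \eqref{3-component Gysin 3}--\eqref{3-component Gysin 6} are in place, so I do not anticipate any serious obstacle beyond careful bookkeeping of the preimages and the use of surjectivity of $i_{W\setminus F_\mp}^*$.
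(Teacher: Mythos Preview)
Your proposal is correct and follows essentially the same approach as the paper: both use the factorization \eqref{eqn:KirwanFactorize} together with the projection formula to reduce the kernel computation to the Gysin sequences \eqref{3-component Gysin 3}--\eqref{3-component Gysin 4}. The paper proves the two inclusions separately by hand, whereas you package the same steps via the identity $f^{-1}(f(S)) = S + \Ker(f)$; note, incidentally, that the surjectivity of $i_{W\setminus F_\mp}^*$ you mention is not actually needed, since $\Ker(g\circ h) = h^{-1}(\Ker g)$ holds without any surjectivity hypothesis.
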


\begin{proof}
We prove the statement for $X_-$; the case of $X_+$ is similar. We first prove the inclusion ($\supseteq$). For any $\alpha\in (i_{W\backslash F_{+}}^*)^{-1} \Im(i_{F_0,*}^{+})$, using \eqref{3-component Gysin 3}, we may compute that $\alpha|_{F_-}=(\alpha|_{W\backslash F_+})|_{F_-}=0$, which implies that $\alpha \in \Ker(\kappa_{X_{-}})$. In addition, for any $\alpha \in H^*_{\bbC^*}(F_-)$, since $i_{F_-,*}(\alpha) = e_{\bbC^*}(N_{F_-/W})\alpha$, we have $\kappa_{X_-}(i_{F_-,*}(\alpha)) = 0$.

% we note that $\kappa_{X_-}(i_{F_-,*}(\alpha)) = p(e_{\bbC^*}(N_{F_-/W})\alpha)=0,$ where $p\colon H_{\bbC^*}^*(F_-)\to H^*(X_-)=H^*_{\bbC^*}(F_-)/e_{\bbC^*}(N_{F_-/W})$. It is already shown that $\big(i_{W\backslash F_{+}}^*\big)^{-1}\Im(i_{F_0,*}^{+})+\Im(i_{F_{-},*})\subset \Ker(\kappa_{X_-})$. 

We now prove the opposite direction ($\subseteq$). For any $\alpha\in \Ker(\kappa_{X_-})$, by \eqref{eqn:KirwanFactorize}, there exists  $\beta\in H_{\bbC^*}^*(F_-)$ such that $\alpha|_{F_-} = e_{\bbC^*}(N_{F_-/W})\beta$. This means that $(\alpha-i_{F_-,*}(\beta))|_{F_-}=0$. We may then conclude by \eqref{3-component Gysin 3} and \eqref{3-component Gysin 4}. 
% we have proved  $\alpha\in(i_{W\backslash F_{+}}^*)^{-1}\Im(i_{F_0,*}^{+})+\Im(i_{F_{-},*})$.
\end{proof}
 
In the next two lemmas, we describe the relation between the two Kirwan maps $\kappa_{X_{\pm}}$ and the cohomology decomposition (Proposition~\ref{prop:deRhamIso3Comp}). Let
$$
    U = X_+\backslash \bbP(N_{F_0,+}) \cong X_-\backslash\bbP(N_{F_0,-})
$$
denote the isomorphic complements of the exceptional loci and let $i_{U_{\pm}}\colon U \hookrightarrow X_{\pm}$ denote the inclusions.
Since $\phi_2$ is induced by the correspondence, we have the commutative relation $i_{U_+}^*=i_{U_-}^*\circ \phi_2\colon H^*(X_+)\to H^*(U)$.

\begin{lemma}\label{lem:KirwanIdentifyCoh}
For $\alpha_+ \in H^*(X_+)$ and $\alpha_- \in H^*(X_-)$, $i_{U_+}^*(\alpha_+) = i_{U_-}^*(\alpha_-)$ if and only if there exists $\alpha \in H_{\bbC^*}^*(W)$ such that $\kappa_{X_{\pm}}(\alpha)=\alpha_{\pm}$.
\end{lemma}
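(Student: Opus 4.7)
The plan is to prove both directions through an equivariant Mayer--Vietoris argument applied to the $\bbC^*$-invariant open union $V \coloneqq W^s(+) \cup W^s(-) \subseteq W$, combined with iterated Gysin surjectivity. A direct computation from the descriptions $W^s(\pm) = W_{F_\pm, \mp} \setminus F_\pm$ in Section~\ref{sect:3Component}, using the negative (resp.\ positive) Bia{\l}ynicki--Birula decomposition to write $W^s(+) = W \setminus (F_+ \cup F_- \cup W_{F_0,-})$ and $W^s(-) = W \setminus (F_+ \cup F_- \cup W_{F_0,+})$, shows that $V = W \setminus W^{\bbC^*}$, that $\bbC^*$ acts freely on $V$, and that $W^s(+) \cap W^s(-)$ projects to $U$ under the quotient map. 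These identifications convert $H^*_{\bbC^*}(W^s(\pm))$ and $H^*_{\bbC^*}(W^s(+) \cap W^s(-))$ into $H^*(X_\pm)$ and $H^*(U)$ respectively, with the natural restrictions coinciding with $\kappa_{X_\pm}$ and $i_{U_\pm}^*$.

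The forward direction is then immediate: if $\kappa_{X_\pm}(\alpha) = \alpha_\pm$, both $i_{U_+}^*(\alpha_+)$ and $i_{U_-}^*(\alpha_-)$ equal $\alpha|_{W^s(+) \cap W^s(-)}$, computed by factoring the equivariant restriction either through $W^s(+)$ or through $W^s(-)$, and hence they agree.

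For the reverse direction, I would argue in two steps. First, the equivariant Mayer--Vietoris sequence for the cover $V = W^s(+) \cup W^s(-)$,
\[
H^*_{\bbC^*}(V) \longrightarrow H^*(X_+) \oplus H^*(X_-) \xrightarrow{(\xi_+,\xi_-)\mapsto i_{U_+}^*(\xi_+) - i_{U_-}^*(\xi_-)} H^*(U),
\]
is exact in the middle, so the hypothesis $i_{U_+}^*(\alpha_+) = i_{U_-}^*(\alpha_-)$ provides $\beta \in H^*_{\bbC^*}(V)$ restricting to $\alpha_\pm$ on $W^s(\pm)$. Second, I would establish surjectivity of $H^*_{\bbC^*}(W) \twoheadrightarrow H^*_{\bbC^*}(V)$ by peeling off the three fixed components one at a time: the short exact sequence \eqref{3-component Gysin 4} removes $F_+$, the analogous Gysin short exact sequence for $F_- \subset W \setminus F_+$ removes $F_-$, and a final Gysin sequence for $F_0 \subset W \setminus (F_+ \cup F_-)$ removes $F_0$ with complement $V$. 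Each sequence is short exact because the equivariant Euler class of the normal bundle of the $\bbC^*$-fixed component is a polynomial in $\lambda$ with nonzero leading coefficient, hence a non-zero-divisor in $H^*_{\bbC^*}(F_i) \cong H^*(F_i)[\lambda]$, forcing the Gysin pushforward to be injective in every degree. Composing the three surjections lifts $\beta$ to $\alpha \in H^*_{\bbC^*}(W)$, and $\kappa_{X_\pm}(\alpha) = \beta|_{W^s(\pm)} = \alpha_\pm$ follows.

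The main technical point lies in Step 2: while the sequences \eqref{3-component Gysin 3}--\eqref{3-component Gysin 6} in the excerpt already cover the removal of $F_\pm$, one needs the analogous short exact Gysin sequence for removing $F_0$ from $W \setminus (F_+ \cup F_-)$. This proceeds by the same non-zero-divisor argument applied to $e_{\bbC^*}(N_{F_0/W}) = e_{\bbC^*}(N_{F_0,+}) \cdot e_{\bbC^*}(N_{F_0,-})$, which under the weight-$(\pm 1)$ hypothesis of Assumption~\ref{3-component assumption} has leading term $(-\lambda)^{r_{F_0,+}} \lambda^{r_{F_0,-}}$ up to sign.
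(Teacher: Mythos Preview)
Your proof is correct but follows a genuinely different route from the paper's.

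For the forward direction both arguments are the same: both observe that $i_{U_\pm}^* \circ \kappa_{X_\pm}$ factor through the common restriction $H^*_{\bbC^*}(W) \to H^*_{\bbC^*}(W^s(+) \cap W^s(-)) \cong H^*(U)$.

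For the converse, the paper proceeds asymmetrically: it first uses Kirwan surjectivity for $X_+$ to lift $\alpha_+$ to some $\alpha_0 \in H^*_{\bbC^*}(W)$, then observes $\kappa_{X_-}(\alpha_0) - \alpha_- \in \Ker(i_{U_-}^*) = \Im(j_{-,*})$, and finally constructs an explicit correction $\delta \in H^*_{\bbC^*}(W)$ with $\kappa_{X_+}(\delta) = 0$ and $\kappa_{X_-}(\delta) = \kappa_{X_-}(\alpha_0) - \alpha_-$. This last step uses the commutative square relating $i_{F_0,*}^-$ on $W \setminus F_-$ to $j_{-,*}$ on $X_-$ (labeled \eqref{Cohomology excess formula for iF0-} in the paper), together with the description of $\Ker(\kappa_{X_+})$ from Lemma~\ref{lem:KirwanKernel}.

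Your Mayer--Vietoris argument is more symmetric and avoids both Lemma~\ref{lem:KirwanKernel} and the explicit correction term. The trade-off is that the paper's proof, though more hands-on, sets up the diagram \eqref{Cohomology excess formula for iF0-}, which is reused immediately in the proof of the next lemma (Lemma~\ref{lem:KirwanPhi2}); your approach proves the lemma in isolation but does not yield this byproduct. One minor point: the paper's listed short exact sequences \eqref{3-component Gysin 3}--\eqref{3-component Gysin 6} do not literally include the step removing $F_-$ from $W \setminus F_+$, nor removing $F_0$ from $W \setminus (F_+ \cup F_-)$; your proof correctly notes that these follow from the same non-zero-divisor argument applied to $e_{\bbC^*}(N_{F_-/W})$ and $e_{\bbC^*}(N_{F_0/W})$ respectively.
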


\begin{proof}
First, the ``if'' direction follows directly from the observation that $i_{U_+}^*\circ \kappa_{X_+}$ and $i_{U_-}^*\circ \kappa_{X_-}$ both coincide with the pullback $H_{\bbC^*}^*(W)\to H_{\bbC^*}^*(W_{F_+,-}\cap W_{F_-,+}) \cong H^*(U)$. Conversely, suppose that $i_{U_+}^*(\alpha_+) = i_{U_-}^*(\alpha_-)$. By the surjectivity of $\kappa_{X_+}$, there exists $\alpha_0\in H_{\bbC^*}^*(W)$ such that $\kappa_{X_+}(\alpha_0)=\alpha_+$, which implies that $i_{U_-}^*(\kappa_{X_-}(\alpha_0)) = i_{U_+}^*(\kappa_{X_+}(\alpha_0)) = i_{U_-}^*(\alpha_-)$. Therefore, $\kappa_{X_-}(\alpha_0)-\alpha_-\in \Ker(i_{U_-}^*)=\Im(j_{-,*})$, where recall that $j_-$ denotes the inclusion $\bbP(N_{F_0,-})\hookrightarrow X_-$. Let $\beta\in H^*(\bbP(N_{F_0,-}))\cong H_{\bbC^*}^*(W_{F_0,-}\backslash F_0)$ such that $j_{-,*}(\beta) = \kappa_{X_-}(\alpha_0)-\alpha_-$. Consider the commutative diagram
\begin{equation}\label{Cohomology excess formula for iF0-}
    \begin{tikzcd}
       H^{*-2r_{F_0,+}}_{\bbC^*}(F_0)\cong H^*_{\bbC^*}(W\backslash F_-, W_{F_+,-}) \arrow[r,"i_{F_0,*}^{+}"]\arrow[d, "g^*"]& H^*_{\bbC^*}(W\backslash F_-)\arrow[d, "i_-^*"] \\H^{*-2r_{F_0,+}}(\bbP(N_{F_0,-}))\cong H^*_{\bbC^*}(W^{ss}(-), W^{ss}(-)\cap W^{ss}(+))\arrow[r,"j_{-,*}"]& H^*_{\bbC^*}(W^{ss}(-))\cong H^*(X_-)
    \end{tikzcd}
\end{equation}
induced by the inclusion of relative pairs $g\colon (W^{ss}(-), W^{ss}(-)\cap W^{ss}(+))\to (W\backslash F_-, W_{F_+,-})$. Here, $g^*$ can be identified with the projection $H_{\bbC^*}^*(F_0)\to H^*(\bbP(N_{F_0,-}))$ and is surjective. We take $\gamma \in H_{\bbC^*}^*(F_0)$ such that $g^*(\gamma) = \beta$. Moreover, by the surjectivity of the map $i_{W\backslash F_-}^*$ in \eqref{3-component Gysin 6}, we can find $\delta \in H_{\bbC^*}^*(W)$ such that $i_{W\backslash F_-}^*(\delta) = i^+_{F_0,*}(\gamma)$. Lemma~\ref{lem:KirwanKernel} then implies that $\kappa_{X_+}(\delta) = 0$. Therefore, the element $\alpha = \alpha_0-\delta$ satisfies that
$$
    \kappa_{X_+}(\alpha) = \kappa_{X_+}(\alpha_0) - \kappa_{X_+}(\delta) = \alpha_+,
$$
$$
    \kappa_{X_-}(\alpha) = \kappa_{X_-}(\alpha_0) - \kappa_{X_-}(\delta) = \alpha_- + j_{-,*}(\beta) - i_-^*(i_{F_0,*}^+(\gamma)) = \alpha_- + j_{-,*}(\beta) - j_{-,*}(\beta) = \alpha_-.
$$
\end{proof}

% At the end of this section, we introduce two technical lemmas, which will be used in the proof of Theorem~\ref{thm:QDMDecompExt}.
\begin{lemma}\label{lem:KirwanPhi2}
    For any $\alpha_+\in H^*(X_+)$, we can find a unique $\alpha\in H_{\bbC^*}^*(W)$ such that $\kappa_{X_+}(\alpha)=\alpha_+$, $\kappa_{X_-}(\alpha)=\phi_2(\alpha_+)$, and $\deg_{\lambda}(\alpha|_{F_0})\le r_{F_0,+}-1$ where we view $\alpha|_{F_0}\in H^*_{\bbC^*}(F_0)\cong H^*(F_0) [\lambda]$ as a polynomial in $\lambda$. 
\end{lemma}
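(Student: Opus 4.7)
My plan is to apply Lemma~\ref{lem:KirwanIdentifyCoh} first to obtain some preliminary lift of the pair $(\alpha_+,\phi_2(\alpha_+))$ in $H^*_{\bbC^*}(W)$, and then to modify it by a correction class in $\Ker(\kappa_{X_+})\cap\Ker(\kappa_{X_-})$ whose $F_0$-restriction cancels the high-$\lambda$-degree terms. Uniqueness will follow by showing the difference of any two such lifts has zero $F_0$-restriction, which in turn propagates to zero on all fixed components.

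For existence, I will first observe that since $\phi_2$ is induced by the graph closure correspondence of $X_-\dashrightarrow X_+$, the relation $i_{U_-}^*\circ\phi_2=i_{U_+}^*$ holds, so Lemma~\ref{lem:KirwanIdentifyCoh} applied to $(\alpha_+,\phi_2(\alpha_+))$ produces some $\alpha'\in H^*_{\bbC^*}(W)$ realizing conditions (1) and (2). Setting $e_\pm:=e_{\bbC^*}(N_{F_0,\pm})$, the polynomial $e_+\in H^*(F_0)[\lambda]$ is $\lambda$-monic of degree $r_{F_0,+}$, so Euclidean division gives $\alpha'|_{F_0}=e_+\cdot Q+R$ with $\deg_\lambda R\le r_{F_0,+}-1$. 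To cancel $e_+Q$, I will push $Q$ forward along the Gysin map $i_{F_0,*}^-$ in~\eqref{3-component Gysin 5} to obtain a class in $H^*_{\bbC^*}(W\setminus F_-)$ with $F_0$-restriction $e_+Q$ (by self-intersection), and then lift it via the surjection in~\eqref{3-component Gysin 6} to some $\delta_0\in H^*_{\bbC^*}(W)$. By Lemma~\ref{lem:KirwanKernel}, $\delta_0\in\Ker(\kappa_{X_+})$; moreover, since $i_{F_0,*}^-(Q)$ is supported on $W_{F_0,+}$, which is disjoint from $W^{ss}(-)=W_{F_-,+}\setminus F_-$, we also have $\delta_0|_{W^{ss}(-)}=0$ and therefore $\delta_0\in\Ker(\kappa_{X_-})$. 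The class $\alpha:=\alpha'-\delta_0$ then satisfies all three conditions, with $\alpha|_{F_0}=R$ by construction.

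For uniqueness, I will take two lifts $\alpha,\alpha''$ satisfying (1)--(3) and set $\eta:=\alpha-\alpha''\in\Ker(\kappa_{X_+})\cap\Ker(\kappa_{X_-})$. Applying Lemma~\ref{lem:KirwanKernel} to $\eta\in\Ker(\kappa_{X_-})$, I will write $\eta=\zeta+i_{F_-,*}(\mu_-)$ with $\zeta|_{W\setminus F_+}=i_{F_0,*}^+(\gamma)$ for some $\gamma\in H^*_{\bbC^*}(F_0)$. Since $F_-\cap F_0=\emptyset$ forces $i_{F_-,*}(\mu_-)|_{F_0}=0$, the self-intersection formula yields $\eta|_{F_0}=e_-\cdot\gamma$. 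Because $e_-$ is $\lambda$-monic of degree $r_{F_0,-}\ge r_{F_0,+}$ in $\lambda$, any nonzero $\gamma$ would force $\deg_\lambda(\eta|_{F_0})\ge r_{F_0,-}>r_{F_0,+}-1$, contradicting the bound; hence $\gamma=0$ and $\eta|_{F_0}=0$.

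The hard part will be promoting $\eta|_{F_0}=0$ to $\eta=0$. The route I will take is to run the analogous kernel decomposition with $\eta\in\Ker(\kappa_{X_+})$ and the same vanishing argument to obtain $\eta\in\Im(i_{F_+,*})+\Im(i_{F_-,*})$, and then to exploit the equivariant formality of $W$ (which makes the restriction $H^*_{\bbC^*}(W)\to\bigoplus_i H^*_{\bbC^*}(F_i)$ injective over $\bbC[\lambda]$) together with the compatibility between $\eta|_{F_\pm}$ and $\eta|_{F_0}=0$ imposed along $\bbC^*$-invariant rational curves in the $1$-skeleton of $W$ to force the residual pushforwards $i_{F_\pm,*}(\mu_\pm)$ to vanish. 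This final compatibility step, which ties together both Kirwan kernel descriptions, is the technical crux of the proof.
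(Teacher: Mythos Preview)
Your existence argument breaks at the claim $\delta_0\in\Ker(\kappa_{X_-})$. The Gysin class $i_{F_0,*}^-(Q)$ from~\eqref{3-component Gysin 5} is supported on the negative Bia{\l}ynicki-Birula cell $W_{F_0,-}$ (the closed stratum in $W\setminus F_- = W_{F_+,-}\sqcup W_{F_0,-}$), not on $W_{F_0,+}$; and $W_{F_0,-}$ is \emph{not} disjoint from $W^{ss}(-)$, since every point of $W_{F_0,-}\setminus F_0$ flows to $F_-$ as $t\to 0$ and hence lies in $W_{F_-,+}\setminus F_- = W^{ss}(-)$. Concretely, diagram~\eqref{Cohomology excess formula for iF0-} gives $\kappa_{X_-}(\delta_0)=j_{-,*}\big(g^*(Q)\big)$, which is nonzero whenever $Q\not\equiv 0\pmod{e_{-\lambda}(N_{F_0,-})}$. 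More fundamentally, any element of $\Ker(\kappa_{X_+})\cap\Ker(\kappa_{X_-})$ has $F_0$-restriction divisible by both $e_+$ and $e_-$, hence by $\mathrm{lcm}(e_+,e_-)$; Euclidean division by $e_+$ alone can never produce such a correction, so the existence of a lift with $\deg_\lambda(\alpha|_{F_0})\le r_{F_0,+}-1$ genuinely depends on a property of $\phi_2$ that you never invoke. The paper supplies this via a blowup computation on $\tX = X_+\times_{X_0}X_-$: if $\alpha_+|_{\bbP(N_{F_0,+})}=f(h_+)$ with $\deg f\le r_{F_0,+}-1$, then $\phi_2(\alpha_+)|_{\bbP(N_{F_0,-})}=f(-h_-)$, forcing $\alpha|_{F_0}\equiv f(\lambda)\pmod{e_-}$ in addition to $\pmod{e_+}$, whence $\alpha|_{F_0}=f(\lambda)$ after correcting by a multiple of the lcm.

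Your uniqueness argument correctly reaches $\eta|_{F_0}=0$, but the final ``$1$-skeleton compatibility'' step is left as a hope and cannot succeed as written: any class $i_{F_+,*}(\mu_+)+i_{F_-,*}(\mu_-)$ with $\mu_\pm\neq 0$ lies in $\Ker(\kappa_{X_+})\cap\Ker(\kappa_{X_-})$, restricts to zero on $F_0$, and is nonzero by~\eqref{3-component Gysin 4}. So $\eta|_{F_0}=0$ alone does not force $\eta=0$.
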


\begin{proof}
By Lemma~\ref{lem:KirwanIdentifyCoh}, there always exists $\alpha\in H_{\bbC^*}^*(W)$ that satisfies $\kappa_{X_+}(\alpha)=\alpha_+$ and $\kappa_{X_-}(\alpha)=\phi_2(\alpha_+)$. We show that the choice of $\alpha$ is unique under the degree constraint on $\alpha |_{F_0}$. Notice that for $\alpha_1,\alpha_2\in H_{\bbC^*}^*(W)$ that satisfy $\kappa_{X_+}(\alpha_1)=\kappa_{X_+}(\alpha_2)=\alpha_+$ and $\kappa_{X_-}(\alpha_1)=\kappa_{X_-}(\alpha_2)=\phi_2(\alpha_+)$, by Lemma~\ref{lem:KirwanKernel}, there exist $\beta_+,\beta_-\in H_{\bbC^*}^*(F_0)$ such that $(\alpha_1-\alpha_2)|_{W\backslash F_{\pm}}=i_{F_0,*}^{\pm}(\beta_{\pm})$. Therefore, $(\alpha_1-\alpha_2)|_{F_0}$ is divisible by both $e_{\lambda}(N_{F_0,+})$ and $e_{-\lambda}(N_{F_0,-})$, and thus by the least common multiple 
% Thus, for different choices of $\alpha$ satisfying $\kappa_{X_+}(\alpha)=\alpha_+$ and $\kappa_{X_-}(\alpha)=\phi_2(\alpha_+)$, the difference of $\alpha|_{F_0}$ must be divided by 
$\mathrm{lcm}(e_{\lambda}(N_{F_0,+}),e_{-\lambda}(N_{F_0,-}))$. This is a monic polynomial in $\lambda$ whose degree $d$ is at least $r_{F_0,-}-1$. Therefore, there exists a unique $\alpha\in H_{\bbC^*}^*(W)$ such that $\kappa_{X_+}(\alpha)=\alpha_+$, $\kappa_{X_-}(\alpha)=\phi_2(\alpha_+)$, and $\deg_{\lambda}(\alpha|_{F_0})\le d-1$. 

Our computation below will use that $X_-$ and $X_+$ share the common blowup $\tX = X_+\times_{X_0}X_-$ with common exceptional divisor $E = \bbP(N_{F_0,+})\times_{F_0} \bbP(N_{F_0,-})$. For convenience, we briefly review the cohomology decomposition of a general smooth blowup $\pi \colon \tX=\Bl_{Z}X \to X$ with exceptional divisor $E\cong \bbP(N_{Z/X})$. We have a commutative diagram
\begin{equation}\label{general commutative diagram for blow up}
\begin{tikzcd}
       H^{*}(\tX)\arrow[d,"i_{E}^*"]\arrow[r, "\pi_*"]& H^*(X)\arrow[d, "i_Z^*"] \\H^{*}(E)\arrow[r,"\varphi"]& H^*(Z).
\end{tikzcd}
\end{equation}
Under
$$
    H^*(E)\cong H^*(Z)h^0\oplus H^{*-2}(Z)h^1\oplus\cdots\oplus H^{*-2r+2}(Z)h^{r-1}
$$ 
where $r$ is the codimension of $Z$ in $X$ and $h=c_1(\cO_E(1))=-c_1(N_{E/\tX})$, if we write an element of $f \in H^*(E)$ as a polynomial in $h$ with coefficients in $H^{*-2}(Z)$, the map $\varphi$ in \eqref{general commutative diagram for blow up} is identified with the projection to the first summand, i.e. taking the constant term of $f(h)$. The commutativity of \eqref{general commutative diagram for blow up} can be checked by expanding any element $\tilde{\alpha} \in H^*(\tX)$ as $\pi^*(\delta) + i_{E,*}(g(h))$ for some $\delta \in H^*(X)$ and $g(h) \in H^*(E)$ (see \cite[Theorem 7.31]{voisin2002hodge}). 

% $p^*(\beta_0)+i_{E,*}(\sum\limits_{i=0}^{r-2}h^i\gamma_i)$

Now we return to the proof of the lemma.
% the discussion of cohomology under the 3-component Assumption~\ref{3-component assumption}. 
The map $\phi_2$ in Proposition~\ref{prop:deRhamIso3Comp} is induced by the correspondence and is given by $\pi_{-,*}\circ \pi_+^*\colon H^*(X_+)\to H^*(X_-)$, where $\pi_{\pm}\colon X_+\times_{X_0}X_- \cong \Bl_{\bbP(N_{F_0,\pm})}X_{\pm}\to X_{\pm}$ are the blowups. 
%Given any $\alpha_+\in H^*(X_+)$ with $\alpha_+|_{\bbP(N_{F_0,+})}=f(h_+)$, where $h_+$ is the fiber class of $\bbP(N_{F_0,+})$ and $f$ is a polynomial in $h_+$ with degree less than or equal to $r_{F_0,+}-1$. 
%Furthermore, $\phi_2(\alpha_+)|_{\bbP(N_{F_0,-})}=(\pi_{-,*}\circ \pi_+^*\alpha_+)|_{\bbP(N_{F_0,-})}$. 
% Recall that we denote the projection $\bbP(N_{F_0,+})\times_{F_0} \bbP(N_{F_0,-})\to \bbP(N_{F_0,\pm})$ by $p_{\pm}$ and the inclusion $\bbP(N_{F_0,\pm})\hookrightarrow X_{\pm}$ by $j_{\pm}$.
We have a commutative diagram
$$
\begin{tikzcd}
       H^{*}(X_+)\arrow[r,"\pi_+^*"]\arrow[d, "j_+^*"]& H^*(X_+\times_{X_0}X_-)\arrow[r,"\pi_{-,*}"]\arrow[d, "i_E^*"] &H^*(X_-)\arrow[d,"j_-^*"]\\H^{*}(\bbP(N_{F_0,+}))\arrow[r,"p_+^*"]& H^*(\bbP(N_{F_0,+})\times_{F_0} \bbP(N_{F_0,-}))\arrow[r,"\varphi"]& H^*(\bbP(N_{F_0,-}))
\end{tikzcd}  
$$
where the square on the right is \eqref{general commutative diagram for blow up} applied to $\pi_-$.
% where $i^*$ is the pullback of cohomology to the exceptional divisor, and $\varphi'$ is the analogue of $\varphi$ in \eqref{general commutative diagram for blow up} by viewing $X_+\times_{X_0}X_-$ as the blowup of $X_-$ with center $\bbP(N_{F_0,-})$. 
Let $h_{\pm}$ denote the fiber class of $\bbP(N_{F_0,\pm})$ and, by an abuse of notation, also its pullback to $\bbP(N_{F_0,+})\times_{F_0} \bbP(N_{F_0,-})$.  By Proposition~\ref{prop:VGITGeometry}, the normal bundle of $\bbP(N_{F_0,+})\times_{F_0} \bbP(N_{F_0,-})$ in $X_+\times_{X_0}X_-$ is isomorphic to $\cO(-1,-1)$, and $c_1(\cO(-1,-1)) = -h_+ - h_-$.
Then for any $\Tilde{f}=\Tilde{f}(h_+,h_-)\in H^*(\bbP(N_{F_0,+})\times_{F_0} \bbP(N_{F_0,-}))$, if we view $\Tilde{f}$ as a polynomial in $h_+ + h_-$ with coefficients in $H^*(\bbP(N_{F_0,-}))$, it follows from the discussion on \eqref{general commutative diagram for blow up} above that $\varphi(\Tilde{f})$ is the projection to the constant term with respect to $h_+ + h_-$.
Now, for $\alpha_+\in H^*(X_+)$, if we write $\alpha_+|_{\bbP(N_{F_0,+})}=f(h_+)$ where $f$ is a polynomial in $h_+$ with degree at most $r_{F_0,+}-1$, we compute that
$$
    \phi_2(\alpha_+)|_{\bbP(N_{F_0,-})} = \pi_{-,*}\circ \pi_+^*(\alpha_+)|_{\bbP(N_{F_0,-})} = \varphi\circ p_+^*(\alpha_+|_{\bbP(N_{F_0,+})}) = \varphi(f(h_+)).
$$
This is equal to the constant term of $f(h_+)$ with respect to $h_+ + h_-$ and can be computed to be $f(-h_-)$. Therefore, we have 
$$
    \kappa_{X_+}(\alpha)|_{\bbP(N_{F_0,+})} = \alpha_+|_{\bbP(N_{F_0,+})} =f(h_+), \quad \kappa_{X_-}(\alpha)|_{\bbP(N_{F_0,-})} = \phi_2(\alpha_+)|_{\bbP(N_{F_0,-})} =f(-h_-).
$$
It then follows from \eqref{Cohomology excess formula for iF0-} that $(\alpha|_{F_0})_{\lambda\mapsto \pm h_{\pm}}=f(\pm h_{\pm})$, which implies that 
$$
    \alpha|_{F_0}\equiv f(\lambda) \text{ mod }e_{\pm\lambda}(N_{F_0,\pm}).
$$
Finally, because $\deg_{\lambda}(\alpha|_{F_0})\le d-1$, $\alpha|_{F_0}$ must be equal to $f(\lambda)$, which is a polynomial in $\lambda$ of degree at most $r_{F_0,+}-1$.
\end{proof}

We conclude the subsection with a computation of the interaction between the cohomology decomposition (Proposition~\ref{prop:deRhamIso3Comp}) and the Poincar\'e pairings.

\begin{lemma}\label{lem:DecompClassicalPairing}
In the context of Proposition~\ref{prop:deRhamIso3Comp}, for any $\alpha_1,\alpha_2\in H^*(X_+)$,  $\beta_1,\beta_2\in H^*(F_0)$, and $0\le l_1,l_2\le\abs{c_{F_0}}-1$, we have the following identities:
\begin{align}
    &\int_{X_-}\phi_2(\alpha_1)\cup\phi_2(\alpha_2) = \int_{X_+}\alpha_1\cup\alpha_2,\label{poincare pairing phi2 cup phi2} \\
    &\int_{X_-}\phi_2(\alpha_1)\cup j_{-,*}(h_-^{l_1} \psi_-^*(\beta_1)) = 0, \label{Poincare pairing phi2 cup proj}
    \\&
    \int_{X_-}j_{-,*}(h_-^{l_1} \psi_-^*(\beta_1))\cup j_{-,*}(h_-^{l_2} \psi_-^*(\beta_2))= \begin{cases}
        0, & l_1+l_2<\abs{c_{F_0}}-1,\\
        (-1)^{r_{F_0,+}}\int_{F_0}\beta_1\cup\beta_2, & l_1+l_2=\abs{c_{F_0}}-1.
    \end{cases}\label{Poincare pairing proj cup proj}
\end{align}
\end{lemma}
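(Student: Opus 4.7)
The plan is to prove all three identities by direct computation, using the common resolution $\tX = X_+\times_{X_0}X_-$, the projection formula, and the explicit restriction formula from Lemma~\ref{lem:KirwanPhi2}: writing $\alpha|_{\bbP(N_{F_0,+})} = f(h_+)$ as the unique polynomial of degree at most $r_{F_0,+}-1$ in $h_+$ with coefficients in $H^*(F_0)$, one has $\phi_2(\alpha)|_{\bbP(N_{F_0,-})} = f(-h_-)$. Throughout, I will repeatedly use the Segre class pushforward $\psi_{-,*}(h_-^k) = s_{k-r_{F_0,-}+1}(N_{F_0,-})$, which vanishes when $k\le r_{F_0,-}-2$.

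Identities \eqref{Poincare pairing phi2 cup proj} and \eqref{Poincare pairing proj cup proj} reduce to polynomial arithmetic. For \eqref{Poincare pairing phi2 cup proj}, the projection formula rewrites the pairing as $\int_{F_0}\psi_{-,*}(f(-h_-)h_-^{l_1})\cup\beta_1$; since the $h_-$-degree of $f(-h_-)h_-^{l_1}$ is at most $r_{F_0,+}-1+l_1 \le r_{F_0,-}-2$, the Segre pushforward vanishes. For \eqref{Poincare pairing proj cup proj}, combining the projection formula with the self-intersection $j_-^*j_{-,*}(\eta) = e(\psi_-^*N_{F_0,+}(-1))\eta$ and expanding $e(\psi_-^*N_{F_0,+}(-1)) = \sum_{k=0}^{r_{F_0,+}}(-1)^{r_{F_0,+}-k}\psi_-^*c_k(N_{F_0,+})h_-^{r_{F_0,+}-k}$, only the term with $r_{F_0,+}-k+l_1+l_2 = r_{F_0,-}-1$ survives; this forces $k = 0$ and $l_1+l_2 = |c_{F_0}|-1$, yielding $(-1)^{r_{F_0,+}}\int_{F_0}\beta_1\beta_2$.

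The substantive part is \eqref{poincare pairing phi2 cup phi2}. Since $\pi_\pm\colon\tX\to X_\pm$ are birational, I have $\int_{X_\pm}\eta = \int_{\tX}\pi_\pm^*\eta$. The blowup formula for $\pi_-$ gives $H^*(\tX) = \pi_-^*H^*(X_-) \oplus A_-$ with $A_- = \bigoplus_{k=0}^{r_{F_0,+}-2} i_{E,*}((h_++h_-)^k p_-^*H^*(\bbP(N_{F_0,-})))$; a direct Segre computation shows $A_-\subset \ker\pi_{-,*}$. I write $\pi_+^*\alpha_i = \pi_-^*\phi_2(\alpha_i) + \nu_i$ with $\nu_i\in A_-$ and expand the product. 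Cross terms $\int_{\tX}\pi_-^*\phi_2(\alpha_i)\cup\nu_j$ vanish by projection formula, so the claim reduces to $\int_{\tX}\nu_1\nu_2 = 0$. Using Lemma~\ref{lem:KirwanPhi2} I compute $i_E^*\nu_i = f_i(h_+) - f_i(-h_-)$ and factor this as $(h_++h_-)g_i$, where $g_i$ has total $h_\pm$-degree at most $r_{F_0,+}-2$; the self-intersection relation $i_E^*i_{E,*} = -(h_++h_-)$ then gives $\int_{\tX}\nu_1\nu_2 = -\int_E (h_++h_-)g_1g_2$. The integrand has total degree at most $2r_{F_0,+}-3$ in $h_\pm$, while a nonzero $\int_E h_+^ah_-^b\xi$ requires both $a\ge r_{F_0,+}-1$ and $b\ge r_{F_0,-}-1$, i.e.\ $a+b\ge r_{F_0,+}+r_{F_0,-}-2\ge 2r_{F_0,+}-2$; this degree incompatibility forces the integral to vanish.

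The main delicate step is the reduction to the identity $\int_{\tX}\nu_1\nu_2 = -\int_E(h_++h_-)g_1g_2$, which is independent of the mild non-uniqueness of lifting $\nu_i$ through $i_{E,*}$ because multiplying by $h_++h_-$ kills the ambiguity. Once this is in place, the argument is a pure degree count, and it is the inequality $r_{F_0,+}\le r_{F_0,-}$ from Assumption~\ref{3-component assumption} that creates the required gap between $2r_{F_0,+}-3$ and $r_{F_0,+}+r_{F_0,-}-2$.
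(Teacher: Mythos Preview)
Your proof is correct and follows essentially the same route as the paper: identities \eqref{Poincare pairing phi2 cup proj} and \eqref{Poincare pairing proj cup proj} are handled identically, and for \eqref{poincare pairing phi2 cup phi2} both you and the paper pass to the common blowup $\tX$, reduce to the vanishing of $\int_E (h_++h_-)g_1g_2$, and then argue by a degree count on $h_\pm$. The only cosmetic difference is in that last step: the paper rewrites the integrand asymmetrically as $(f_1(h_+)-f_1(-h_-))g_2$ and uses that $f_1(h_+)-f_1(-h_-)$ contains only pure $h_+$- or pure $h_-$-monomials, whereas your symmetric total-degree bound $2r_{F_0,+}-3 < r_{F_0,+}+r_{F_0,-}-2$ reaches the same conclusion a bit more cleanly.
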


\begin{proof}
The last statement \eqref{Poincare pairing proj cup proj} follows from the computation 
\begin{align*}
    \int_{X_-}j_{-,*}(h_-^{l_1} \psi_-^*(\beta_1)) \cup j_{-,*}(h_-^{l_2} \psi_-^*(\beta_2)) &= \int_{\bbP(N_{F_0,-})}j_{-}^*j_{-,*}(h_-^{l_1+l_2} \psi_-^*(\beta_1 \cup \beta_2))\\
    &= e(\psi_-^*(N_{F_0,+})(-1))h_-^{l_1+l_2}\psi_-^*(\beta_1\cup\beta_2),
\end{align*} 
where $e(\psi_-^*(N_{F_0,+})(-1))$ is a polynomial in $h_-$ with coefficients in $H^*(F_0)$ whose leading term is $(-h_-)^{r_{F_0,+}}$. For the statement \eqref{Poincare pairing phi2 cup proj}, recall from the proof of Lemma~\ref{lem:KirwanPhi2} that if $\alpha_1|_{\bbP(N_{F_0,+})}=f(h_+)$, where $f$ is a polynomial of degree at most $r_{F_0,+}-1$, it holds that $\phi_2(\alpha_1)|_{\bbP(N_{F_0,-})}=f(-h_-)$. Therefore, we have
\begin{align*}
    \int_{X_-}\phi_2(\alpha_1) \cup j_{-,*}(h_-^{l_1} \psi_-^*(\beta_1)) &=\int_{\bbP(N_{F_0,+})} \phi_2(\alpha_1)|_{\bbP(N_{F_0,-})}\cup h_-^{l_1} \psi_-^*(\beta_1)\\
    &= \int_{\bbP(N_{F_0,-})}f(-h_-)h_-^{l_1}\psi_-^*(\beta_1).
\end{align*}
Notice that $\deg_{h_-} (f(-h_-)h_-^{l_1}) \le \abs{c_{F_0}}-1+r_{F_0,+}-1= r_{F_0,-}-2$. This proves \eqref{Poincare pairing phi2 cup proj}. 

Finally, we show \eqref{poincare pairing phi2 cup phi2}. This statement appears in \cite[Section 2.3]{lee2010flops} in terms of Chow motives. Here, we give a proof by explicit calculations. We still first consider the general blowup model $\pi \colon \tX=\Bl_{Z}X\to X$ with exceptional divisor $E\cong \bbP(N_{Z/X})$ as in the proof of Lemma~\ref{lem:KirwanPhi2}. For any $\tilde{\alpha}_1, \tilde{\alpha}_2\in H^*(\tX)$ expanded as $\pi^*(\delta_1)+i_{E,*}(g_1(h))$ and $\pi^*(\delta_2)+i_{E,*}(g_2(h))$ respectively, where $\delta_1, \delta_2 \in H^*(X)$ and $g_1, g_2$ are polynomials in $h$ with degrees at most $r-2$, we notice that
% \begin{equation}\label{Critierion for blow-up poincare pairing preserving}
$$   
    \int_{\tX}\tilde{\alpha}_1\cup \tilde{\alpha}_2 = \int_X \pi_*(\tilde{\alpha}_1) \cup \pi_*(\tilde{\alpha}_2)
$$
if and only if 
$$
    \int_{E} h\cup g_1(h) \cup g_2(h)=0.
$$
% \end{equation}
In addition, denoting the constant terms of $\tilde{\alpha}_1|_E$ and $\tilde{\alpha}_2|_{E}$ with respect to $h$ by $u_1$ and $u_2$ respectively, % where $\tilde{\alpha}_1|_{E}$ and $\tilde{\alpha}_2|_{E}$ are polynomials of $h$, 
we have $g_i(h) = (\tilde{\alpha}_i|_{E}-u_i)/h$ for $i=1,2$. 

Now we return to proving \eqref{poincare pairing phi2 cup phi2}. % For $\pi_{\pm}\colon X_+\times_{X_0}X_-\to X_{\pm}$ and $p_{\pm}\colon \bbP(N_{F_0,+})\times_{F_0}\bbP(N_{F_0,-})\to \bbP(N_{F_0,\pm})$, 
% Set $v_1 = \pi_+^*(\alpha_1)$ and $v_2 = \pi_+^*(\alpha_2)$. 
Notice that $\pi_{+,*}\circ\pi_+^* = \Id_{H^*(X_+)}$, which implies that 
$$
    \int_{X_+}\alpha_1\cup\alpha_2 = \int_{X_+\times_{X_0}X_-} \pi_{+}^*(\alpha_1) \cup \pi_+^*(\alpha_2).
$$
Then, it suffices to prove 
$$
    \int_{X_+\times_{X_0}X_-} \pi_{+}^*(\alpha_1) \cup \pi_+^*(\alpha_2) = \int_{X_-}\pi_{-,*}(\pi_{+}^*(\alpha_1)) \cup \pi_{-,*}(\pi_{+}^*(\alpha_2)).
$$
Denote $\alpha_1|_{\bbP(N_{F_0,+})}$ and $\alpha_2|_{\bbP(N_{F_0,+})}$ by $f_1(h_+)$ and $f_2(h_+)$ respectively, where $\deg_{h_+}(f_1)$ and $\deg_{h_+}(f_2)$ are both at most $r_{F_0,+}-1$. % Because $\pi_-$ is the natural projection $\Bl_{\bbP(N_{F_0,-})}X_-\to X_-$, by \eqref{Critierion for blow-up poincare pairing preserving}, 
By the above discussion on the blowup model applied to $\pi_-$, it suffices to prove that
\begin{equation}\label{int on double proj bundle}
    \int_{\bbP(N_{F_0,+})\times_{F_0}\bbP(N_{F_0,-})}(h_+ + h_-)\frac{f_1(h_+)-f_1(-h_-)}{h_+ + h_-}\frac{f_2(h_+)-f_2(-h_-)}{h_+ + h_-}=0,
\end{equation} 
where recall $h_+ + h_-$ is the first Chern class of $N_{E/\tX}$ and $f_1(-h_-)$, $f_2(-h_-)$ are the constant terms of $f_1(h_+)$, $f_2(h_+)$ with respect to $h_++h_-$ respectively. The integrand on the left-hand side of \eqref{int on double proj bundle} can be rewritten as
$$
    (f_1(h_+)-f_1(-h_-))\frac{f_2(h_+)-f_2(-h_-)}{h_+ + h_-}
$$
where $\frac{f_2(h_+)-f_2(-h_-)}{h_++h_-}$ is a polynomial in $h_+,h_-$ with coefficients in $H^*(F_0)$ and degree at most $r_{F_0,+}-2$. Because $f_1(h_+)-f_1(-h_-)$ only contains monomials in only $h_+$ or $h_-$, for any monomial in the integrand, either its $h_+$-degree is strictly less than $r_{F_0,+}-1$, or its $h_-$-degree is less than $r_{F_0,-}-1$. Therefore, the integral \eqref{int on double proj bundle} is zero.
\end{proof}

\begin{comment}
\begin{align*}
    &\int_{\bbP(N_{F_0,+})\times_{F_0}\bbP(N_{F_0,-})}(h_+ + h_-)\frac{f_1(h_+)-f_1(-h_-)}{h_+ + h_-}\frac{f_2(h_+)-f_2(-h_-)}{h_+ + h_-}\\
    =&\int_{\bbP(N_{F_0,+})\times_{F_0}\bbP(N_{F_0,-})} (f_1(h_+)-f_1(-h_-))\frac{f_2(h_+)-f_2(-h_-)}{h_+ + h_-}\\
    =&\int_{\bbP(N_{F_0,+})\times_{F_0}\bbP(N_{F_0,-})}(f_1(h_+)-f_1(-h_-))g(h_+,h_-),
\end{align*}
where $g(h_+,h_-)=\frac{f_2(h_+)-f_2(-h_-)}{h_++h_-}$ is a polynomial in $h_+,h_-$ with coefficients in $H^*(F_0)$ and degree $\le r_{F_0,+}-2$. Because $f_1(h_+)-f_1(-h_-)$ only contains monomials of $h_+$ or $h_-$, for any monomial of $(f_1(h_+)-f_1(-h_-))g(h_+,h_-)$, either its $h_+$-degree is strictly less than $r_{F_0,+}-1$, or its $h_-$-degree is less than $r_{F_0,-}-1$. Therefore, the integral \eqref{int on double proj bundle} is $0$.
\end{comment}

% !TEX root = vgit.tex

\section{Fourier transformations on quantum cohomology}
\label{sect:Fourier}
In this section, we discuss Fourier transformations on equivariant quantum cohomology and quantum $D$-modules.
% , which relate the equivariant quantum cohomology of a $T$-variety with the quantum cohomology of its fixed components, as well as its GIT quotients. 
% We restrict the discussion to the case $G = \bbC^*$.
% For the purpose of this paper, we only consider the $\bbC^*$-variety. 
As in Section~\ref{subsec:BB decom}, let $W$ be a smooth projective variety with a $\bbC^*$-action whose fixed locus is $W^{\bbC^*} = F_1 \sqcup \cdots \sqcup F_r$. We do not assume that $r = 3$. 

% \orange{Note:} Our convention for projective bundles is that, for a vector bundle $E$ over a base $B$, $\bbP(E)$ is the bundle of lines in the fibers of $E$ over $B$. This is the same convention as in \cite{iritani-koto2023quantum} which agrees with Fulton and Voisin but is different from Hartshorne or the Stacks Project.

\subsection{Continuous Fourier transformations on Givental spaces}\label{sect:ContinuousFT}
In this subsection, we review the continuous Fourier transformation associated to a fixed component $F = F_i$ of the $\bbC^*$-action on $W$, following \cite[Section 4.2]{iritani2023quantum}. The following result is due to \cite[Theorem 2(i)]{brown2014gromov} and \cite[Theorem 3.5(1)]{fan2020on}; see also \cite[Proposition 4.2]{iritani2023quantum}.

\begin{proposition}[\cite{brown2014gromov,fan2020on}]\label{prop:WRestriction}
Let $\bff$ be a point on the $\bbC^*$-equivariant Givental cone $\cL_W$ of $W$ and $\bff |_F$ be its restriction to $F$. Then the Laurent expansion of $\bff |_F$ at $z = 0$ is a point on the twisted Givental cone $\cL_{F, (N_{F/W}, e_{\bbC^*}^{-1})}$.
\end{proposition}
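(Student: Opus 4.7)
The plan is to apply $\bbC^*$-equivariant virtual localization \cite{graber1999localization} to the moduli spaces of stable maps to $W$, following the strategy of Brown \cite{brown2014gromov} and Fan--Lee \cite{fan2020on}. Since $\cL_W$ is ruled by its tangent spaces $zT_\theta$ at points $zJ_W(\theta)$ (Proposition~\ref{proposition of Givental cone}\eqref{prop:Givental cone 3}) and restriction to $F$ is $\bbC[z]\formal{Q_W}$-linear, it is enough to verify the claim on each $zJ_W(\theta)$ together with the $\bbC[z]\formal{Q_W}$-span of the derivatives $z\partial_{\theta^{i,n}}J_W(\theta)=M_W(\theta)(\varphi_i\lambda^n)$; these sweep out $T_\theta$, and their images under restriction should sweep out tangent spaces to the twisted cone $\cL_{F,(N_{F/W}, e_{\bbC^*}^{-1})}$.

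Next, to compute $(zJ_W(\theta))|_F$ together with its Laurent expansion at $z=0$, I would localize each moduli space $\overline{\cM}_{0,n+1}(W,\beta)$ along its $\bbC^*$-fixed locus. The fixed strata are indexed by decorated graphs whose vertices carry stable maps into the various fixed components and whose edges record $\bbC^*$-invariant multiple covers of the one-dimensional orbit closures joining them. Restricting the insertion $\varphi_i/(z-\psi)$ to $F$ forces the last (special) marked point to lie in $F$; the corresponding ``vertex contribution'' at that marked point produces exactly the factor $1/e_{\bbC^*}(N_{F/W})$ needed to convert ordinary insertions on $F$ into $(N_{F/W}, e_{\bbC^*}^{-1})$-twisted ones, via the standard dictionary between equivariant Euler classes of virtual normal bundles and twisting classes.

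The remaining legs and internal vertices of each graph decorate with data on the other fixed components $F_j\ne F$, and assemble into a modification of the input parameter: writing the regular part of $(zJ_W(\theta))|_F$ at $z=0$ as $z + \widetilde{\bft}(z)$ for a uniquely determined $\widetilde{\bft}(z)\in \cH_{F}^{+,\tw}$, one identifies the full Laurent expansion with $\cJ_F^{\tw}(\widetilde{\bft}(z))$, i.e.\ with a point of $\cL_{F,(N_{F/W}, e_{\bbC^*}^{-1})}$ parametrized via the formula for points of the twisted cone recalled in Section~\ref{subsec:Givental formalism}.

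The main technical obstacle is to show that each leg, once pulled back to the $F$-vertex, contributes a series in $z$ that is regular (in fact polynomial) at $z=0$, so that the assembled $\widetilde{\bft}(z)$ genuinely lies in $\cH_F^{+,\tw}$ rather than picking up a pole at $z=0$ that would spoil its being a valid input on the twisted cone. This polynomiality is the heart of the arguments in \cite{brown2014gromov, fan2020on}: it is obtained by a careful comparison of the $\psi$-classes at the nodes joining edges to the $F$-vertex with the $\bbC^*$-weights on the summands of $N_{F/W}$, so that the apparent $(z-\psi)^{-1}$ singularities from the node smoothing cancel against the equivariant Euler factors of the edge moduli. I would invoke their results directly to conclude, rather than redo the graph-sum analysis.
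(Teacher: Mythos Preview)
The paper does not give its own proof of this proposition; it simply attributes the result to \cite{brown2014gromov} and \cite{fan2020on} (with a pointer also to \cite[Proposition 4.2]{iritani2023quantum}) and states it without argument. Your outline accurately summarizes the strategy of those references---virtual localization on $\overline{\cM}_{0,n+1}(W,\beta)$, identification of the $F$-vertex contribution with the $(N_{F/W},e_{\bbC^*}^{-1})$-twisting, and the polynomiality of leg contributions at $z=0$---so as far as comparison with the paper goes, you have supplied strictly more than the paper does.

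One small comment on your reduction step: the claim that it suffices to check $zJ_W(\theta)$ and its $\theta$-derivatives is correct because $\cL_W=\bigcup_\theta zT_\theta$ and $T_\theta$ is the $\bbC[z]\formal{Q_W}$-span of the $M_W(\theta)(\varphi_i\lambda^n)$, but to conclude for an arbitrary $\bff\in zT_\theta$ you also need that the restriction of each $M_W(\theta)(\varphi_i\lambda^n)$ lands in the \emph{same} tangent space $T^{\tw}_{\tau}$ of the twisted cone (not merely somewhere on $\cL_{F,(N_{F/W},e_{\bbC^*}^{-1})}$), so that $\bbC[z]\formal{Q_W}$-combinations stay on the cone. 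This is exactly what the cited references establish, and your final paragraph already anticipates invoking them for the hard part.
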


Here, the Givental space of $F$ is extended from the Novikov ring $\bbC\formal{Q_F}$ of $F$ to the Novikov ring $\bbC\formal{Q_W}$ of $W$ via the pushforward of curve classes $Q_F^\beta \mapsto Q_W^{i_{F,*}(\beta)}$. 
%In particular, by Lemma~\ref{lem:KirwanC1}, the extension of the Novikov ring is homogeneous.

For a point $\bff$ in the $\bbC^*$-equivariant Givental space $\cH_W^{\rat}$, the continuous Fourier transformation of $\bff$ is defined by the stationary phase expansion of the Mellin-Barnes integral
\begin{equation}\label{eqn:MellinBarnes}
    \int e^{\lambda \log S_F/z}G_F \bff |_F d\lambda
\end{equation}
as $z \to 0$. Here, $S_F \coloneqq \hS^{\sigma_F(1)}$ and $G_F$ is the operator
$$
    G_F \coloneqq \prod_\delta \frac{1}{\sqrt{-2\pi z}}(-z)^{-\delta/z}\Gamma \left( -\frac{\delta}{z} \right)
$$
where the product is taken over the $\bbC^*$-equivariant Chern roots $\delta$ of $N_{F/W}$. The asymptotic expansion of $G_F$ via the Stirling approximation of the Gamma function is related to quantum Riemann-Roch operators. Specifically, recall the weight decomposition~\eqref{eqn:NormalWtDecomp} of $N_{F/W}$. For $c \in \bbZ$, let
$$
    \rho_c \coloneqq c_1(N_{F,c}), \qquad r_c \coloneqq \rank(N_{F,c}), \qquad \Delta_c \coloneqq \Delta_{(N_{F,c}, e_{\bbC^*}^{-1})}. % \qquad \tDelta_c := \Delta_{(N_{F,c}, \te_{\bbC^*}^{-1})} = (c\lambda)^{-\rho_c/z - r_c/2} \Delta_c.
$$
Then as $z \to 0$ along an appropriate angular region, we have
$$
    \log G_F \sim \sum_{c \in \bbZ} - r_c \frac{c\lambda \log (c\lambda) - c\lambda}{z} - \log \Delta_c.
$$
This leads to the following asymptotics of the integral~\eqref{eqn:MellinBarnes}:
$$
    \int e^{\eta(\lambda)/z} \left( \prod_{c \in \bbZ} \Delta_c^{-1} \right) \bff |_F d\lambda
$$
where
$$
    \eta(\lambda) \coloneqq \lambda \log S_F - \sum_{c \in \bbZ} r_c(c\lambda \log (c\lambda) - c\lambda)
$$
is the phase function. Set
$$
    c_F \coloneqq \sum_{c \in \bbZ} r_c c, \qquad \rho_F \coloneqq c_1(N_{F/W}) = \sum_{c \in \bbZ} \rho_c, \qquad r_F \coloneqq \rank(N_{F/W}) = \sum_{c \in \bbZ} r_c.
$$
We assume that $c_F \neq 0$. Then $\eta(\lambda)$ has $\abs{c_F}$ critical points, which are given by
\begin{equation}\label{eqn:Lambdaj}
    \lambda_j \coloneqq e^{2\pi\sqrt{-1}j/c_F} (S_F)^{1/c_F} \prod_{c \in \bbZ} c^{-r_c c/c_F}, \qquad j = 0, \dots, \abs{c_F}-1.
\end{equation}

\begin{definition}\label{def:ContinuousFT}
For $j = 0, \dots, \abs{c_F}-1$, the \emph{continuous Fourier transformation}
$$
    \sF_{F, j}\colon \cH_W^{\rat} \longrightarrow S_F^{-\rho_F/(c_F z) - (r_F - 1)/(2c_F)} H^*(F)[z, z^{-1}]\biglaurent{S_F^{-1/c_F}}\formal{Q_W}
$$
is defined by the stationary phase expansion of~\eqref{eqn:MellinBarnes} at the critical point $\lambda_j$ as $z \to 0$: For $\bff \in \cH_W^{\rat}$, we have
$$
    \int e^{\lambda \log S_F/z}G_F \bff |_F d\lambda \sim \sqrt{2\pi z} e^{c_F \lambda_j/z} \sF_{F,j}(\bff).
$$
\end{definition}

The leading order of the continuous Fourier transformation as Laurent series in $S_F^{-1/c_F}$ can be described as follows. For $j = 0, \dots, \abs{c_F} - 1$, define
\begin{equation}\label{eqn:qFj}
    q_{F, j} \coloneqq \sqrt{\frac{\lambda_j}{c_F}} \prod_{c \in \bbZ} (c\lambda_j)^{-r_c/2} \quad \in \bbC S_F^{-(r_F-1)/(2c_F)}
\end{equation}
and
$$
    h_{F, j} \coloneqq -\frac{2\pi\sqrt{-1} j}{c_F} \rho_F + \sum_{c \in \bbZ} \left(\frac{r_c c}{c_F} \rho_F - \rho_c \right) \log c \quad \in H^2(F).
$$
Then, for $\bff$ such that $\bff |_F$ takes form $\alpha \lambda^n + O(\lambda^{n-1})$ for some $\alpha \in H^*(F)$, $\alpha \neq 0$, we have
\begin{equation}\label{eqn:ContFTLeading}
    \sF_{F, j}(\bff) = q_{F, j} e^{h_{F, j}/z} S_F^{-\rho_F/(c_F z)} \lambda_j^n \left( \alpha + O(S_F^{1/c_F}) \right).
\end{equation}

We collect useful properties of the continuous Fourier transformation from \cite[Section 4.2]{iritani2023quantum}. We first make the following definition.

% Propositions 4.7, 4.8, Corollary 4.9, Proposition 5.7

\begin{definition}\label{def:KirwanFixedComp}
We define the map
$$
    \kappa_F \colon H_{\bbC^*}^*(W) \longrightarrow H^*(F), \qquad \alpha \longmapsto (\alpha |_F) |_{\lambda = -\frac{\rho_F}{c_F}}.
$$
The dual map of the restriction of $\kappa_F$ to $H^2$ is defined by
$$
    \kappa_F^* \colon N_1(F) \longrightarrow N_1^{\bbC^*}(W)_{\bbQ}, \quad \beta \longmapsto i_{F,*}(\beta) - \frac{(\beta, \rho_F)}{c_F} \sigma_F(1).
$$
\end{definition}

We note that in the general case where $c_F \neq \pm 1$, the evaluation of $\lambda$ in $\kappa_F$ is rational, and $\kappa_F^*$ is only defined over $\bbQ$. Moreover, $\kappa_F$ is in general not surjective, and $\kappa_F^*$ not injective. The map $\kappa_F^*$ induces a monoid homomorphism $\NE_{\bbN}(F) \to \NE_{\bbN}(W) + \bbZ \frac{1}{2c_F} \sigma_F(-1)$ and a ring map
$$
    \bbC\formal{Q_F} \longrightarrow \bbC\Biglaurent{S_F^{-\frac{1}{2c_F}}}\formal{Q_W}, \quad Q_F^{\beta} \longmapsto \hS^{\kappa_F^*(\beta)} = Q_W^{i_{F,*}\beta}S_F^{-\frac{(\beta, \rho_F)}{c_F}}.
$$

\begin{proposition}\label{prop:ContFTCone}
The continuous Fourier transformation $\sF_{F,j}$ satisfies the following properties:
\begin{enumerate}[wide]
    \item \label{prop:prop1 of Continuous FT on cone level} $\sF_{F,j}\circ\hcS^{\beta} = \hS^{\beta} \circ\sF_{F,j}$ for any $\beta \in N_1^{\bbC^*}(W)$.
    
    \item \label{prop:prop2 of Continuous FT on cone level} $\sF_{F,j}\circ\lambda = (z \lambda \hS\partial_{\hS} + \lambda_j )\circ \sF_{F,j}$.
    
    \item \label{prop:prop3 of Continuous FT on cone level}
    $\sF_{F,j}\circ(z\partial_z-z^{-1}c_1^{\bbC^*}(W)+\mu_W^{\bbC^*}+\frac{1}{2})= (z\partial_z-z^{-1}c_1(F)-z^{-1}c_F\lambda_j+\mu_F)\circ \sF_{F,j}$.
    
    \item \label{prop:prop4 of Continuous FT on cone level} 
    For any $\xi\in H^2_{\bbC^*}(W)$ such that $i_{\pt}^*(\xi) = 0 \in H^2_{\bbC^*}(\pt)$, where $i_{\pt}$ is the inclusion of a point in $F$ into $W$, we have $\sF_{F,j}\circ z\xi Q\partial_Q = z\xi \hS\partial_{\hS} \circ\sF_{F,j}$ and $\sF_{F,j}\circ (\kappa_F(\xi)) = \xi \circ\sF_{F,j}$.
    
    \item \label{prop:prop5 of Continuous FT on cone level}
    $\lambda_j^{c_1(N_{F/W})/z}\circ \sF_{F,j}$ is homogeneous of degree $1-r_F$.
    
    \item \label{prop:prop6 of Continuous FT on cone level} 
    There exist  $\sigma_j(\theta) \in H^*(F)\big[S_F^{1/c_F}, S_F^{-1/c_F}\big]\formal{Q_W,\theta}$ and $\vartheta(\theta) \in q_{F,j} H^*(F)[z]\biglaurent{S_F^{-1/c_F}}\formal{Q_W,\theta}$ such that 
    $$
        \sF_{F,j}(J_W(\theta)) = S_F^{-\rho_F/(c_Fz)} M_F(\sigma_j(\theta))\big|_{Q_F \mapsto \hS}\text{ }\vartheta(\theta)
    $$ 
    where the subscript $Q_F \mapsto \hS$ means to replace $Q_F^\beta$ with $\hS^{\kappa_{F}^*(\beta)}$ for any $\beta \in \NE_{\bbN}(F)$. 
    In addition, $\sigma_j(\theta)$ and $\vartheta(\theta)$ satisfy
    $$
        \sigma_j(\theta) \big|_{Q_W= 0} \in h_{F,j} + \fm' H^*(F)\formal{S_F^{-1/c_F}, \theta_F S_F^{\bullet/c_F}},
    $$
    $$
        \vartheta(\theta) \big|_{Q_W= 0} \in q_{F,j} \big( 1 + \fm'' H^*(F)[z]\bigformal{S_F^{-1/c_F}, \theta_F S_F^{\bullet/c_F}} \big)
    $$
    where:
    \begin{itemize}
        \item $\theta_F = \{\theta_F^{i,k}\}_{k \in \bbN, i}$ is the collection of parameters for the restriction $\theta|_F = \sum_{i,k} \theta_F^{i,k} \phi_{F,i} \lambda^k$, where $\{\phi_{F,i}\}$ is a $\bbC$-basis of $H^*(F)$, and each $\theta_F^{i,k}$ can be written as a $\bbC$-linear combination of parameters in $\theta$, % $\{\theta^{j,l}\}_{0 \le l \le k, j}$
        
        \item $\theta_F S_F^{\bullet/c_F}$ is the infinite set $\big\{\theta_F^{i,k} S_F^{k/c_F} \big\}_{k \in \bbN, i}$ of variables,
        
        \item $\fm' \subset \bbC\bigformal{ S_F^{-1/c_F}, \theta_F S_F^{\bullet/c_F}}$ and $\fm'' \subset \bbC[z]\bigformal{S_F^{-1/c_F}, \theta_F S_F^{\bullet/c_F}}$ are the ideals generated by $S_F^{-1/c_F}$ and $\theta_F^{i,k} S_F^{k/c_F}$ respectively.
    \end{itemize}
\end{enumerate}
\end{proposition}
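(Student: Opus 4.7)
My plan is to verify the six properties directly from the defining stationary phase expansion in Definition~\ref{def:ContinuousFT}, using the explicit critical points \eqref{eqn:Lambdaj} and the asymptotic expansion of $G_F$ recalled just before the definition. The first five properties are formal consequences of manipulating the Mellin--Barnes integral, while (6), which expresses $\sF_{F,j}(J_W(\theta))$ in terms of the untwisted fundamental solution $M_F$ on $F$, is where the real content lies; it will require combining Proposition~\ref{prop:WRestriction} with the quantum Riemann--Roch theorem (Theorem~\ref{thm:QRR}), and this is the main obstacle.

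For (1), I would substitute the explicit formulas for $\hcS^\beta$ from Definition~\ref{formula for shift operator on Givental cone} into $\bff|_F$ and change variables $\lambda \mapsto \lambda + z\bar\beta$ in the integral; this converts the equivariant Euler class prefactors into part of $G_F$ and produces a global factor $S_F^{\bar\beta}$, which combines with the Novikov shift to give $\hS^\beta$. For (2), multiplying by $\lambda$ inside the integral equals $z\hS\partial_{\hS}$ acting on $e^{\lambda\log S_F/z}$, which at the saddle $\lambda_j$ yields the shift by $\lambda_j$ after extracting the universal factor $\sqrt{2\pi z}\,e^{c_F\lambda_j/z}$. Property (3) follows by differentiating the Gamma factors and the phase in $z$: the extra $+\tfrac12$ absorbs the $\sqrt{2\pi z}$ Jacobian, and the saddle substitution identifies $c_1^{\bbC^*}(W)|_F$ with $c_1(F) + c_F\lambda_j + c_1(N_{F/W})$ on the right. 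For (4), the condition $i_{\pt}^*\xi=0$ means $\xi$ has no pure-$\lambda$ component, so $\xi Q\partial_Q$ translates to $\xi\hS\partial_{\hS}$, and multiplication by $\kappa_F(\xi) = (\xi|_F)|_{\lambda=-\rho_F/c_F}$ becomes multiplication by $\xi$ because the prefactor $S_F^{-\rho_F/(c_Fz)}$ combined with the leading saddle absorbs precisely $-\rho_F/c_F$. Property (5) is degree bookkeeping using the degrees of $\lambda_j$ and $q_{F,j}$ visible from \eqref{eqn:Lambdaj} and \eqref{eqn:qFj}, together with the fact that $\Delta_c$ is homogeneous of degree zero.

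The substantive step is (6). By Proposition~\ref{prop:WRestriction}, the restriction $J_W(\theta)|_F$, Laurent-expanded at $z=0$, lies on the twisted cone $\cL_{F,(N_{F/W},e_{\bbC^*}^{-1})}$, and by Theorem~\ref{thm:QRR} this cone equals $\Delta_{(N_{F/W},e_{\bbC^*}^{-1})}\cL_F = \bigl(\prod_c \Delta_c\bigr)\cL_F$. The Stirling expansion of $G_F$ is, by design, the asymptotic inverse of this $\Delta$-operator modulo the phase piece that combines with $\lambda\log S_F$ to form $\eta(\lambda)$. Stationary phase at $\lambda_j$ therefore converts $J_W(\theta)|_F$ into $S_F^{-\rho_F/(c_Fz)}$ times a point on the untwisted cone $\cL_F$, with the Hessian producing $q_{F,j}$ and the second-order phase correction producing $e^{h_{F,j}/z}$. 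Writing this point in the form $M_F(\sigma_j(\theta))\,\vartheta(\theta)\big|_{Q_F\mapsto \hS}$ using Proposition~\ref{proposition of Givental cone}\eqref{prop: Givental cone 2}\eqref{prop:Givental cone 3} determines $\sigma_j(\theta)$ uniquely as the $z$-polynomial part and $\vartheta(\theta)$ as the residual. The initial conditions modulo $\fm'$ and $\fm''$ are then computed from $J_W(\theta)|_{Q_W=0} = e^{\theta/z}$ by evaluating the resulting explicit Gamma-type integral at $\lambda_j$. The delicate point, which I would handle following \cite[Section 4.2]{iritani2023quantum}, is confirming that $Q_F\mapsto\hS$ makes sense as a graded completion in the sense of Section~\ref{sect:GradedCompletion} and that $\sigma_j,\vartheta$ lie in the stated $\theta_F S_F^{\bullet/c_F}$-adic completions: one tracks the $S_F^{-1/c_F}$-orders produced by successive stationary phase corrections and checks that each coefficient stabilises in the graded sense.
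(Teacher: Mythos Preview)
The paper does not prove this proposition at all: it is stated as a collection of results ``from \cite[Section 4.2]{iritani2023quantum}'' with no further argument, so there is no in-paper proof to compare against. Your sketch is a faithful outline of how Iritani establishes these facts in the cited reference, and you even say you would follow \cite[Section 4.2]{iritani2023quantum} for the delicate completion issues in (6); so your approach coincides with the source the paper relies on.
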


\subsection{Continuous Fourier transformations on quantum $D$-modules}\label{sect:ContinuousQDM}
We translate the continuous Fourier transformations defined above into transformations on quantum $D$-modules. For a $\bbC^*$-fixed component $F$ of $W$ satisfying $c_F \neq 0$, we first extend the base of the quantum $D$-module of $F$.

% For a fixed component $F\subset W^{\bbC^*}$, we have the ring extension $\bbC[z]\formal{Q_F,\tau_F}\to \bbC[z]\biglaurent{S_F^{-\frac{1}{2c_F}}}\formal{Q_W,\tau_F}$ induced by the dual map $\kappa_F^*$, which sends $Q_F^{\beta}$ to $\hS^{\kappa_F^*(\beta)}=Q_W^{i_{F,*}\beta}S_F^{-\frac{\rho_F\cdot\beta}{c_F}}$. Here, $\kappa_F\colon H_{\bbC^*}^*(W)\to H^*(F,\bbQ)$ is defined to be the composition of the pullback $i_F^*\colon H_{\bbC^*}^*(W)\to H^*_{\bbC^*}(F)$ and the rational valuation $\lambda=\frac{\rho_F}{c_F}$. Hence, $\kappa_F^*(\beta)$ is only well-defined in $N_1^{\bbC^*}(W)_{\bbQ}$ and the fractional power $S_F^{-\frac{1}{2c_F}}$ is involved. At the same time, if $c_F=0$, we will have no valid extension of the quantum $D$-module by this method. Therefore, we cannot define the continuous Fourier transformation for $F$ in this case.

\color{black}

\begin{definition}\label{Extension of QDK for a fixed component} %(Extension of quantum $D$-module for a fixed component)
% For the fixed component $F\subset W^{\bbC^*}$,
We define the formal Laurent extension
$$
    \QDM(F)^{\La} \coloneqq \QDM(F) \otimes_{\bbC[z]\formal{Q_F,\tau_F}} \bbC[z]\Biglaurent{S_F^{-\frac{1}{2c_F}}}\formal{Q_W,\tau_F} = H^*(F)[z]\Biglaurent{S_F^{-\frac{1}{2c_F}}}\formal{Q_W,\tau_F}
$$
through the ring extension $\bbC[z]\formal{Q_F,\tau_F}\to \bbC[z]\Biglaurent{S_F^{-\frac{1}{2c_F}}}\formal{Q_W,\tau_F}$ induced by the dual map $\kappa_F^*$. The pullback connection $\nabla$ is defined as follows:
\begin{align*}
\nabla_{\tau_F^i} &= \nabla_{\tau_F^i} \otimes \Id + \Id \otimes \partial_{\tau_F^i} 
= \partial_{\tau_F^i} + z^{-1} \left( \phi_{F,i} \star_{\tau_F} \right), \\
\nabla_{z \partial_z} &= \nabla_{z \partial_z} \otimes \Id + \Id \otimes z \partial_z 
= z \partial_z - z^{-1} \left( E_F \star_{\tau_F} \right) + \mu_F, \\
\nabla_{\xi \hS \partial_{\hS}} &= \nabla_{\kappa_F(\xi) Q \partial_Q} \otimes \Id + \Id \otimes \xi \hS \partial_{\hS} 
= \xi \hS \partial_{\hS} + z^{-1} \left( \kappa_F(\xi) \star_{\tau_F} \right).
\end{align*} 
\end{definition}

\begin{comment}
This extension is also the pullback of $\QDM(F)$ through the map between the formal schemes $\mathrm{Spf}(\bbC[z]\biglaurent{S_F^{-\frac{1}{2c_F}}}\formal{Q_W,\tau_F})\to\mathrm{Spf}(\bbC[z]\formal{Q_F,\tau_F})$.
\end{comment}

% The pairing $P_F$ is defined with the same formula $P_F(f,g)=(f(z),g(-z))_F$. 
By an abuse of notation, we will also use $M_F(\tau_F)$ to denote the base change $M_F(\tau_F)\big|_{Q_F\to\hS}$ induced by $\kappa_F^*$. To define continuous Fourier transformations on the level of quantum $D$-modules, we need the pullback of $\QDM(F)^{\La}$ under the change of coordinates $\sigma_j(\theta)$ in Proposition~\ref{prop:ContFTCone}\eqref{prop:prop6 of Continuous FT on cone level}. We first remark on the well-definedness of this pullback.

\begin{remark}\label{rem:ContPullbackDefined}
The element $\sigma_j(\theta)$ in Proposition~\ref{prop:ContFTCone}\eqref{prop:prop6 of Continuous FT on cone level} does not directly induce formal changes of coordinates $\bbC[z]\formal{Q_F,\tau_F} \to \bbC[z]\formal{Q_F,\theta}$ because of the existence of the nonzero constant term % $\sigma_j(\theta)\big|_{\theta=Q_W=S_F^{-\frac{1}{2c_F}}=0}=
$h_{F,j}\in H^2(F)$. However, we may still apply the divisor equation to define the pullback quantum $D$-modules $\sigma_j^*\QDM(F)^{\La}$.
More precisely, we consider the subring $\bbC[z]\formal{Q_Fe^{\tau_F^{(2)}},\tau_F^{(\ne 2)}} \subset \bbC[z]\formal{Q_F,\tau_F}$, where $\tau_F=\tau_F^{(2)}+\tau_F^{(\ne 2)}$ decomposes $\tau_F$ into the part of degree 2 and the rest, and the notation $Q_Fe^{\tau_F^{(2)}}$ means that $(Q_Fe^{\tau_F^{(2)}})^\beta = Q_F^\beta e^{(\beta, \tau_F^{(2)})}$.
The submodule $H^*(F)[z]\formal{Q_Fe^{\tau_F^{(2)}},\tau_F^{(\ne2)}}$ of $\QDM(F)$ is closed under $\nabla_{z^2\partial_z}$, $\nabla_{z\tau_F^i}$, and $\nabla_{z\xi Q\partial_Q}$ by the divisor equation. 
Now suppose $\sigma(\theta)\in H^*(F)\formal{Q_F,\theta}$ is such that $\sigma(\theta)|_{\theta=Q_F=0} = \sigma_F\in H^2(F)$. Then $\sigma(\theta)$ induces a homomorphism $ \bbC[z]\formal{Q_Fe^{\tau_F^{(2)}},\tau_F^{(\ne 2)}} \to \bbC[z]\formal{Q_F,\theta}$ without the convergence problem. Therefore, we can base change the submodule $H^*(F)[z]\formal{Q_Fe^{\tau_F^{(2)}},\tau_F^{(\ne2)}}$ to $\bbC[z]\formal{Q_F,\theta}$ and define the pullback quantum $D$-module $\sigma^*\QDM(F)$. Explicitly, $\sigma^*\QDM(F)$ is the module $H^*(F)[z]\formal{Q_F,\theta}$ together with the pullback quantum connection:
\begin{align*}
    \nabla_{\theta^{i,k}} &= \partial_{\theta^{i,k}} + z^{-1} \left( \partial_{\theta^{i,k}} \sigma(\theta) \right) \star_{\sigma(\theta)},\\
    \nabla_{z \partial_z}&= z \partial_z - z^{-1} \left( E_F \star_{\sigma(\theta)} \right) + \mu,\\
    \nabla_{\xi Q\partial_Q} &= \xi Q\partial_Q + z^{-1} \left( \xi \star_{\sigma(\theta)} \right) + z^{-1} \left( \xi Q\partial_Q \sigma(\theta) \right) \star_{\sigma(\theta)}.
\end{align*}
Similarly, for $\sigma(\theta) \in H^*(F)\Biglaurent{S_F^{-\frac{1}{2c_F}}}\formal{Q_W,\theta}$ such that 
$
    \sigma(\theta)|_{Q_W=S_F^{-\frac{1}{2c_F}}=\theta_F=0} \in H^2(F),
$
the pullback $\sigma^*\QDM(F)^{\La}$ is well-defined. It is the module $H^*(F)[z]\Biglaurent{S_F^{-\frac{1}{2c_F}}}\formal{Q_W,\theta}$ together with the pullback connection:
\begin{align*}
\nabla_{\theta^{i,k}} &= \partial_{\theta^{i,k}} + z^{-1} \left( \partial_{\theta^{i,k}} \sigma(\theta) \right) \star_{\sigma(\theta)},
\\
\nabla_{z \partial_z}&= z \partial_z - z^{-1} \left( E_F \star_{\sigma(\theta)} \right) + \mu,
\\
\nabla_{\xi \hS \partial_{\hS}}& = \xi \hS \partial_{\hS} + z^{-1} \left( \kappa_F(\xi) \star_{\sigma(\theta)} \right) + z^{-1} \left( \xi \hS \partial_{\hS} \sigma(\theta) \right) \star_{\sigma(\theta)}.
\end{align*}
Notice the base change of the Novikov variables induced by $\kappa_{F}^*$ above.
\end{remark}

We now construct continuous Fourier transformations on the level of quantum $D$-modules.

\begin{proposition}\label{prop:ContFTDmodule}
Let $F$ be a $\bbC^*$-fixed component of $W$, $j = 0, \dots, |c_F|-1$, and $\sigma_j(\theta)\in H^*(F)\Biglaurent{S_F^{-\frac{1}{2c_F}}}\formal{Q_W,\theta}$ be given in Proposition~\ref{prop:ContFTCone}\eqref{prop:prop6 of Continuous FT on cone level}. There is a $\bbC[z]\formal{Q_W,\theta} \big[\hS,Q_W^{-1} \big]$-module homomorphism
$$
    \FT_{F,j} \colon \QDM_{\bbC^*}(W)[Q_W^{-1}] \to \sigma_j^* \QDM(F)^{\La}[Q_W^{-1}]
$$
satisfying the following properties:
\begin{enumerate}[wide]
\item \label{item:ContFTDmoduleTarget} $\FT_{F,j}(\QDM_{\bbC^*}(W)) \subset \sigma_j^* \QDM(F)^{\La}$.
\item \label{item:ContFTDmoduleConnection} $\FT_{F,j}$ intertwines $z\nabla_{\xi Q\partial_Q}$ with $z\nabla_{\xi\hS\partial_{\hS}}+i_{\pt}^* (\xi)|_{\lambda=\lambda_j}$, $\nabla_{\theta^{i,k}}$ with $\nabla_{\theta^{i,k}}$, and $\nabla_{z\partial_z}+\frac{1}{2}$ with $\nabla_{z\partial_z}-\frac{c_F\lambda_j}{z}$. % where $\lambda_j=e^{\frac{2\pi i(j+\frac{r_{F,-}}{2})}{c_F}}S_F^{\frac{1}{c_F}}$.
\item \label{item:ContFTDmoduleHomo}$\FT_{F,j}$ is homogeneous of degree $1-r_F$ as a $\bbC[z]\formal{Q_W,\theta}\big[\hS,Q_W^{-1}\big]$-module homomorphism.

\item \label{item:ContFTDmoduleCommute} $\FT_{F,j}$ satisfies the commutative diagram
$$
\begin{tikzcd}
\QDM_{\bbC^*}(W)[Q_W^{-1}] \arrow[r, "\FT_{F,j}"] \arrow[d, "M_W(\theta)"'] & \sigma_j^* \QDM(F)^{\La}[Q_W^{-1}] \arrow[d, "M_F(\sigma_j(\theta))"] \\
\cH_W^{\mathrm{rat}}[Q_W^{-1}] \arrow[r, " S_F^{\rho_F/(zc_F)} \sF_{F,j}"] & \cH_F^{\La}[Q_W^{-1}]
\end{tikzcd}
$$
where we set $\cH_F^{\La} \coloneqq H^*(F)[z,z^{-1}]\Biglaurent{S_F^{-\frac{1}{2c_F}}}\formal{Q_W}$. 
\color{black}
\end{enumerate}
\end{proposition}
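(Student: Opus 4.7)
The plan is to take the commutative diagram in (\ref{item:ContFTDmoduleCommute}) as the \emph{definition} of $\FT_{F,j}$ and then read off each of the remaining assertions from the corresponding Givental-level statements in Proposition~\ref{prop:ContFTCone} by conjugating across fundamental solutions. Concretely, for $\alpha \in \QDM_{\bbC^*}(W)[Q_W^{-1}]$ I would set
$$
    \FT_{F,j}(\alpha) \coloneqq M_F(\sigma_j(\theta))^{-1}\cdot S_F^{\rho_F/(zc_F)}\cdot \sF_{F,j}\bigl(M_W(\theta)\,\alpha\bigr),
$$
which makes (\ref{item:ContFTDmoduleCommute}) tautological. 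The $\bbC[z]\formal{Q_W,\theta}$-linearity is immediate from the construction; linearity over $\hS$ and $Q_W^{-1}$ will follow from Proposition~\ref{prop:ContFTCone}(\ref{prop:prop1 of Continuous FT on cone level}) together with the commutation $\hcS^\beta \circ M_W(\theta) = M_W(\theta) \circ \hS^\beta$ of Proposition~\ref{proposition of shift operators on Givental cone}(\ref{shift operator commute with funda}).

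The first substantive step is to verify (\ref{item:ContFTDmoduleTarget}), namely that the formula lands in the polynomial-in-$z$ module $\sigma_j^*\QDM(F)^{\La}$ and not merely in its $z$-localization. For $\alpha = 1$ this is exactly Proposition~\ref{prop:ContFTCone}(\ref{prop:prop6 of Continuous FT on cone level}): the identity $\sF_{F,j}(J_W(\theta)) = S_F^{-\rho_F/(c_F z)} M_F(\sigma_j(\theta))\,\vartheta(\theta)$ yields $\FT_{F,j}(1) = \vartheta(\theta)$, which lies in $H^*(F)[z]\Biglaurent{S_F^{-\frac{1}{2c_F}}}\formal{Q_W,\theta}$ thanks to the prescribed form of $\vartheta(\theta)$ and the fact that $q_{F,j} \in \bbC\, S_F^{-(r_F-1)/(2c_F)}$. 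To extend to a general $\alpha$, I would use that $\QDM_{\bbC^*}(W)$ is generated as a $\bbC[z]\formal{Q_W,\theta}$-module by the monomials $\varphi_i\lambda^n$, and that $M_W(\theta)\,\varphi_i\lambda^n = z\partial_{\theta^{i,n}}J_W(\theta)$ by Proposition~\ref{proposition of Givental cone}(\ref{prop: Givental cone 2}); the residual $\lambda$-factors are converted, via Proposition~\ref{prop:ContFTCone}(\ref{prop:prop2 of Continuous FT on cone level}), into the operator $z\lambda\hS\partial_{\hS} + \lambda_j$, which preserves $M_F(\sigma_j(\theta))\,\cH_F^{+,\La}[Q_W^{-1}]$ and hence the target.

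Granted (\ref{item:ContFTDmoduleTarget}), properties (\ref{item:ContFTDmoduleConnection}) and (\ref{item:ContFTDmoduleHomo}) follow by conjugating Proposition~\ref{prop:ContFTCone}(\ref{prop:prop1 of Continuous FT on cone level})--(\ref{prop:prop5 of Continuous FT on cone level}) by the fundamental solutions, using the defining relations $M_W(\theta)\nabla_\bullet = \partial_\bullet M_W(\theta)$ of Section~\ref{subsec:quantum cohomology and qdm} and their $F$-analogues; the boundary term $i_{\pt}^*(\xi)|_{\lambda=\lambda_j}$ and the shift $c_F\lambda_j/z$ will emerge as precisely the critical values of the phase function underlying the stationary-phase definition of $\sF_{F,j}$, while homogeneity of degree $1-r_F$ is preserved because $M_W$ and $M_F$ are degree-zero operators. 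The hard part will be the $z$-integrality in Step~2: although the $J$-function case is guaranteed by Proposition~\ref{prop:ContFTCone}(\ref{prop:prop6 of Continuous FT on cone level}), propagating the polynomial-in-$z$ conclusion to every generator $z\partial_{\theta^{i,n}}J_W$ while simultaneously respecting the $\bbC[z]\formal{Q_W,\theta}[\hS,Q_W^{-1}]$-module structure will require carefully tracking the cancellation between the asymptotic factor $e^{h_{F,j}/z}S_F^{-\rho_F/(c_F z)}$ of \eqref{eqn:ContFTLeading} and $M_F(\sigma_j(\theta))^{-1}$, together with the fractional base change $\kappa_F^*$ and the divisor-equation pullback convention of Remark~\ref{rem:ContPullbackDefined}.
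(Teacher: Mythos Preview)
Your approach is essentially the same as the paper's, and it is correct. The paper likewise defines $\FT_{F,j}$ so that the diagram in (\ref{item:ContFTDmoduleCommute}) commutes, and then reads off properties (\ref{item:ContFTDmoduleConnection}) and (\ref{item:ContFTDmoduleHomo}) from Proposition~\ref{prop:ContFTCone}(\ref{prop:prop1 of Continuous FT on cone level})--(\ref{prop:prop5 of Continuous FT on cone level}) together with the intertwining $M_W(\theta)\circ\hS^\beta = \hcS^\beta\circ M_W(\theta)$.

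The one place where you work harder than necessary is the $z$-integrality in (\ref{item:ContFTDmoduleTarget}), which you flag as ``the hard part''. In fact, once you carry out the differentiation you describe, it becomes trivial. Since $\sF_{F,j}$ commutes with $\partial_{\theta^{i,k}}$, applying $z\partial_{\theta^{i,k}}$ to the identity $S_F^{\rho_F/(zc_F)}\sF_{F,j}(J_W(\theta)) = M_F(\sigma_j(\theta))\vartheta_j(\theta)$ of Proposition~\ref{prop:ContFTCone}(\ref{prop:prop6 of Continuous FT on cone level}) gives
\[
    \FT_{F,j}(\varphi_i\lambda^k) \;=\; M_F(\sigma_j(\theta))^{-1}\,z\partial_{\theta^{i,k}}\bigl[M_F(\sigma_j(\theta))\vartheta_j(\theta)\bigr] \;=\; z\nabla_{\theta^{i,k}}\vartheta_j(\theta),
\]
which is manifestly polynomial in $z$ because $\vartheta_j(\theta)$ is. This is exactly how the paper phrases its definition: it \emph{declares} $\FT_{F,j}$ on the $\bbC[z]\formal{Q_W,\theta}$-basis $\{\varphi_i\lambda^k\}$ by this formula, so (\ref{item:ContFTDmoduleTarget}) is immediate and no tracking of cancellations between $e^{h_{F,j}/z}S_F^{-\rho_F/(c_Fz)}$ and $M_F(\sigma_j(\theta))^{-1}$ is required. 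Your detour through ``residual $\lambda$-factors'' and the operator $z\lambda\hS\partial_{\hS}+\lambda_j$ is unnecessary here, since the coordinates $\theta^{i,k}$ already account for all powers of $\lambda$.
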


% By Remark~\ref{rem:ContPullbackDefined}, the pullback quantum $D$-module $\sigma_j^*\QDM(F)^{\La}$ is well-defined.

\begin{proof}
By differentiating the equation $S_F^{\frac{\rho_F}{zc_F}}\sF_{F,j}(J_W(\theta))=M_F(\sigma_j(\theta))\vartheta_j(\theta)$ in Proposition~\ref{prop:ContFTCone}\eqref{prop:prop6 of Continuous FT on cone level}, we define $\FT_{F,j}$ to be the unique $\bbC[z]\formal{Q_W,\theta}[Q_W^{-1}]$-module homomorphism that sends $\phi_i\lambda^k$ to $\nabla_{\theta^{i,k}}\vartheta_j(\theta)$ for all $i, k$. Then the commutative diagram~\eqref{item:ContFTDmoduleCommute} naturally holds. Proposition~\ref{prop:ContFTCone}\eqref{prop:prop1 of Continuous FT on cone level} and the relation $M_W(\theta)\circ\hS^{\beta}=\hcS^{\beta}\circ M_W(\theta)$ together imply that $\FT_{F,j}$ is a $\bbC[z]\formal{Q_W,\theta}\big[\hS,Q_W^{-1}\big]$-module homomorphism. In addition, property~\eqref{item:ContFTDmoduleTarget} follows from that $\{\phi_i\lambda^k\}_{i,k}$ is a $\bbC[z]\formal{Q_W,\theta}$-basis of $\QDM_{\bbC^*}(W)$ and thus we can define $\FT_{F,j}$ without localizing $Q_W$. Property~\eqref{item:ContFTDmoduleConnection} follows from the commutative relations~\eqref{prop:prop1 of Continuous FT on cone level}\eqref{prop:prop2 of Continuous FT on cone level}\eqref{prop:prop3 of Continuous FT on cone level}\eqref{prop:prop4 of Continuous FT on cone level} in Proposition~\ref{prop:ContFTCone}. Property~\eqref{item:ContFTDmoduleHomo} follows from Proposition~\ref{prop:ContFTCone}\eqref{prop:prop5 of Continuous FT on cone level}.
\begin{comment}
$$\sF_{F,j}\circ \partial_{\theta^{i,k}}=\partial_{\theta^{i,k}}\circ \sF_{F,j},$$
    $$\sF_{F,j}\circ \lambda=(zS\partial_S+\lambda_j)\circ \sF_{F,j},$$
    $$\sF_{F,j}\circ z\xi\hS\partial_{\hS}=z\xi \hS\partial_{\hS}\circ \sF_{F,j}, \text{ when } \xi\hS\partial_{\hS}(S_F)=0,$$
    $$\sF_{F,j}\circ \xi=\kappa_F(\xi)\circ \sF_{F,j}, \text{ when } \xi\hS\partial_{\hS}(S_F)=0,$$
    $$\sF_{F,j}\circ (z\partial_z-z^{-1}c_1^{\bbC^*}(W)+\mu_W^{\bbC^*}+\frac{1}{2})=(z\partial_z-z^{-1}c_1(F)-z^{-1}\lambda_j+\mu_F)\circ \sF_{F,j}.$$
\end{comment}
\end{proof}

Notice that $\FT_{F,j}$ intertwines $\nabla_{z\partial_z}+\frac{1}{2}$ with $\nabla_{z\partial_z}-\frac{c_F\lambda_j}{z}$. We make the shift 
$$
    \zeta_j(\theta) \coloneqq \sigma_j(\theta) + c_F\lambda_j.
$$
Because 
$$
    \zeta_j(\theta)|_{Q_W=S_F^{-\frac{1}{2c_F}}=\theta_F=0} = \sigma_j(\theta)|_{Q_W=S_F^{-\frac{1}{2c_F}}=\theta_F=0} \in H^2(F),
$$
the pullback quantum $D$-module $\zeta_j^*\QDM(F)^{\La}$ is well-defined by Remark \ref{rem:ContPullbackDefined}. 
As a module, $\zeta_j^*\QDM(F)^{\La}$ is isomorphic to $H^*(F)[z]\Biglaurent{S_F^{-\frac{1}{2c_F}}}\formal{Q_W,\theta}$. By adding in the shift $c_F\lambda_j$, we only change the coordinate $\tau_F^0$ in the direction of $1\in H^0(F)$, which preserves the quantum product and shifts the Euler vector field by $c_F\lambda_j$. In addition, because 
$$
    \xi\hS\partial_{\hS}(c_F\lambda_j) = (\frac{1}{c_F}\sigma_F(1), \xi) c_F\lambda_j = (\sigma_F(1), \xi) \lambda_j = i_{\pt}^*(\xi)|_{\lambda=\lambda_j},
$$
we can compute the shift on the pullback quantum connection as 
$$
    \zeta_j^*\nabla = \sigma_j^*\nabla-c_F\lambda_jz^{-2}dz+\lambda_jz^{-1}\frac{dS_F}{S_F}.
$$
By composing $\FT_{F,j}$ with the identity on the underlying module 
$$
    \sigma_j^*\QDM(F)^{\La}[Q_W^{-1}]\to\zeta_j^*\QDM(F)^{\La}[Q_W^{-1}],
$$
we can rewrite our the continuous Fourier transformation as follows.

\begin{corollary}\label{Corollary of continuous FT(shift on the pullback)}
Let $F$ be a $\bbC^*$-fixed component of $W$ and $j = 0, \dots, |c_F|-1$. There is a $\bbC[z]\formal{Q_W,\theta}[\hS,Q_W^{-1}]$-module homomorphism 
$$
    \FT_{F,j}\colon \QDM_{\bbC^*}(W)[Q_W^{-1}]\to \zeta_j^*\QDM(F)^{\La}[Q_W^{-1}]
$$
which is homogeneous of degree $1-r_F$, intertwines $\nabla+\frac{1}{2}\frac{dz}{z}$ with $\nabla$, and satisfies $\FT_{F,j}(\QDM_{\bbC^*}(W))\subset \zeta_j^*\QDM(F)^{\La}$.
\end{corollary}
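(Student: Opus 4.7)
My plan is to obtain $\FT_{F,j}$ of the corollary by post-composing the homomorphism from Proposition~\ref{prop:ContFTDmodule} with the identity map on the underlying module, now regarded as going from $\sigma_j^*\QDM(F)^{\La}[Q_W^{-1}]$ to $\zeta_j^*\QDM(F)^{\La}[Q_W^{-1}]$. Since the two pullbacks share the same underlying module $H^*(F)[z]\Biglaurent{S_F^{-\frac{1}{2c_F}}}\formal{Q_W,\theta}$, this composition is well-defined and automatically inherits from Proposition~\ref{prop:ContFTDmodule} the $\bbC[z]\formal{Q_W,\theta}[\hS,Q_W^{-1}]$-linearity, the target preservation $\FT_{F,j}(\QDM_{\bbC^*}(W)) \subset \zeta_j^*\QDM(F)^{\La}$, and the degree $(1-r_F)$ homogeneity. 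What remains is to verify the new intertwining property, which reduces to comparing the two connections $\sigma_j^*\nabla$ and $\zeta_j^*\nabla$ on their common module.

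The core observation is that $\zeta_j(\theta) - \sigma_j(\theta) = c_F\lambda_j$ is a constant shift in the $1 \in H^0(F)$-direction of the coordinate. I will compare the two connections direction by direction. In the $\theta^{i,k}$-directions, since $c_F\lambda_j$ is independent of $\theta$ and the quantum product is invariant under shifts of the coordinate in the unit direction (a direct consequence of the string equation, since $1$ is the quantum identity), we get $\zeta_j^*\nabla_{\theta^{i,k}} = \sigma_j^*\nabla_{\theta^{i,k}}$. In the $z\partial_z$-direction, the Euler vector field $E_F = c_1(F) + \sum_i (1 - \deg\phi_i/2)\tau_F^i \phi_i$ has coefficient $1$ in front of $\phi_0 = 1$, so $E_F$ evaluated at $\zeta_j(\theta)$ exceeds its value at $\sigma_j(\theta)$ by exactly $c_F\lambda_j$, giving $\zeta_j^*\nabla_{z\partial_z} = \sigma_j^*\nabla_{z\partial_z} - z^{-1}c_F\lambda_j$. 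In the $\xi\hS\partial_{\hS}$-direction, the computation $\xi\hS\partial_{\hS}(c_F\lambda_j) = i_{\pt}^*(\xi)|_{\lambda = \lambda_j}$ already recorded in the paragraph preceding the corollary yields $\zeta_j^*\nabla_{\xi\hS\partial_{\hS}} = \sigma_j^*\nabla_{\xi\hS\partial_{\hS}} + z^{-1} i_{\pt}^*(\xi)|_{\lambda = \lambda_j}$.

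Substituting these three identities into the intertwining relations of Proposition~\ref{prop:ContFTDmodule}\eqref{item:ContFTDmoduleConnection} exactly absorbs the extra terms $-c_F\lambda_j/z$ and $i_{\pt}^*(\xi)|_{\lambda=\lambda_j}$ into the new target connection $\zeta_j^*\nabla$. In differential-form language, this is precisely the statement that $\FT_{F,j}$ intertwines $\nabla + \frac{1}{2}\frac{dz}{z}$ with $\nabla$. The argument is a bookkeeping check and I do not expect a real obstacle; the only nontrivial ingredient is the invariance of the quantum product under translation in the unit coordinate, which follows at once from the string equation.
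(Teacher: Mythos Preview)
Your proposal is correct and follows essentially the same approach as the paper: the paragraph immediately preceding the corollary already records exactly your three comparison identities (invariance of the quantum product under the unit-direction shift, the $c_F\lambda_j$ shift in the Euler vector field, and the computation $\xi\hS\partial_{\hS}(c_F\lambda_j)=i_{\pt}^*(\xi)|_{\lambda=\lambda_j}$), summarizes them as $\zeta_j^*\nabla=\sigma_j^*\nabla-c_F\lambda_j z^{-2}dz+\lambda_j z^{-1}\frac{dS_F}{S_F}$, and then states the corollary by composing with the identity on the underlying module. You have simply made these steps more explicit.
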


\subsection{Discrete Fourier transformations}\label{sect:DiscreteFT}
% \todo{Don't say review if we have something new. Be clear what is review and what is new.}
In this subsection, we review the discrete Fourier transformation associated to a smooth GIT quotient $X = W \gitquot \bbC^*$, following \cite[Section 4.1]{iritani2023quantum}, and prove Conjecture \ref{conj:introReduction} in the case of the highest/lowest GIT quotients. We assume that the ample linearization is chosen such that $W^{ss} = W^{s} \neq \emptyset$ and we drop the notation for the linearization in the subscript. As in Section~\ref{sect:C*CurveClass}, we use the section class $\lambda^*=\sigma_{F_r}(1)$ associated to the lowest $\bbC^*$-fixed component $F_r$ of $W$ and write $S = \hS^{\lambda^*}$, $\cS = \hcS^{\lambda^*}$.

% As discussed in Section~\ref{sect:GIT}, the dual Kirwan map 
Consider the dual map 
$$
    \kappa_X^* \colon N_1(X) \longrightarrow N_1^{\bbC^*}(W)
$$
of the restriction of the Kirwan map $\kappa_X$ to $H^2$. It is injective. By Lemma~\ref{lem:KirwanC1}, $\kappa_X^*$ induces a homogeneous embedding of rings
$$
    \bbC \formal{\NE_{\bbN}(X)} \longrightarrow \bbC \formal{C_{X,\bbN}^{\vee}}, \quad  Q_X^{\beta} \longmapsto \hS^{\kappa_X^*(\beta)}.
$$
We define the following extension of the Givental space of $X$:
$$
    \cH_X^{\text{ext}} \coloneqq H^*(X)[z,z^{-1}]\formal{C_{X,\bbN}^{\vee}}.
$$ 
%Notice that by Lemma~\ref{lem:KirwanC1}, the extension is homogeneous.

\begin{comment}

\begin{lemma}\label{lem:DualKirwanNE}
The image of $\NE(X)$ under $\kappa_X^*$ is contained in $C_{X}^{\vee}$.
\end{lemma}

% Specifically, $\kappa_X^*(\NE_{\bbN}(X))\subset C_{W\gitquot_{L}\bbC^*,\bbN}^{\vee}$ can be shown as the following method.

\begin{proof}
Because $\NE(X)^{\vee}=\Nef(X)$, it suffices to show $\kappa_X(C_{X})\subset \Nef(X)$. Given an ample class $\omega$ which has the same stable locus (with no strictly semistable locus) with $L$, we consider the pair of the 2-form $\kappa(\omega)$ and an arbitrary effective curve class $\beta$ of $X$. In the view of Shoemaker's paper\cite{shoemaker2022kleiman}, $\kappa_X^*(\beta)$ can be seen as an $L$-quasimap class. To be concrete, if $\beta$ is induced from the morphism from a smooth curve to a free quotient stack $f_{\beta}:C\to [W^{ss}_L/\bbC^*]$, we compose $f_{\beta}$ with the inclusion $i_L:[W^{ss}_L/\bbC^*]\to [W/\bbC^*]$ to get an $L$-quasimap. This quasimap induces the quasimap class $\kappa_X^*(\beta)$ because $\kappa_X^*$ can be seen as the pushforward of equivariant homology or curve classes from semistable locus $W_L^{ss}$ to $W$. Finally, $\langle \kappa_X(\omega),\beta\rangle=\langle\omega,\kappa_X^*\beta\rangle\ge 0$ because $C_{X}\subset \NE(L)^{\vee}$\cite[Corollary 3.5]{shoemaker2022kleiman}. Here $\NE(L)$ is the cone of $L$-quasimap classes. 
\end{proof}

\end{comment}

% Now we can define the discrete Fourier transform.

\begin{definition}\label{def:DiscreteFT}
The \emph{discrete Fourier transformation} 
$$
    \sfF_X\colon \cH_W^{\rat} \dashrightarrow \cH_X^{\ext}
$$
is defined by
$$
    \sfF_X(\bff) \coloneqq \sum_{k \in \bbZ} S^{-k} \kappa_X(\cS^k \bff).
$$
\end{definition}

% Here $(-)[\cQ^{-1}]$ denotes the localization as a $\bbC\formal{\cQ}$-module at $\{\cQ^d : d \in \NE_{\bbN}(W)\}$. 

The map $\sfF_X$ is defined on $\bff$ if $\cS^k \bff \big|_X$ is regular at $\lambda = 0$ (so that $\kappa_X(\cS^k \bff)$ is defined) for all $k \in \bbZ$, and the defining summation as a power series in $\hS = (Q_W, S)$ is supported on $\beta + C_{X,\bbN}^{\vee}$ for some $\beta \in N_1(W)$. We collect useful properties of the discrete Fourier transformation from \cite[Section 4.1]{iritani2023quantum}.

\begin{proposition}\label{prop:DiscreteProp}
The discrete Fourier transformation $\sfF_X$ satisfies the following properties (conditioned on definedness):
\begin{enumerate}[wide]
    \item \label{prop:prop1 of Discrete FT on cone level} $\sfF_X\circ \cS^k = S^k\circ \sfF_X$ for any $k \in\bbZ$.
    
    \item \label{prop:prop2 of Discrete FT on cone level} $\sfF_X\circ (z\xi Q\partial_Q+\xi) = (z\xi\hS\partial_{\hS} + \kappa_X(\xi))\circ \sfF_X$ for any $\xi\in H_{\bbC^*}^2(W)$. In particular, $\sfF_X\circ \lambda =(z\lambda\hS\partial_{\hS}+\kappa_{X}(\lambda))\circ\sfF_X$.
    
    \item \label{prop:prop3 of Discrete FT on cone level} $\sfF_X\circ (z\partial_z-z^{-1}c_1^{\bbC^*}(W) +\mu_W^{\bbC^*}+\frac{1}{2}) = (z\partial_z-c_1(X) + \mu_X)\circ \sfF_X$.
    
    \item \label{prop:prop4 of Dicrete FT on cone level} $\sfF_X\circ \partial_{\theta^{i,k}}=\partial_{\theta^{i,k}}\circ \sfF_X$ for any $i, k$.
    
    \item \label{prop:prop5 of Discrete FT on cone level} $\sfF_X$ is homogeneous.
\end{enumerate}
\end{proposition}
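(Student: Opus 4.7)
The plan is to verify each of the five identities by direct computation from the definition
\[
\sfF_X(\bff) = \sum_{k \in \bbZ} S^{-k}\kappa_X(\cS^k\bff),
\]
using the corresponding property of the shift operator $\cS = \hcS^{\lambda^*}$ from Proposition~\ref{proposition of shift operators on Givental cone} together with the compatibility of the Kirwan map with pullbacks of equivariant classes. Identity (1) is simply an index shift: $\sfF_X(\cS^m\bff) = \sum_k S^{-k}\kappa_X(\cS^{k+m}\bff) = S^m\sfF_X(\bff)$. Identity (4) is immediate because $\cS$ is $\bbC[z]\formal{Q_W}$-linear and $\theta$-independent, as is $\kappa_X$, so both commute with $\partial_{\theta^{i,k}}$. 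Identity (5) is a grading check: by Proposition~\ref{Shift operator proposition}(6), both $\cS$ and the Novikov variable $S = \hS^{\lambda^*}$ have degree $2(\lambda^*, c_1^{\bbC^*}(W))$, so each summand $S^{-k}\cS^k$ has total degree zero, and $\kappa_X$ is degree-preserving; hence $\sfF_X$ is homogeneous.

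Identities (2) and (3) both follow the same pattern. For (2), the commutation relation $[\xi Q\partial_Q + z^{-1}\xi,\cS^k] = k(\lambda^*,\xi)\cS^k$ from Proposition~\ref{proposition of shift operators on Givental cone}(3) rearranges to
\[
\cS^k \circ (z\xi Q\partial_Q + \xi) = \bigl((z\xi Q\partial_Q + \xi) - kz(\lambda^*,\xi)\bigr)\circ \cS^k.
\]
Applying $S^{-k}\kappa_X(-)$ and summing, the term $-kz(\lambda^*,\xi)S^{-k}$ is precisely the action of $z\xi\hS\partial_{\hS}$ on $S^{-k}$ (since $\hS\partial_{\hS}$ acts on $S = \hS^{\lambda^*}$ with eigenvalue $(\lambda^*,\cdot)$), while $\kappa_X(\xi)$ emerges as a cup-product operator from the ring-homomorphism property of $\kappa_X$. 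Identity (3) is analogous: the relation $[z\partial_z - z^{-1}c_1^{\bbC^*}(W) + \mu_W^{\bbC^*},\cS^k] = 0$, combined with $\kappa_X(c_1^{\bbC^*}(W)) = c_1(X)$ from Lemma~\ref{lem:KirwanC1} and the equality $\mu_W^{\bbC^*}|_{W^s} + \tfrac12 = \mu_X$ (which reflects $\dim_\bbC W = \dim_\bbC X + 1$), yields the stated identity.

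The main obstacle is not the algebraic identities themselves but the well-definedness of the infinite sum defining $\sfF_X$: one must check that each $\cS^k\bff$ restricts regularly to $X$ at $\lambda = 0$, and that the resulting Laurent series in $\hS$ is supported on a translate of $C_{X,\bbN}^\vee$, so that the output lies in $\cH_X^{\ext}$. The proposition is stated ``conditioned on definedness,'' so these convergence issues are parked as hypotheses here; genuinely certifying them on the $J$-function locus is the content of Conjecture~\ref{conj:introReduction}, and of Theorem~\ref{thm:IntroReduction} in the highest/lowest case.
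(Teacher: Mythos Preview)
Your proof is correct. The paper does not supply its own proof of this proposition but simply records the properties with a citation to \cite[Section 4.1]{iritani2023quantum}; your direct verification from the definition, using the commutation relations of $\cS^k$ (Proposition~\ref{proposition of shift operators on Givental cone}), the ring-homomorphism property of $\kappa_X$, Lemma~\ref{lem:KirwanC1}, and the dimension shift $\dim_\bbC W = \dim_\bbC X + 1$, is exactly the expected argument.
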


As discussed in Conjecture~\ref{conj:introReduction}, $\sfF_X$ is conjectured to be defined on the $\bbC^*$-equivariant Givental cone of $W$ and to preserve the Givental cones. We prove Conjecture~\ref{conj:introReduction} in the case where $X$ is the highest/lowest GIT quotient of $W$ in the sense of Definition~\ref{def:highest/lowest GIT}, which is a slight generalization of the case of divisors proved in \cite[Theorem 1.13]{iritani2023quantum}.

\begin{theorem}\label{thm:Reduction}
Conjecture~\ref{conj:introReduction} holds for the highest/lowest GIT quotient $X$. More generally, let $x = (x_1, x_2, \dots)$ be a collection of formal parameters and $\bff$ be a $\bbC[\lambda]\formal{Q_W, x}$-valued point on the Givental cone $\cL_W$. Then $z\sfF_X(\bff)$ is defined and is a $\bbC\formal{Q_W, x}$-valued point on the Givental cone of $X$ (with the same extension as in the definition of $\cH_X^{\ext}$).
\end{theorem}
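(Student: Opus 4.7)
The plan is to follow and generalize the strategy of \cite[Theorem 1.13]{iritani2023quantum} for the divisor case. Without loss of generality I focus on the lowest GIT quotient, so $X = \bbP(N_{F_r/W})$ and the $\bbC^*$-action on $N_{F_r/W}$ has only the weight $+1$. The goal is to identify $\sfF_X$, after summation of the series in $S$, with a composition of (a) restriction to $F_r$, producing a point on the twisted Givental cone $\cL_{F_r, (N_{F_r/W}, e_{\bbC^*}^{-1})}$ by Proposition~\ref{prop:WRestriction}, (b) the inverse quantum Riemann-Roch operator of Theorem~\ref{thm:QRR} bringing the point onto the untwisted cone $\cL_{F_r}$, and (c) a projective-bundle lift from $\cL_{F_r}$ to $\cL_X$ along $\pi\colon X \to F_r$. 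The polynomial-in-$\lambda$ hypothesis on $\bff$ is what ensures that the Kirwan substitution $\lambda \mapsto h = c_1(\cO_X(1))$ underlying $\kappa_X$ is regular, so each $\kappa_X(\cS^k\bff)$ is well-defined.

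First I would verify the well-definedness of $\sfF_X(\bff)$ and check its support in $\cH_X^{\ext}$. Using Definition~\ref{formula for shift operator on Givental cone} together with the weight-$+1$ assumption, the iterated shift $\cS^k\bff|_{F_r}$ for $k \geq 0$ is of the form $\prod_{m=1}^{k}e_{\lambda -(m-1)z}(N_{F_r/W}) \cdot e^{-kz\partial_\lambda}\bff|_{F_r}$ (times a Novikov monomial), which remains polynomial in $\lambda$ when $\bff|_{F_r}$ is; the case $k<0$ is handled symmetrically with inverse Euler factors. Contributions of $\cS^k\bff|_F$ for other fixed components $F \neq F_r$ are controlled by the localization formula and tracked through $\kappa_X$. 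Combined with Proposition~\ref{dual ample cone} and \eqref{dual ample cone's monoid}, this gives that the combined Novikov support of $\sfF_X(\bff)$ lies in $C_{X,\bbN}^\vee$, as required for membership in $\cH_X^{\ext}$.

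The core step is the cone identification. By Proposition~\ref{prop:WRestriction}, $\bff|_{F_r} \in \cL_{F_r, (N_{F_r/W}, e_{\bbC^*}^{-1})}$; applying $\Delta_{(N_{F_r/W}, e_{\bbC^*}^{-1})}^{-1}$ from Theorem~\ref{thm:QRR} produces a point on $\cL_{F_r}$. I would then sum $\sum_k S^{-k}\kappa_X(\cS^k\bff)$ using a partial-fraction expansion in $h$, leveraging that $H^*(X) = H^*(F_r)[h]/P(h)$ where $P$ is the Chern polynomial of $N_{F_r/W}$. The closed-form expression should match the projective-bundle lift of a Givental cone point on $F_r$ to one on $X$, as described by Brown's theorem \cite{brown2014gromov} or its refinements in Iritani-Koto \cite{iritani-koto2023quantum}; the $\Gamma$-function asymptotics in those constructions should absorb the quantum Riemann-Roch factors from step (b). Finally, the generality of $\bff$ (as opposed to just $J_W(\theta)$) follows from Proposition~\ref{proposition of Givental cone}\eqref{prop:Givental cone 3} and the compatibility of $\sfF_X$ with the operators that generate tangent spaces $T_\theta$ (Proposition~\ref{prop:DiscreteProp}).

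The hardest step will be the explicit summation in the core identification. In the divisor case the series is geometric and yields a single exponential factor; in the projective-bundle case the relevant sum involves the full Chern polynomial $P(h)$, and one must handle it via residues or contour-type manipulations, with the $\abs{c_F}$ critical points $\lambda_j$ from \eqref{eqn:Lambdaj} playing a role analogous to the roots of $P$. A secondary difficulty is the bookkeeping of Novikov completions and the curve-class monoid $C_{X,\bbN}^\vee$, which requires careful tracking of the section classes $\sigma_{\max}(k)$ appearing in Proposition~\ref{Shift operator proposition} against the chamber structure of Section~\ref{sect:C*CurveClass}.
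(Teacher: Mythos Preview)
Your overall architecture---restrict to $F_r$, land on the twisted cone, then pass to $\cL_X$ via projective-bundle formulas---matches the paper's, but the paper sidesteps the hard summation you flag by reducing directly to the Iritani--Koto result \cite[Theorem 3.3]{iritani-koto2023quantum}, which already proves the reduction conjecture for the total space of a semi-projective vector bundle. Concretely, the paper chooses an auxiliary line bundle $L$ on $F_r$ with $(N\otimes L)^\vee$ globally generated (so that $N\otimes L$ is semi-projective), and factorizes
\[
\sfF_X^W = \sfF_X^{N\otimes L}\circ |^{N\otimes L}\circ \big(\Delta_{N\otimes L}/\Delta_N\big)\circ |_{F_r}.
\]
Under the change of equivariant parameter $\lambda' = \lambda - c_1(L)$ the equivariant Chern roots of $N$ and $N\otimes L$ coincide, so $\Delta_{N\otimes L}/\Delta_N = \Id$ and the two Kirwan substitutions $\lambda\mapsto p$, $\lambda'\mapsto p'$ agree; the factorization is then a two-line verification. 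Each step preserves Givental cones by Proposition~\ref{prop:WRestriction}, Theorem~\ref{thm:QRR}, virtual localization, and Iritani--Koto respectively. No explicit series summation, partial fractions, or residue manipulation is needed.

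Your steps (a)--(b) match the first two maps of this factorization. In step (c) you diverge: rather than invoking Iritani--Koto on the vector bundle side as a black box, you propose to sum $\sum_k S^{-k}\kappa_X(\cS^k\bff)$ by hand and match with a lift from $\cL_{F_r}$. This would essentially reprove the core identity of \cite{iritani-koto2023quantum}. Two technical points you would still need to address: first, Iritani--Koto's theorem requires the vector bundle to be semi-projective, which $N_{F_r/W}$ itself need not be---this is precisely why the paper twists by $L$, and the same issue would arise in your route. Second, the critical points $\lambda_j$ of \eqref{eqn:Lambdaj} belong to the \emph{continuous} Fourier transformation (stationary phase of the Mellin--Barnes integral), not the discrete one; they play no role in the analysis of $\sfF_X$. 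For well-definedness (handled in Lemma~\ref{lem:DiscreteDefined}), only the restriction to $F_r$ matters, since $\kappa_X$ factors as $|_{F_r}$ followed by $\lambda\mapsto p$; the other fixed components do not enter.
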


\begin{comment}
\color{orange}
Notes (for self) on how to use this result:
\begin{itemize}

    \item The leading term of $\sfF_X$ can be directly computed from the definition, as in \cite[Equation (4.2)]{iritani-koto2023quantum} and \cite[Lemma 5.11]{iritani2023quantum}.
    
    \item To turn into a statement about QDM, we consider tangent spaces of the Givental cone, following the argument as in \cite[Theorem 4.2]{iritani-koto2023quantum} and \cite[Section 5.3]{iritani2023quantum}.
    
    \item In the above cited arguments, \cite{iritani2023quantum} corresponds to the $r=1$ case in \cite{iritani-koto2023quantum} and the statements should be consistent.

    \item For the above purposes, it is more convenient to study the Fourier transformation on the $J$-function.
\end{itemize}
\color{black}
\end{comment}

% \subsection{Proof of Theorem~\ref{thm:Reduction}}

To prove the theorem, we first show that the discrete Fourier transformation $\sfF_X$ is defined on the tangent space of the equivariant Givental cone of $W$.

\begin{lemma}\label{lem:DiscreteDefined}
For the highest/lowest GIT quotient $X$, if $\bff \in \cH_W^{\rat}$ lies in a tangent space of $\cL_W$, then $\sfF_X(\bff)$ is defined.
\end{lemma}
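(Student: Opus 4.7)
The plan is to reduce to a computation on a single fundamental-solution generator, convert the Kirwan map into a restriction to the extremal fixed component, and then control the support of the resulting sum using the cone description of Proposition~\ref{dual ample cone}.

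First I would use Proposition~\ref{proposition of Givental cone}(\ref{prop: Givental cone 2}) to write a typical tangent element as $\bff = M_W(\theta) g$ with $g \in \cH_W^+$, and apply the intertwining $\hcS^{\beta} \circ M_W = M_W \circ \hS^{\beta}$ from Proposition~\ref{proposition of shift operators on Givental cone}(\ref{shift operator commute with funda}) to obtain $\cS^k \bff = M_W(\theta)\, S^k g$. Since $\sfF_X$ is $\bbC[\lambda]\formal{Q_W,x}$-linear in the only sense that matters (Proposition~\ref{prop:DiscreteProp}), it suffices to verify definedness for $g$ a polynomial in $z$ and $\lambda$ with coefficients in $H^*_{\bbC^*}(W)\formal{Q_W, x}$. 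This frees the analysis from the rational-in-$\lambda$ nature of $\cH_W^{\rat}$ and puts everything into the fundamental solution $M_W(\theta)$, whose restrictions to the fixed components are controlled by Proposition~\ref{prop:WRestriction}.

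Next I would restrict to the lowest component $F_r$, the case of the highest GIT quotient being symmetric. By definition of the lowest GIT quotient, the Kirwan map $\kappa_X$ factors through the restriction $H^*_{\bbC^*}(W) \to H^*_{\bbC^*}(F_r) \cong H^*(F_r)[\lambda]$, so definedness of $\kappa_X(\cS^k \bff)$ reduces to regularity of $\cS^k \bff |_{F_r}$ at $\lambda = 0$. Since $N_{F_r,-}=0$, the explicit formula in Definition~\ref{formula for shift operator on Givental cone} specializes to
\[
\cS \bff |_{F_r} = \prod_{c>0}\prod_{m=-c+1}^{0} e_{\lambda+mz}(N_{F_r,c}) \cdot e^{-z\partial_\lambda} \bff |_{F_r},
\]
which is a polynomial operator in $\lambda$. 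Therefore $\cS^k \bff |_{F_r}$ is manifestly regular at $\lambda = 0$ for $k \ge 0$. For $k < 0$, the operator $\cS^{-1}$ introduces a factor $\prod_c e_{\lambda+z}(N_{F_r,c})^{-1}$; regularity follows from Proposition~\ref{prop:WRestriction}, which places $\bff |_{F_r}$ on the twisted cone $\cL_{F_r,(N_{F_r/W},e_{\bbC^*}^{-1})}$, combined with the quantum Riemann--Roch identification (Theorem~\ref{thm:QRR}) between twisted and untwisted cones: the twisting factor $e_{\bbC^*}(N_{F_r/W})$ absorbs the poles introduced by the inverse Euler classes.

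Finally I would analyze the support of $\sum_{k \in \bbZ} S^{-k}\kappa_X(\cS^k \bff)$ as a formal series in the extended Novikov ring. By Proposition~\ref{proposition of shift operators on Givental cone}(4), $\cS^k \bff$ lies in $Q_W^{k\lambda^\ast + \sigma_{\max}(-k)}T_\theta$, so each term $S^{-k}\kappa_X(\cS^k \bff)$ carries a monomial in $\hS$ whose exponent lies in $C_{X,\bbN}^\vee$ by the computation of Proposition~\ref{dual ample cone} and~\eqref{dual ample cone's monoid}; for negative $k$ the negative power of $S$ is offset by the effective class $\sigma_{\max}(-k)-k\lambda^{\ast}$, while for positive $k$ the $S^{-k}$ is absorbed into $\lambda^\ast \in C_{X,\bbN}^\vee$. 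Convergence of the sum in the $C_{X,\bbN}^\vee$-adic topology then follows.

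The main obstacle will be step three: rigorously controlling the support so that the two-sided sum over $k$ collapses into a power series supported on a single translate of $C_{X,\bbN}^\vee$. This is precisely where the highest/lowest hypothesis is essential, as one of the generators of $C_{X,\bbN}^\vee$ is $\lambda^\ast$ (or $-\lambda^\ast$ shifted by an effective class) rather than an effective curve class, which is what allows the cancellation between the $S^{-k}$ factor and the equivariant curve class $\sigma_{\max}(-k)$; for intermediate GIT chambers the corresponding $\sigma_{F_i}(\pm 1)$ need not lie in $C_{X,\bbN}^\vee$, which is why Conjecture~\ref{conj:introReduction} is not known in general.
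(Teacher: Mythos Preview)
Your outline has the right shape (reduce to a generator, analyze $\kappa_X(\cS^k\bff)$ term by term, then control the support), but both key steps fail as written, and you miss the one mechanism that actually makes the lemma go through.

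\textbf{Regularity.} Saying that the prefactor in $\cS^k\bff|_{F_r}$ is a polynomial operator for $k\ge 0$ does not make $\cS^k\bff|_{F_r}$ regular at $\lambda=0$: the issue is that $\bff|_{F_r}$ itself typically carries denominators coming from virtual localization, and multiplying by a polynomial does not clear them. For $k=0$ your argument says nothing at all. For $k<0$ the poles introduced by the prefactor sit at $\lambda=-cz$ with $c>0$, not at $\lambda=0$, so invoking QRR to ``absorb'' them is beside the point.

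\textbf{Support.} The bound $\cS^k\bff\in Q_W^{k\lambda^*+\sigma_{\max}(-k)}T_\theta$ does not place $S^{-k}\kappa_X(\cS^k\bff)$ inside $C_{X,\bbN}^\vee$. For $k>0$ one has $k\lambda^*+\sigma_{\max}(-k)=0$, so the exponent of $S^{-k}\kappa_X(\cS^k\bff)$ lands in $-k\lambda^*+\NE_{\bbN}(W)$, which is on the \emph{wrong} side of the cone (you have the signs reversed when you say ``$S^{-k}$ is absorbed into $\lambda^*\in C_{X,\bbN}^\vee$''). What actually forces the sum into $\bbC\formal{C_{X,\bbN}^\vee}$ is a cancellation you do not mention. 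For $k>0$ the prefactor always contains the factor $e_\lambda(N_{F_r/W})$; under the factorization
\[
\kappa_X\colon H^*_{\bbC^*}(W)\xrightarrow{|_{F_r}} H^*(F_r)[\lambda]\xrightarrow{\lambda\mapsto p} H^*(X)\cong H^*(F_r)[p]\big/\bigl(p^{l}+c_1(N_{F_r/W})p^{l-1}+\cdots+c_l(N_{F_r/W})\bigr)
\]
this factor is sent to the defining relation of the projective bundle and hence to zero. So either $e_\lambda(N_{F_r/W})$ cancels a pole of $e^{-kz\partial_\lambda}\bff_\beta|_{F_r}$ (making the term regular), or it survives and annihilates the term under $\kappa_X$; either way the dangerous low-degree contributions for $k>0$ disappear, and the remaining support lands in $C_{X,\bbN}^\vee$. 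This is precisely the single claim the paper isolates as the needed generalization of \cite[Appendix~A]{iritani2023quantum} from the divisor case to arbitrary rank, and it is the missing idea in your argument.
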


Since the highest case can be turned into the lowest case by reversing the $\bbC^*$-action on $W$, the proofs in the rest of this subsection will focus on the lowest case. 

\begin{proof}
We prove the lemma for the lowest GIT quotient $X$. The lemma directly follows from the argument in \cite[Appendix A]{iritani2023quantum} (which generalizes the proof of \cite[Proposition 4.1]{iritani2023quantum}). The only part in the argument that needs to be generalized to the case where $F_r$ is not necessarily a divisor is the following claim. Expand $\bff$ in $Q_W$ as $\bff = \sum_{\beta \in \NE_{\bbN}(W)} \bff_{\beta} Q_W^\beta$. If $k \in \bbZ_{>0}$ and $\beta \in \NE_{\bbN}(W)$ are such that in the prefactor of 
$$
    (\cS^k \bff) |_{F_r} = \frac{\prod_{c = -\infty}^0 e_{\lambda + cz}(N_{F_r/W})}{\prod_{c = -\infty}^{-k} e_{\lambda + cz}(N_{F_r/W})} e^{-kz\partial_\lambda} \bff |_{F_r},
$$
the factor $e_\lambda(N_{F_r/W})$ (corresponding to $c=0$) appearing in the numerator is not cancelled by the denominator of $e^{-kz\partial_\lambda} \bff_{\beta} |_{F_r}$, then $\kappa_X(\cS^k \bff_\beta) = 0$. To prove this claim, observe that under the presentation
$$
    H^*(X) \cong H^*(F_r)[p] / (p^l + c_1(N_{F_r/W}) p^{l-1} + \cdots + c_l(N_{F_r/W}))
$$
of the cohomology of $X = \bbP(N_{F_r/W})$, where $p \coloneqq c_1(\cO(1))$, the Kirwan map $\kappa_X$ may be factorized as
$$
    \xymatrix{
        \kappa_X\colon H^*_{\bbC^*}(W) \ar[r]^-{|_{F_r}} & H^*_{\bbC^*}(F_r) \ar[r]^{\lambda \mapsto p} & H^*(X).
    }        
$$
Then the claim follows from the fact that 
$
    e_\lambda(N_{F_r/W}) = \lambda^l + c_1(N_{F_r/W}) \lambda^{l-1} + \cdots + c_l(N_{F_r/W})
$
is sent to zero in $H^*(X)$ under $\lambda \mapsto p$.
\end{proof}

\begin{proof}[Proof of Theorem~\ref{thm:Reduction}]
We prove the theorem for the lowest GIT quotient $X$. We denote $N \coloneqq N_{F_r/W}$. In this proof, let $L$ be a line bundle on $F_r$ such that $(N \otimes L)^\vee$ is globally generated. We use the notation $N\otimes L$ for both the vector bundle and its total space. Then $N\otimes L$ is semi-projective and is equipped with a $\bbC^*$-action under which the only nontrivial GIT quotient is $\bbP(N \otimes L) \cong \bbP(N) = X$. We have the presentations
\begin{align*}
    H^*(X) & \cong H^*(F_r)[p] / (p^l + c_1(N) p^{l-1} + \cdots + c_l(N)) \\
    & \cong H^*(F_r)[p'] / (p'^l + c_1(N \otimes L) p'^{l-1} + \cdots + c_l(N \otimes L)) 
\end{align*}
where $p \coloneqq c_1(\cO_{\bbP(N)}(1))$ and $p' \coloneqq c_1(\cO_{\bbP(N \otimes L)}(1))$. We have
$$
    p' = p - c_1(L).
$$
Let $\lambda'$ denote the equivariant parameter for the $\bbC^*$-action on $N \otimes L$. We have $H^*_{\bbC^*}(F_r) \cong H^*(F_r)[\lambda] \cong H^*(F_r)[\lambda']$ under the change of parameters
\begin{equation}\label{eqn:LambdaChange}
    \lambda' = \lambda - c_1(L).
\end{equation}

We relate the discrete Fourier transformation $\sfF_X$ defined for the GIT quotient $X = W \gitquot \bbC^*$, which we denote by $\sfF_X^W$ in this proof, to the discrete Fourier transformation $\sfF_X^{N \otimes L}$ defined for the GIT quotient $X = N \otimes L \gitquot \bbC^*$. Specifically, we factorize $\sfF_X^W$ into the following sequence of maps each of which preserves the relevant (extended) Givental cones listed below the (rational, extended) Givental spaces:
\begin{equation}\label{eqn:FTFactorize}
    \xymatrix{
        \cH_W^{\rat} \ar[r]^-{|_{F_r}}_-{\text{\ref{item:InitRes}}} & \cH_{F_r}^{\rat} \formal{Q_W} \ar[r]^-{\Delta_{N \otimes L}/\Delta_N}_-{\text{\ref{item:DoubleTwist}}} & \cH_{F_r}^{\rat}  \formal{Q_W} \ar[r]^-{\left(|_{F_r}\right)^{-1}}_-{\text{\ref{item:VBRes}}} & \cH_{N \otimes L}^{\rat} \formal{Q_W}  \ar[r]^-{\sfF_X^{N \otimes L}}_-{\text{\ref{item:IKFT}}} & \cH_X^{\ext}. \\
        \cL_W \ar@{}[u]|-{\cup} & \cL_{F_r, (N, e_{\bbC^*}^{-1})} \ar@{}[u]|-{\cup} & \cL_{F_r, (N \otimes L, e_{\bbC^*}^{-1})} \ar@{}[u]|-{\cup} & \cL_{N \otimes L} \ar@{}[u]|-{\cup} & \cL_X \ar@{}[u]|-{\cup}
    }
\end{equation}
We first provide the details of the maps above.
\begin{enumerate}[label=(\roman*), wide]
    \item \label{item:InitRes} 
    The restriction map $|_{F_r}$ from $W$ to $F_r$ is considered in Section~\ref{sect:ContinuousFT}. The relation between the Givental cones is given by Proposition~\ref{prop:WRestriction}.
    
    \item \label{item:DoubleTwist}
    This map is the composition of $\Delta_N^{-1}$ with $\Delta_{N \otimes L}$, where $\Delta_N$ (resp.\ $\Delta_{N \otimes L}$) denotes the quantum Riemann-Roch operator associated to the $(N, e_{\bbC^*}^{-1})$-twisted (resp.\ $(N \otimes L, e_{\bbC^*}^{-1})$-twisted) theory. The map preserves the Givental cones (Theorem~\ref{thm:QRR}). In particular, it passes through the Givental cone of the untwisted theory.\footnote{As remarked in \cite[Remark 2.6]{iritani-koto2023quantum} and \cite[Remark 2.15]{iritani2023quantum}, there is a technical issue that in the $e_{\bbC^*}^{-1}$-twisted theory it is difficult to treat the Givental cone and quantum Riemann-Roch operator formally in the $\lambda^{-1}$- (or $\lambda'^{-1}$-)adic topology, and this issue may be resolved by passing through the $\te_{\bbC^*}^{-1}$-twisted theory via a change of variables where $\te_{\bbC^*}(-) \coloneqq \sum_{i \ge 0} \lambda^{-i}c_i(-)$.}

    \item \label{item:VBRes}
    The map is a canonical lift that is inverse to the restriction map $|_{F_r}$ from $N \otimes L$ to $F_r$, and the relation between the Givental cones is given by $\bbC^*$-equivariant virtual localization applied to $N \otimes L$. In more detail, if $\{\phi_i\}$ is a $\bbC$-basis of $H^*(F_r) \cong H^*(N \otimes L)$ and $\{\phi^i\}$ is the dual basis of $H^*(F_r)$, then $\{\phi^i e_{\bbC^*}(N \otimes L)\}$ is dual to $\{\phi_i\}$ under the $\bbC^*$-equivariant Poincar\'e pairing on $N \otimes L$. Moreover, virtual localization implies that 
    $$ %\label{equ: GW of vector bundle= GW of twisted base}
        \langle \alpha_1\psi^{k_1},\dots,\alpha_n\psi^{k_n} \rangle^{N \otimes L}_{0,n,\beta} = \langle \alpha_1\psi^{k_1},\dots,\alpha_n\psi^{k_n}\rangle^{F_r,(N \otimes L,e_{\bbC^*}^{-1})}_{0,n,\beta}
    $$
    for any $n, \beta, k_1, \dots, k_n$, and $\alpha_1, \dots, \alpha_n \in H^*(F_r)$. Thus, any point $\cJ^{(N \otimes L, e_{\bbC^*}^{-1})}_{F_r}(\bft(z))$ of $\cL_{F_r,(N \otimes L, e_{\bbC^*}^{-1})}$ given by
    $$
        z + \bft(z) + \sum_{i} \sum_{\substack{n \geq 0,\, \beta \in \NE_{\bbN}(F_r) \\ (n,\beta) \ne (0,0),(1,0)}} \phi^i e_{\bbC^*}(N \otimes L) \Big{\langle} \bft(-\psi), \ldots, \bft(-\psi), \frac{\phi_i}{z - \psi} \Big{\rangle}^{F_r,(N \otimes L, e_{\bbC^*}^{-1})}_{0, n+1, \beta} \frac{Q_{F_r}^\beta}{n!}
    $$
    is canonically lifted to $\cL_{N \otimes L}$ under the identification of $\bft(z)$ as a point in $\cH_{N \otimes L}^{+, \bbC^*}$. 
    %  a point in $H^*_{\bbC^*}(V)[z]$
    Below, we will also denote the map by $|^{N \otimes L}$. 
    
    \item \label{item:IKFT}
    The discrete Fourier transformation $\sfF_X^{N \otimes L}$ is defined in the same way as in Definition~\ref{def:DiscreteFT} and is studied extensively by Iritani-Koto \cite{iritani-koto2023quantum}. In particular, for the noncompact, semi-projective total space $N \otimes L$, they proved Conjecture~\ref{conj:introReduction} \cite[Theorems 1.1]{iritani-koto2023quantum} and its generalized version that $z\mathrm{F}_{X}(\bff)$ lies on $\cL_X$ for any $\bff\in \cL_{N\otimes L}$ regular at $\lambda'=0$ \cite[Theorem 3.3]{iritani-koto2023quantum}.

\end{enumerate}
To complete the proof, we verify that~\eqref{eqn:FTFactorize} is a factorization of $\sfF_X^W$. Recall that the Kirwan maps can be factorized as
$$
    \xymatrix{
        \kappa_X^W\colon H^*_{\bbC^*}(W) \ar[r]^-{|_{F_r}} & H^*_{\bbC^*}(F_r) \ar[r]^{\lambda \mapsto p} & H^*(X),
    }
    \quad      
    \xymatrix{
        \kappa_X^{N \otimes L}\colon H^*_{\bbC^*}(N \otimes L) \ar[r]^-{|_{F_r}} & H^*_{\bbC^*}(F_r) \ar[r]^{\lambda' \mapsto p'} & H^*(X).
    }
$$
The specializations of $\lambda, \lambda'$ are consistent with~\eqref{eqn:LambdaChange}. Moreover, if $\{\delta\}$ denotes the collection of nonequivariant Chern roots of $N$, the change of parameters~\eqref{eqn:LambdaChange} identifies the $\bbC^*$-equivariant Chern roots $\{\delta + \lambda\}$ of $N$ with the $\bbC^*$-equivariant Chern roots $\{\delta + c_1(L) + \lambda'\}$ of $N \otimes L$, which implies that $\Delta_N = \Delta_{N \otimes L}$. Therefore, we have
$$    
    \sfF_X^{N \otimes L} \left( ((\Delta_{N \otimes L}/\Delta_N)\bff |_{F_r}) |^{N \otimes L}\right) = \sum_{k \in \bbZ} S^{-k} (\Delta_{N \otimes L}/\Delta_N)\bff |_{F_r} \big|_{\lambda' \mapsto p'} = \sum_{k \in \bbZ} S^{-k} \bff |_{F_r} \big|_{\lambda \mapsto p} = \sfF_X^{W}(\bff).
$$
% For a point $\bff$ on $\cL_W \subset \cH_W^{\rat}$, we show that $\sfF_X^W(\bff)$ (which is defined in by Lemma~\ref{lem:DiscreteDefined}) lies on $\cL_X$ defined over the extension $\bbC\formal{C_{X,\bbN}^{\vee}}$. 
\end{proof}

\begin{corollary}\label{cor:MirrorMapDiscrete}
Let $J_W(\theta)$ be the equivariant $J$-function of $W$. For the highest/lowest GIT quotient $X$, there exist $\tau_{X}(\theta)\in H^*(X)\formal{C_{X,\bbN}^{\vee},\theta}$ and $\vartheta_X(\theta)\in H^*(X)[z]\formal{C_{X,\bbN}^{\vee},\theta}$ such that 
$$
    \sfF_X(J_W(\theta)) = M_X(\tau_X(\theta))\vartheta_X(\theta),
$$
where $M_X(\tau)$ is defined under the base change $\bbC\formal{\NE_{\bbN}(X)} \to \bbC\formal{C_{X,\bbN}^{\vee}}$, and  
$$
    \tau_{X}(0)\equiv 0,\quad \vartheta_X(0)\equiv 1 \quad \mathrm{mod} \text{ }  \overline{(\hS^{\beta})}_{\beta\in C_{X,\bbN}^{\vee}\backslash \{0\}},
$$
where $\overline{(\hS^{\beta})}_{\beta\in C_{X,\bbN}^{\vee}\backslash \{0\}}$ is the ideal of $H^*(X)\formal{C_{X,\bbN}^{\vee}}$ generated by $\hS^{\beta}$ of all $\beta\in C_{X,\bbN}^{\vee}\backslash\{0\}$. 
\end{corollary}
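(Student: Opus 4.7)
The plan is to combine the Reduction Theorem~\ref{thm:Reduction} with the ruled structure of the Givental cone of $X$ from Proposition~\ref{proposition of Givental cone}.

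First, I would apply Theorem~\ref{thm:Reduction} with the collection of formal parameters $x = \theta$ to the $\bbC[\lambda]\formal{Q_W,\theta}$-valued point $zJ_W(\theta) \in \cL_W$. This yields that $z\sfF_X(J_W(\theta))$ is well-defined and is a $\bbC\formal{Q_W,\theta}$-valued point of the Givental cone of $X$, extended as in the definition of $\cH_X^{\ext}$.

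Next, by the ruling Proposition~\ref{proposition of Givental cone}(\ref{prop:Givental cone 3}), we have $\cL_X = \bigcup_\tau z T_\tau$; by Proposition~\ref{proposition of Givental cone}(\ref{prop: Givental cone 2}), each tangent space is $T_\tau = M_X(\tau)\cH_X^+$. Hence there exist $\tau_X(\theta) \in H^*(X)\formal{C_{X,\bbN}^\vee,\theta}$ and $\vartheta_X(\theta) \in H^*(X)[z]\formal{C_{X,\bbN}^\vee,\theta}$ with
$$
    \sfF_X(J_W(\theta)) = M_X(\tau_X(\theta))\,\vartheta_X(\theta).
$$
Uniqueness of $\tau_X(\theta)$ comes from the injectivity of $\tau \mapsto T_\tau$ (since the base points $zJ_X(\tau)$ are pairwise distinct); then $\vartheta_X(\theta) = M_X(\tau_X(\theta))^{-1}\sfF_X(J_W(\theta))$ is uniquely determined by the invertibility of $M_X(\tau_X(\theta))$ as an element of $\End(H^*(X))[z^{-1}]\formal{Q_X,\tau_X}$.

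Finally, I would verify the initial conditions modulo the ideal $I \coloneqq \overline{(\hS^\beta)}_{\beta \in C_{X,\bbN}^\vee \setminus \{0\}}$. Since $I$ contains $Q_W^\beta$ for every $\beta \in \NE_{\bbN}(W)\setminus\{0\}$, we have $J_W(0) \equiv 1$ modulo $I$. Expanding $\sfF_X(1) = \sum_{k \in \bbZ} S^{-k}\kappa_X(\cS^k \cdot 1)$: the $k = 0$ term contributes $\kappa_X(1) = 1$; for $k \ge 1$ in the lowest-GIT case, $(\cS^k \cdot 1)|_{F_r}$ is a product of equivariant Euler factors that includes $e_\lambda(N_{F_r/W})$, and this maps to zero under the Kirwan substitution $\lambda \mapsto p$ in the presentation $H^*(X) \cong H^*(F_r)[p]/(e_p(N_{F_r/W}))$, so these terms vanish; for $k \le -1$, the prefactor $S^{|k|} = \hS^{|k|\lambda^*}$ lies in $I$ since $\lambda^* \in C_{X,\bbN}^\vee \setminus \{0\}$. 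The highest-GIT case is symmetric. Therefore $\sfF_X(J_W(0)) \equiv 1 \pmod{I}$, and since $M_X(0) = \Id$, the uniqueness from the previous step forces $\tau_X(0) \equiv 0$ and $\vartheta_X(0) \equiv 1$ modulo $I$.

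The substantive input is already Theorem~\ref{thm:Reduction}, which secures that $z\sfF_X(J_W(\theta))$ lies in $\cL_X$ with support in $C_{X,\bbN}^\vee$; granted this, the rest of the argument is a direct application of the ruling plus a short initial-condition computation, and I do not anticipate further conceptual obstacles.
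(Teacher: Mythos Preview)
Your proposal is correct and follows essentially the same approach as the paper: apply Theorem~\ref{thm:Reduction} to $zJ_W(\theta)$, read off $\tau_X(\theta)$ and $\vartheta_X(\theta)$ from the ruling of $\cL_X$, then verify the initial conditions by specializing modulo the ideal $I = \overline{(\hS^\beta)}_{\beta \in C_{X,\bbN}^\vee\setminus\{0\}}$.

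One small point worth tightening is your final step. You write that ``since $M_X(0)=\Id$, the uniqueness from the previous step forces $\tau_X(0)\equiv 0$ and $\vartheta_X(0)\equiv 1$ modulo $I$.'' The uniqueness you established was over the full ring, and it does not automatically descend to the quotient by $I$ without a further word. The paper handles this more directly: after reduction modulo $I$ (so all Novikov variables vanish) one has $M_X(\tau_X(0))|_I = e^{\tau_X(0)|_I/z}$, hence
\[
\vartheta_X(0)\big|_I \;=\; e^{-\tau_X(0)|_I/z}.
\]
The left side lies in $H^*(X)[z]$ while the right side lies in $H^*(X)\formal{z^{-1}}$, so both must be constant, forcing $\tau_X(0)|_I=0$ and $\vartheta_X(0)|_I=1$. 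This $z$-degree comparison is what actually makes the ``uniqueness after specialization'' rigorous; once you insert it, your argument is complete. Your explicit term-by-term analysis of $\sfF_X(1)$ (the $k=0$, $k\ge 1$, $k\le -1$ cases) is more detailed than what the paper writes and is a nice addition.
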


\begin{proof}
We prove the corollary for the lowest GIT quotient $X$. By Theorem~\ref{thm:Reduction} applied to $\bff = zJ_W(\theta)$, there exist $\tau_X(\theta)\in H^*(X)\formal{C_{X,\bbN}^{\vee},\theta}$ and $\vartheta_X(\theta)\in H^*(X)[z]\formal{C_{X,\bbN}^{\vee},\theta}$ such that $\sfF_X(zJ_W(\theta)) = z M_X(\tau_X(\theta))\vartheta_X(\theta)$. By taking the limit $\theta=\hS^{\beta}=0$ for all $\beta\in C_{X,\bbN}^{\vee}\backslash\{0\}$, we have
$$
    \sfF_X(J_W(\theta))\big|_{\theta=\hS^{\beta}=0}=1, \quad M_X(\tau_X(\theta))\big|_{\theta=\hS^{\beta}=0}=e^{\frac{\tau_X(0)}{z}|_{\hS^{\beta}=0}}.
$$
Therefore, the equation $\sfF_X(J_W(\theta))=M_X(\tau_X(\theta))\vartheta_X(\theta)$ implies that
$$
    \vartheta_X(0)\big|_{\hS^{\beta}=0} = e^{-\frac{\tau_X(0)}{z}|_{\hS^{\beta}=0}}.
$$
Since the left-hand side only contains nonnegative powers of $z$ and the right-hand side only contains nonpositive powers of $z$, they are both constant and we have $\tau_{X}(0)|_{\hS^{\beta}=0}=0$ and $\vartheta_X(0)|_{\hS^{\beta}=0}=1$.
\end{proof}

\subsection{Discrete Fourier transformations on quantum $D$-modules}\label{sect:DiscreteQDM}
We translate the discrete Fourier transformations defined above into transformations on quantum $D$-modules. We work in the setup of highest/lowest GIT quotients as in Theorem~\ref{thm:Reduction}, although the discussion can be generalized for a general GIT quotient for which Conjecture~\ref{conj:introReduction} holds.

\begin{comment}
For the case of the highest/lowest GIT quotient $X=W\gitquot_{\pm}\bbC^*$, we have the ring extension $\bbC[z]\formal{Q_X,\tau_X}\to \bbC[z]\formal{C_{X,\bbN}^{\vee},\tau_X}$ induced by the dual Kirwan map $\kappa_X^*$, which sends $Q_X^{\beta}$ to $\hS^{\kappa_X^*(\beta)}$.
\end{comment}

\begin{definition}\label{Extension of QDM for a GIT quotient} % (Extension of quantum $D$-module for a GIT quotient)
For the highest/lowest GIT quotient $X$, we define the extension 
$$
    \QDM(X)^{\ext} \coloneqq \QDM(X) \otimes_{\bbC[z]\formal{Q_X,\tau_X}} \bbC[z]\formal{C_{X,\bbN}^{\vee},\tau_X}=H^*(X)[z]\formal{C_{X,\bbN}^{\vee},\tau_X}
$$
through the ring extension $\bbC[z]\formal{Q_X,\tau_X}\to \bbC[z]\formal{C_{X,\bbN}^{\vee},\tau_X}$. 
\begin{comment}
Equivalently, this extension can be viewed as the pullback of $\QDM(X)$ through the map $\mathrm{Spf}(\bbC[z]\formal{C_{X,\bbN}^{\vee},\tau_X})\to\mathrm{Spf}(\bbC[z]\formal{Q_X,\tau_X})$.
\end{comment}
The pullback connection $\nabla$ is defined as follows:
\begin{align*}
    \nabla_{\tau_X^i} &= \nabla_{\tau_X^i} \otimes \Id + \Id \otimes \partial_{\tau_X^i} 
= \partial_{\tau_X^i} + z^{-1} \left( \phi_{X,i} \star_{\tau_X} \right), \\
\nabla_{z \partial_z} &= \nabla_{z \partial_z} \otimes \Id + \Id \otimes z \partial_z 
= z \partial_z - z^{-1} \left( E_X \star_{\tau_X} \right) + \mu_X, \\
\nabla_{\xi \hS \partial_{\hS}} &= \nabla_{\kappa_X(\xi) Q \partial_Q} \otimes \Id + \Id \otimes \xi \hS \partial_{\hS} 
= \xi \hS \partial_{\hS} + z^{-1} \left( \kappa_X(\xi) \star_{\tau_X} \right).
\end{align*}
\end{definition}

% The pullback of the pairing $P_X$ is defined in $\QDM(X)^{\ext}$ using the same formula \blue{$P_X(f,g)=\int_Xf(z)\cup g(-z)$}. This holds because in the pullback process, we do not change the Poincar\'e pairing \blue{$\int_X$} on each fiber. 

By an abuse of notation, we will also use $M_X(\tau_X)$ to denote its base change under $\bbC\formal{\NE_{\bbN}(X)}\hookrightarrow \bbC\formal{C_{X,\bbN}^{\vee}}$. Notice that for $\tau_X(\theta)$ given in Corollary~\ref{cor:MirrorMapDiscrete}, we have $\tau_X(\theta)\big|_{\theta=\widehat{S}=0}=0$. Hence, the pullback (extended) quantum $D$-modules $\tau_X^*\QDM(X)^{\ext}$ for the highest/lowest GIT quotients are well-defined. Explicitly, $\tau_X^*\QDM (X)^{\ext}$ is the module $H^*(X)[z]\formal{C_{X,\bbN}^{\vee},\theta}$ with the pullback connection:
\begin{align*}
    \nabla_{\theta^{i,k}} &=\partial_{\theta^{i,k}}+z^{-1}(\partial_{\theta^{i,k}}\tau_X(\theta)\star_{\tau_X(\theta)}),\\ 
    \nabla_{z\partial_z} &=z\partial_z-z^{-1}(E_X\star_{\tau_X(\theta)})+\mu_X,\\
    \nabla_{\xi\hS\partial_{\hS}} &=\xi\hS\partial_{\hS}+z^{-1}(\kappa_X(\xi)\star_{\tau_X(\theta)})+z^{-1}(\xi\hS\partial_{\hS}\tau_X(\theta))\star_{\tau_X(\theta)}.
\end{align*}

We now construct discrete Fourier transformations on the level of quantum $D$-modules.

\begin{proposition}\label{prop:DiscreteFTDmodule}
For the highest/lowest GIT quotient $X$ and $\tau_X(\theta)\in H^*(X)\formal{C_{X,\bbN}^{\vee},\theta}$ given in Corollary~\ref{cor:MirrorMapDiscrete}, there is a $\bbC[z]\formal{Q_W,\theta}\big[\hS,Q_W^{-1}\big]$-module homomorphism
$$
    \FT_{X}\colon \QDM_{\bbC^*} (W)[Q_W^{-1}]\to \tau_{X}^*\QDM(X)^{\ext}[Q_W^{-1}]
$$
satisfying the following properties:
\begin{enumerate}[wide]
    \item \label{item:DiscreteFTDmoduleTarget}$\FT_{X}(\QDM_{\bbC^*}(W)) \subset \tau_{X}^* \QDM(X)^{\ext}$.

    \item \label{item:DiscreteFTDmoduleConnection} $\FT_{X}$ intertwines the connection $\nabla+\frac{1}{2}\frac{dz}{z}$ with $\nabla$. More precisely, $\FT_{X}$ intertwines with $\nabla_{\theta^{i,k}}$, $z \nabla_{\xi Q\partial_{Q}}$ with $z \nabla_{\xi \hS \partial_{\hS}}$, and $\nabla_{z\partial_z}+\frac{1}{2}$ with $\nabla_{z\partial_z}$.

    \item \label{item:DiscreteFTDmoduleHomo}$\FT_{X}$ is a homogeneous $\bbC[z]\formal{Q_W,\theta}\big[\hS,Q_W^{-1}\big]$-module homomorphism.

    \item \label{item:DiscreteFTDmoduleCommute} $\FT_{X}$ satisfies the commutative diagram
    $$
        \begin{tikzcd}
            \QDM_{\bbC^*}(W)[Q_W^{-1}] 
            \arrow[r, "\FT_{X}"]
            \arrow[d, "M_W(\theta)"'] 
            & \tau_{X}^* \QDM(X)^{\ext}[Q_W^{-1}] 
            \arrow[d, "M_X(\tau(\theta))"] \\
            \cH_W^{\mathrm{rat}}[Q_W^{-1}] 
            \arrow[r, dotted, "\sfF_X"] 
            & \cH_X^{\ext}[Q_W^{-1}].
        \end{tikzcd}
    $$
    where note that $\sfF_X\circ M_W(\theta)$ is well-defined due to Theorem~\ref{thm:Reduction}.
    
\end{enumerate}
\end{proposition}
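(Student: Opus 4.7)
The plan is to mimic the construction of $\FT_{F,j}$ in the proof of Proposition~\ref{prop:ContFTDmodule}, with Corollary~\ref{cor:MirrorMapDiscrete} playing the role of Proposition~\ref{prop:ContFTCone}(\ref{prop:prop6 of Continuous FT on cone level}). The key input is the factorization $\sfF_X(J_W(\theta)) = M_X(\tau_X(\theta))\vartheta_X(\theta)$ from Corollary~\ref{cor:MirrorMapDiscrete}, together with the fact that $\tau_X(0)\equiv 0$ modulo nontrivial $\hS^\beta$ which ensures the pullback $\tau_X^*\QDM(X)^{\ext}$ is well-defined.

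First, I will define $\FT_X$ by formally differentiating the above identity in the $\theta^{i,k}$-directions. Using the commutation $\sfF_X \circ \partial_{\theta^{i,k}} = \partial_{\theta^{i,k}} \circ \sfF_X$ from Proposition~\ref{prop:DiscreteProp}(\ref{prop:prop4 of Dicrete FT on cone level}), the relation $M_W(\theta)\varphi_i\lambda^k = z\partial_{\theta^{i,k}}J_W(\theta)$ from Proposition~\ref{proposition of Givental cone}(\ref{prop: Givental cone 2}), and the identity $\partial_{\theta^{i,k}} \circ M_X(\tau_X(\theta)) = M_X(\tau_X(\theta)) \circ \nabla_{\theta^{i,k}}$ on the pullback module, we obtain $\sfF_X(M_W(\theta)\varphi_i\lambda^k) = M_X(\tau_X(\theta))\bigl(z\nabla_{\theta^{i,k}}\vartheta_X(\theta)\bigr)$. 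I will then define $\FT_X$ to be the unique $\bbC[z]\formal{Q_W,\theta}[Q_W^{-1}]$-linear extension sending $\varphi_i\lambda^k$ to $z\nabla_{\theta^{i,k}}\vartheta_X(\theta)$. By construction, the commutative diagram in property (\ref{item:DiscreteFTDmoduleCommute}) holds on the $\bbC[z]\formal{Q_W,\theta}[Q_W^{-1}]$-basis $\{\varphi_i\lambda^k\}$ of $\QDM_{\bbC^*}(W)[Q_W^{-1}]$, and hence everywhere.

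Property (\ref{item:DiscreteFTDmoduleTarget}) is immediate, since $\{\varphi_i\lambda^k\}$ is already a $\bbC[z]\formal{Q_W,\theta}$-basis of the non-localized $\QDM_{\bbC^*}(W)$, so no inversion of $Q_W$ is required to land in $\tau_X^*\QDM(X)^{\ext}$. Property (\ref{item:DiscreteFTDmoduleConnection}) is obtained by transporting the cone-level commutation relations in Proposition~\ref{prop:DiscreteProp}(\ref{prop:prop2 of Discrete FT on cone level})(\ref{prop:prop3 of Discrete FT on cone level}) through the fundamental solutions $M_W(\theta)$ and $M_X(\tau_X(\theta))$, which gauge-transform the respective quantum connections into the coordinate derivations with the same shifts by Chern classes and grading operators that appear on the Givental cone side. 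Property (\ref{item:DiscreteFTDmoduleHomo}) follows from Proposition~\ref{prop:DiscreteProp}(\ref{prop:prop5 of Discrete FT on cone level}) together with the homogeneity of the fundamental solutions and $\vartheta_X(\theta)$. Finally, upgrading the $\bbC[z]\formal{Q_W,\theta}[Q_W^{-1}]$-linearity of $\FT_X$ to $\bbC[z]\formal{Q_W,\theta}[\hS,Q_W^{-1}]$-linearity follows from the cone-level identity $\sfF_X\circ\cS^k = S^k\circ\sfF_X$ in Proposition~\ref{prop:DiscreteProp}(\ref{prop:prop1 of Discrete FT on cone level}) combined with the compatibility $M_W(\theta)\circ\hS^\beta = \hcS^\beta\circ M_W(\theta)$ from Proposition~\ref{proposition of shift operators on Givental cone}(\ref{shift operator commute with funda}).

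I do not expect a substantive obstacle: all the heavy lifting is already contained in Theorem~\ref{thm:Reduction} and Corollary~\ref{cor:MirrorMapDiscrete}. The only mild subtlety is ensuring that $\nabla_{\theta^{i,k}}\vartheta_X(\theta)$ really lies in the non-localized module $\tau_X^*\QDM(X)^{\ext}$ rather than requiring further base changes in $\hS$; but this is automatic from Corollary~\ref{cor:MirrorMapDiscrete}, which places $\vartheta_X(\theta)$ in $H^*(X)[z]\formal{C_{X,\bbN}^\vee,\theta}$ and $\tau_X(\theta)$ in $H^*(X)\formal{C_{X,\bbN}^\vee,\theta}$ with the correct initial conditions.
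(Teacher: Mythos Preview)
Your proposal is correct and follows essentially the same approach as the paper: define $\FT_X$ by differentiating the identity $\sfF_X(J_W(\theta)) = M_X(\tau_X(\theta))\vartheta_X(\theta)$ from Corollary~\ref{cor:MirrorMapDiscrete} so that $\varphi_i\lambda^k \mapsto z\nabla_{\theta^{i,k}}\vartheta_X(\theta)$, and then deduce properties \eqref{item:DiscreteFTDmoduleTarget}--\eqref{item:DiscreteFTDmoduleHomo} and the $\hS$-linearity from the corresponding cone-level relations in Proposition~\ref{prop:DiscreteProp} together with $M_W(\theta)\circ\hS^\beta = \hcS^\beta\circ M_W(\theta)$.
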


\begin{proof}
% This $\bbC[z]\formal{Q_W,\theta}[\hS,Q_W^{-1}]$-homomorphism is constructed from Corollary~\ref{cor:MirrorMapDiscrete}. More precisely, 
By differentiating the equation $\sfF_X(J_W(\theta))=M_X(\tau_X(\theta))\vartheta_X(\theta)$ in Corollary~\ref{cor:MirrorMapDiscrete}, we define $\FT_X$ to be the unique $\bbC[z]\formal{Q_W,\theta}[Q_W^{-1}]$-module homomorphism that sends $\phi_i\lambda^k$ % (which is dual to the coordinate $\theta^{i,k}$) 
to $z\nabla_{\theta^{i,k}}\vartheta(\theta)$ for all $i,k$. 
Then the commutative diagram~\eqref{item:DiscreteFTDmoduleCommute} naturally holds. Proposition~\ref{prop:DiscreteProp}\eqref{prop:prop1 of Discrete FT on cone level} and the relation $M_W(\theta)\circ \hS^{\beta}=\cS^{\beta} \circ M_W(\theta)$ together imply that $\FT_X$ is a $\bbC[z]\formal{Q_W,\theta}\big[\hS,Q_W^{-1}\big]$-module homomorphism.
% Moreover, $\FT_X$ commutes with the action of the shift operator $\hS^{\beta}$ for any $\beta\in H_{\bbC^*}^2(W)$ due to the commutative relation $\sfF_X\circ \cS^{\beta}=\hS^{\beta} \cdot\sfF_X$ on the level of (extended) Givental space and the commutative relation 
Property~\eqref{item:DiscreteFTDmoduleTarget} holds because $\{\phi_i\lambda^k\}_{i,k}$ is also a $\bbC[z]\formal{Q_W,\theta}$-basis of $\QDM_{\bbC^*}(W)$ and thus we can define $\FT_X$ without localizing $Q_W$. Property~\eqref{item:DiscreteFTDmoduleConnection} follows from the commutative relations~\eqref{prop:prop2 of Discrete FT on cone level}\eqref{prop:prop3 of Discrete FT on cone level}\eqref{prop:prop4 of Dicrete FT on cone level} in Proposition~\ref{prop:DiscreteProp}.
\begin{comment}
$$\sfF_X\circ (z\xi Q\partial_{Q})=(z\xi \hS\partial_{\hS}+\kappa_X(\xi))\circ \sfF_X,$$
$$\sfF_X\circ \partial_{\theta^{i,k}}=\partial_{\theta^{i,k}}\circ \sfF_X,$$
$$\sfF_X\circ (z\partial_z-z^{-1}c_1^{\bbC^*}(W)+\mu_W^{\bbC^*}+\frac{1}{2})=(z\partial_z-z^{-1}c_1(X)+\mu_X)\circ \sfF_X.$$
\end{comment}
Property~\eqref{item:DiscreteFTDmoduleHomo} follows from Proposition~\ref{prop:DiscreteProp}\eqref{prop:prop5 of Discrete FT on cone level}.
\end{proof}

% !TEX root = vgit.tex

\section{Decomposition of quantum $D$-modules for 3-component $\bbC^*$-VGIT wall-crossings}\label{sect:Decomposition}
In this section, we establish the decomposition of quantum $D$-modules in the case of 3-component $\bbC^*$-VGIT wall-crossings, under the setup of Assumption~\ref{3-component assumption}. 
% \orange{Let $F$ denote $F_0$ for simplicity.}
We also assume that $c_{F_0} < 0$, i.e.\ $r_{F_0, +} < r_{F_0, -}$.
Specifically, we decompose the quantum $D$-module of $X_-$ into the quantum $D$-module of $X_+$ and $-c_{F_0}$ copies of the quantum $D$-module of $F_0$, after pulling back to a common base ring.
The main result of the section, Theorem~\ref{thm:QDMDecompRed}, is stated over the base ring
\[
    R \coloneqq \bbC[z]\Biglaurent{Q_{X_-}^{-\frac{a}{2c_{F_0}}}}\formal{Q_{X_-},\tau_{X_-}}.
\]
We first establish the decomposition over a larger base ring
\[
    \tR \coloneqq \bbC[z]\Biglaurent{S_{F_0}^{-\frac{1}{2c_{F_0}}}}\formal{Q_W,\theta}
\]
where the continuous Fourier transformations from $W$ to $F_0$ and the discrete Fourier transformations from $W$ to $X_\pm$ are naturally defined, see Theorem~\ref{thm:QDMDecompExt}.
Then we reduce the decomposition to the smaller base ring $R$ which depends only on the quotients $X_\pm$ and not on $W$.

\subsection{Completion of quantum $D$-module of $W$}\label{subsec:Completion}

We start by extending the base of the equivariant quantum $D$-module of $W$ via a completion along the dual ample cone of a GIT quotient. % This will later enable the base extensions for the Fourier transformations.

% Recall that for the above two extensions of quantum $D$-modules (\ref{Extension of quantum $D$-module for a fixed component})(\ref{Extension of quantum $D$-module for a GIT quotient}), the formal completion of $S$-direction is involved. Thus, we make the formal completion for the equivariant quantum $D$-module $\QDM_{\bbC^*}(W)$.

\begin{definition}\label{def:completion of qdm}%(Completion version of the equivariant quantum $D$-module)
Let $X = X_+$ or $X_-$. Consider
$$
    \QDM_{\bbC^*}(W)_{X} \coloneqq \bbC[C_{X,\bbN}^{\vee}]\cdot\QDM_{\bbC^*}(W)\subset \QDM_{\bbC^*}(W)[Q_W^{-1}].
$$
Let $\fm_X\subset \bbC[C_{X,\bbN}^{\vee}][z]\formal{Q_W,\theta}$ be the ideal generated by all nonzero elements of the monoid $C_{X,\bbN}^{\vee}$. We define the \emph{completion}
$$
    \QDM_{\bbC^*}(W)_X^{\wedge} \coloneqq \varprojlim_k
 \QDM_{\bbC^*}(W)_X/\fm_X^k\QDM_{\bbC^*}(W)_X.
$$
\end{definition}

% For the equivariant quantum $D$-module $\QDM_{\bbC^*}(W)$, and the highest/lowest GIT quotient $X=W\gitquot_{\pm}\bbC^*$, we now define the completion version of the equivariant quantum $D$-module. 

%Then we define the completion version of equivariant quantum $D$-modules $\QDM_{\bbC^*}(W)_X^{\wedge}$ by
    
Here, the inverse limit is taken in the category of graded modules. The completion endows $\QDM_{\bbC^*}(W)_X^{\wedge}$ with the structure of a $\bbC[z]\formal{C_{X,\bbN}^{\vee},\theta}$-module. Moreover, the operators $z\nabla_{z\partial_z}$, $z\nabla_{\theta^{i,k}}$, and $z\nabla_{\xi Q\partial_Q}$ can be naturally extended to this module because of the commutative relations 
$$
    \big[\nabla_{z\partial_z},\hS^{\beta}\big] =\big[\nabla_{\theta^{i,k}},\hS^{\beta}\big]=0, \quad
    \big[\nabla_{\xi Q\partial_Q},\hS^{\beta}\big]= (\xi\cdot\beta)\hS^{\beta} \quad
    \text{for any } \beta\in H^2_{\bbC^*}(W), \xi\in H_2^{\bbC^*}(W).
$$
We show below that $\QDM_{\bbC^*}(W)_X^{\wedge}$ is a finite-rank free $\bbC[z]\formal{C_{X,\bbN}^{\vee},\theta}$-module, following \cite[Theorem 5.2]{iritani2023quantum}. As a consequence, the extended operator $\nabla$ is a flat connection on $\QDM_{\bbC^*}(W)^{\wedge}_X$.

% Finally, $z\nabla_{z\partial_z}$, $z\nabla_{\theta^{i,k}}$, and $z\nabla_{\xi Q\partial_Q}$ can be extended to this free $\bbC[z]\formal{C_{X,\bbN}^{\vee},\theta}$-module 

% Following the same method of \cite[Theorem 5.2]{iritani2023quantum}, we show that $\QDM_{\bbC^*}(W)_X^{\wedge}$ is a finite-rank free $\bbC[z]\formal{C_{X,\bbN}^{\vee},\theta}$-module with the flat quantum connection inherited from $\QDM_{\bbC^*}(W)$. 

\begin{proposition}\label{prop:CompletionFree}
Let $X = X_+$ or $X_-$. The completion $\QDM_{\bbC^*}(W)_X^{\wedge}$ is a free $\bbC[z]\formal{C_{X,\bbN}^{\vee},\theta}$-module of rank equal to $r\coloneqq \dim H^*(X)$. Specifically, if $s_1,\cdots,s_{r} \in H^*_{\bbC^*}(W)$ are homogeneous elements such that $\{\kappa_X(s_1),\cdots,\kappa_X(s_{r})\}$ is a $\bbC$-basis of $H^*(X)$, then $s_1,\cdots,s_{r}$ freely generate $\QDM_{\bbC^*}(W)^{\wedge}_X$ over $\bbC[z]\formal{C_{X,\bbN}^{\vee},\theta}$.
% The operator $z\nabla_{z\partial_z}$, $z\nabla_{\theta^{i,k}}$, and $z\nabla_{\xi Q\partial_Q}$ can be naturally extended to this free module. Equivalently, $\QDM_{\bbC^*}(W)^{\wedge}_X$ is a trivial vector bundle over $\Spf(\bbC[z]\formal{C_{X,\bbN}^{\vee},\theta})$ with the flat quantum connection $\nabla$. 
\end{proposition}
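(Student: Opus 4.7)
The strategy is a topological Nakayama argument applied to the $\fm_X$-adic completion, following \cite[Theorem 5.2]{iritani2023quantum}. Write $R \coloneqq \bbC[z]\formal{C_{X,\bbN}^{\vee},\theta}$ and $\bar R \coloneqq R/\fm_X = \bbC[z]\formal{\theta}$. The key reduction is to establish that the fiber $\QDM_{\bbC^*}(W)_X/\fm_X\QDM_{\bbC^*}(W)_X$ is a free graded $\bar R$-module of rank $r = \dim_{\bbC} H^*(X)$, with basis given by the images $\bar s_1,\dots,\bar s_r$ of the chosen classes. Granting this, the topological graded Nakayama lemma applied to the complete filtered $R$-module $\QDM_{\bbC^*}(W)_X^{\wedge}$ shows that $s_1,\dots,s_r$ generate it over $R$. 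To upgrade generation to freeness, consider the induced surjection $\varphi\colon R^r \twoheadrightarrow \QDM_{\bbC^*}(W)_X^{\wedge}$; its kernel $K$ is closed, hence $\fm_X$-adically complete. The reduction $\varphi \otimes_R \bar R$ is an isomorphism by the basis hypothesis, so $K \subset \fm_X R^r$; iterating, $K \subset \bigcap_k \fm_X^k R^r = 0$, completing the freeness argument.

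The main content is therefore identifying the fiber modulo $\fm_X$ with $H^*(X)[z]\formal{\theta}$. Since $\NE_{\bbN}(W) \subset C_{X,\bbN}^{\vee}$ by \eqref{eqn:DualCone3Comp}, setting $\fm_X=0$ kills every Novikov variable $Q_W^\beta$ for $\beta \in \NE_{\bbN}(W)\setminus\{0\}$, collapsing the underlying module to a quotient of $H^*_{\bbC^*}(W)[z]\formal{\theta}$. The remaining generators of $C_{X,\bbN}^{\vee}$ modulo $\NE_{\bbN}(W)$ are, in the case $X = X_-$, the classes $\lambda^*$ and $a-\lambda^*$; the case $X = X_+$ is parallel via \eqref{eqn:DualCone3Comp}. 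Using the explicit formula \eqref{eqn:ShiftOpLocal}, at $Q_W=0$ the shift $\cS = \hcS^{\lambda^*}$ is supported entirely on $F_-$, since its components along $F_+$ and $F_0$ carry factors $Q_W^{a+b}$ and $Q_W^a$ respectively which vanish, and on $F_-$ it acts by $e_\lambda(N_{F_-/W})e^{-z\partial_\lambda}$; a similar analysis applies to the inverse shift $\hcS^{a-\lambda^*}$ on $F_+$. Imposing the vanishing of these shift-operator images then yields precisely the Kirwan relations described in Lemma~\ref{lem:KirwanKernel}, cutting $H^*_{\bbC^*}(W)[z]\formal{\theta}$ down to $H^*(X_-)[z]\formal{\theta}$ via the surjective Kirwan map $\kappa_{X_-}$.

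Since $\{\kappa_X(s_i)\}_{i=1}^{r}$ is a $\bbC$-basis of $H^*(X)$ by hypothesis, the images $\bar s_i$ form a $\bar R$-basis of the fiber $\cong H^*(X)[z]\formal{\theta}$, which closes the Nakayama argument. The principal technical obstacle is the careful verification that these shift-operator relations, together with the vanishing of $Q_W^\beta$ for $\beta\in\NE_{\bbN}(W)\setminus\{0\}$ and the nontrivial $e^{-z\partial_\lambda}$ action in the $z$-direction, reproduce exactly the kernel of $\kappa_X$ as characterized in Lemma~\ref{lem:KirwanKernel}. Handling the $z$-shifts requires filtering $H^*_{\bbC^*}(W)[z]$ by $z$-degree and extracting the classical contribution at the leading order, so that Euler classes such as $e_\lambda(N_{F_-/W})$ are correctly matched with the Kirwan relations. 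Once this calculation is in hand, the rank comparison and completion-Nakayama steps proceed as in \cite[Section 5]{iritani2023quantum}, yielding the free module structure of rank $r = \dim H^*(X)$.
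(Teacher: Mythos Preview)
Your overall Nakayama framework matches the paper's strategy, and the generation direction is correct in spirit (it is exactly the content of Lemma~\ref{lem:LeadingTermKirwan}). But two concrete gaps prevent this sketch from constituting a proof.

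First, your analysis of the shift operator $\hcS^{a-\lambda^*} = Q_W^a\cS^{-1}$ is incorrect. From \eqref{eqn:ShiftOpLocal}, multiplying $\cS^{-1}$ by $Q_W^a$ gives prefactors $Q_W^{-b}$, $1$, $Q_W^{a}$ on $F_+$, $F_0$, $F_-$ respectively. So at $Q_W=0$ the surviving contribution is on $F_0$, not $F_+$; on $F_+$ there is a genuine $Q_W^{-b}$ pole. This is precisely why the paper's Lemma~\ref{lem:LeadingTermKirwan} must carefully restrict to classes $\alpha$ with $\alpha|_{F_+}=0$ before applying $Q_W^aS^{-1}$, and must use a \emph{third} shift $Q_W^{a+b}S^{-1}$ (which \emph{is} supported on $F_+$ at leading order) to produce the $\Im(i_{F_+,*})$ part of $\Ker\kappa_{X_-}$. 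Your two-generator picture misses this. Relatedly, you use the Givental-space operator $\cS=\hcS^{\lambda^*}$ where the $D$-module operator $S=\hS^{\lambda^*}$ is needed; these agree only after setting $\theta=0$ as well, since $M_W(\theta)|_{Q_W=0}\neq\Id$ for $\theta\neq 0$.

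Second, the ``iterating $K\subset\fm_X^k R^r$'' step is not valid as stated. Knowing $\varphi\otimes\bar R$ is an isomorphism only gives $K\subset\fm_X R^r$; to iterate you would need the induced map $\fm_X^k R^r/\fm_X^{k+1}R^r\to\fm_X^k M/\fm_X^{k+1}M$ to be injective for each $k$, which is essentially flatness of $M$---exactly what you are trying to prove. The ring is non-Noetherian (infinitely many $\theta^{i,n}$) and the graded pieces are not finite-dimensional, so no dimension count rescues this. The paper does \emph{not} deduce freeness from the fiber computation; instead it proves linear independence directly in the completion by a contradiction argument: assuming $\sum_i A_i s_i = 0$ with $A_i\in\bbC[z]\formal{C_{X,\bbN}^\vee,\theta}$ not all zero, it extracts a minimal $\beta_0\in\NE_\bbN(W)$ and minimal $\theta$-monomial $t_0$, applies $M_W(\theta)$, restricts to $F_-$, and reduces (after clearing Euler-class denominators and setting $z=0$) to a relation among the $e_\lambda(N_{F_-/W})^{k-k_0}\,s_i|_{F_-}$ in $H^*_{\bbC^*}(F_-)$, which contradicts that $\{\kappa_X(s_i)\}$ is a basis. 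This direct argument is the main technical content and is what your sketch omits.
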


\begin{proof}
We prove the proposition for $X = X_-$; the case $X = X_+$ is similar.
%Let $r := \dim H^*(X)$. We show that if $s_1,\cdots,s_{r} \in H^*_{\bbC^*}(W)$ are homogeneous elements such that $\{\kappa_X(s_1),\cdots,\kappa_X(s_{r})\}$ is a $\bbC$-basis of $H^*(X)$, then $\{s_1,\cdots,s_{r}\}$ is a basis of $\QDM_{\bbC^*}(W)^{\wedge}_X$ as a $\bbC[z]\formal{C_{X,\bbN}^{\vee},\theta}$-module. 
% We denote the dimension of $H^*(X)$ by $r$ and consider the set of homogeneous elements $\{s_1,\cdots,s_{r}\}\subset H^*_{\bbC^*}(W)$ where $\{\kappa_X(s_1),\cdots,\kappa_X(s_{r})\}$ is the basis of $H^*(X)$. Now we prove that $\{s_1,\cdots,s_{r}\}$ can also be seen as a basis of $\QDM_{\bbC^*}(W)^{\wedge}_X$ as a $\bbC[z]\formal{C_{X,\bbN}^{\vee},\theta}$-module. 
First, we prove that $s_1,\cdots,s_{r}$ generate $\QDM_{\bbC^*}(W)_X^{\wedge}$. % \orange{For this part, the idea of the proof is to prove that for arbitrary $s\in H_{\bbC^*}^*(W)[z]$, we can use all $s_i$ and the elements in $\bbC[C_{X,\bbN}^{\vee}]$ to generate the $Q_W$-regular element with the leading term equal to $s$.} %Without loss of generality, we assume $X=W\gitquot_{-}\bbC^*$. 
We denote $\cM \coloneqq \QDM_{\bbC^*}(W)_{X}$ 
%\bbC[C_{X,\bbN}^{\vee}]\QDM_{\bbC^*}(W)$ 
and $\cM_0 \coloneqq \QDM_{\bbC^*}(W)$. Notice that $\cM_0$ consists of $Q_W$-regular elements in $\cM$.
%For an element $s\in \QDM_{\bbC^*}(W)\subset \bbC[C_{X,\bbN}^{\vee}]\QDM_{\bbC^*}(W)$, 
For $f \in \cM_0$, we define the \emph{leading term} 
\begin{equation}\label{eqn:LTDef}
    \LT(f) \coloneqq f \big|_{Q_W=0,\theta=0}\in H_{\bbC^*}^*(W)[z]. 
\end{equation}
% Notice that the leading term is only defined for elements in $\QDM_{\bbC^*}(W)$, i.e.\ the $Q_W$-regular elements, as otherwise we cannot make the evaluation $Q_W=0$. 

Observe that
% \begin{equation}\label{Quotient M/m_XM}
$$
    \cM/\fm_X\cM= \frac{\cM_0}{S\cM_0+\cM_0\cap Q_W^{a}S^{-1}\bbC[Q_W^aS^{-1}]\cM_0+  \sum_{\beta\in \NE_{\bbN}(W)/\{0\}} Q_W^{\beta}\cM_0}.
$$
% \end{equation}   
%For any $s\in \cM_0$, if $\kappa_X(\LT(s))=0$, 
Let $f \in \cM_0$. Proceeding inductively from the lower order terms of $f$ as a power series in $Q_W$ and $\theta$, and applying Lemma~\ref{lem:LeadingTermKirwan} below at each step, we may find $A_1, \dots, A_{r} \in \bbC[z]\formal{Q_W, \theta}$ and $C_1,C_2,C_3\in H_{\bbC^*}^*(W)[z]\formal{Q_W, \theta}$ such that $Q_W^a S^{-1}C_2$ is $Q_W$-regular and 
$$
    f - \sum_{i = 1}^{r} A_i s_i = \LT(SC_1+Q_W^aS^{-1}C_2+Q_W^{a+b}S^{-1}C_3).
$$
Note that $Q_W^{a+b}S^{-1} C_3=Q_W^aS^{-1}(Q_W^b C_3)\in \cM_0\cap Q_W^a S^{-1}\bbC[Q_W^aS^{-1}]\cM_0$. We have thus shown that $s_1,\cdots,s_{r}$ generate $\cM/\fm_X\cM$ over $\bbC[z][C_{X,\bbN}^{\vee}]\formal{\theta}/\fm_X$. It follows that $s_1,\cdots,s_{r}$ generate $\displaystyle \QDM_{\bbC^*}(W)_X^{\wedge} = \varprojlim_k \cM/\fm_X^k\cM$ over $\displaystyle \bbC[z]\formal{C_{X,\bbN}^{\vee},\theta} =  \varprojlim_k \bbC[z][C_{X,\bbN}^{\vee}]\formal{\theta}/\fm_X^k$.

% Hence, we come to the conclusion of the first part: $$\QDM_{\bbC^*}(W)_X^{\wedge}=\lim{\leftarrow_k}\cM/\fm_X^k\cM \text{ can be generated by } s_1,\cdots,s_r \text{ as } \bbC[z]\formal{C_{X,\bbN}^{\vee},\theta}-\text{module.}$$

Next, we prove that $s_1,\cdots,s_{r}$ are linearly independent.
Let $A_i\in \bbC[z]\formal{C_{X,\bbN}^{\vee},\theta}$ such that $\sum_{i=1}^{r} A_is_i=0$. Suppose that not all $A_i$ are zero.
% In the completion $\bbC[z]\formal{C_{X,\bbN}^{\vee},\theta}$, we can
We express $A_i$ as
$$
    A_i= \sum_{\substack{\beta\in\NE_{\bbN}(W),\, k\in \bbZ \\ \beta+ka\in \NE_{\bbN}(W)}} \ \sum_{t = (t_{j,n})} c_{i,\beta,k, t} \theta^t Q_W^{\beta}S^{k}
$$ 
where $\theta^t$ is the shorthand notation for the monomial $\prod_{j, n} (\theta^{j,n})^{t_{j,n}}$ and $c_{i,\beta,k, t} \in \bbC[z]$ are coefficients. We choose a minimal element $\beta_0\in \NE_{\bbN}(W)$ in the nonempty set 
$$
    \{\beta\in\NE_{\bbN}(W) \ \big|\ \text{there exist } i,k,t \text{ such that } c_{i,\beta,k,t}\ne0\}.
$$
%and consider the coefficient of $Q_W^{\beta_0}$. 
Let $k_0$ be the minimal $k \in \bbZ$ such that $\beta_0 + ka\in \NE_{\bbN}(W)$, or equivalently, $Q_W^{\beta_0}S^k \in C_{X,\bbN}^{\vee}$. Note that $k_0 \le 0$. 
Moreover, we choose a minimal element $t_0$ in the nonempty set
$$ 
    \{t = (t_{j,n}) \ \big|\ \text{there exist }i,k \text{ such that } c_{i,\beta_0,k, t} \ne 0\}.
$$ 
% of the following set of monomials (we require that only finite $t_{m,n}$ are nonzero): 
% $$ \{\prod_{m,n}\theta_{m,n}^{t_{m,n}}|\text{There exist }i,k \text{ making }\prod_{m,n}(\partial_{\theta_{m,n}})^{t_{m,n}}c_{i,\beta_0,k}\ne 0\}.$$ 

We consider
$$
    0 = M_W(\theta) \left(\sum_{i=1}^{r}A_is_i \right)
    = \sum_{i=1}^{r} \ \sum_{\substack{\beta\in\NE_{\bbN}(W), k\in \bbZ \\ \beta+ka\in \NE_{\bbN}(W)}} \ \sum_{t = (t_{j,n})} c_{i,\beta,k, t} \theta^t Q_W^{\beta}\cS^k(M_W(\theta)s_i).
$$
% &=M_W(\theta)(\sum_{i=1}^{r}\sum_{\substack{k\in \bbZ, \\ \beta+ka\in \NE_{\bbN}(W),\\\beta\in\NE_{\bbN}(W)}}c_{i,\beta,k}Q_W^{\beta}S^{k}s_i)\\
Restricting the above to $F_-$ and using the formula \eqref{eqn:ShiftOpLocal} of the shift operator, we obtain
$$
    0 =\sum_{i=1}^{r} \sum_{\substack{\beta\in\NE_{\bbN}(W), k\in \bbZ, \\ \beta+ka\in \NE_{\bbN}(W)}} \sum_{t = (t_{j,n})} c_{i,\beta,k, t} \theta^t Q_W^{\beta}\frac{\prod_{c=-\infty}^{0}e_{\lambda+cz}(N_{F_-/W})}{\prod_{c=-\infty}^{-k}e_{\lambda+cz}(N_{F_-/W})}e^{-kz\partial_{\lambda}}(M_W(\theta)s_i)|_{F_-}.
$$
% &=\sum_{i=1}^{r}\sum_{\substack{k\in \bbZ, \\ \beta+ka\in \NE_{\bbN}(W),\\\beta\in\NE_{\bbN}(W)}}c_{i,\beta,k}Q_W^{\beta}\cS^k(M_W(\theta)s_i)|_{F_-}\\
% Suppose that there exists $(i,\beta,k)$ such that $c_{i,\beta,k}\ne0$.
Recall that $M_W(\theta)|_{\theta=0,Q_W=0}(s_i) = s_i$ for all $i$. Then, taking the coefficient of $\theta^{t_0} Q_W^{\beta_0}$ in the above, we have
$$
    0 = \sum_{i=1}^{r} \sum_{\substack{k\ge k_0}} c_{i,\beta_0, k, t_0} \frac{\prod_{c=-\infty}^{0}e_{\lambda+cz}(N_{F_-/W})}{\prod_{c=-\infty}^{-k}e_{\lambda+cz}(N_{F_-/W})}e^{-kz\partial_{\lambda}}s_i|_{F_-}.
$$
We write $c_{i,\beta_0, k, t_0} = c_{i, k}$ for short. Multiplying the above by $\prod_{c=1}^{-k_0}e_{\lambda+cz}(N_{F_-/W})$, we have
$$
    0 = \sum_{i=1}^{r}\sum_{\substack{ k\ge k_0}} c_{i, k} \prod_{c = -k+1}^{-k_0}e_{\lambda+cz}(N_{F_-/W})e^{-kz\partial_{\lambda}}(s_i|_{F_-}).
$$

Taking $z = 0$, we obtain the relation
$$
    0 =  \sum_{\substack{ k\ge k_0}} \sum_{i=1}^{r} c_{i,k}|_{z=0} e_{\lambda}(N_{F_-/W})^{k-k_0} s_i|_{F_-}
$$
in $H_{\bbC^*}^*(F_-)$. Now, recall the factorization \eqref{eqn:KirwanFactorize} of the Kirwan map $\kappa_X$. Since $\{\kappa_X(s_i)\}_{i=1}^{r}$ is a $\bbC$-basis of $H^*(X)=H^*_{\bbC^*}(F_-)/e_{\lambda}(N_{F_-/W})H_{\bbC^*}^*(F_-)$ and $e_{\lambda}(N_{F_-/W})$ is a monic polynomial in $\lambda$, we see that
$$
    \{e_{\lambda}(N_{F_-/W})^{k-k_0}s_i|_{F_-}\}_{i=1}^{r}
$$ 
is a $\bbC$-basis of the quotient $e_{\lambda}(N_{F_-/W})^{k-k_0}H_{\bbC^*}^*(F_-)/e_{\lambda}(N_{F_-/W})^{k+1-k_0}H_{\bbC^*}^*(F_-)$ for any $k \ge k_0$. It follows that $c_{i,k}|_{z=0}=0$ for all $i, k$. Proceeding in a similar way in increasing degrees of $z$, we see eventually that $c_{i,k}=0$ for all $i, k$, which is a contradiction.
\end{proof}

\begin{lemma}\label{lem:LeadingTermKirwan}
Let $X = X_+$ or $X_-$. For any $s \in H_{\bbC^*}^*(W)[z]$ such that $\kappa_X(s) = 0$, there exist $\alpha_1,\alpha_2,\alpha_3\in H_{\bbC^*}^*(W)[z]$ such that $Q_W^aS^{-1}\alpha_2 \in \QDM_{\bbC^*}(W)$ %is $Q_W$-regular 
and $s=\LT(S\alpha_1+Q_W^aS^{-1}\alpha_2+Q_W^{a+b}S^{-1}\alpha_3)$, where $\LT$ is defined in \eqref{eqn:LTDef}.
\end{lemma}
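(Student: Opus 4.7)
I will focus on the case $X = X_-$; the case $X = X_+$ is symmetric under reversal of the $\bbC^*$-action, which interchanges the roles of $S\alpha_1$ and $Q_W^{a+b}S^{-1}\alpha_3$. The plan is to invoke Lemma~\ref{lem:KirwanKernel} to decompose $s = s_1 + s_2$ with $s_2 = i_{F_-,*}(\beta)\in \Im(i_{F_-,*})$ and $s_1\in (i_{W\setminus F_+}^*)^{-1}\Im(i_{F_0,*}^+)$, then to realize $s_2$ by $\LT(S\alpha_1)$ and $s_1$ jointly by $\LT(Q_W^a S^{-1}\alpha_2)+\LT(Q_W^{a+b}S^{-1}\alpha_3)$.

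The central computational tool will be the intertwining relation $M_W(\theta)\hS^\gamma = \hcS^\gamma M_W(\theta)$ from Proposition~\ref{proposition of shift operators on Givental cone}\eqref{shift operator commute with funda} together with the explicit localization formulas \eqref{eqn:ShiftOpLocal}. Since $M_W(\theta)|_{\theta=Q_W=0}=\Id$, the $F_i$-restriction of $\LT(\hS^\gamma \alpha)=(\hS^\gamma\alpha)|_{\theta=Q_W=0}$ equals the $Q_W^0$-coefficient of $\hcS^\gamma\alpha|_{F_i}$, and the Novikov prefactor $Q_W^{\gamma+\sigma_{F_i}(-\bar\gamma)}$ dictates which fixed components can survive. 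Concretely, $\LT(S\alpha_1)$ is supported only on $F_-$, $\LT(Q_W^{a+b}S^{-1}\alpha_3)$ only on $F_+$, and $\LT(Q_W^a S^{-1}\alpha_2)$ on $F_0$ and potentially $F_+$, with the $F_+$-part constrained by the regularity condition.

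For $s_2=i_{F_-,*}(\beta)$: a direct calculation gives $\LT(S\alpha_1)|_{F_-}=e_\lambda(N_{F_-/W})\alpha_1|_{F_-}(\lambda-z)$, which matches $i_{F_-,*}(\beta)|_{F_-}=e_\lambda(N_{F_-/W})\beta$ precisely when $\alpha_1|_{F_-}=\beta(\lambda+z)$. Such an assignment lifts to some $\alpha_1\in H^*_{\bbC^*}(W)[z]$ by surjectivity of the restriction $i_{F_-}^*$, which follows from $\bbC^*$-equivariant formality together with the Bia{\l}ynicki-Birula stratification. Because the $F_0$- and $F_+$-restrictions of both $\LT(S\alpha_1)$ and $s_2$ vanish, injectivity of the localization inclusion $H^*_{\bbC^*}(W)[z]\hookrightarrow \bigoplus_i H^*_{\bbC^*}(F_i)[z]\otimes\bbQ(\lambda)$ closes the case.

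For $s_1$, whose $F_-$-restriction vanishes and whose $F_0$-restriction is an element of the image of the Gysin pushforward $i^+_{F_0,*}$: the plan is to impose $\alpha_2|_{F_+}=0$ (the leading-order piece of the regularity $Q_W^aS^{-1}\alpha_2\in\QDM_{\bbC^*}(W)$) and to require that $\alpha_2|_{F_0}$ be divisible by the Euler-class factor from \eqref{eqn:ShiftOpLocal} that appears in the denominator of the $F_0$-localization of $Q_W^a\hcS^{-1}\alpha_2$; this divisibility forces $\LT(Q_W^aS^{-1}\alpha_2)|_{F_0}$ to be polynomial and, after a residue-matching computation, identifies it with $s_1|_{F_0}$ up to an $e^{z\partial_\lambda}$-shift. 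Such $\alpha_2\in H^*_{\bbC^*}(W)[z]$ is produced by lifting the prescribed restrictions on $F_0$ and $F_+$ globally on $W$ using the Gysin sequences \eqref{3-component Gysin 3}--\eqref{3-component Gysin 6}, and any residual $F_+$-discrepancy is absorbed by a free lift $\alpha_3\in H^*_{\bbC^*}(W)[z]$ via the $F_+$-restriction formula for $\LT(Q_W^{a+b}S^{-1}\alpha_3)$. The main technical obstacle is verifying the full regularity $Q_W^aS^{-1}\alpha_2\in\QDM_{\bbC^*}(W)$ to all orders in $Q_W$ rather than just at the leading term: this requires expanding $M_W(\theta)\alpha_2$ in $(Q_W,\theta)$ and ensuring that every negative-degree $Q_W$-coefficient of the $F_+$-localization cancels, which can be arranged by iteratively adjusting higher-order pieces of $\alpha_2$ via the divisor equation and the equivariant quantum product on $W$, without disturbing the leading-order matching at $F_0$. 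Injectivity of the localization inclusion then gives the desired identity.
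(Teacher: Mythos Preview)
Your overall architecture matches the paper's: decompose $s$ via Lemma~\ref{lem:KirwanKernel}, realize the $F_-$-part by $\LT(S\alpha_1)$, the $F_0$-part by $\LT(Q_W^aS^{-1}\alpha_2)$ with $\alpha_2|_{F_+}=0$, and absorb the residual $F_+$-part into $\LT(Q_W^{a+b}S^{-1}\alpha_3)$. Your treatment of $\alpha_1$ and $\alpha_3$ is correct and essentially identical to the paper.

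The gap is in your handling of the regularity condition $Q_W^aS^{-1}\alpha_2\in\QDM_{\bbC^*}(W)$. You describe this as ``the main technical obstacle'' and propose to ``iteratively adjust higher-order pieces of $\alpha_2$.'' But $\alpha_2$ lives in $H_{\bbC^*}^*(W)[z]$, which carries no $Q_W$- or $\theta$-adic filtration: there are no higher-order pieces to adjust. The object whose regularity is in question is $Q_W^aS^{-1}\alpha_2 = M_W(\theta)^{-1}\bigl(Q_W^a\hcS^{-1}M_W(\theta)\alpha_2\bigr)$, and its $Q_W$-expansion is determined entirely by the fixed class $\alpha_2$ together with the equivariant Gromov--Witten theory of $W$; you cannot perturb it order by order.

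The paper's actual argument is that regularity is \emph{automatic} once $\alpha_2|_{F_+}=0$, and the reason is geometric. By virtual localization, the only potentially singular restriction is $\bigl(Q_W^a\hcS^{-1}M_W(\theta)\alpha_2\bigr)|_{F_+}$, whose Novikov prefactor from \eqref{eqn:ShiftOpLocal} is $Q_W^{-b}$. But since $\alpha_2|_{F_+}=0$, every term of $\bigl(M_W(\theta)\alpha_2\bigr)|_{F_+}$ comes from a $\bbC^*$-invariant stable map with at least one irreducible component leaving $F_+$, hence carries a Novikov factor of $Q_W^b$ or $Q_W^{a+b}$. This cancels the $Q_W^{-b}$, making the whole expression $Q_W$-regular. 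No induction or adjustment is needed; the regularity is a one-line consequence of the fixed-locus geometry.
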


\begin{proof}
% The proof for this part is similar, but a little more complicated than \cite[Lemma 5.3]{iritani2023quantum}. 
We prove the lemma for $X = X_-$; the case $X = X_+$ is similar.
In preparation, we characterize the actions of the operators $S$, $Q_W^{a}S^{-1}$, and $Q_W^{a+b}S^{-1}$ on an element $\alpha \in H^*_{\bbC^*}(W)[z]$. Note from the definitions that $S\alpha, Q_W^{a+b}S^{-1}\alpha \in \QDM_{\bbC^*}(W)$. For $S\alpha$, we have
$$
    \LT(S\alpha)=M_W(\theta)(S\alpha)|_{Q_W=0,\theta=0} = \cS (M_W(\theta)\alpha)|_{Q_W=0,\theta=0}= \cS\alpha|_{Q_W=0,\theta=0},
$$
% $$
%    \LT(Q_W^{a+b}S^{-1}c)=Q_W^{a+b}\cS^{-1}c|_{Q_W=0,\theta=0}.
% $$
The formula \eqref{eqn:ShiftOpLocal} of the shift operator implies that
% $\cS c|_X$ and $\cS c|_Z$ are divisible by $Q_W^{a+b}$ and $Q_W^{b}$ respectively
$$
    \LT(S\alpha)|_{F_+}=\LT(S\alpha)|_{F_0}=0, \qquad \LT(S\alpha)|_{F_-}=e_{\lambda}(N_{F_-/W})(e^{-z\partial_{\lambda}}\alpha|_{F_-}).
$$
Thus, we have
$$
    \LT(S\alpha)=i_{F_-,*}(e^{-z\partial_{\lambda}}\alpha|_{F_-}).
$$
By a similar computation, we have
$$
    \LT(Q_W^{a+b}S^{-1}\alpha)=i_{F_+,*}(e^{z\partial_{\lambda}}\alpha|_{F_+}).
$$
We now turn to $Q_W^{a}S^{-1}\alpha$ which in general does not necessarily lie in $\QDM_{\bbC^*}(W)$. We show that when $\alpha|_{F_+}=0$, we have $Q_W^{a}S^{-1}\alpha \in \QDM_{\bbC^*}(W)$.
% the cohomology class $\alpha$ does not intersect with $F_+$, we will show that $Q_W^aS^{-1}c\in \QDM_{\bbC^*}(W)$. 
Consider
$$
    M_W(\theta)(Q_W^aS^{-1}\alpha) = Q_W^a\cS^{-1}(M_W(\theta)\alpha).
$$
By virtual localization, the condition $\alpha|_{F_+}=0$ implies that in $(M_W(\theta)\alpha)|_{F_+}$, the Novikov variables which have nonzero contribution must correspond to $\bbC^*$-invariant curves containing an irreducible component connecting $F_+$ to $F_0$ or $F_-$, and are thus divisible by $Q_W^b$ or $Q_W^{a+b}$.
%which corresponds to the $T$-invariant curves connecting $F_+$ and $F_0$ or connecting $F_+$ and $F_-$. 
Then, the $Q_W$-regularity of $Q_W^{a+b}\cS^{-1}$ implies the $Q_W$-regularity of $Q_W^a\cS^{-1}(M_W(\theta)\alpha)|_{F_+}$, which further implies the $Q_W$-regularity of $Q_W^aS^{-1}\alpha|_{F_+}$ as $M_W(\theta)|_{Q_W=0,\theta=0}=\Id$. Since $Q_W^aS^{-1}\alpha|_{F_0}$ and $Q_W^aS^{-1}\alpha|_{F_-}$ are also $Q_W$-regular, we conclude that $Q_W^{a}S^{-1}\alpha$ is $Q_W$-regular. Note that we have the leading terms
$$
    \LT\big(Q_W^aS^{-1}\alpha\big)\big|_{F_-} = \LT\big(Q_W^a\cS^{-1}(M_W(\theta)\alpha)\big)\big|_{F_-} = \LT\bigg(Q_W^a\frac{1}{e_{\lambda+z}(N_{F_-/W})}e^{z\partial_{\lambda}}\alpha\bigg)\bigg|_{F_-}=0,
$$ $$
    \LT(Q_W^aS^{-1}\alpha)|_{F_0} = \LT(Q_W^a\cS^{-1}M_W(\theta)\alpha)|_{F_0} = \LT\bigg(\frac{e_{-\lambda}(N_{F_0,-})}{e_{\lambda+z}(N_{F_0,+})}e^{z\partial_{\lambda}}\alpha\bigg)\bigg|_{F_0} = \frac{e_{-\lambda}(N_{F_0,-})}{e_{\lambda+z}(N_{F_0,+})}e^{z\partial_{\lambda}}\alpha|_{F_0}.
$$

Now we prove the statement in the lemma. Let $s_- \coloneqq s|_{F_-}$ and $s_0 \coloneqq s|_{F_0}$. Since $\kappa_X(s) = 0$, by \eqref{eqn:KirwanFactorize}, there exists $t_- \in H^*_{\bbC^*}(F_-)$ such that $s_- = e_{\lambda}(N_{F_-/W}) t_-$. 
% $\alpha_1 := i_{F_-,*}(e^{z\partial_{\lambda}}t_-)$.
We take $\alpha_1 \in H_{\bbC^*}^*(W)[z]$ with
$
    \alpha_1 |_{F_-} = e^{z\partial_{\lambda}}t_-,
$
so that
$$
    \LT(S\alpha_1) = i_{F_-,*}(t_-), \qquad \LT(S\alpha_1)|_{F_-} = (i_{F_-,*}(t_-)) |_{F_-} = s_-.
$$
% Now it suffices to find $\alpha_2, \alpha_3$ such that
% $\LT(Q_W^aS^{-1}\alpha_2 + Q_W^{a+b}S^{-1}\alpha_3)|_{F_0} = s_0$, $\LT(Q_W^aS^{-1}\alpha_2 + Q_W^{a+b}S^{-1}\alpha_3)|_{F_+} = s_+$, and $\LT(Q_W^aS^{-1}\alpha_2 + Q_W^{a+b}S^{-1}\alpha_3)|_{F_-} = 0$. 
Now, by \eqref{3-component Gysin 3}, there exists $t_0 \in H_{\bbC^*}^*(F_0)[z]$ such that $(s-\LT(S\alpha_1))|_{W\backslash F_+} = i_{F_0,*}^+(t_0)$. Note that $s_0 = (s-\LT(S\alpha_1))|_{F_0} = i_{F_0,*}^+(t_0)|_{F_0} = e_{-\lambda}(N_{F_0,-})t_0$. By \eqref{3-component Gysin 6}, we may take $\alpha_2 \in H_{\bbC^*}^*(W)[z]$ such that 
$$
    \alpha_2|_{W\backslash F_-} = i_{F_0,*}^-(e^{-z\partial_{\lambda}}t_0).
$$
This implies that $\alpha_2|_{F_+} = (\alpha_2|_{W\backslash F_-})|_{F_+} = 0$
. It follows from our discussion above that $Q_W^aS^{-1}\alpha_2$ is $Q_W$-regular with the leading terms $\LT(Q_W^aS^{-1}\alpha_2)|_{F_-}=0$ and
$$
    \LT(Q_W^aS^{-1}\alpha_2)|_{F_0} = \frac{e_{-\lambda}(N_{F_0,-})}{e_{\lambda+z}(N_{F_0,+})}e^{z\partial_{\lambda}} i_{F_0,*}^-(e^{-z\partial_{\lambda}}t_0) |_{F_0} = e_{-\lambda}(N_{F_0,-})t_0 =  s_0.
$$
Therefore, we have 
$$
    \big(s-\LT(S\alpha_1 + Q_W^aS^{-1}\alpha_2)\big)|_{F_-} = \big(s-\LT(S\alpha_1 + Q_W^aS^{-1}\alpha_2)\big)|_{F_0} = 0.
$$
Finally, by \eqref{3-component Gysin 3} and \eqref{3-component Gysin 4}, there exists $t_+ \in H^*_{\bbC^*}(F_+)$ such that $s-\LT(S\alpha_1 + Q_W^aS^{-1}\alpha_2) = i_{F_+,*}(t_+)$. 
We then take $\alpha_3 \in H_{\bbC^*}^*(W)[z]$ with
$
    \alpha_3 |_{F_+} = e^{-z\partial_{\lambda}}t_+,
$
so that
$$
    \LT(Q_W^{a+b}S^{-1}\alpha_3) = i_{F_+,*}(t_+).
$$
\end{proof}

\subsection{QDM decomposition over extended bases}\label{subsec:QDMFlipExtended}
In this subsection, we prove the QDM decomposition theorem over the extended base 
$$
    \tR = \bbC[z]\Biglaurent{S_{F_0}^{-\frac{1}{2c_{F_0}}}}\formal{Q_W,\theta}.
$$
We first observe that the natural inclusions
$$
    \bbC\formal{C_{X_\pm,\bbN}^{\vee}} \subseteq \bbC\Biglaurent{S_{F_0}^{-\frac{1}{2c_{F_0}}}}\formal{Q_W}
$$
induce the inclusions
$$
    \bbC[z]\formal{C_{X_\pm,\bbN}^{\vee},\theta} \subseteq \bbC[z]\Biglaurent{S_{F_0}^{-\frac{1}{2c_{F_0}}}}\formal{Q_W,\theta} = \tR.
$$
To see this, for $X_-$, recall from \eqref{eqn:DualCone3Comp} that $\bbC\formal{C_{X_-,\bbN}^{\vee}}$ is generated by $Q_W^{\delta}$ for $\delta \in \NE_{\bbN}(W)$, $Q_W^aS^{-1}=S_{F_0}^{-1}$, and $S=Q_W^a S_{F_0}$. Given any homogeneous element $f \in \bbC\formal{C_{X,\bbN}^{\vee}}$, written in the form
$$
    f = \sum_{k_1,k_2,\delta} f_{k_1,k_2,\delta} Q_W^{\delta}(Q_W^aS_{F_0})^{k_1}(S_{F_0}^{-1})^{k_2} = \sum_{k_1,k_2,\delta} f_{k_1,k_2,\delta} S_{F_0}^{k_1-k_2} Q_W^{\delta+k_1a},
$$
since $S_{F_0}=Q_W^{-a}S$ has degree $2c_{F_0} < 0$, the power $k_1 - k_2$ is bounded below in terms of the fixed degree of $Q_W^{\delta+k_1a}$. Therefore, $f$ is contained in $\bbC\Biglaurent{S_{F_0}^{-\frac{1}{2c_{F_0}}}}\formal{Q_W}$. The argument for $X_+$ is similar.

\begin{comment}
where there will be a lower bound for $k_1-k_2$ by homogeneity. Thus, it lies in $\bbC[z]\laurent{S_{F_0}^{-\frac{1}{2c_{F_0}}}}\formal{Q_W}$. When $X=W\gitquot_+\bbC^*$, similarly, $C_{X,\bbN}^{\vee}$ is generated by $Q_W^{\delta}$ $(\delta\in \NE_{\bbN}(W))$, $Q_W^{-a}S=S_{F_0}$, and $Q_W^{a+b}S^{-1}=Q_W^bS_{F_0}^{-1}$ as a monoid. Any homogeneous element of $\bbC[z]\formal{C_{X,\bbN}^{\vee}}$ can be written as $$\sum_{k_1,k_2,\delta}f_{k_1,k_2,\delta}(z)Q_W^{\delta}(S_{F_0})^{k_1}(Q_W^bS_{F_0}^{-1})^{k_2}=\sum_{k_1,k_2,\delta}f_{k_1,k_2,\delta}(z)Q_W^{\delta+k_2b}S_{F_0}^{k_1-k_2},$$ where there will be a lower bound for $k_1-k_2$ for homogeneity (because deg $z>0$ and deg $Q_W^{\delta+k_2b}>0$). For the same reason, it lies in $\bbC[z]\laurent{S_{F_0}^{-\frac{1}{2c_{F_0}}}}\formal{Q_W}$.
\end{comment}

% In addition, for the case $F=F_0$ with $c_{F_0}\ne 0$ and $X=W\gitquot_{\pm}\bbC^*$, we have $\bbC[z]\formal{C_{X,\bbN}^{\vee},\theta}\subset \bbC[z]\laurent{S_{F_0}^{-\frac{1}{2c_{F_0}}}}\formal{Q_W,\theta}$, where $S_{F_0}=Q_W^{-a}S$ has a negative degree $2c_{F_0}$. This statement is induced by the inclusion $\bbC[z]\formal{C_{X,\bbN}^{\vee}}\subset \bbC[z]\laurent{S_{F_0}^{-\frac{1}{2c_{F_0}}}}\formal{Q_W}$. 

In Section~\ref{subsec:Completion}, we have completed $\QDM_{\bbC^*}(W)$ into a free $\bbC[z]\formal{C_{X_-,\bbN}^{\vee},\theta}$-module $\QDM_{\bbC^*}(W)^{\wedge}_{X_-}$, which can be further extended to a free $\tR$-module
$$
    \QDM_{\bbC^*}(W)^{\wedge,\La}_{X_-} \coloneqq \QDM_{\bbC^*}(W)^{\wedge}_{X_-} \otimes_{\bbC[z]\formal{C_{X_\pm,\bbN}^{\vee},\theta}} \bbC[z]\Biglaurent{S_{F_0}^{-\frac{1}{2c_{F_0}}}}\formal{Q_W,\theta}.
$$
Recall from Section~\ref{sect:ContinuousQDM} that for $j = 0, \dots, \abs{c_{F_0}}-1$, we have the continuous Fourier transformation $\FT_{F_0,j}$ from $\QDM_{\bbC^*}(W)$ to the $\tR$-module $\zeta_j^*\QDM(F_0)^{\La}$. By the argument of \cite[Propositions 5.4, 5.7]{iritani2023quantum},
they can be naturally extended to $\tR$-module homomorphisms
$$
    \FT_{F_0,j}\colon \QDM_{\bbC^*}(W)^{\wedge,\La}_{X_-} \longrightarrow \zeta_j^*\QDM(F_0)^{\La}, \qquad j = 0, \dots, \abs{c_{F_0}}-1
$$
which satisfy the proposition in Corollary~\ref{Corollary of continuous FT(shift on the pullback)} such that the homomorphism intertwines $\nabla+\frac{dz}{2z}$ with $\nabla$.

% By a similar argument, the Continuous Fourier transformation $\FT_{F_0,j}$ can be extended to a $\bbC[z]\formal{C_{X,\bbN}^{\vee},\theta}$-module homomorphism $\QDM_{\bbC^*}(W)^{\wedge}_X\to \zeta_j^*\QDM(F_0)^{\La}$ satisfying (\ref{item:ContFTDmoduleConnection})(\ref{item:ContFTDmoduleHomo}).

% We will use $\FT_X$ and $\FT_{F_0,j}$ to represent both the origin Fourier transformations and their extensions.

Similarly, recall from Section~\ref{sect:DiscreteQDM} that for $X_\pm$, we have the discrete Fourier transformation $\FT_{X_\pm}$ from $\QDM_{\bbC^*}(W)$ to the $\bbC[z]\formal{C_{X_\pm,\bbN}^{\vee},\theta}$-module $\tau_{X_\pm}^*\QDM(X_\pm)^{\ext}$. We define the extensions
$$
    \QDM(X_\pm)^{\La} \coloneqq \QDM(X_\pm)^{\ext} \otimes_{\bbC[z]\formal{C_{X_\pm,\bbN}^{\vee},\tau_{X_\pm}}} \bbC[z]\Biglaurent{S_{F_0}^{-\frac{1}{2c_{F_0}}}}\formal{Q_W,\tau_{X_\pm}}, 
$$
$$
    \tau_{X_{\pm}}^*\QDM(X_{\pm})^{\La} \coloneqq \tau_{X_\pm}^*\QDM(X_\pm)^{\ext} \otimes_{\bbC[z]\formal{C_{X_\pm,\bbN}^{\vee},\theta}} \bbC[z]\Biglaurent{S_{F_0}^{-\frac{1}{2c_{F_0}}}}\formal{Q_W,\theta}.
$$
The latter is a free $\tR$-module equipped with induced flat connection.
By Proposition~\ref{prop:DiscreteFTDmodule}, and additionally the argument of \cite[Proposition 5.4]{iritani2023quantum} for $X_+$,
the discrete Fourier transformation can be naturally extended to an $\tR$-module homomorphism
$$
    \FT_{X_\pm}\colon\QDM_{\bbC^*}(W)^{\wedge,\La}_{X_-} \longrightarrow \tau_{X_\pm}^*\QDM(X_\pm)^{\La}
$$
which satisfies Proposition~\ref{prop:DiscreteFTDmodule}\eqref{item:ContFTDmoduleConnection}\eqref{item:ContFTDmoduleHomo}.

Now that we have extended the Fourier transformations as $\tR$-module homomorphisms, we are ready to state the decomposition theorem over this extended base ring.

% Now we have all the extended versions of Discrete and Continuous Fourier transformations. We move on to precisely state and prove QDM Decomposition Theorem for 3-component case.

\begin{theorem}[QDM decomposition over extended base]\label{thm:QDMDecompExt}
The $\tR$-module homomorphisms
$$
    \FT_{X_-}\colon\QDM_{\bbC^*}(W)^{\wedge,\La}_{X_-}\longrightarrow\tau_{X_-}^*\QDM(X_-)^{\La}
$$ 
and
$$
    \Psi_+ \coloneqq \FT_{X_+}\oplus \bigoplus_{j=0}^{\abs{c_{F_0}}-1}\FT_{F_0,j}\colon\QDM_{\bbC^*}(W)^{\wedge,\La}_{X_-}\longrightarrow \tau_{X_+}^*\QDM(X_+)^{\La}\oplus \bigoplus_{j=0}^{\abs{c_{F_0}}-1}\zeta_j^*\QDM(F_0)^{\La}
$$
are isomorphisms. Moreover, the composition
$$
    \Phi \coloneqq \Psi_+ \circ \FT_{X_-}^{-1}\colon\tau_{X_-}^*\QDM(X_-)^{\La}\longrightarrow\tau_{X_+}^*\QDM(X_+)^{\La}\oplus \bigoplus_{j=0}^{\abs{c_{F_0}}-1}\zeta_j^*\QDM(F_0)^{\La}
$$
intertwines the quantum connections and intertwines the pairing $P_{X_-}$ with $P_{X_+}\oplus \bigoplus_{j=0}^{\abs{c_{F_0}}-1}P_{F_0}$.
\end{theorem}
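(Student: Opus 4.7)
The plan is to prove separately that $\FT_{X_-}$ and $\Psi_+$ are $\tR$-module isomorphisms, after which the intertwining properties of $\Phi$ will follow from those of the constituent Fourier transformations. First note that both source and target are graded-complete free $\tR$-modules of the same rank: the source has rank $n_{X_-}:=\dim_{\bbC}H^*(X_-)$ by Proposition~\ref{prop:CompletionFree}, and the target rank of $\Psi_+$ equals $n_{X_+}+\abs{c_{F_0}}\dim_{\bbC}H^*(F_0)=n_{X_-}$ by Proposition~\ref{prop:deRhamIso3Comp}. Hence establishing invertibility reduces, via a graded Nakayama argument, to checking that the matrix of leading terms is invertible over $\bbC\Biglaurent{S_{F_0}^{-1/(2c_{F_0})}}$.

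For $\FT_{X_-}$, I would choose homogeneous lifts $s_1,\dots,s_{n_{X_-}}\in H^*_{\bbC^*}(W)$ of a basis of $H^*(X_-)$, which form an $\tR$-basis of the source by Proposition~\ref{prop:CompletionFree}. Since $\vartheta_{X_-}(0)\equiv 1$ modulo $\fm_{X_-}$ by Corollary~\ref{cor:MirrorMapDiscrete} and $\FT_{X_-}(\phi_i\lambda^k)=z\nabla_{\theta^{i,k}}\vartheta_{X_-}(\theta)$, the leading term of $\FT_{X_-}(s_i)$ equals $\kappa_{X_-}(s_i)$, so the initial matrix is simply the change-of-basis from $\{s_i\}$ to $\{\kappa_{X_-}(s_i)\}$, which is invertible by construction.

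For $\Psi_+$, the plan is to construct a basis of the source adapted to the cohomology decomposition of Proposition~\ref{prop:deRhamIso3Comp}. I would take the lifts $\tilde\alpha_i\in H^*_{\bbC^*}(W)$ of a basis $\{\alpha_{+,i}\}$ of $H^*(X_+)$ provided by Lemma~\ref{lem:KirwanPhi2}, satisfying $\kappa_{X_+}(\tilde\alpha_i)=\alpha_{+,i}$, $\kappa_{X_-}(\tilde\alpha_i)=\phi_2(\alpha_{+,i})$, and the degree bound $\deg_\lambda(\tilde\alpha_i|_{F_0})\leq r_{F_0,+}-1$, together with arbitrary lifts $\tilde\beta_{k,\ell}\in H^*_{\bbC^*}(W)$ of the residual classes $j_{-,*}(h_-^\ell\psi_-^*(\beta_k))$ for $\ell=0,\dots,\abs{c_{F_0}}-1$ and $\beta_k$ ranging over a basis of $H^*(F_0)$. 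At leading order $\FT_{X_+}$ sends $\tilde\alpha_i$ to $\alpha_{+,i}$ and kills each $\tilde\beta_{k,\ell}$, since the latter lifts a class in $\Ker\kappa_{X_+}$ by Lemma~\ref{lem:KirwanKernel}. Meanwhile the continuous Fourier transformations $\FT_{F_0,j}$ evaluate $\tilde\beta_{k,\ell}|_{F_0}\in H^*(F_0)[\lambda]$ at the critical value $\lambda=\lambda_j$ via the leading-term formula~\eqref{eqn:ContFTLeading}; the dependence on $\ell$ produces a Vandermonde block in the distinct values $\{\lambda_j^\ell\}_{j,\ell=0}^{\abs{c_{F_0}}-1}$, which is invertible over the Laurent extension, while the degree bound on $\tilde\alpha_i|_{F_0}$ confines the cross-block to a triangular perturbation that can be absorbed by elementary row operations.

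The intertwining of quantum connections under $\Phi$ then follows immediately: Proposition~\ref{prop:DiscreteFTDmodule}\eqref{item:DiscreteFTDmoduleConnection} and Corollary~\ref{Corollary of continuous FT(shift on the pullback)} each assert that the constituent Fourier transformation intertwines $\nabla+\tfrac{1}{2}\tfrac{dz}{z}$ on the source with $\nabla$ on its target, so the $\tfrac{1}{2}$-shifts cancel in $\Psi_+\circ\FT_{X_-}^{-1}$. The pairing compatibility is the main obstacle. The plan there is to equip $\QDM_{\bbC^*}(W)^{\wedge,\La}_{X_-}$ with the natural extension of the $\bbC^*$-equivariant Poincar\'e pairing $P_W^{\bbC^*}$ and show that $\FT_{X_-}$ pushes it forward to $P_{X_-}$ while $\Psi_+$ pushes it forward to $P_{X_+}\oplus\bigoplus_j P_{F_0}$. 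Via virtual localization, $P_W^{\bbC^*}$ decomposes into $\lambda$-rational contributions from the three fixed components, and the shift-operator adjoint formula $P_W^{\bbC^*}(\hcS^{-\beta}f,\hcS^\beta g)=e^{-\bar\beta z\partial_\lambda}P_W^{\bbC^*}(f,g)$ from Proposition~\ref{Shift operator proposition}(5) converts this into a residue/saddle-point identity: the discrete residues at the $\lambda$-eigenvalues of $N_{F_\pm/W}$ reassemble into $P_{X_\pm}$, while the stationary-phase contributions at the critical points $\lambda_j$ yield the $P_{F_0}$-pairings, the whole identity being enforced by the total-residue theorem applied to the oscillatory integral~\eqref{eqn:MellinBarnes}.
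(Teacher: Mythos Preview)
Your treatment of the module isomorphisms and the connection-intertwining is essentially the paper's argument, including the choice of lifts via Lemmas~\ref{lem:KirwanKernel} and~\ref{lem:KirwanPhi2} and the Vandermonde block from the leading terms of the continuous Fourier transformations. One point you gloss over: because $\tR$ is a Laurent-series ring in $S_{F_0}^{-1/(2c_{F_0})}$, a naive Nakayama argument is not available, and the paper instead writes the leading matrix at $Q_W=\theta=0$ as $A+B$ with $A$ block-triangular and explicitly invertible, then verifies by direct power-counting that $BA^{-1}$ contains only strictly positive powers of $S_{F_0}^{-1/(2c_{F_0})}$ so that the Neumann series $\sum_k(-BA^{-1})^kA^{-1}$ converges in $\tR$. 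Your phrase ``triangular perturbation absorbed by elementary row operations'' hides exactly this estimate.

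The genuine gap is in your pairing argument. The plan to equip $\QDM_{\bbC^*}(W)^{\wedge,\La}_{X_-}$ with $P_W^{\bbC^*}$ and push it forward through each Fourier transformation separately does not work as stated: $P_W^{\bbC^*}$ is $\bbC[\lambda]$-valued, and under the completion $\lambda$ has been traded for shift operators, so there is no obvious $\tR$-valued pairing on the source to which your ``total-residue theorem'' could apply. Even granting a definition, making the residue/saddle-point matching rigorous for the \emph{discrete} Fourier transformation would mean controlling the full $\bbZ$-indexed sum $\sum_k S^{-k}\kappa_X(\cS^k f)$ inside a pairing, which you have not addressed. The paper avoids all of this: since both $P_{X_-}$ and $P_{X_+}\oplus\bigoplus_j P_{F_0}$ are covariantly constant for their quantum connections and $\Phi$ already intertwines those connections, it suffices to verify the pairing identity at the single point $Q_W=\theta=S_{F_0}^{-1/(2c_{F_0})}=0$. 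This is done by computing $\Phi^{-1}$ there exactly (via a sharpened leading-term lemma valid for classes with $\deg_\lambda(\alpha|_{F_0})\le r_{F_0,-}-1$, which your chosen basis satisfies) and then invoking the purely classical pairing identities of Lemma~\ref{lem:DecompClassicalPairing} for the flip $X_-\dashrightarrow X_+$.
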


We prove Theorem~\ref{thm:QDMDecompExt} in the rest of the subsection. As discussed above, the $\tR$-module homomorphisms $\FT_{X_-}$ and $\Psi_+$ intertwine the quantum connections. We show in Proposition~\ref{prop:tRModIso} below that they are $\tR$-module isomorphisms. It follows that the composition $\Phi$ is defined, is an isomorphism, and intertwines the quantum connections. Furthermore, we show in Proposition~\ref{prop:IntertwinePairings} below that $\Phi$ intertwines the pairings.

\begin{comment}

The rest of the subsection aims to prove Theorem~\ref{thm:QDMDecompExt}. There are three things to check:
\begin{enumerate}
    \item \label{QDM Decomposition Thm1-module} $\FT_{X_-}$ and $\FT_{X_+}\oplus \bigoplus_{j=0}^{|c_{F_0}|-1}\FT_{F_0,j}$ are $\tR$-module isomorphisms.
    \item \label{QDM Decomposition Thm2-connection}$\Phi$ intertwines the quantum connections.
    \item \label{QDM Decomposition Thm3-pairing} $\Phi$ intertwines the pairings.
\end{enumerate}
For short, we denote $\FT_{X_+}\oplus \bigoplus_{j=0}^{|c_{F_0}-1|}\FT_{F_0,j}$ by $\Psi_+$. If (\ref{QDM Decomposition Thm1-module}) is valid, (\ref{QDM Decomposition Thm2-connection}) will be the direct corollary of Proposition~\ref{prop:DiscreteFTDmodule}(\ref{item:DiscreteFTDmoduleConnection}) and Corollary~\ref{Corollary of continuous FT(shift on the pullback)}. Thus, it suffices to prove (\ref{QDM Decomposition Thm1-module}) and (\ref{QDM Decomposition Thm3-pairing}). Now we first check (\ref{QDM Decomposition Thm1-module}).

\end{comment}

\subsubsection{Reference bases}\label{sect:RefBases}
For the proof and for later use, we fix reference bases for the cohomology groups. 
Let $\{\alpha_{X_+,0},\cdots,\alpha_{X_+,n_+-1}\}$ be a homogeneous basis of $H^*(X_+)$ and $\{\alpha_{F_0,0},\cdots,\alpha_{F_0,n_0-1}\}$ be a homogeneous basis of $H^*(F_0)$, where we denote $\mathrm{dim}_{\bbC} (H^*(X_+))$ and $\mathrm{dim}_{\bbC}(H^*(F_0))$ by $n_+$ and $n_0$ respectively. For $0 \le j \le \abs{c_{F_0}} -1 $, let $\alpha_{F_0,i,j}$ represent the copy of $\alpha_{F_0,i}$ in 
$\zeta_j^*\QDM(F_0)^{\La}$. 
Under the isomorphism in Proposition~\ref{prop:deRhamIso3Comp}, the collection 
\begin{equation}\label{eqn:XMinusBasis}
    \{\phi_2(\alpha_{X_+,m})\}_{0\le m\le n_+-1} \cup \{j_{-,*}(h_-^l  \psi_-^*(\alpha_{F_0,i}))\}_{ 0\le l\le \abs{c_{F_0}}-1, 0\le i\le n_0-1}
\end{equation}
is a homogeneous basis of $H^*(X_-)$. By Lemma~\ref{lem:KirwanPhi2}, for $0 \le m \le n_+-1$, there exists (uniquely) $s_m \in H_{\bbC^*}^*(W)$ such that $\kappa_{X_+}(s_m)= \alpha_{X_+,m}$, $\kappa_{X_-}(s_m) = \phi_2(\alpha_{X_+,m})$, and $\deg_{\lambda}(s_m|_{F_0})\le r_{F_0,+}-1$. Moreover, due to the surjectivity of $i_{W\backslash F_-}^*$ \eqref{3-component Gysin 6}, for $0\le l\le \abs{c_{F_0}}-1$ and $0\le i \le n_0-1$, there exists $s_{l,i}\in H^*_{\bbC^*}(W)$ such that $s_{l,i}|_{W\backslash F_-}=i_{F_0,*}^-(\lambda^l\alpha_{F_0,i})$. Then, $\kappa_{X_+}(s_{l,i})=0$ by Lemma~\ref{lem:KirwanKernel} and $\kappa_{X_-}(s_{l,i})=j_{-,*}(h^l \psi_-^*(\alpha_{F_0,i}))$ by the diagram \eqref{Cohomology excess formula for iF0-}. The list \eqref{eqn:XMinusBasis} is rewritten as
$$
    \{\kappa_{X_-}(s_m)\}_{0\le m\le n_+-1} \cup \{\kappa_{X_-}(s_{l,i})\}_{0\le l\le \abs{c_{F_0}}-1, 0\le i\le n_0-1},
$$
which implies that
$$
    \{s_m\}_{0\le m\le n_+-1} \cup \{s_{l,i}\}_{0\le l\le \abs{c_{F_0}}-1, 0\le i\le n_0-1}
$$
is a collection of linearly independent homogeneous elements of $H_{\bbC^*}^*(W)$.

\color{black}

\subsubsection{Isomorphisms as $\tR$-modules}\label{sect:tRModIso}
%{$\Phi$ is an $\tR$-module isomorphism and intertwines the connections}
We start by calculating the leading terms of the continuous and discrete Fourier transformations. In the rest of this section, by taking the limit $Q_W = 0$ on a series, we consider the series in the variables $Q_W$ and $S_{F_0}^{\mp\frac{1}{2c_{F_0}}}$.

\begin{lemma}% [Leading terms of continuous Fourier transformations]
\label{lem:LTContFT}
% For $F=F_0$, we have the following calculation for the leading term of $\FT_{F_0,j}$. 
Let $j = 0, \dots, \abs{c_{F_0}}-1$. Given $\alpha \in H_{\bbC^*}^*(W)$, and expanding $\alpha |_{F_0} \in H_{\bbC^*}^*(F_0) = H^*(F_0)[\lambda]$ as $\alpha |_{F_0} = \alpha_n \lambda^n + O(\lambda^{n-1})$ with $\alpha_n \neq 0$, we have
$$
    \FT_{F_0,j}(\alpha)|_{Q_W=0,\theta=0} = q_{F_0,j}\lambda_j^{n} \left(\alpha_n + \sum_{k>0}f_{j,k}(\alpha)S_{F_0}^{-\frac{k}{c_{F_0}}} \right)
$$
for some $f_{j,k}(\alpha)\in H^*(F_0)[z]$, $k>0$. Here, $\lambda_j$ and $q_{F_0,j}$ are defined in \eqref{eqn:Lambdaj} and \eqref{eqn:qFj} respectively.
\end{lemma}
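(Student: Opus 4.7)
The strategy is to use the commutative diagram in Proposition~\ref{prop:ContFTDmodule}, namely
\[
M_{F_0}(\sigma_j(\theta))\, \FT_{F_0,j}(\alpha) \;=\; S_{F_0}^{\rho_{F_0}/(zc_{F_0})}\, \sF_{F_0,j}(M_W(\theta)\alpha),
\]
and extract the leading term after setting $Q_W=0$ and $\theta=0$. Since $M_W(\theta)|_{Q_W=0,\theta=0}=\Id$, the right-hand side reduces to $S_{F_0}^{\rho_{F_0}/(zc_{F_0})}\sF_{F_0,j}(\alpha)$, and the leading-term expansion \eqref{eqn:ContFTLeading} applied to $\alpha$, whose restriction $\alpha|_{F_0}$ begins with $\alpha_n\lambda^n$, yields
\[
S_{F_0}^{\rho_{F_0}/(zc_{F_0})}\,\sF_{F_0,j}(\alpha) \;=\; q_{F_0,j}\,e^{h_{F_0,j}/z}\,\lambda_j^{n}\bigl(\alpha_n+O(S_{F_0}^{1/c_{F_0}})\bigr).
\]

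Next I would analyse $M_{F_0}(\sigma_j(0))|_{Q_W=0}$. Under the base change $\kappa_{F_0}^*$, a class $\beta\in\NE_{\bbN}(F_0)$ is sent to $Q_W^{i_{F_0,*}\beta}\,S_{F_0}^{-(\beta,\rho_{F_0})/c_{F_0}}$; at $Q_W=0$, only $\beta$ with $i_{F_0,*}\beta=0$ contribute, and (excepting $\beta=0$) they produce strictly positive powers of $S_{F_0}^{-1/c_{F_0}}$. The $\beta=0$ contribution is computed via the identification of the degree-zero moduli as $F_0\times\overline{\cM}_{0,n+2}$ and the standard identities $\int_{\overline{\cM}_{0,n+2}}\psi_{n+2}^{n-1}=1$; this gives that the $\beta=0$ part of $M_{F_0}(\tau)|_{Q_W=0}$ is multiplication by $e^{\tau/z}$ on $H^*(F_0)$. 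Combined with Proposition~\ref{prop:ContFTCone} specialised at $\theta=0$, giving $\sigma_j(0)|_{Q_W=0}=h_{F_0,j}+O(S_{F_0}^{-1/c_{F_0}})$, the leading term in $S_{F_0}^{-1/c_{F_0}}$ of $M_{F_0}(\sigma_j(0))|_{Q_W=0}$ is multiplication by $e^{h_{F_0,j}/z}$.

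Inverting and combining, the factors $e^{\pm h_{F_0,j}/z}$ cancel exactly, producing
\[
\FT_{F_0,j}(\alpha)|_{Q_W=0,\theta=0} \;=\; q_{F_0,j}\lambda_j^{n}\alpha_n \;+\; q_{F_0,j}\lambda_j^{n}\cdot O(S_{F_0}^{-1/c_{F_0}}).
\]
Since Proposition~\ref{prop:ContFTDmodule} and Corollary~\ref{Corollary of continuous FT(shift on the pullback)} guarantee that $\FT_{F_0,j}(\alpha)$ lies in $\zeta_j^*\QDM(F_0)^{\La}$, whose fibres are $H^*(F_0)[z]$-valued, every apparent negative power of $z$ in the exponential expansion must cancel, leaving corrections of the claimed form $q_{F_0,j}\lambda_j^n f_{j,k}(\alpha)S_{F_0}^{-k/c_{F_0}}$ with $f_{j,k}(\alpha)\in H^*(F_0)[z]$. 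The main obstacle will be tracking the $z^{-1}$ cancellation order-by-order in $S_{F_0}^{-1/c_{F_0}}$; the $[z]$-valuedness of $\FT_{F_0,j}(\alpha)$ guarantees it abstractly, but carrying it out explicitly requires careful expansion of $e^{(h_{F_0,j}-\sigma_j(0))/z}$.
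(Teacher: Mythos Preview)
Your approach is essentially the same as the paper's: use the commutative diagram from Proposition~\ref{prop:ContFTDmodule}\eqref{item:ContFTDmoduleCommute}, restrict to $Q_W=\theta=0$ using $M_W(\theta)|_{Q_W=0,\theta=0}=\Id$, apply the leading-term formula \eqref{eqn:ContFTLeading} on the right-hand side, and use Proposition~\ref{prop:ContFTCone}\eqref{prop:prop6 of Continuous FT on cone level} to compute $M_{F_0}(\sigma_j(\theta))|_{Q_W=\theta=0}=e^{h_{F_0,j}/z}\bigl(1+O(S_{F_0}^{-1/c_{F_0}})\bigr)$ on the left. The paper deduces this last identity directly from the asymptotics of $\sigma_j$ in Proposition~\ref{prop:ContFTCone}, rather than unpacking the moduli-space description of $M_{F_0}$ as you do; your extra detail is correct but not needed.

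One point to tighten: you flag the cancellation of negative $z$-powers as ``the main obstacle'' requiring explicit expansion. It is not an obstacle at all. The very definition of $\FT_{F_0,j}$ places its values in $\zeta_j^*\QDM(F_0)^{\La}$, whose underlying module is $H^*(F_0)[z]\biglaurent{S_{F_0}^{-1/(2c_{F_0})}}\formal{Q_W,\theta}$. Hence the coefficients $f_{j,k}(\alpha)$ lie in $H^*(F_0)[z]$ automatically, and no order-by-order tracking is required; this is exactly how the paper concludes (``The lemma then follows'').
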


\begin{proof}
By Proposition~\ref{prop:ContFTDmodule}\eqref{item:ContFTDmoduleCommute}, we have 
$$
    M_{F_0}(\sigma_j(\theta))\FT_{F_0,j}(\alpha) = S_{F_0}^{\frac{\rho_{F_0}}{zc_{F_0}}}\mathscr{F}_{F_0,j}(M_W(\theta)c).
$$
We take the restriction $Q_W=\theta=0$ on both sides. For the left-hand side, Proposition~\ref{prop:ContFTCone} implies that $M_{F_0}(\sigma_j(\theta))|_{Q_W=\theta=0} = e^{\frac{h_{F_0,j}}{z}}\Big(1+O\big(S_{F_0}^{-1/c_{F_0}}\big)\Big)$. For the right-hand side, by \eqref{eqn:ContFTLeading}, we have that
$$
    S_{F_0}^{\frac{\rho_{F_0}}{zc_{F_0}}}\mathscr{F}_{F_0,j}(M_W(\theta)\alpha)|_{Q_W=0,\theta=0} = S_{F_0}^{\frac{\rho_{F_0}}{zc_{F_0}}}\mathscr{F}_{F_0,j}(\alpha)|_{Q_W=0,\theta=0} = q_{F_0,j}e^{\frac{h_{F_0,j}}{z}}\lambda_j^n\Big(\alpha_n + O\big(S_{F_0}^{-1/c_{F_0}}\big)\Big).
$$
The lemma then follows.
%Similarly, by induction on the degree of $S_{F_0}^{-\frac{1}{c_{F_0}}}$, we can get $f_{j,k}(\alpha)$.
\end{proof}

\begin{lemma}% [Leading terms of discrete Fourier transformations]
\label{lem:LTDiscreteFT}
% For $X_{\pm}=W\gitquot_{\pm}\bbC^*$, we have the following calculation for the leading term of $\FT_{X_{\pm}}$. 
Given $\alpha \in H_{\bbC^*}^*(W)$, we have
$$
    \FT_{X_+}(\alpha)|_{Q_W=0,\theta=0}=\kappa_{X_+}(\alpha)+\sum_{k>0}f_{+,k}(\alpha)S_{F_0}^k,
$$
$$
    \FT_{X_-}(\alpha)|_{Q_W=0,\theta=0}=\kappa_{X_-}(\alpha)+\sum_{k>0}f_{-,k}(\alpha)S_{F_0}^{-k}
$$
for some $f_{\pm,k}(\alpha)\in H^*(X_{\pm})[z]$, $k > 0$.
\end{lemma}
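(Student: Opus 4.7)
The plan is to exploit the commutative diagram from Proposition~\ref{prop:DiscreteFTDmodule}\eqref{item:DiscreteFTDmoduleCommute}, which at $\theta=0$ reads
$$
    M_{X_\pm}(\tau_{X_\pm}(0))\circ \FT_{X_\pm}(\alpha)\big|_{\theta=0} = \sfF_{X_\pm}(\alpha).
$$
Restricting further to $Q_W=0$ and inverting $M_{X_\pm}(\tau_{X_\pm}(0))|_{Q_W=0}$ reduces the lemma to two separate computations: of $\sfF_{X_\pm}(\alpha)|_{Q_W=0}$ and of $M_{X_\pm}(\tau_{X_\pm}(0))|_{Q_W=0}$.

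First I would compute $\sfF_{X_\pm}(\alpha)|_{Q_W=0}$ directly from Definition~\ref{def:DiscreteFT}, expanding $\sfF_{X_\pm}(\alpha)=\sum_{k\in\bbZ}S^{-k}\kappa_{X_\pm}(\cS^k\alpha)$ and applying the local shift operator formula~\eqref{eqn:ShiftOpLocal}. In the $X_-$ case, the restriction $\cS^k\alpha|_{F_-}$ for $k\ge 1$ carries the factor $e_\lambda(N_{F_-/W})$ (the $m=0$ term in the numerator is not cancelled by the denominator), which vanishes under $\kappa_{X_-}$ by the factorization~\eqref{eqn:KirwanFactorize}; on the other hand, for $k\le -1$ the prefactor $S^{-k}=Q_W^{a|k|}S_{F_0}^{|k|}$ vanishes at $Q_W=0$. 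Only $k=0$ survives, yielding $\sfF_{X_-}(\alpha)|_{Q_W=0}=\kappa_{X_-}(\alpha)$. The argument for $X_+$ is symmetric: the Kirwan-killing factor $e_\lambda(N_{F_+/W})$ now appears in $\cS^{-l}\alpha|_{F_+}$ for $l\ge 1$, while the positive powers of $\cS$ carry a vanishing prefactor $Q_W^{k(a+b)}$ at $F_+$, again leaving only the $k=0$ term.

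Next I would analyze $M_{X_\pm}(\tau_{X_\pm}(0))|_{Q_W=0}$. By Corollary~\ref{cor:MirrorMapDiscrete}, $\tau_{X_\pm}(0)$ lies in the ideal generated by $\hS^\beta$ for $\beta\in C_{X_\pm,\bbN}^{\vee}\setminus\{0\}$. Inspecting the generators of these cones listed in~\eqref{eqn:DualCone3Comp}, the only generator that does not already carry a positive $Q_W$ factor is $\hS^{a-\lambda^*}=S_{F_0}^{-1}$ in the $X_-$ case and $\hS^{\lambda^*-a}=S_{F_0}$ in the $X_+$ case. Hence $\tau_{X_\pm}(0)|_{Q_W=0}$ is a power series in $S_{F_0}^{\mp 1}$ without constant term; the same analysis applied to the Novikov base change $Q_{X_\pm}^\beta\mapsto \hS^{\kappa_{X_\pm}^*(\beta)}$ appearing inside $M_{X_\pm}$ shows that every surviving Novikov contribution is also in nonnegative powers of $S_{F_0}^{\mp 1}$. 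Consequently $M_{X_\pm}(\tau_{X_\pm}(0))|_{Q_W=0}=\Id+O(S_{F_0}^{\mp 1})$, and likewise its inverse, so applying this inverse to $\kappa_{X_\pm}(\alpha)$ produces precisely the expansion claimed in the lemma.

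The hard part will be ensuring that the correction coefficients $f_{\pm,k}(\alpha)$ lie in $H^*(X_\pm)[z]$ rather than in $H^*(X_\pm)[z,z^{-1}]$, since $M_{X_\pm}(\tau)$ and its inverse a priori involve $z^{-1}$. This $z$-regularity is a posteriori: by Proposition~\ref{prop:DiscreteFTDmodule}\eqref{item:DiscreteFTDmoduleTarget}, $\FT_{X_\pm}(\alpha)$ already lies in the $z$-regular module $\tau_{X_\pm}^*\QDM(X_\pm)^{\La}$, so its restriction at $Q_W=\theta=0$ is automatically $z$-regular and the coefficients land in $H^*(X_\pm)[z]$.
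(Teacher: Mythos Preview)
There is a gap at the very first step. The commutative diagram in Proposition~\ref{prop:DiscreteFTDmodule}\eqref{item:DiscreteFTDmoduleCommute} reads $M_X(\tau_X(\theta))\circ\FT_X = \sfF_X\circ M_W(\theta)$, so applied to $\alpha$ and restricted to $\theta=0$ it gives
\[
M_{X_\pm}(\tau_{X_\pm}(0))\,\FT_{X_\pm}(\alpha)\big|_{\theta=0}=\sfF_{X_\pm}\bigl(M_W(0)\alpha\bigr),
\]
with the fundamental solution $M_W(0)$ still present on the right. This matters: $M_W(0)\alpha=\alpha+\sum_{\beta\ne 0}Q_W^\beta(\cdots)$, and for $k>0$ the factor $S^{-k}=Q_W^{-ka}S_{F_0}^{-k}$ in $\sfF_{X_-}$ can cancel against the $Q_W^{ka}$-piece of $M_W(0)\alpha$, leaving a nonzero contribution $g_{-,k}(\alpha)S_{F_0}^{-k}$ at $Q_W=0$. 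Your Kirwan-killing argument for $\cS^k\alpha$ relies on $\alpha|_{F_-}$ being polynomial in $\lambda$; the $Q_W^\beta$-coefficients of $M_W(0)\alpha|_{F_-}$ are only rational in $\lambda,z$, so the factor $e_\lambda(N_{F_-/W})$ need not survive. Thus your computation $\sfF_{X_-}(\alpha)|_{Q_W=0}=\kappa_{X_-}(\alpha)$, while correct in itself, is not the quantity the diagram produces.

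The paper's proof keeps $M_W(\theta)$ throughout: it writes
\[
\sfF_{X_-}(M_W(\theta)\alpha)=\sum_{k\in\bbZ} S_{F_0}^{-k}Q_W^{-ka}\kappa_{X_-}\bigl(\cS^k M_W(\theta)\alpha\bigr),
\]
observes that at $Q_W=\theta=0$ the $k<0$ terms vanish (the series is supported on $C_{X_-,\bbN}^\vee$ by Lemma~\ref{lem:DiscreteDefined}), while for $k>0$ only the $Q_W^{ka}$-coefficient survives, yielding $\kappa_{X_-}(\alpha)+\sum_{k>0}g_{-,k}(\alpha)S_{F_0}^{-k}$. Your analysis of $M_{X_\pm}(\tau_{X_\pm}(0))|_{Q_W=0}=\Id+O(S_{F_0}^{\mp 1})$ and the $z$-regularity argument via Proposition~\ref{prop:DiscreteFTDmodule}\eqref{item:DiscreteFTDmoduleTarget} are both correct and agree with the paper; only the right-hand side of the diagram needs the fix above.
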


\begin{proof}
We prove the lemma for $X_-$; the case $X_+$ is similar. By Proposition~\ref{prop:DiscreteFTDmodule}\eqref{item:DiscreteFTDmoduleCommute}, we have 
% \begin{equation}\label{commutative relation for discrete ft}
\begin{align*}
    M_{X_-}(\tau_{X_-}(\theta))\FT_{X_-}(\alpha) &= \sfF_{X_-}(M_W(\theta)\alpha) = \sum_{k\in\bbZ} S^{-k} \kappa_{X_-}(\cS^k M_W(\theta)\alpha) \\ &= \sum_{k\in\bbZ} S_{F_0}^{-k} Q_W^{-ka} \kappa_{X_-}(\cS^k M_W(\theta)\alpha).
\end{align*}
We have shown in Lemma~\ref{lem:DiscreteDefined} that this series is supported on $\bbC\formal{C_{X_-,\bbN}^{\vee}}$. It follows that, when $k<0$, $\kappa_{X_-}(\cS^k M_W(\theta)\alpha)$ vanishes in the limit $Q_W = 0$. On the other hand, when $k>0$, only the coefficient of $Q_W^{ka}$ in $\kappa_{X_-}(\cS^k M_W(\theta)\alpha)$ survives in the limit. We have
$$
    M_{X_-}(\tau_{X_-}(\theta))\FT_{X_-}(\alpha)|_{Q_W=\theta=0} = \kappa_{X_-}(\alpha)+\sum_{k>0}g_{-,k}(\alpha)S_{F_0}^{-k}
$$
for some $g_{-,k}(\alpha)\in H^*(X_-)\laurent{z^{-1}}$. In addition, by Corollary~\ref{cor:MirrorMapDiscrete}, we have $\tau_{X_-}(\theta)|_{Q_W=\theta=0} = O(S_{F_0}^{-1})$ and thus $M_{X_-}(\tau_{X_-}(\theta))|_{Q_W=0,\theta=0} = \Id + O(S_{F_0}^{-1})$. The statement in the lemma then follows. Note that the definition of $\FT_{X_-}$ ensures that each $f_{-,k}(\alpha)$ does not contain negative powers of $z$.
\end{proof}

\begin{comment}

\begin{proof}
We prove the lemma for $\FT_{X_+}$ and a similar argument holds for $\FT_{X_-}$. By Proposition~\ref{prop:DiscreteFTDmodule}(\ref{item:DiscreteFTDmoduleCommute}), we have 
\begin{equation}%\label{commutative relation for discrete ft}
    M_{X_+}(\tau_{X_+}(\theta))\mathrm{FT_{X_+}}(c)=\sfF_{X_+}(M_W(\theta)c)=\sum_{k\in\bbZ}S^k\kappa_{X_+}(\cS^{-k}M_W(\theta)c)=\sum_{k\in\bbZ}S_{F_0}^kQ_W^{ka}\kappa_{X_+}(\cS^{-k}M_W(\theta)c).
\end{equation}
When $k<0$, we know that $\kappa_{X_+}(\cS^{-k}M_W(\theta)c)$ will be $Q_W$-regular. Thus, when we take the limit $Q_W=0$, these terms will vanish. Moreover, (\ref{lem:DiscreteDefined}) shows that when $k>0$, $S_{F_0}^kQ_W^{ka}\kappa_{X_+}(\cS^{-k}M_W(\theta)c)$ is in the support $\bbC[z]\formal{C_{X_+,\bbN}^{\vee},\theta}$. Hence, $Q_W^{ka}\kappa_{X_+}(\cS^{-k}M_W(\theta)c)$ is $Q_W$-regular for $k>0$. When we take the limits $Q_W=\theta=0$, we will have:
$$
    M_{X_+}(\tau_{X_+}(\theta))\FT_{X_+}(c)|_{Q_W=\theta=0}=\kappa_{X_+}(c)+\sum_{k>0}g_k(c)S_{F_0}^k,
$$
where $g_k(c)\in H^*(X_+)\laurent{z^{-1}}$. At the same time, we know that $\tau_{X_+}(\theta)|_{Q_W=\theta=0}$ will be divided by $S_{F_0}$ due to the calculation of the leading terms in (\ref{prop:DiscreteFTDmodule}). Hence, $M_{X_+}(\tau_{X_+}(\theta))|_{Q_W=0,\theta=0}\equiv 1 \text{ mod }S_{F_0}$. We can get $f_{+,k}(c)$ by induction on the degree of $S_{F_0}$. 
\end{proof}

\end{comment}

\begin{proposition}\label{prop:tRModIso}
The $\tR$-module homomorphisms $\FT_{X_-}$ and $\Psi_+ = \FT_{X_+}\oplus \bigoplus_{j=0}^{\abs{c_{F_0}}-1}\FT_{F_0,j}$ in Theorem~\ref{thm:QDMDecompExt} are isomorphisms.
\end{proposition}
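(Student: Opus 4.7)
The plan is to represent both $\FT_{X_-}$ and $\Psi_+$ as $n_-\times n_-$ matrices in the reference bases of Section~\ref{sect:RefBases}—$\{s_m\}\cup\{s_{l,i}\}$ on the source, $\{\phi_2(\alpha_{X_+,m})\}\cup\{j_{-,*}(h_-^l\psi_-^*(\alpha_{F_0,i}))\}$ on $\tau_{X_-}^*\QDM(X_-)^{\La}$, and $\{\alpha_{X_+,m}\}\cup\{\alpha_{F_0,i,j}\}$ on $\tau_{X_+}^*\QDM(X_+)^{\La}\oplus\bigoplus_j\zeta_j^*\QDM(F_0)^{\La}$—and then to verify in each case that the determinant is a unit in $\tR$. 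Both sides have rank $n_- = n_+ + \abs{c_{F_0}}\,n_0$ by Proposition~\ref{prop:CompletionFree} and Proposition~\ref{prop:deRhamIso3Comp}. Writing $R_0 \coloneqq \bbC[z]\Biglaurent{S_{F_0}^{-1/(2c_{F_0})}}$, so that $\tR = R_0\formal{Q_W,\theta}$, a square matrix over $\tR$ is invertible iff its $(Q_W,\theta)$-reduction has determinant that is a unit in $R_0$, i.e.\ whose leading Laurent coefficient in $u \coloneqq S_{F_0}^{-1/(2c_{F_0})}$ is a nonzero complex constant.

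For $\FT_{X_-}$, Lemma~\ref{lem:LTDiscreteFT} gives directly that the reduced matrix has the form $\Id + O(S_{F_0}^{-1})$, so its determinant is $1 + O(S_{F_0}^{-1})$, hence a unit. For $\Psi_+$, I will decompose the reduced matrix into four blocks along the partitions $\{s_m\}\sqcup\{s_{l,i'}\}$ (rows) and $\{\alpha_{X_+,m}\}\sqcup\{\alpha_{F_0,i,j}\}$ (columns). Combining $\kappa_{X_+}(s_{l,i})=0$ from Lemma~\ref{lem:KirwanPhi2} with Lemma~\ref{lem:LTDiscreteFT}, the $X_+$-column entries are $\Id+O(S_{F_0})$ on the $\{s_m\}$-rows and $O(S_{F_0})$ on the $\{s_{l,i'}\}$-rows. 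For the $F_0$-columns, the identity $s_{l,i'}|_{F_0} = e_{-\lambda}(N_{F_0,-})\lambda^l\alpha_{F_0,i'}$ exhibits the top $\lambda$-degree as $r_{F_0,-}+l$ with leading coefficient $(-1)^{r_{F_0,-}}\alpha_{F_0,i'}$, so Lemma~\ref{lem:LTContFT} yields the leading $u$-term of the $(s_{l,i'},\alpha_{F_0,i,j})$-entry as $\delta_{i,i'}\,(-1)^{r_{F_0,-}}\,q_{F_0,j}\,\lambda_j^{r_{F_0,-}+l}$.

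Factoring $(-1)^{r_{F_0,-}}q_{F_0,j}\lambda_j^{r_{F_0,-}}$ out of each $j$-column reduces this $F_0$-block, up to a nonzero scalar times a power of $u$, to $\Id_{n_0}\otimes (\lambda_j^l)_{j,l}$; since $\lambda_j/\lambda_0 = e^{2\pi\sqrt{-1}\, j/c_{F_0}}$ are distinct $\abs{c_{F_0}}$-th roots of unity, this is a Vandermonde matrix with nonzero scalar determinant. Combined with the identity leading term of the $X_+$-block, the block-diagonal contribution to the full determinant is a nonzero complex constant times a power of $u$.

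The hard part will be the $u$-order accounting needed to rule out interference from the remaining permutations in the expansion of the determinant. Using the bound $\deg_\lambda s_m|_{F_0}\le r_{F_0,+}-1$ from Lemma~\ref{lem:KirwanPhi2}, the $(s_m,\alpha_{F_0,i,j})$-entries have $u$-order at least $\abs{c_{F_0}}+1$, while the $(s_{l,i'},\alpha_{X_+,m})$-entries have $u$-order at least $2\abs{c_{F_0}}$; replacing any pair of block-diagonal entries by a corresponding pair of off-diagonal entries in a permutation therefore strictly raises the total $u$-order. Hence the leading coefficient of $\det\Psi_+|_{Q_W=\theta=0}$ equals the block-diagonal contribution, a nonzero complex constant, so $\det\Psi_+$ is a unit in $\tR$ and $\Psi_+$ is an isomorphism.
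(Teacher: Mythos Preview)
Your approach is essentially the paper's: use the reference bases of Section~\ref{sect:RefBases}, apply Lemmas~\ref{lem:LTContFT} and~\ref{lem:LTDiscreteFT} to compute the $(Q_W,\theta)$-reduction of the representing matrix, and conclude invertibility from a Vandermonde-type nondegeneracy in the $F_0$-block. The $\FT_{X_-}$ part is identical to the paper's. For $\Psi_+$, the paper packages the same information slightly differently, writing the reduced matrix as $C=A+B$ with $A$ block-upper-triangular and explicitly invertible, then checking that $BA^{-1}$ has only positive powers of $S_{F_0}^{-1/(2c_{F_0})}$ so that the Neumann series $\sum_k(-1)^k(A^{-1}B)^kA^{-1}$ converges; your direct $u$-order accounting on the permutation expansion of the determinant is an equivalent, and equally valid, way to extract the same conclusion.

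One correction: your formula $s_{l,i'}|_{F_0}=e_{-\lambda}(N_{F_0,-})\lambda^l\alpha_{F_0,i'}$ is wrong. Since $s_{l,i}|_{W\setminus F_-}=i_{F_0,*}^-(\lambda^l\alpha_{F_0,i})$ and $i_{F_0,*}^-$ is the Gysin pushforward along the closed embedding of the \emph{negative} cell $W_{F_0,-}$ into $W\setminus F_-$ (see \eqref{3-component Gysin 5}), the normal bundle there is $N_{F_0,+}$, giving
\[
s_{l,i}|_{F_0}=e_{\lambda}(N_{F_0,+})\,\lambda^l\alpha_{F_0,i}=\lambda^{l+r_{F_0,+}}\bigl(\alpha_{F_0,i}+O(\lambda^{-1})\bigr).
\]
So the top $\lambda$-degree is $l+r_{F_0,+}$, not $l+r_{F_0,-}$, and the leading coefficient is $\alpha_{F_0,i}$, not $(-1)^{r_{F_0,-}}\alpha_{F_0,i}$. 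This is cosmetic for your argument: the leading coefficient is still nonzero, the Vandermonde structure $(\lambda_j^l)_{l,j}$ is unchanged after factoring out $q_{F_0,j}\lambda_j^{r_{F_0,+}}$, and your off-diagonal $u$-order bounds (which came from $\deg_\lambda s_m|_{F_0}\le r_{F_0,+}-1$ and $\kappa_{X_+}(s_{l,i})=0$, both correct) still show each crossover swap strictly raises the total $u$-order.
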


\begin{proof}
Consider the bases constructed in Section~\ref{sect:RefBases}.
% \blue{We construct an $\tR$-basis for $\QDM_{\bbC^*}(W)_{X_-}^{\wedge,\La}$ whose image under $\FT_{X_-}$ (resp.\ $\Psi_+$) forms an $\tR$-basis of $\tau_{X_-}^*\QDM(X_-)^{\La}$ (resp.\ $\tau_{X_+}^*\QDM(X_+)^{\La} \oplus \bigoplus_{j=0}^{|c_{F_0}|-1} \zeta_j^*\QDM(F_0)^{\La}$).} 
By Proposition~\ref{prop:CompletionFree}, the collection of elements $\{s_m, s_{l,i}\}$ is an $\tR$-basis of $\QDM_{\bbC^*}(W)_{X_-}^{\wedge,\La}$. By Lemma~\ref{lem:LTDiscreteFT}, we have for each $m$ that $\FT_{X_-}(s_m) |_{Q_W = S_{F_0}^{-1} = 0, \theta = 0} = \kappa_{X_-}(s_m)$, and similarly for each $s_{l,i}$. It follows that the image of this basis under $\FT_{X_-}$ is an $\tR$-basis of $\tau_{X_-}^*\QDM(X_-)^{\La}$.

\begin{comment}
Let $r := \dim H^*(X_-)$. We construct an $\tR$-basis $\{s_1,\cdots,s_r\}$ for $\QDM_{\bbC^*}(W)_{X_-}^{\wedge,\La}$ such that 
$$
    \{\FT_{X_-}(s_1),\cdots,\FT_{X_-}(s_r)\}
$$
forms an $\tR$-basis of $\tau_{X_-}^*\QDM(X_-)^{\La}$ and 
$$
    \{\Psi_+(s_1),\cdots,\Psi_+(s_r)\}
$$
forms an $\tR$-basis of $\tau_{X_+}^*\QDM(X_+)^{\La}\oplus \bigoplus_{j=0}^{|c_{F_0}|-1}\zeta_j^*\QDM(F_0)^{\La}$.

% We denote $\phi_2(c_{X_+,m})$ by $a_m$ and $j_{-,*}(h_-^l\cup p_-^*(\gamma_{F_0,i}))$ by $a_{l,i}$. By Lemma~\ref{lem:KirwanPhi2}, for $1 \le m \le k$, there exists (uniquely) $b_m \in H_{\bbC^*}^*(W)$ such that $\kappa_{X_-}(b_m)=a_m$, $\kappa_{X_+}(b_m)=c_{X_+,m}$, and $\deg_{\lambda}(b_m|_{F_0})\le r_{F_0,+}-1$. Moreover, due to the surjectivity of $i_{W\backslash F_-}^*$ (\ref{3-component Gysin 6}), for $1\le i \le t$ and $0\le l\le |c_{F_0}|-1$, there exists $b_{l,i}\in H^*_{\bbC^*}(W)$ such that $b_{l,i}|_{W\backslash F_-}=i_{F_0,*}^-(\lambda^l\gamma_{F_0,i})$. Then, $\kappa_{X_+}(b_{l,i})=0$ by Lemma~\ref{lem:KirwanKernel} and $\kappa_{X_-}(b_{l,i})=j_{-,*}(h^l\cup p_-^*(\gamma_{F_0,i}))$ by the diagram (\ref{Cohomology excess formula for iF0-}). 

Thus, for the homogeneous elements 
$$
    \{b_m,b_{l,i}\}_{1\le m\le k;0 \le l\le |c_{F_0}|-1; 1\le i\le t}\subset H_{\bbC^*}^*(W),
$$
we have proved that $\{\kappa_{X_-}(b_m),\kappa_{X_-}(b_{l,i})\}_{1\le m\le k;0 \le l\le |c_{F_0}|-1; 1\le i\le t}\subset H^*(X_-)$ forms a basis of $H^*(X_-)$. 

By Proposition~\ref{prop:CompletionFree}, $\FT_{X_-}$ is a $\bbC[z]\laurent{S_{F_0}^{-\frac{1}{2c_{F_0}}}}\formal{Q_W,\theta}$-module isomorphism. 

\end{comment}

We now consider the image of $\{s_m, s_{l,i}\}$ under $\Psi_+$, by relating it to the basis 
$$
    \{\alpha_{X_+,m}\}_{0\le m\le n_+-1} \cup \{\alpha_{F_0,i,j}\}_{0\le i\le n_0-1, 0\le j\le \abs{c_{F_0}}-1}.
$$
% where $\alpha_{F_0,i,j}$ represents the copy of $\alpha_{F_0,i}$ in $\zeta_j^*\QDM(F_0)^{\La}$. 
We consider the leading term $\Psi_+|_{Q_W=\theta=0}$.
By Lemmas~\ref{lem:LTContFT} and \ref{lem:LTDiscreteFT}, and that 
$$
    s_{l,i}|_{F_0} = e_{\lambda}(N_{F_0,+})\lambda^{l}\alpha_{F_0,i} = \lambda^{l+r_{F_0,+}}(\alpha_{F_0,i}+O(\lambda^{-1}))
$$ 
for all $l, i$, we have the relation
% \begin{equation}\label{eqn:MatrixCalcIso}
$$
\left[
    \begin{array}{c}
    \begin{matrix}
        \Psi_+|_{Q_W=\theta=0}(s_m)
    \end{matrix}
    \\ \hdashline
    \begin{matrix}
        \Psi_+|_{Q_W=\theta=0}(s_{l,i})
    \end{matrix}
    \end{array}
\right]
= \left[
    \begin{array}{c:c}
    \begin{matrix}
        \Id_{X_+}+O(S_{F_0})
    \end{matrix}&
    S_{F_0}^{-\frac{r_{F_0}-1}{2c_{F_0}}+\frac{r_{F_0,+}-1}{c_{F_0}}}(*+O\big(S_{F_0}^{-1/c_{F_0}}\big))
    \\ \hdashline
    \begin{matrix}
       O\big(S_{F_0}^{-1/c_{F_0}}\big)
    \end{matrix}&
    \left(
    \begin{matrix}
        q_{F_0,j}\lambda_j^{l+r_{F_0,+}}(\Id_{F_0}+O\big(S_{F_0}^{-1/c_{F_0}}\big))
    \end{matrix}
    \right)_{l,j}
    \end{array}
\right]
\left[
    \begin{array}{c}
    \begin{matrix}
        \alpha_{X_+,m}
    \end{matrix}
    \\ \hdashline
    \begin{matrix}
        \alpha_{F_0, i, j}
    \end{matrix}
    \end{array}
\right].
$$
%\end{equation}
Here, in the bottom block of the vector on the left (resp.\ on the right), the elements are sorted first according to $l$ (resp.\ $j$), and then according to $i$. Let $C$ denote the block matrix in the middle. The ``$*$'' in the top-right block of $C$ denotes some matrix that is independent of $S_{F_0}$. The bottom-right block of $C$ is divided into $\abs{c_{F_0}} \times \abs{c_{F_0}}$ minors each of size $t \times t$, and the notation $(-)_{l, j}$ stands for $(l,j)$-th minor.
% where we use $(\cdot)_a$ to present the column spanned by varying all possible indices $a$, and $(\cdot)_{a,b}$ to present the matrix whose columns spanned by varying the index $b$ and rows spanned by varying the index $a$. Specifically, we always vary the indices $1\le m\le k$, $1\le i\le t$, and $0\le l,j\le |c_{F_0}|-1$. Besides, 
Moreover, we use $\Id_{X_+}$ (resp.\ $\Id_{F_0}$) to denote the $k \times k$ (resp.\ $t \times t$) identity matrix on %the coordinates of 
$H^*(X)$ (resp.\ $H^*(F_0)$). 

Now we show that $C$ is invertible, as this would imply that the image of $\{s_m, s_{l,i}\}$ under $\Psi_+$ is an $\tR$-basis of $\tau_{X_+}^*\QDM(X_+)^{\La} \oplus \bigoplus_{j=0}^{\abs{c_{F_0}}-1} \zeta_j^*\QDM(F_0)^{\La}$.
We write $C = A + B$, where
$$
    A = \left[
        \begin{array}{c:c}
        \begin{matrix}
            \Id_{X_+}
        \end{matrix}&
        S_{F_0}^{-\frac{r_{F_0}-1}{2c_{F_0}}+\frac{r_{F_0,+}-1}{c_{F_0}}}(*)
        \\ \hdashline
        \begin{matrix}
           0
        \end{matrix}&
        \left(
        \begin{matrix}
            q_{F_0,j}\lambda_j^{l+r_{F_0,+}}(\Id_{F_0})
        \end{matrix}
        \right)_{l,j}
        \end{array}
    \right]
    =
    \left[
        \begin{array}{c:c}
        \begin{matrix}
            \Id_{X_+}
        \end{matrix}&
        S_{F_0}^{-\frac{r_{F_0}-1}{2c_{F_0}}+\frac{r_{F_0,+}-1}{c_{F_0}}}(*)
        \\ \hdashline
        \begin{matrix}
           0
        \end{matrix}&
        \left(
        \begin{matrix}
            q_{F_0,j}\lambda_0^{l+r_{F_0,+}}\zeta^{j(l+r_{F_0,+})}(\Id_{F_0})
        \end{matrix}
        \right)_{l,j}
        \end{array}
    \right],
$$
and 
$$
    B=\left[
        \begin{array}{c:c}
        \begin{matrix}
            O(S_{F_0})
        \end{matrix}&
        S_{F_0}^{-\frac{r_{F_0}-1}{2c_{F_0}}+\frac{r_{F_0,+}-1}{c_{F_0}}}O\big(S_{F_0}^{-1/c_{F_0}}\big)
        \\ \hdashline
        \begin{matrix}
           O\big(S_{F_0}^{-1/c_{F_0}}\big)
        \end{matrix}&
        \left(
        \begin{matrix}
            q_{F_0,j}\lambda_j^{l+r_{F_0,+}}O\big(S_{F_0}^{-1/c_{F_0}}\big)
        \end{matrix}
        \right)_{l,j}
        \end{array}
    \right],
$$
with $\zeta \coloneqq e^{\frac{2\pi i}{c_{F_0}}}$. Note that $A$ is invertible as a matrix with entries in $\bbC[z]\Biglaurent{S_{F_0}^{-\frac{1}{2c_{F_0}}}}$, because the matrix $(\zeta^{j(l+r_{F_0,+})})_{l,j}$ is invertible. Using that $(\zeta^{j(l+r_{F_0,+})})_{l,j}^{-1}=(\zeta^{-l(j+r_{F_0,+})})_{l,j}$, we may compute that
$$
    A^{-1} = \left[
        \begin{array}{c:c}
        \begin{matrix}
            \Id_{X_+}
        \end{matrix}&
        -S_{F_0}^{-\frac{r_{F_0}-1}{2c_{F_0}}+\frac{r_{F_0,+}-1}{c_{F_0}}}(*)
        \cdot (\lambda_0^{-j-r_{F_0,+}}(q_{F_0,l}\abs{c_{F_0}})^{-1}\zeta^{-l(j+r_{F_0,+})}\Id_{F_0})_{l,j}
        \\ \hdashline
        \begin{matrix}
           0
        \end{matrix}&
        \left(
        \begin{matrix}
            \lambda_0^{-j-r_{F_0,+}}(q_{F_0,l}\abs{c_{F_0}})^{-1}\zeta^{-l(j+r_{F_0,+})}(\Id_{F_0})
        \end{matrix}
        \right)_{l,j}
        \end{array}
    \right]. 
$$ 
Moreover, recall that $\lambda_0\in \bbC S_{F_0}^{\frac{1}{c_{F_0}}}$ and $q_{F_0,j}\in\bbC S_{F_0}^{-\frac{r_{F_0}-1}{2c_{F_0}}}$. We see that $BA^{-1}$ only contains positive powers of $S_{F_0}^{-\frac{1}{2c_{F_0}}}$, because the negative powers of $S_{F_0}^{-\frac{1}{2c_{F_0}}}$ in the bottom-right block of $A^{-1}$ are eliminated by the higher positive powers of $S_{F_0}^{-\frac{1}{2c_{F_0}}}$ in the bottom-right and top-right blocks of $B$, and the other blocks of $A^{-1}$ do not contain negative powers of $S_{F_0}^{-\frac{1}{2c_{F_0}}}$. Hence, the sequence $A^{-1}+A^{-1}BA^{-1}+A^{-1}BA^{-1}BA^{-1+}\cdots$ converges to a matrix with entries in $\bbC[z]\Biglaurent{S_{F_0}^{-\frac{1}{2c_{F_0}}}}$. Therefore, the inverse matrix of $A+B = C$ exists.
% Finally, by induction on the degree of $Q_W,\theta$, we have shown that $\Psi_+$ is a $\bbC[z]\laurent{S_{F_0}^{-\frac{1}{2c_{F_0}}}}\formal{Q_W,\theta}$-module isomorphism. 
% Thus, we have proved (\ref{QDM Decomposition Thm1-module}) and (\ref{QDM Decomposition Thm2-connection}).
\end{proof}

\subsubsection{Compatibility with pairings}\label{sect:IntertwinePairings}
We now study the interaction of $\Phi$ and the pairings.
% we first consider the pairings in the limit $S_{F_0}^{-\frac{1}{2c_{F_0}}}=Q_W=\theta=0$. 
We will use the following more precise calculation of the leading term of $\FT_{X_-}$. 

\begin{lemma}\label{lem:SpecialLTPairing}
Given $\alpha \in H_{\bbC^*}^*(W)$ such that $\alpha|_{F_0}$ as a polynomial in $\lambda$ has degree $\deg_{\lambda}(\alpha|_{F_0})\le r_{F_0,-}-1$, we have 
$$
    \FT_{X_-}(\alpha)|_{Q_W=\theta=0} = \kappa_{X_-}(\alpha).
$$
\end{lemma}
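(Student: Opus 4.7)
The target is to refine Lemma~\ref{lem:LTDiscreteFT} under the extra hypothesis: I must show that each correction term $f_{-,k}(\alpha) S_{F_0}^{-k}$ with $k>0$ vanishes when $\deg_\lambda(\alpha|_{F_0}) \le r_{F_0,-}-1$. I will set the stage by evaluating the defining diagram in Proposition~\ref{prop:DiscreteFTDmodule}\eqref{item:DiscreteFTDmoduleCommute} at $\theta=0, Q_W=0$, namely
\[
    M_{X_-}(\tau_{X_-}(\theta))\FT_{X_-}(\alpha)\big|_{Q_W=\theta=0} = \sfF_{X_-}(M_W(\theta)\alpha)\big|_{Q_W=\theta=0}.
\]
Corollary~\ref{cor:MirrorMapDiscrete} gives $M_{X_-}(\tau_{X_-}(\theta))|_{Q_W=\theta=0}=\Id+O(S_{F_0}^{-1})$, so an induction on the $S_{F_0}^{-1}$-order reduces the lemma to proving $\sfF_{X_-}(M_W(\theta)\alpha)|_{Q_W=\theta=0}=\kappa_{X_-}(\alpha)$.

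Second, I will expand $\sfF_{X_-}(M_W(\theta)\alpha)=\sum_{k\in\bbZ} S_{F_0}^{-k} Q_W^{-ka}\kappa_{X_-}(\cS^k M_W(\theta)\alpha)$ as in Lemma~\ref{lem:LTDiscreteFT}, and write $M_W(0)\alpha=\alpha+\delta$, where $\delta=\sum_{\beta>0}a_\beta Q_W^\beta$ collects the nontrivial $2$-point equivariant Gromov-Witten contributions with $a_\beta=\sum_j\varphi^j\bigl\langle\alpha,\frac{\varphi_j}{z-\psi}\bigr\rangle^{W,\bbC^*}_{0,2,\beta}$. The $\alpha$-part drops out for every $k>0$: since $\alpha|_{F_-}$ is polynomial in $\lambda$, the $i=0$ factor $e_\lambda(N_{F_-/W})$ in $\cS^k\alpha|_{F_-}=\prod_{i=0}^{k-1}e_{\lambda-iz}(N_{F_-/W})\,e^{-kz\partial_\lambda}\alpha|_{F_-}$ is not cancelled by any denominator and maps to $e_p(N_{F_-/W})=0$ in $H^*(X_-)$ via the factorization \eqref{eqn:KirwanFactorize} of $\kappa_{X_-}$. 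Hence the surviving correction is $\sum_{k>0}S_{F_0}^{-k}\kappa_{X_-}(\cS^k a_{ka})$, and it suffices to prove $\kappa_{X_-}(\cS^k a_{ka})=0$ for each $k>0$.

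Third, I will apply virtual localization to $\bigl\langle\alpha,\frac{\varphi_j}{z-\psi}\bigr\rangle^{W,\bbC^*}_{0,2,ka}$ on $M_{0,2}(W,ka)$. Because $ka$ is a multiple of the invariant curve class $a$ joining $F_0$ and $F_-$, every fixed locus is a chain of $\bbC^*$-invariant rational curves with components mapping into $F_-,F_0$ (and possibly $F_+$). The only contributions that can survive $\kappa_{X_-}\circ\cS^k$ are those whose localization denominator contains $e_\lambda(N_{F_-/W})$ and thereby cancels the $i=0$ factor in the shift operator; these are exactly the loci where the first marked point is attached to a component over $F_-$. On such loci the integrand on the $F_0$-end takes the form $\alpha|_{F_0}$ coupled with Chern classes of $N_{F_0,\pm}$, and a dimension count in $\lambda$ shows that the hypothesis $\deg_\lambda(\alpha|_{F_0})\le r_{F_0,-}-1$ forces the resulting integrand to have $\lambda$-degree strictly below the threshold needed to produce a nonzero residue. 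Consequently, every such contribution vanishes, and the correction terms are all zero.

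The principal obstacle will be the explicit bookkeeping in the third step: identifying which fixed loci of $M_{0,2}(W,ka)$ yield a cancelling $e_\lambda(N_{F_-/W})$ denominator, and turning the bound on $\deg_\lambda(\alpha|_{F_0})$ into a transparent dimension deficit. A cleaner alternative I will also pursue, should the localization computation prove cumbersome, is to exploit the $\bbC[\hS]$-linearity of $\FT_{X_-}$ together with the reference basis $\{s_m,s_{l,i}\}$ from Section~\ref{sect:RefBases}: since each $s_m$ satisfies the stronger bound $\deg_\lambda(s_m|_{F_0})\le r_{F_0,+}-1$, and the elements $s_{l,i}$ carry a precise $S_{F_0}^{-1}$-divisibility behavior through the Fourier transform inherited from their construction via $i_{F_0,*}^{-}$, one can reduce the general $\lambda$-degree hypothesis to an already-known identity on the $s_m$-subspace by a filtration argument in $\deg_\lambda(\alpha|_{F_0})$.
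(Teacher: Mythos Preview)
Your reduction in the first paragraph is logically valid, but the target you set yourself is too strong and in fact false. You aim to prove $\sfF_{X_-}(M_W(0)\alpha)\big|_{Q_W=0}=\kappa_{X_-}(\alpha)$ exactly, equivalently $\kappa_{X_-}(\cS^k a_{ka})=0$ for every $k>0$. But once the lemma holds, the commutative diagram gives
\[
\sfF_{X_-}(M_W(0)\alpha)\big|_{Q_W=0}=M_{X_-}(\tau_{X_-}(0))\big|_{Q_W=0}\cdot \kappa_{X_-}(\alpha),
\]
and $M_{X_-}(\tau_{X_-}(0))\big|_{Q_W=0}=\Id+\sum_{m\ge 1,k\ge 1}B_{m,k}z^{-m}S_{F_0}^{-k}$ where the $B_{m,k}$ are not all zero in general (for instance, the $z^{-1}$-row encodes $\tau_{X_-}(0)|_{Q_W=0}$, a genuine mirror-map correction). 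Hence $\sfF_{X_-}(M_W(0)\alpha)|_{Q_W=0}=\kappa_{X_-}(\alpha)+O(z^{-1})$, with the $O(z^{-1})$ part typically nonzero. No virtual-localization bookkeeping on $\overline{M}_{0,2}(W,ka)$ can make $\kappa_{X_-}(\cS^k a_{ka})$ vanish, because it does not vanish.

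The paper avoids this trap by aiming only for $\kappa_{X_-}(w_k)=O(z^{-1})$, which \emph{does} suffice (via the $z$-polynomiality of $\FT_{X_-}$, exactly as in your Step~1 but with $O(z^{-1})$ in place of $0$). The key trick is to look at $w_k$ on $F_+$ and $F_0$ rather than on $F_-$: by \eqref{eqn:ShiftOpLocal}, $\cS^k$ carries a factor $Q_W^{k(a+b)}$ at $F_+$ and $Q_W^{ka}$ at $F_0$, so the $Q_W^{ka}$-coefficient $w_k$ satisfies $w_k|_{F_+}=0$ and $w_k|_{F_0}$ involves only $\alpha|_{F_0}$ (no Gromov--Witten input at all). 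A direct $z$-degree count on $w_k|_{F_0}$ using $\deg_\lambda(\alpha|_{F_0})\le r_{F_0,-}-1$ gives $z$-degree $\le (k-1)c_{F_0}-1\le -1$. Since $w_k\in H^*_{\bbC^*}(W)\laurent{z^{-1}}$, the nonnegative-$z$ part $w_k^+$ then satisfies $w_k^+|_{F_+}=w_k^+|_{F_0}=0$, forcing $w_k^+\in\Im(i_{F_-,*})$ by \eqref{3-component Gysin 5}--\eqref{3-component Gysin 6}, whence $\kappa_{X_-}(w_k^+)=0$. This is the missing idea: trade the hard computation at $F_-$ for the easy ones at $F_+,F_0$, and settle for $O(z^{-1})$ rather than exact vanishing.
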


\begin{proof}
We proceed as in the proof of Lemma~\ref{lem:LTDiscreteFT} and consider
$$
    M_{X_-}(\tau_{X_-}(\theta))\FT_{X_-}(\alpha)|_{Q_W=\theta=0} = \kappa_{X_-}(\alpha) + \sum_{k>0} S_{F_0}^{-k} Q_W^{-ka} \kappa_{X_-}(\cS^k M_W(\theta)\alpha) |_{Q_W=\theta=0}.
$$
% $$ M_{X_-}(\tau_{X_-}(\theta))\FT_{X_-}(c)|_{Q_W=\theta=0}=\sfF_{X_-}(M_W(\theta)c)|_{Q_W=\theta=0}=\sum_{k\ge 0}S_{F_0}^{-k}Q^{-ka}\kappa_{X_-}(\cS^{k}M_W(\theta)c)|_{Q_W=\theta=0}.$$
As observed there, for $k>0$, only the coefficient of $Q_W^{ka}$ in $\cS^k M_W(\theta)\alpha$ survives in the limit $Q_W = 0$.
% By (\ref{thm:Reduction}), for $k>0$, we know that if $Q_W^d$ is not divided by $Q_W^{ka}$, the coefficient of $Q_W^d$ in $\kappa_{X_-}(\cS^kM_W(\theta)c)$ will be $0$. Moreover, using the formula of shift operators $\cS^{k}M_W(\theta)c$, we know that 
By \eqref{eqn:ShiftOpLocal}, we have
$$
    \cS^kM_W(\theta)\alpha|_{F_+} = Q_W^{k(a+b)}\prod_{c=1}^ke_{\lambda+cz}(N_{F_+/W})^{-1}e^{-kz\partial_{\lambda}}(M_W(\theta)\alpha)|_{F_+},
$$
$$
    \cS^kM_W(\theta)\alpha|_{F_0} = Q_W^{ka} \frac{\prod_{c=1-k}^{0}e_{\lambda+cz}(N_{F_0,+})}{\prod_{c=1}^{k}e_{\lambda+cz}(N_{F_0,-})} e^{-kz\partial_{\lambda}}(M_W(\theta)\alpha)|_{F_0}.
$$
Therefore, if we denote the coefficient of $Q_W^{ka}$ in $\cS^kM_W(\theta)\alpha|_{Q_W=\theta=0}$ by $w_k \in H_{\bbC^*}^*(W)\laurent{z^{-1}}$, then we have $w_k|_{F_+}=0$ and $w_k|_{F_0} = \frac{\prod_{c=1-k}^{0}e_{\lambda+cz}(N_{F_0,+})}{\prod_{c=1}^{k}e_{\lambda+cz}(N_{F_0,-})} e^{-kz\partial_{\lambda}}c|_{F_0}$, which means that 
% \begin{equation}\label{Estimation for degree of z}
$$
    w_k|_{W\backslash F_-}=i_{F_0,*}^+ \left(\frac{\prod_{c=1-k}^{-1}e_{\lambda+cz}(N_{F_0,+})}{\prod_{c=1}^{k}e_{\lambda+cz}(N_{F_0,-})}e^{-kz\partial_{\lambda}}\alpha|_{F_0} \right). % = O(z^{(k-1)r_{F_0,+}-kr_{F_0,-}+r_{F_0,-}-1}).
$$
In this expression, the degree of $z$ is bounded above by
$$
    (k-1)r_{F_0,+}-kr_{F_0,-}+r_{F_0,-}-1=(k-1)c_{F_0}-1\le -1.
$$
% Because for any $k>0$, $(k-1)r_{F_0,+}-kr_{F_0,-}+r_{F_0,-}-1=(k-1)c_{F_0}-1\le -1$, we can see that 
It follows that 
$$
    M_{X_-}(\tau_{X_-}(\theta))\FT_{X_-}(\alpha)|_{Q_W=\theta=0} = \kappa_{X_-}(\alpha) + O(z^{-1}).
$$
Since $M_{W}(\tau_{X_-}(\theta))|_{Q_W=0,\theta=0} = \Id +O(z^{-1})$, we have $\FT_{X_-}(\alpha)|_{Q_W=\theta=0} = \kappa_{X_-}(\alpha) + O(z^{-1})$, which implies that $\FT_{X_-}(\alpha)|_{Q_W=\theta=0} = \kappa_{X_-}(\alpha)$ since $\FT_{X_-}$ does not involve negative powers of $z$.
% Because the localization $z^{-1}$ is not involved in $\tau_{X_-}^*\QDM(X_-)^{\La}$, we have $\FT_{X_-}(c)|_{Q_W=\theta=0}=\kappa_{X_-}(c)$.
\end{proof}

\begin{proposition}\label{prop:IntertwinePairings}
The $\tR$-module isomorphism $\Phi$ in Theorem~\ref{thm:QDMDecompExt} intertwines the pairing $P_{X_-}$ with $P_{X_+}\oplus \bigoplus_{j=0}^{\abs{c_{F_0}}-1}P_{F_0}$.  
\end{proposition}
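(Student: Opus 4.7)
The plan is to leverage the flatness of both pairings under the quantum connection---a consequence of the already-established intertwining of $\Phi$ with $\nabla$ (Theorem~\ref{thm:QDMDecompExt}, first part) and the $\nabla$-compatibility of each individual Poincar\'e pairing---and then verify the identity at the initial locus $Q_W = \theta = 0$ using the leading-term computations from the previous subsection. Concretely, define the $\tR$-sesquilinear form
\begin{equation*}
    B(f, g) \coloneqq P_{X_-}(f, g) - \left( P_{X_+} \oplus \bigoplus_{j=0}^{\abs{c_{F_0}}-1} P_{F_0} \right)(\Phi f, \Phi g)
\end{equation*}
on $\tau_{X_-}^*\QDM(X_-)^{\La}$. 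Each of the three Poincar\'e pairings satisfies $dP(f, g) = P(\nabla f, g) + P(f, \nabla g)$ for its respective quantum connection, and $\Phi$ intertwines all of these connections, so $B$ satisfies the same flatness identity. Fixing any $\tR$-basis $\{e_i\}$, the collection $\{B(e_i, e_j)\}_{i,j}$ satisfies a first-order linear system in each coordinate direction of the formal base, so solutions are uniquely determined by their values at $Q_W = \theta = 0$, and it suffices to verify $B \equiv 0$ along this initial locus.

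For the initial check I would pull back the basis $\{s_m, s_{l,i}\} \subset H^*_{\bbC^*}(W)$ of Section~\ref{sect:RefBases} across the isomorphism $\FT_{X_-}$ of Proposition~\ref{prop:tRModIso}. Both basis types satisfy $\deg_\lambda(s|_{F_0}) \le r_{F_0,-}-1$: for $s_m$ one has $\deg_\lambda(s_m|_{F_0}) \le r_{F_0,+}-1 \le r_{F_0,-}-1$, while for $s_{l,i}$ the eigenbundle factor forces $\deg_\lambda(s_{l,i}|_{F_0}) = l + r_{F_0,+} \le \abs{c_{F_0}}-1 + r_{F_0,+} = r_{F_0,-}-1$. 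Hence Lemma~\ref{lem:SpecialLTPairing} collapses $\FT_{X_-}(s_m)|_{Q_W=\theta=0} = \phi_2(\alpha_{X_+,m})$ and $\FT_{X_-}(s_{l,i})|_{Q_W=\theta=0} = j_{-,*}(h_-^l \psi_-^*(\alpha_{F_0,i}))$, and Lemma~\ref{lem:DecompClassicalPairing} then evaluates the LHS of $B$ in closed form. On the RHS, Lemmas~\ref{lem:LTContFT} and~\ref{lem:LTDiscreteFT} supply the principal symbols of $\FT_{X_+}(s)$ and $\FT_{F_0,j}(s)$. The key combinatorial input is that summing the continuous Fourier contributions over $j$ produces root-of-unity sums $\sum_{j=0}^{\abs{c_{F_0}}-1} e^{2\pi\sqrt{-1}\, jn/c_{F_0}}$, which vanish unless $\abs{c_{F_0}}$ divides $n$; this mirrors the orthogonality and top-degree nonvanishing in Lemma~\ref{lem:DecompClassicalPairing}\eqref{Poincare pairing proj cup proj}, with the constants $q_{F_0,j}^2 \lambda_j^{l_1+l_2+2r_{F_0,+}}$ supplying exactly the sign $(-1)^{r_{F_0,+}}$ and Euler factors needed.

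The main obstacle will be controlling higher-order corrections in the Laurent variable $S_{F_0}^{\pm 1/(2c_{F_0})}$ at the initial locus, since the leading-term lemmas only pin down principal symbols. My plan to close this is to use the remaining flatness of $B$ in the $\hS$-direction, specifically $\nabla_{\xi \hS\partial_{\hS}}$ for $\xi \in H^2_{\bbC^*}(W)$, which propagates agreement from the principal symbol to all orders of $S_{F_0}^{-1/(2c_{F_0})}$. A cleaner alternative, in the spirit of \cite{iritani2023quantum, iritani-koto2023quantum}, is to construct an equivariant residue pairing on $\QDM_{\bbC^*}(W)^{\wedge,\La}_{X_-}$ via $\bbC^*$-equivariant integration on $W$ combined with $\lambda$-residues, show that $\FT_{X_-}$ intertwines it with $P_{X_-}$ and that $\Psi_+$ intertwines it with $P_{X_+} \oplus \bigoplus_j P_{F_0}$ via Cauchy-type contour deformation in the $\lambda$-plane around the poles at $\lambda = 0$ and the critical points $\lambda_j$, and thereby bypass the explicit higher-order computations altogether.
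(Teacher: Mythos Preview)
Your proposal is correct and follows essentially the same approach as the paper: verify the pairing identity at the initial locus $Q_W=\theta=S_{F_0}^{-1/(2c_{F_0})}=0$ using the reference basis $\{s_m,s_{l,i}\}$, Lemma~\ref{lem:SpecialLTPairing}, Lemma~\ref{lem:DecompClassicalPairing}, and the root-of-unity combinatorics coming from the constants $q_{F_0,j}\lambda_j^{l+r_{F_0,+}}$, then propagate to the full formal neighborhood via $\nabla$-flatness (the paper cites \cite[Section~5.6.2]{iritani2023quantum} for this last step, which is exactly your $\hS$-direction argument). The only organizational difference is that the paper works with $\Phi^{-1}$ applied to the target basis $\{\alpha_{X_+,m},\alpha_{F_0,i,j}\}$ and pairs on the $X_-$ side, using the explicit $(A+B)^{-1}$ computation from Proposition~\ref{prop:tRModIso} to track the $O\big(S_{F_0}^{-1/(2c_{F_0})}\big)$ corrections, whereas you pair on the target side via $\Psi_+$; these are dual computations leading to the same verification.
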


\begin{proof}
We first consider $\Phi$ and the pairings in the limit $Q_W=\theta=0$.
Consider the bases constructed in Section~\ref{sect:RefBases}.
For each $l, i$, since $s_{l,i}|_{F_0}=e_{\lambda}(N_{F_0,+})\lambda^l\alpha_{F_0,i}$, we have $\deg_{\lambda}(s_{l,i}|_{F_0}) = l + r_{F_0,+} \le r_{F_0,-}-1$.
% Hence, by considering the $\bbC[z]\laurent{S_{F_0}^{-\frac{1}{2c_{F_0}}}}\formal{Q_W,\theta}$-module isomorphism $\Phi=\Psi_+\circ\FT_{X_-}^{-1}$ at the limit $Q_W=\theta=0$, if we denote the cohomology class $\alpha_{F_0,i}\in H^*(F_0)$ in $j$-th quantum $D$-module $\zeta_j^*\QDM(F_0)^{\La}$ by $e_j\alpha_{F_0,i}$, 
Combining Lemma~\ref{lem:SpecialLTPairing} and the computation of $(A+B)^{-1}$ above , we have 
$$
    \Phi^{-1}|_{Q_W=\theta=0}(\alpha_{X_+,m}) \in \phi_2(\alpha_{X_+,m})+O\Big(S_{F_0}^{-\frac{1}{2c_{F_0}}}\Big)\Im (\phi_2) + \sum_{l=0}^{\abs{c_{F_0}}-1} O\Big(S_{F_0}^{-\frac{l+1}{c_{F_0}}}\Big) j_{-, *} (h_-^lH^*(F_0))),
$$
\begin{align*}
    \Phi^{-1}|_{Q_W=\theta=0}(\alpha_{F_0,i,j}) \in & O\Big(S_{F_0}^{-\frac{1}{2c_{F_0}}}\Big)\Im (\phi_2) + \sum_{l=0}^{\abs{c_{F_0}}-1} \lambda_0^{-l-r_{F_0,+}}(q_{F_0,j}\abs{c_{F_0}})^{-1}\zeta^{-l(j+r_{F_0,+})}
    \\& j_{-, *} (h_-^l(\alpha_{F_0,i} + O\Big(S_{F_0}^{-\frac{1}{2c_{F_0}}}\Big)H^*(F_0))).
\end{align*}
Then, by Lemma~\ref{lem:DecompClassicalPairing}, we have
\begin{align*}
    & P_{X_-}(\Phi^{-1}|_{Q_W=\theta=0}(\alpha_{X_+,m_1}),\Phi^{-1}|_{Q_W=\theta=0}(\alpha_{X_+,m_2})) \\
    & = \int_{X_-}\phi_2(\alpha_{X_+,m_1})\cup\phi_2(\alpha_{X_+,m_2}) + O\Big(S_{F_0}^{-\frac{1}{2c_{F_0}}}\Big)
    = \int_{X_+}\alpha_{X_+,m_1}\cup\alpha_{X_+,m_2}+O\Big(S_{F_0}^{-\frac{1}{2c_{F_0}}}\Big),\\
    & P_{X_-}(\Phi^{-1}|_{Q_W=\theta=0}(\alpha_{X_+,m}), \Phi^{-1}|_{Q_W=\theta=0}(\alpha_{F_0,i,j})) = O\Big(S_{F_0}^{-\frac{1}{2c_{F_0}}}\Big), \\
    &P_{X_-}(\Phi^{-1}|_{Q_W=\theta=0}(\alpha_{F_0,i_1,j_1}),\Phi^{-1}|_{Q_W=\theta=0}(\alpha_{F_0,i_2,j_2})) \\
    & = \sum_{l+l'=\abs{c_{F_0}}-1}\lambda_0^{-\abs{c_{F_0}}-1-2r_{F_0,+}}q_{F_0,j_1}^{-1}q_{F_0,j_2}^{-1}c_{F_0}^{-2} \zeta^{-l(j_1+r_{F_0,+})-l'(j_2+r_{F_0,+})}(-1)^{r_{F_0,+}}\int_{F_0}\alpha_{F_0,i_1}\cup \alpha_{F_0,i_2} \\&+O\Big(S_{F_0}^{-\frac{1}{2c_{F_0}}}\Big) 
    = \delta_{j_1,j_2}\int_{F_0}\alpha_{F_0,i_1}\cup \alpha_{F_0,i_2} + O\Big(S_{F_0}^{-\frac{1}{2c_{F_0}}}\Big).
\end{align*}

% Here, we give an explanation of the asymptotic calculations above.
Here, we explain the derivation of the second equation above; the derivation of the third is similar. In view of \eqref{poincare pairing phi2 cup phi2} and \eqref{Poincare pairing phi2 cup proj}, it suffices to show that for any $\gamma_l, \gamma'_l \in H^*(F_0)$, $l = 0, \dots, \abs{c_{F_0}}-1$, we have
$$
    \sum_{l_1, l_2 = 0}^{\abs{c_{F_0}}-1} \int_{X_-} O\Big(S_{F_0}^{-\frac{l_1+1}{c_{F_0}}}\Big) j_{-, *} (h_-^{l_1} \gamma_{l_1}) \cup \lambda_0^{-l_2 - r_{F_0,+}} q_{F_0,j}^{-1} j_{-, *} (h_-^{l_2}(\alpha_{F_0,i} + O\Big(S_{F_0}^{-\frac{1}{2c_{F_0}}}\Big)\gamma'_{l_2})) = O\Big(S_{F_0}^{-\frac{1}{2c_{F_0}}}\Big).
$$
By \eqref{Poincare pairing proj cup proj}, there is no contribution to the summation from $l_1, l_2$ with $l_1+l_2<\abs{c_{F_0}}-1$. On the other hand, when $l_1+l_2\ge \abs{c_{F_0}}-1$, recalling that $\lambda_0\in \bbC S_{F_0}^{\frac{1}{c_{F_0}}}$ by \eqref{eqn:Lambdaj} and $q_{F_0,j}\in \bbC S_{F_0}^{-\frac{r_{F_0}-1}{2c_{F_0}}}$ by \eqref{eqn:qFj}, we see that the pairing lies in
$$
    O\bigg(S_{F_0}^{-\frac{l_1+1+l_2+r_{F_0,+}}{c_{F_0}} + \frac{r_{F_0}-1}{2c_{F_0}}}\bigg) = O\bigg(S_{F_0}^{-\frac{r_{F_0,-}-r_{F_0,+}+1}{2c_{F_0}}}\bigg) = O\bigg(S_{F_0}^{-\frac{1}{2c_{F_0}}}\bigg).
$$

The asymptotic calculations above implies that $\Phi$ preserves the pairings in the limit $Q_W=\theta=S_{F_0}^{-\frac{1}{2c_{F_0}}}=0$. We may then proceed as in \cite[Section 5.6.2]{iritani2023quantum} to conclude that $\Phi$ preserves the pairings.
% We have finished the proof of (\ref{thm:QDMDecompExt}).
\end{proof}

\subsection{Reduction of coordinates}\label{sect:RedCoord}
% Notice that the isomorphism in (\ref{thm:QDMDecompExt}) is an isomorphism of vector bundles over the formal scheme $\Spf(\bbC[z]\laurent{S_{F_0}^{-\frac{1}{2c_{F_0}}}}\formal{Q_W,\theta})$. In fact, there are actually some redundant coordinates and Novikov variables in the base. Thus, we make a further refinement to eliminate extra variables in $\Spf(\bbC[z]\laurent{S_{F_0}^{-\frac{1}{2c_{F_0}}}}\formal{Q_W,\theta})$ for (\ref{thm:QDMDecompExt}).

We now work towards restricting the base ring of the isomorphism $\Phi$ in Theorem~\ref{thm:QDMDecompExt} from $\tR$ to
$$
    R = \bbC[z]\Biglaurent{Q_{X_-}^{-\frac{a}{2c_{F_0}}}}\formal{Q_{X_-},\tau_{X_-}}
$$
by eliminating redundant coordinates and Novikov variables. In this subsection, we first show that the isomorphism can be established over
$$
    \bbC[z]\Biglaurent{S_{F_0}^{-\frac{1}{2c_{F_0}}}}\formal{Q_W,\tau_{X_-}}
$$
where the infinite collection $\theta$ of coordinates is reduced to the finite collection $\tau_{X_-}$.

Consider the finite-dimensional subspace 
$$
    H \coloneqq \bbC\{s_m, s_{l,i}\} \subset H_{\bbC^*}^*(W)
$$ 
spanned by the elements $s_m$ and $s_{l,i}$ constructed in Section~\ref{sect:RefBases}. We introduce the corresponding coordinates $\vartheta = (\vartheta^m, \vartheta^{l,i})$ on $H$. Note that the Kirwan map $\kappa_{X_-}$ restricts to an isomorphism between $H$ and $H^*(X_-)$.
% $b_m $ and $b_{l,i}$ for $1\le m\le k, 0 \le l \le |c_{F_0}|-1, 1\le i\le t$ with coordinates $\vartheta^m$ and $\vartheta^{l,i}$. Similarly, we use $\vartheta$ to represent a point of $H$. 
The inclusion $H \subset H_{\bbC^*}^*(W)$ induces a homogeneous ring homomorphism 
$$
    \bbC[z]\Biglaurent{S_{F_0}^{-\frac{1}{2c_{F_0}}}}\formal{Q_W,\theta} \longrightarrow \bbC[z]\Biglaurent{S_{F_0}^{-\frac{1}{2c_{F_0}}}}\formal{Q_W,\vartheta}.
$$
% Equivalently, we have the morphism $\Spf(\bbC[z]\laurent{S_{F_0}^{-\frac{1}{2c_{F_0}}}}\formal{Q_W,\vartheta})\to \Spf(\bbC[z]\laurent{S_{F_0}^{-\frac{1}{2c_{F_0}}}}\formal{Q_W,\theta})$. 
Under this base change, the isomorphism $\Phi$ in Theorem~\ref{thm:QDMDecompExt} restricts to a $\bbC[z]\Biglaurent{S_{F_0}^{-\frac{1}{2c_{F_0}}}}\formal{Q_W,\vartheta}$-module isomorphism
\begin{equation}\label{eqn:PhiRestricted}
    \Phi |_H\colon \tau_{X_-,H}^*\QDM(X_-)^{\La}\longrightarrow\tau_{X_+,H}^*\QDM(X_+)^{\La}\oplus \bigoplus_{j=0}^{\abs{c_{F_0}}-1}\zeta_{j,H}^*\QDM(F_0)^{\La}.
\end{equation}
Here, the map $\tau_{X_{\pm},H}$ (resp.\ $\zeta_{j,H}$, $j = 0, \dots, \abs{c_{F_0}}-1$) denotes the restriction of $\tau_{X_{\pm}}$ (resp.\ $\zeta_j$) to $H$. The modules $\tau_{X_-,H}^*\QDM(X_-)^{\La}$, $\tau_{X_+,H}^*\QDM(X_+)^{\La}$, $\zeta_{j,H}^*\QDM(F_0)^{\La}$ are the pullbacks of the quantum $D$-modules $\tau_{X_-}^*\QDM(X_-)^{\La}$, $\tau_{X_+}^*\QDM(X_+)^{\La}$, $\zeta_j^*\QDM(F_0)^{\La}$ respectively. The restriction $\Phi |_H$ preserves the quantum connections and the pairings.

We have the following computation of the linear terms of the restricted maps $\tau_{X_{\pm},H}$ and $\zeta_{j,H}$.
% Now we show that the maps $\tau_{X_-,H}$ and $\tau_{X_+,H}\oplus \bigoplus_{j=0}^{|c_{F_0}|-1}\zeta_{j,H}$ are formally invertible, by computing the \blue{linear terms}. 

\begin{lemma}\label{lem:ResMirrorMapLT}
We have
$$
    \tau_{X_-,H}(\vartheta)|_{Q_W=0}=\kappa_{X_-}(\vartheta)+O(\vartheta^2), \quad 
    \tau_{X_+,H}(\vartheta)|_{Q_W=0}=\kappa_{X_+}(\vartheta)+O\Big(S_{F_0}^{-\frac{1}{2c_{F_0}}},\vartheta^2\Big).
$$ 
For any $0\le l,j\le\abs{c_{F_0}}-1$, $0 \le i \le n_0-1$, we have
% $1\le m\le \dim_{\bbC}H^*(X_+)$,  
$$
    \zeta_{j,H}(\vartheta^{l,i}s_{l,i})|_{Q_W=0}=h_{F_0,j}+c_{F_0}\lambda_j+q_{F_0,j}\lambda_j^{l+\abs{c_{F_0}}}\vartheta^{l,i}\Big(\alpha_{F_0,i}+O\Big(S_{F_0}^{-\frac{1}{2c_{F_0}}}\Big)\Big)+O(\vartheta^2).
$$
\end{lemma}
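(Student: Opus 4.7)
The three assertions follow a uniform strategy based on the defining mirror equations
\begin{align*}
\sfF_{X_\pm}(J_W(\theta)) &= M_{X_\pm}(\tau_{X_\pm}(\theta))\vartheta_{X_\pm}(\theta), \\
\sF_{F_0,j}(J_W(\theta)) &= S_{F_0}^{-\rho_{F_0}/(c_{F_0}z)}M_{F_0}(\sigma_j(\theta))\vartheta_j(\theta)
\end{align*}
coming from Corollary~\ref{cor:MirrorMapDiscrete} and Proposition~\ref{prop:ContFTCone}\eqref{prop:prop6 of Continuous FT on cone level}, combined with the expansion $J_W(\theta)|_{Q_W=0}=1+\theta/z+O(\theta^2)$ that follows from the string equation (all $Q_W$-positive terms in $J_W$ die at $Q_W=0$). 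The plan is to evaluate both sides at $Q_W=0$ to linear order in $\theta$, perform Birkhoff factorization on the right, and read off the linear parts of the mirror maps. Throughout, I will substitute the $H$-valued coordinate $\theta=\sum_m\vartheta^ms_m+\sum_{l,i}\vartheta^{l,i}s_{l,i}$.

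For the discrete side, I would compute $\sfF_{X_\pm}(J_W(\theta))|_{Q_W=0}$ term by term in $\sfF_X=\sum_mS^{-m}\kappa_X\circ\cS^m$. Using \eqref{eqn:ShiftOpLocal}, the $m\geq 1$ contribution at $F_-$ (resp.\ $F_+$) carries the factor $e_\lambda(N_{F_-/W})$ (resp.\ a $Q_W^{k(a+b)}$-factor), which is either killed by $\kappa_{X_\pm}$ via the Grothendieck relation $e_p(N_{F_\pm/W})=0$ defining $H^*(\bbP(N_{F_\pm/W}))$, or vanishes at $Q_W=0$; the $m\leq -1$ contributions are annihilated by the prefactor $S^{-m}$ which contains a positive $Q_W$-power. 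Only $m=0$ survives, giving
\[
\sfF_{X_\pm}(J_W(\theta))|_{Q_W=0}=1+\kappa_{X_\pm}(\theta)/z+O(\theta^2).
\]
For $X_-$, the structure of $C^\vee_{X_-,\bbN}$ shows that the only monomials $\hS^\beta$ with $\beta\in C^\vee_{X_-,\bbN}\setminus\{0\}$ that survive $Q_W=0$ are $S_{F_0}^{-k}$ with $k\geq 1$; matching $z^0$-coefficients in $M_{X_-}(\tau_{X_-}(0))\vartheta_{X_-}(0)|_{Q_W=0}=1$ recursively forces $\vartheta_{X_-}(0)|_{Q_W=0}=1$, and then the string-equation expansion $J_{X_-}(\tau)=1+\tau/z+O(z^{-2})$ at $z^{-1}$-coefficient forces $\tau_{X_-}(0)|_{Q_W=0}=0$ \emph{exactly} (no $S_{F_0}$-corrections). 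Differentiating in $\theta$ at this trivial basepoint and matching $z^{-1}$-coefficients yields $\partial_{\theta^{i,k}}\tau_{X_-}|_{\theta=0,Q_W=0}=\kappa_{X_-}(\phi_i\lambda^k)$, which gives the first identity by summation over the basis of $H$. For $X_+$, the surviving monomials at $Q_W=0$ are instead positive powers of $S_{F_0}$ (coming from $\beta=k(\lambda^*-a)$), which are $O(S_{F_0}^{-1/(2c_{F_0})})$ in the Laurent filtration because $c_{F_0}<0$; the same matching argument then gives the second identity with the residual error.

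For $\zeta_{j,H}$, I would use the continuous leading-order formula~\eqref{eqn:ContFTLeading}. At $\vartheta=0$, Proposition~\ref{prop:ContFTCone}\eqref{prop:prop6 of Continuous FT on cone level} gives $\sigma_j(0)|_{Q_W=0}\in h_{F_0,j}+\fm'$, and after adding $c_{F_0}\lambda_j$ we obtain $\zeta_j(0)|_{Q_W=0}=h_{F_0,j}+c_{F_0}\lambda_j+O(S_{F_0}^{-1/(2c_{F_0})})$. For the linear part, from the definition $s_{l,i}|_{W\setminus F_-}=i_{F_0,*}^-(\lambda^l\alpha_{F_0,i})$ and the shift computed from \eqref{3-component Gysin 5}, one has $s_{l,i}|_{F_0}=e_\lambda(N_{F_0,+})\lambda^l\alpha_{F_0,i}$ with leading $\lambda$-term $\alpha_{F_0,i}\lambda^{l+r_{F_0,+}}$; feeding this into~\eqref{eqn:ContFTLeading} yields the leading behavior of $\sF_{F_0,j}(s_{l,i}/z)|_{Q_W=0}$ as $q_{F_0,j}\lambda_j^{l+r_{F_0,+}}\alpha_{F_0,i}/z$ modulo $S_{F_0}^{-1/(2c_{F_0})}$-corrections. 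Matching this with the linear-in-$\vartheta^{l,i}$ part of the Birkhoff factorization $S_{F_0}^{-\rho_{F_0}/(c_{F_0}z)}M_{F_0}(\sigma_j(\vartheta^{l,i}s_{l,i}))\vartheta_j(\vartheta^{l,i}s_{l,i})$, using that $M_{F_0}(\sigma_j)$ acts as $\Id+z^{-1}(\partial_{\vartheta^{l,i}}\sigma_j)\star$ to leading order, extracts $\partial_{\vartheta^{l,i}}\zeta_{j,H}|_{\vartheta=0,Q_W=0}$ and yields the third identity.

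The main obstacle will be the careful bookkeeping that distinguishes the \emph{exact} identity for $\tau_{X_-,H}$ from the leading-order identities for $\tau_{X_+,H}$ and $\zeta_{j,H}$. The sharp vanishing at $X_-$ relies crucially on the Grothendieck relation $e_p(N_{F_-/W})=0$, which forces \emph{all} higher $m$-contributions in the Fourier sum to die at $Q_W=0$ and leaves no ideal-theoretic residue; at $X_+$ and $F_0$ the corresponding cancellations hold only modulo ideals whose $Q_W=0$-reduction contains nonzero $S_{F_0}$-powers, and I will need to track these Laurent-filtration orders consistently through the Birkhoff factorization.
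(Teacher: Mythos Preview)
Your approach is correct and takes a different route from the paper's. The paper works at the $D$-module level: it uses $\FT_{X_\pm}(s_\nu)=z\nabla_{\vartheta^\nu}\FT_{X_\pm}(1)$ from Proposition~\ref{prop:DiscreteFTDmodule}\eqref{item:DiscreteFTDmoduleConnection} (and the analogue via Proposition~\ref{prop:ContFTDmodule} for $\FT_{F_0,j}$), restricts to $z=0$, and reads off $\partial_{\vartheta^\nu}\tau_{X_\pm,H}|_{\vartheta=0,Q_W=0}$ from the leading-term Lemmas~\ref{lem:LTDiscreteFT}, \ref{lem:LTContFT} and, for the sharp $X_-$ statement, Lemma~\ref{lem:SpecialLTPairing}. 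You instead stay on the cone side of the commuting square and Birkhoff-match $z$-coefficients in the mirror equations directly. The crux of your argument---that $J_W|_{F_\pm}$ is genuinely \emph{polynomial} in $\lambda$ (since $J_W$ takes values in $H^*_{\bbC^*}(W)\laurent{z^{-1}}$, not merely in the rational Givental space), so the shift-produced factor $e_{\pm\lambda}(N_{F_\pm/W})$ cannot be cancelled and is annihilated by the Grothendieck relation under $\kappa_{X_\pm}$---is correct and parallels the mechanism behind Lemma~\ref{lem:DiscreteDefined}. Your route is more self-contained for this particular lemma; the paper's route reuses the $\FT$-machinery and leading-term lemmas needed throughout Section~\ref{sect:Decomposition}.

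One small correction: for $X_+$ with $m\le -1$, the positive $Q_W$-power in $S^{-m}$ alone does not kill the term, because $\cS^m J_W|_{F_+}$ carries a compensating $Q_W^{-|m|(a+b)}$ from~\eqref{eqn:ShiftOpLocal}; the vanishing comes instead from the factor $e_{-\lambda}(N_{F_+/W})$ in $\cS^{-1}|_{F_+}$ via the Grothendieck relation, symmetrically to your $m\ge 1$ argument at $F_-$. Once this is fixed, your argument in fact yields the \emph{exact} identity $\tau_{X_+,H}(\vartheta)|_{Q_W=0}=\kappa_{X_+}(\vartheta)+O(\vartheta^2)$ as well: the $Q_{X_+}^{nb}\mapsto S_{F_0}^n$ corrections in $M_{X_+}(0)|_{Q_W=0}$ are all $O(z^{-2})$ and do not affect the $z^{-1}$-matching. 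So your ``main obstacle'' paragraph slightly understates what the Grothendieck mechanism buys; the $O\big(S_{F_0}^{-1/(2c_{F_0})}\big)$ in the stated lemma is not sharp.
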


\begin{proof}
We first consider the statements for $X_\pm$. Let $s_\nu$ denote $s_m$ or $s_{l, i}$, and $\vartheta^\nu$ denote the corresponding coordinate $\vartheta^m$ or $\vartheta^{l, i}$. By Proposition~\ref{prop:DiscreteFTDmodule}\eqref{item:DiscreteFTDmoduleConnection}, we have 
% For any $b_{\alpha}=b_m$ or $b_{\alpha}=b_{l,i}$ and its coordinate $\vartheta^{\alpha}$, we have 
$$    
    \FT_{X_{\pm}}(s_\nu)=\FT_{X_{\pm}}(z\nabla_{\vartheta^\nu}1)=(z\partial_{\vartheta^\nu}+\partial_{\vartheta^\nu}\tau_{X_{\pm},H}(\vartheta)\star_{\tau_{X_{\pm},H}(\vartheta)})\FT_{X_{\pm}}(1). 
    %\label{discrete FT restriction to finite slice equation}
$$
% where $\nabla_{\vartheta^\nu}$ and $\partial_{\vartheta^\nu}$ are defined by the linear coordinate change of $\nabla_{\theta^{i,k}}$ and $\partial_{\theta^{i,k}}$.
For $X_-$, restricting the two sides to $Q_W=\vartheta=z=0$ and applying Lemma~\ref{lem:SpecialLTPairing}, we obtain
$$
    \kappa_{X_-}(s_\nu)=\partial_{\vartheta^\nu}\tau_{X_-,H}(\vartheta)|_{Q_W=\vartheta=0},
$$
where for the right-hand side, we use the fact $\tau_{X_-,H}(\vartheta)|_{Q_W=\vartheta=0}=0$ and $\FT_{X_-}(1)|_{Q_W=\vartheta=0}=1$. This implies the statement in the lemma. 
Similarly, for $X_+$, we may conclude by the leading term computation in Lemma~\ref{lem:LTDiscreteFT} and the observation that $\tau_{X_+,H}(\vartheta)$, $\FT_{X_+}(1)$, and $\FT_{X_+}(s_\nu)$ only involve the nonnegative powers of $S_{F_0}$.

The proof of the statement for $F_0$ is analogous. By Propositions~\ref{prop:ContFTDmodule}\eqref{item:ContFTDmoduleConnection}, we have
$$
    \FT_{F_0,j}(s_{l,i})=\FT_{F_0,j}(z\nabla_{\vartheta^{l,i}}1)=(z\partial_{\vartheta^{l,i}}+\partial_{\vartheta^{l,i}}\zeta_{j,H}(\vartheta)\star_{\zeta_{j,H}(\vartheta)})\FT_{F_0,j}(1).
    %,\label{continuous FT restriction to finite slice equation}
$$
We may then conclude by the leading term computation in Lemma~\ref{lem:LTContFT} and the observation that $\zeta_{j}(\vartheta)|_{Q_W=0}$ only involves the nonnegative power of $S_{F_0}$.
\end{proof}
It follows from Lemma~\ref{lem:ResMirrorMapLT} that the map $\tau_{X_-,H}$ is formally invertible over the coefficient ring $\bbC\Biglaurent{S_{F_0}^{-\frac{1}{2c_{F_0}}}}\formal{Q_W}$. 
%In addition, by a computation similar to that in the proof of Proposition~\ref{prop:tRModIso}, the map $\tau_{X_+,H}\oplus \bigoplus_{j=0}^{\abs{c_{F_0}}-1}\zeta_{j,H}$ is also formally invertible.
% By the similar argument of (\ref{eqn:MatrixCalcIso}), $\tau_{X_-,H}$ and $\tau_{X_-,H}\oplus \bigoplus_{j=0}^{|c_{F_0}|-1}\zeta_{j,H}$ are \orange{formally invertible} over the coefficient ring $\bbC\laurent{S_{F_0}^{-\frac{1}{2c_{F_0}}}}\formal{Q_W}$ because the linear parts of $\tau_{X_-,H}$ and $\tau_{X_-,H}\oplus \bigoplus_{j=0}^{|c_{F_0}|-1}\zeta_{j,H}$ both provide the invertible matrices in $\bbC\laurent{S_{F_0}^{-\frac{1}{2c_{F_0}}}}\formal{Q_W}$. 
We now consider the base change along the isomorphism
$$
    \tau_{X_-,H}^{-1}\colon \bbC[z]\Biglaurent{S_{F_0}^{-\frac{1}{2c_{F_0}}}}\formal{Q_W,\vartheta} \longrightarrow \bbC[z]\Biglaurent{S_{F_0}^{-\frac{1}{2c_{F_0}}}}\formal{Q_W,\tau_{X_-}}.
$$
Under this base change, $\tau_{X_-,H}^*\QDM(X_-)^{\La}=H^*(X_-)[z]\Biglaurent{S_{F_0}^{-\frac{1}{2c_{F_0}}}}\formal{Q_W,\vartheta}$ is isomorphic to $\QDM(X_-)^{\La}=H^*(X_-)[z]\Biglaurent{S_{F_0}^{-\frac{1}{2c_{F_0}}}}\formal{Q_W,\tau_{X_-}}$ as quantum $D$-modules. 
% induced by the point $\tau_{X_-,H}^{-1}(\tau_{X_-})\in H\laurent{S_{F_0}^{-\frac{1}{2c_{F_0}}}}\formal{Q_W,\tau_{X_-}}$. 
On the other hand, we define the refined maps 
\begin{align*} 
    &\ttau_{X_+} \coloneqq \tau_{X_-,H}^{-1}\circ \tau_{X_+,H}\in H^*(X_+)\Biglaurent{S_{F_0}^{-\frac{1}{2c_{F_0}}}}\formal{Q_W,\tau_{X_-}}, \\ 
    &\tzeta_j \coloneqq\tau_{X_-,H}^{-1}\circ  \zeta_{j,H}\in H^*(F_0) \Biglaurent{S_{F_0}^{-\frac{1}{2c_{F_0}}}}\formal{Q_W,\tau_{X_-}}.
\end{align*}
The pullback of $\tau_{X_+,H}^*\QDM(X_+)^{\La}$ and $\zeta_{j,H}^*\QDM(F_0)^{\La}$ along $\tau_{X_-,H}^{-1}$ are given by $\ttau_{X_+}^*\QDM(X_+)^{\La}$ and $\tzeta_j^*\QDM(F_0)^{\La}$ respectively. 
Observe that, for the basis $\big\{\phi_2(\alpha_{X_+,m}), j_{-,*}(h_-^l  \psi_-^*(\alpha_{F_0,i}))\big\}$ of $H^*(X_-)$ (see \eqref{eqn:XMinusBasis}) and the corresponding coordinates $\{\tau_{X_-}^m, \tau_{X_-}^{l,i}\}$, we have 
\begin{equation}\label{equ:asymptotic of mirror map}
    \begin{split}
    &\ttau_{X_+}(\tau_{X_-})|_{Q_W=0} = \sum_{m} \tau_{X_-}^m \alpha_{X_+,m}+O\Big((\tau_{X_-})^2, S_{F_0}^{-\frac{1}{2c_{F_0}}}\Big),
    \\
    &\tzeta_j(\tau_{X_-})|_{Q_W=0} = c_{F_0}\lambda_j+h_{F_0,j}+\sum_{l,i}\tau_{X_-}^{l,i}(-1)^l \lambda_j^{l+\abs{c_{F_0}}}\Big(\alpha_{F_0,i}+O\Big(S_{F_0}^{-\frac{1}{2c_{F_0}}}\Big)\Big) \\
    &\hspace{21em} + O\Big(\tau_{X_-}^m,(\tau_{X_-}^{l,i})^2, S_{F_0}^{-\frac{1}{2c_{F_0}}}\Big).
    \end{split}
\end{equation}
In particular, by Remark~\ref{rem:ContPullbackDefined}, the pullbacks $\widetilde{\zeta_j}^*\QDM(F_0)^{\mathrm{La}}$ are well-defined.

\begin{comment}
We have the isomorphism 
$$
    \Phi':\QDM(X_-)^{\La}\to \ttau_{X_+}^*\QDM(X_+)^{\La}\oplus \bigoplus_{j=0}^{|c_{F_0}|-1}\tzeta_j^*\QDM(F_0)^{\La},
$$
where $\Phi'$ is the composition of the following three isomorphisms of quantum $D$-modules: 
$$
    (\tau_{X_-,H})^*: \QDM(X_-)^{\La}\to \tau_{X_-,H}^*\QDM(X_-)^{\La},
$$
$$
    \Psi|_{H}: \tau_{X_-,H}^*\QDM(X_-)^{\La}\to \tau_{X_+,H}^*\QDM(X_+)^{\La}\oplus \bigoplus_{j=0}^{|c_{F_0}|-1}\zeta_{j,H}^*\QDM(F_0)^{\La},
$$ 
$$
    (\tau_{X_-,H}^{-1})^*:\tau_{X_+,H}^*\QDM(X_+)^{\La}\oplus \bigoplus_{j=0}^{|c_{F_0}|-1}\zeta_{j,H}^*\QDM(F_0)^{\La}\to (\ttau_{X_+})^*\QDM(X_+)^{\La}\oplus \bigoplus_{j=0}^{|c_{F_0}|-1}(\tzeta_j)^*\QDM(F_0)^{\La}.
$$
\end{comment}

Our discussion in this subsection is summarized into the following statement.

% Now, we can restate our QDM decomposition theorem.

\begin{proposition}\label{prop:QDMDecompRedCoord}%(Refined QDM Decomposition Theorem)
The pullback of $\Phi |_H$ \eqref{eqn:PhiRestricted} under $\tau_{X_-,H}^{-1}$ gives a $\bbC[z]\Biglaurent{S_{F_0}^{-\frac{1}{2c_{F_0}}}}\formal{Q_W,\tau_{X_-}}$-module isomorphism 
$$
    \Phi'\colon \QDM(X_-)^{\La}\longrightarrow \ttau^*_{X_+}\QDM(X_+)^{\La}\oplus \bigoplus_{j=0}^{\abs{c_{F_0}}-1}\tzeta_j^*\QDM(F_0)^{\La}.
$$
Moreover, the isomorphism $\Phi'$ intertwines the quantum connections and intertwines the pairings.
\end{proposition}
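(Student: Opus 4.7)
The proof will essentially tie together the observations already made in the paragraphs leading up to the proposition. First, Lemma~\ref{lem:ResMirrorMapLT} shows that $\tau_{X_-,H}(\vartheta)|_{Q_W=0} = \kappa_{X_-}(\vartheta) + O(\vartheta^2)$, and since the restriction $\kappa_{X_-}|_H \colon H \to H^*(X_-)$ is a linear isomorphism by construction of the reference basis in Section~\ref{sect:RefBases}, a formal inverse function argument (inverting the linear part and iterating) produces a homogeneous ring automorphism
\[
\tau_{X_-,H}^{-1}\colon \bbC[z]\Biglaurent{S_{F_0}^{-\frac{1}{2c_{F_0}}}}\formal{Q_W,\vartheta} \overset{\sim}{\longrightarrow} \bbC[z]\Biglaurent{S_{F_0}^{-\frac{1}{2c_{F_0}}}}\formal{Q_W,\tau_{X_-}}
\]
with coefficients in $\bbC\Biglaurent{S_{F_0}^{-\frac{1}{2c_{F_0}}}}\formal{Q_W}$.

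Next, define $\Phi'$ as the pullback of $\Phi|_H$ along $\tau_{X_-,H}^{-1}$. Since pullback along a ring isomorphism is functorial under composition, the source $\tau_{X_-,H}^*\QDM(X_-)^{\La}$ pulls back to $(\tau_{X_-,H}^{-1})^*\tau_{X_-,H}^*\QDM(X_-)^{\La} = \QDM(X_-)^{\La}$, while the target pulls back to $\ttau_{X_+}^*\QDM(X_+)^{\La} \oplus \bigoplus_j \tzeta_j^*\QDM(F_0)^{\La}$ by the definitions $\ttau_{X_+} = \tau_{X_-,H}^{-1}\circ\tau_{X_+,H}$ and $\tzeta_j = \tau_{X_-,H}^{-1}\circ\zeta_{j,H}$. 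Well-definedness of these pullback modules as $R$-modules with flat connection is the content of the remarks preceding the proposition: the asymptotic expansions \eqref{equ:asymptotic of mirror map} together with Remark~\ref{rem:ContPullbackDefined} guarantee that $\tzeta_j$ satisfies the required $H^2$-condition for the pullback $\tzeta_j^*\QDM(F_0)^{\La}$ to exist (and similarly, more straightforwardly, for $\ttau_{X_+}^*\QDM(X_+)^{\La}$).

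Finally, since $\Phi|_H$ was established in Theorem~\ref{thm:QDMDecompExt} (restricted to the subspace $H$) to be an $R$-module isomorphism intertwining the quantum connections and the pairings, and pullback along a ring isomorphism preserves all three structures tautologically, the map $\Phi'$ inherits these compatibilities. The main obstacle is only bookkeeping: carefully verifying that the various formal pullbacks compose correctly and that the well-definedness hypothesis of Remark~\ref{rem:ContPullbackDefined} is satisfied at each stage, rather than proving any genuinely new analytic or algebraic fact beyond Theorem~\ref{thm:QDMDecompExt} and Lemma~\ref{lem:ResMirrorMapLT}.
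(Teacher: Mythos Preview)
Your proposal is correct and follows essentially the same approach as the paper: the proposition is stated as a summary of the preceding discussion, which consists precisely of (i) using Lemma~\ref{lem:ResMirrorMapLT} to invert $\tau_{X_-,H}$ formally, (ii) identifying the pullbacks of source and target under $\tau_{X_-,H}^{-1}$ via the definitions of $\ttau_{X_+}$ and $\tzeta_j$, and (iii) invoking Remark~\ref{rem:ContPullbackDefined} and the asymptotics~\eqref{equ:asymptotic of mirror map} for well-definedness. One small notational slip: you write ``$R$-modules'' but at this stage the base ring is $\bbC[z]\biglaurent{S_{F_0}^{-1/(2c_{F_0})}}\formal{Q_W,\tau_{X_-}}$, not the final reduced ring $R$.
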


\subsection{QDM decomposition over reduced base}\label{sect:RedNovikov}
In this subsection, we eliminate the redundant Novikov variables in the isomorphism $\Phi'$ in Proposition~\ref{prop:QDMDecompRedCoord} so that the decomposition eventually holds over $R = \bbC[z]\Biglaurent{Q_{X_-}^{-\frac{a}{2c_{F_0}}}}\formal{Q_{X_-},\tau_{X_-}}$. For convenience, we make the following assumption in this subsection.

% For convenience in the later discussion, we first make the following assumption, which will turn out to be unnecessary (after we've shown that the parameters do not depend on $W$)

\begin{assumption}\label{assump:FixedDivisor}
We assume that $X_\pm = F_\pm$ are $\bbC^*$-fixed divisors of $W$.
% In addition to assume that $W$ has 3 $\bbC^*$-fixed components, we also require the highest fixed component $F_+$ and the lowest fixed component $F_-$ be divisors of $W$, where we must have $F_{\pm}=X_{\pm}$. 
We also assume that $2\le r_{F_0,+}<r_{F_0,-}$.
\end{assumption}

We remark that the assumption that $X_\pm = F_\pm$ be made \emph{without loss of generality} since we may replace $W$ by its blowup along $F_+ \cup F_-$ with the induced $\bbC^*$-action if necessary and, as we will show in Theorem~\ref{thm:QDMDecompRed} below, the final base ring $R$ over which the decomposition holds depends only on $X_-$ but not on $W$.
Moreover, the statement we obtain in Theorem~\ref{thm:QDMDecompRed} in the case $r_{F_0,+} \ge 2$ will turn out to be consistent with the statement in the case $r_{F_0,+} = 1$ (where $X_- = \Bl_{F_0}X_+$) obtained by \cite[Theorem 5.18]{iritani2023quantum}; see Remark~\ref{rem:ConsistentWithBlowup}. 

% Later, we will see that both assuming $F_{\pm}$ are divisors and $r_{F_0,+}\ge 2$ are unnecessary.

Under Assumption~\ref{assump:FixedDivisor}, in view of \eqref{eqn:KirwanFactorize}, the maps $\kappa_{X_\pm}$ defined in Definition~\ref{def:KirwanFixedComp} are indeed the Kirwan maps. The dual maps $\kappa_{X_{\pm}}^*$ and $\kappa_{F_0}^*$ are given by
\begin{equation}\label{eqn:DualKirwan3Comp}
    \begin{aligned}
        &\kappa_{X_+}^*(\beta_+) % = i_{X_+,*} \beta + c_{X_+}^{-1}(\beta,c_1(N_{X_+/W}))\sigma_{X_+}(-1) 
        = i_{X_+,*}(\beta_+) + (\beta_+, c_1(N_{X_+/W}))(\lambda^*-a-b),\\
        &\kappa_{X_-}^*(\beta_-) % = i_{X_-,*}\beta + c_{X_-}^{-1}(\beta,c_1(N_{X_-/W}))\sigma_{X_-}(-1)
        = i_{X_-,*}(\beta_-) - (\beta_-, c_1(N_{X_-/W}))\lambda^*,\\
        & \kappa_{F_0}^*(\beta_0) % =i_{F_0,*}\beta + c_{F_0}^{-1}(d,c_1(N_{F_0/W}))\sigma_{F_0}(-1)
        = i_{F_0,*}(\beta_0) - \frac{(\beta_0, c_1(N_{F_0/W}))}{c_{F_0}}(\lambda^*-a)
    \end{aligned}
\end{equation}
respectively. In particular, $\kappa_{X_\pm}^*$ defines a monoid homomorphism from $\NE_{\bbN}(X_\pm)$ to $\NE_{\bbN}(W) + \bbZ (\lambda^*-a)$, and $\kappa_{F_0}^*$ defines a monoid homomorphism from $\NE_{\bbN}(F_0)$ to $\NE_{\bbN}(W) + \frac{1}{2c_{F_0}}  \bbZ (\lambda^*-a)$. 
Moreover, recall from Proposition~\ref{prop:VGITGeometry} that we have inclusions $\bbP(N_{F_0,\pm}) \hookrightarrow X_\pm$. Since $2 \le r_{F_0,+} < r_{F_0,-}$, $\bbP(N_{F_0,\pm})$ admits a nontrivial fiber curve class. Under the pushforward to $W$ via the inclusion, the fiber curve class in $\bbP(N_{F_0,-}) \subset X_-$ (resp.\ $\bbP(N_{F_0,+}) \subset X_+$) can be deformed to the class $a$ (resp.\ $b$) in $W$. We thus use $a$ and $b$ to denote the fiber curve classes in $X_\pm$ by an abuse of notation, and use the associated Novikov variables
$$
    Q_{X_-}^a, \quad Q_{X_+}^b.
$$
Notice that $(a,c_1(N_{X_-/W})) = 1$ because $N_{X_-/W}|_{\bbP(N_{F_0,-})} \cong N_{\bbP(N_{F_0,-})/\bbP(N_{F_0,-}\oplus \cO)} = \psi_-^*(N_{F_0,-})(1)$, where $\psi_- \colon \bbP(N_{F_0,-})\to F_0$ is the natural projection. Similarly, $(b, c_1(N_{X_+/W})) = 1$. It follows that
\begin{equation}\label{eqn:KappaDualAB}
    \kappa_{X_-}^*(a) = a - \lambda^* = - \kappa_{X_+}^*(b).
\end{equation}
In particular, $\kappa_{X_-}^*$ extends to an injective monoid homomorphism 
\begin{equation}\label{eqn:KappaDualXMinus}
    \kappa_{X_-}^* \colon \NE_{\bbN}(X_-) + \bbZ a \to \NE_{\bbN}(W) + \bbZ (\lambda^*-a).
\end{equation}
We may further replace $\bbZ a$ above by $\frac{1}{2c_{F_0}} \bbZ a$ and $\bbZ (\lambda^*-a)$ by $\frac{1}{2c_{F_0}} \bbZ (\lambda^*-a)$.

\begin{comment}
\orange{these definitions should probably appear somewhere earlier (before defining the ring $R$)...}
\end{comment}

We first establish the following lemmas in preparation for the reduction of curve classes in this subsection.

\begin{lemma}\label{lem:KappaDualPlusFactor}
Under Assumption~\ref{assump:FixedDivisor}, the monoid homomorphism $\kappa_{X_+}^* \colon \NE_{\bbN}(X_+) \to \NE_{\bbN}(W) + \bbZ (\lambda^*-a)$ factors uniquely through $\kappa_{X_-}^*$. In fact, there is a unique monoid homomorphism $\varphi \colon \NE_{\bbN}(X_+) \to \NE_{\bbN}(X_-) + \bbZ a$ that makes the following diagram commute:
$$
    \begin{tikzcd}
        \NE_{\bbN}(X_+) \arrow[r, "\varphi"] \arrow[rd, "\kappa_{X_+}^*", swap] & \NE_{\bbN}(X_-)+\bbZ a \arrow[d, "\kappa_{X_-}^*"] \\
     & \NE_{\bbN}(W) + \bbZ (\lambda^*-a).
    \end{tikzcd}
$$
\end{lemma}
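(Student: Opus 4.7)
The plan is to treat uniqueness and existence separately. Uniqueness follows immediately from the injective monoid extension of $\kappa_{X_-}^*$ to $\NE_{\bbN}(X_-) + \bbZ a$ recorded in \eqref{eqn:KappaDualXMinus}: any $\varphi$ fitting in the triangle must send each $\beta_+$ to the unique preimage of $\kappa_{X_+}^*(\beta_+)$ under this injection, provided such a preimage exists.

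For existence, the first step is to reduce the commutativity of the diagram to a concrete identity in $N_1(W)$. Using the formulas \eqref{eqn:DualKirwan3Comp} together with $\kappa_{X_-}^*(a) = a - \lambda^* = -\kappa_{X_+}^*(b)$ from \eqref{eqn:KappaDualAB}, and expanding along the splitting $N_1^{\bbC^*}(W)_{\bbQ} = N_1(W)_{\bbQ} \oplus \bbQ\lambda^*$ induced by $i_{x_r}$, the relation $\kappa_{X_-}^*(\beta_- + k a) = \kappa_{X_+}^*(\beta_+)$ becomes equivalent to the pair of conditions $k = -d - e$ and
\[
    i_{X_-,*}(\beta_-) = i_{X_+,*}(\beta_+) + e\cdot a - d\cdot b \quad\text{in } N_1(W),
\]
where $d := (\beta_+, c_1(N_{X_+/W}))$ and $e := (\beta_-, c_1(N_{X_-/W}))$. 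Once $\beta_- \in \NE_{\bbN}(X_-)$ satisfying this is produced, I would set $\varphi(\beta_+) := \beta_- + k a$; monoidality of $\varphi$ will then be forced by uniqueness, since both $\varphi(\beta_+ + \beta_+')$ and $\varphi(\beta_+) + \varphi(\beta_+')$ map to $\kappa_{X_+}^*(\beta_+) + \kappa_{X_+}^*(\beta_+')$ under $\kappa_{X_-}^*$.

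The next step is a geometric construction of $\beta_-$ from $\beta_+$ using the standard flip structure of Proposition~\ref{prop:VGITGeometry}. Let $\widetilde X$ denote the common blowup with projections $\pi_\pm \colon \widetilde X \to X_\pm$. Writing $\beta_+ = \sum_\alpha m_\alpha [C_\alpha]$ as a nonnegative integer combination of classes of irreducible curves $C_\alpha \subset X_+$, I would associate an irreducible effective curve $C_\alpha' \subset X_-$ as follows: if $C_\alpha$ is not a fiber of $\psi_+ \colon \bbP(N_{F_0,+}) \to F_0$, take its strict transform in $\widetilde X$ and project by $\pi_-$; if $C_\alpha$ is a $\psi_+$-fiber (whose pushforward to $W$ is $b$), replace it by a $\psi_-$-fiber in $X_-$ (of class $a$). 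Then $\beta_- := \sum_\alpha m_\alpha [C_\alpha']$ lies in $\NE_{\bbN}(X_-)$. The required pushforward identity in $N_1(W)$ can be verified by factoring $i_{X_\pm,*}$ through $i_{X_0,*} \circ q_{\pm,*}$: both sides agree modulo $\bbZ a + \bbZ b$ because the flip is an isomorphism on $U = X_\pm \setminus \bbP(N_{F_0,\pm})$, and the precise fiber-class correction $e \cdot a - d \cdot b$ is dictated by intersection with the normal bundles and the identifications $N_{\bbP(N_{F_0,\pm})/X_\pm} = \psi_\pm^*(N_{F_0,\mp})(-1)$.

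The main obstacle I anticipate will be the case where $\beta_+$ is supported on the exceptional locus $\bbP(N_{F_0,+})$ by non-fiber irreducible curves: since the flip exchanges the two projective bundles $\bbP(N_{F_0,\pm})$, which have unequal ranks $r_{F_0,+} < r_{F_0,-}$, one must carefully check that the strict transform construction produces an effective class in $\NE_{\bbN}(X_-)$ rather than an $\bbR$-combination that is only effective after adding multiples of $a$, and simultaneously that the integers $d$ and $e$ emerging on both sides of the pushforward identity match. This should be achievable by a direct computation using the explicit projective-bundle structure of the exceptional loci and the normal-bundle intersection formulas.
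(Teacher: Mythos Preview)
Your uniqueness argument and the algebraic unpacking of $\kappa_{X_-}^*(\beta_- + ka) = \kappa_{X_+}^*(\beta_+)$ are fine. The existence argument, however, has a genuine gap in the strict-transform step: for an irreducible non-fiber curve $C_\alpha$ contained in $\bbP(N_{F_0,+})$, the strict transform is undefined because $C_\alpha$ lies entirely in the center of the blowup $\pi_+$, so $\pi_+^{-1}(C_\alpha) \subset E$ is a $\bbP^{r_{F_0,-}-1}$-bundle over $C_\alpha$, not a curve. Your last paragraph flags this case but misdiagnoses the obstacle as an effectivity problem rather than an undefinedness problem; the proposed ``direct computation using the projective-bundle structure'' would need a genuinely different lift for these curves, not a strict transform. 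A second issue is that the factorization $i_{X_\pm,*} = i_{X_0,*} \circ q_{\pm,*}$ does not typecheck: $X_0$ is a GIT quotient, not a subvariety of $W$, so there is no pushforward $i_{X_0,*}\colon N_1(X_0) \to N_1(W)$ to factor through.

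The paper's proof sidesteps both problems by working entirely at the level of curve \emph{classes} and avoiding any direct identity in $N_1(W)$. It lifts $\beta_+$ to some $\tilde\beta \in \NE_{\bbN}(\tX)$ via the surjection $\pi_{+,*}$, then adds a multiple of the fiber class $l_-$ of $E \to \bbP(N_{F_0,+})$ to normalize $(\tilde\beta, [E]) = 0$, and sets $\varphi(\beta_+) \coloneqq \pi_{-,*}(\tilde\beta) \in \NE_{\bbN}(X_-) + \bbZ a$ directly (no separate $\beta_-$ and $k$). The commutativity $\kappa_{X_-}^*(\pi_{-,*}\tilde\beta) = \kappa_{X_+}^*(\beta_+)$ is then verified by pairing both sides against an arbitrary $\omega \in H^2_{\bbC^*}(W)$: one first deduces $\phi_2(\kappa_{X_+}(\omega)) = \kappa_{X_-}(\omega)$ from Lemma~\ref{lem:KirwanIdentifyCoh} together with the injectivity of $H^2(X_-) \to H^2(U)$ (here $r_{F_0,+} \ge 2$ is used), after which the normalization $(\tilde\beta, [E]) = 0$ reduces the comparison to projection formulas. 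No curve-by-curve analysis is needed.
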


\begin{proof}
Recall from Section~\ref{sect:VGIT} that $X_\pm$ admit a common blowup $\pi_{\pm}\colon \tX = X_+ \times_{X_0} X_-\to X_{\pm}$ with the same exceptional divisor $E = \bbP(N_{F_0,+}) \times_{F_0} \bbP(N_{F_0,-})$. Let $l_\pm$ denote the fiber curve classes of the $\bbP(N_{F_0,+})$- and the $\bbP(N_{F_0,-})$-factors respectively, which are also viewed as curve classes in $\tX$. Then $(l_{\pm},[E]) = -1$. There are surjective maps $\pi_{\pm,*}\colon \NE_{\bbN}(\tX) \to \NE_{\bbN}(X_{\pm})$ which annihilate $l_\mp$ respectively.

Let $\beta_+ \in \NE_{\bbN}(X_+)$. We take $\tbeta \in \NE_{\bbN}(\tX)+\bbZ l_-$ such that $\pi_{+,*}(\tbeta) = \beta_+$. Replacing $\tbeta$ by $\tbeta + (\tbeta, [E])l_-$ if necessary, we may assume that $(\tbeta, [E]) = 0$. Setting $\beta_- = \pi_{-,*}(\tbeta)$, where $\beta_-\in \mathrm{NE}_{\bbN}(X_-)+\bbZ a$, we show that
$$
    \kappa_{X_-}^*(\beta_-) = \kappa_{X_+}^*(\beta_+).
$$
It suffices to prove that for any $\omega\in H^2_{\bbC^*}(W)$, we have $(\kappa_{X_-}^*(\beta_-),\omega) = (\kappa_{X_+}^*(\beta_+),\omega)$, or equivalently, 
$$
    (\beta_-, \kappa_{X_-}(\omega)) = (\beta_+,\kappa_{X_+}(\omega)).
$$
By Lemma~\ref{lem:KirwanIdentifyCoh}, we have $\kappa_{X_+}(\omega)|_{U_+} = \kappa_{X_-}(\omega)|_{U_-}$. Moreover, the restriction of the Gysin sequence 
$$    
    \cdots \to H^{*-2r_{F_0,+}}(\bbP(N_{F_0,-})) \to H^*(X_-)\to H^*(U_-) \to \cdots
$$
to $*=2$ together with the condition $r_{F_0,+} \ge 2$ shows that $i_{U_-}^*\colon H^2(X_-)\to H^2(U)$ is injective. It follows from Lemma~\ref{lem:KirwanPhi2} that $\phi_2(\kappa_{X_+}(\omega))=\kappa_{X_-}(\omega)$, where $\phi_2 = \pi_{-,*} \circ \pi_+^*$ is defined by the correspondence. Since $(\tbeta, [E]) = 0$, we have
$$
    (\beta_-, \kappa_{X_-}(\omega)) = (\pi_{-,*}(\tbeta), \pi_{-,*}(\pi_+^*(\kappa_{X_+}(\omega)))) = (\tbeta, \pi_+^*(\kappa_{X_+}(\omega))) = (\beta_+,\kappa_{X_+}(\omega))
$$
as desired.
Finally, since $\kappa_{X_-}^*$ is injective, the element $\beta_-$ is uniquely determined by the property $\kappa_{X_-}^*(\beta_-) = \kappa_{X_+}^*(\beta_+)$. We may then define the desired map $\varphi$ by setting $\varphi(\beta_+) \coloneqq \beta_-$.
\end{proof}

\begin{remark}\label{rem:CurveCorrespondence}
Lemma \ref{lem:KappaDualPlusFactor} does not require that $c_{F_0} \neq 0$. From the proof, we see that $\varphi$ can be extended to a monoid homomorphism
$$
    \varphi \colon \NE_{\bbN}(X_+) + \bbZ b \longrightarrow \NE_{\bbN}(X_-) + \bbZ a
$$
which is indeed an isomorphism. Moreover, Equation \eqref{eqn:KappaDualAB} implies that $\varphi(b) = -a$.
\end{remark}

\begin{lemma}\label{lem:KappaDual0Factor}
Under Assumption~\ref{assump:FixedDivisor}, the monoid homomorphism $\kappa_{F_0}^* \colon \NE_{\bbN}(F_0) \to \NE_{\bbN}(W) + \frac{1}{2c_{F_0}} \bbZ (\lambda^*-a)$ factors uniquely through $\kappa_{X_-}^*$. In other words, there is a unique monoid homomorphism $\varphi_0 \colon \NE_{\bbN}(F_0) \to \NE_{\bbN}(X_-) + \frac{1}{2c_{F_0}} \bbZ a$ that makes the following diagram commute:
$$
\begin{tikzcd}
    \NE_{\bbN}(F_0) \arrow[r, "\varphi_0"] \arrow[rd, "\kappa_{X_+}^*", swap] & \NE_{\bbN}(X_-) + \frac{1}{2c_{F_0}} \bbZ a \arrow[d, "\kappa_{X_-}^*"] \\
 & \NE_{\bbN}(W) + \frac{1}{2c_{F_0}} \bbZ (\lambda^*-a).
\end{tikzcd}
$$
\end{lemma}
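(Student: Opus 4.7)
The strategy parallels that of Lemma~\ref{lem:KappaDualPlusFactor}, but replacing the common blowup by the projective bundle structure $\psi_-\colon\bbP(N_{F_0,-})\to F_0$ together with the inclusion $j_-\colon\bbP(N_{F_0,-})\hookrightarrow X_-$ as the exceptional divisor. Uniqueness of $\varphi_0$ is immediate from the injectivity of $\kappa_{X_-}^*$ extended $\bbQ$-linearly to $N_1(X_-)_\bbQ+\tfrac{1}{2c_{F_0}}\bbQ a$, exactly as in the end of the proof of Lemma~\ref{lem:KappaDualPlusFactor}; so the entire task is existence, which I would split into an abstract step over $\bbQ$ and a concrete step to confirm the target lies in the stated monoid.

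For existence over $\bbQ$, by the perfect pairings between $N_1$ and $N^1$, a rational lift $\varphi_0\colon N_1(F_0)_\bbQ\to N_1(X_-)_\bbQ$ with $\kappa_{X_-}^*\circ\varphi_0=\kappa_{F_0}^*$ exists if and only if $\kappa_{F_0}(\omega)=0$ for every $\omega\in\ker(\kappa_{X_-})\cap H^2_{\bbC^*}(W)_\bbQ$. By Lemma~\ref{lem:KirwanKernel} such an $\omega$ decomposes as $\omega_1+\omega_2$ with $\omega_2\in\Im(i_{F_-,*})$ and $\omega_1|_{W\setminus F_+}\in\Im(i_{F_0,*}^+)$. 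Since $F_-$ and $F_0$ are disjoint components of $W^{\bbC^*}$, base change gives $i_{F_0}^*\circ i_{F_-,*}=0$ and hence $\omega_2|_{F_0}=0$. For the $\omega_1$ term, the assumption $r_{F_0,+}\ge 2$ forces $i_{F_0,*}^+$ to raise cohomological degree by $2r_{F_0,+}\ge 4$, so no nonzero degree-$2$ class can lie in its image; consequently $\omega_1|_{W\setminus F_+}=0$, and in particular $\omega_1|_{F_0}=0$. Combining, $\kappa_{F_0}(\omega)=(\omega|_{F_0})|_{\lambda=-\rho_{F_0}/c_{F_0}}=0$. The hard part of the argument is precisely this kernel comparison; the hypothesis $r_{F_0,+}\ge 2$ built into Assumption~\ref{assump:FixedDivisor} is indispensable here and is what distinguishes this case from the blowup setting (cf.\ Remark~\ref{rem:ConsistentWithBlowup}).

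For integrality and effectivity, given $\beta_0\in\NE_{\bbN}(F_0)$, I would represent it by an irreducible curve $C\subset F_0$ and lift to an effective curve $\tC\subset\bbP(N_{F_0,-})|_C$ via a section of the restricted projective bundle (which exists since $\bbP(N_{F_0,-})|_C\to C$ is a projective bundle over a curve, so it admits a section), setting $\tbeta_0\coloneqq[\tC]\in\NE_{\bbN}(\bbP(N_{F_0,-}))$. The candidate is
$$
    \varphi_0(\beta_0)\coloneqq j_{-,*}(\tbeta_0)+m\cdot\frac{a}{2c_{F_0}}
$$
for an integer $m$ chosen so that $\kappa_{X_-}^*\varphi_0(\beta_0)=\kappa_{F_0}^*(\beta_0)$. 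Pairing with an arbitrary $\alpha\in H^2_{\bbC^*}(W)$, push-pull on the exceptional divisor and the identity $j_-^*(\alpha|_{X_-})=\psi_-^*(\alpha|_{F_0})$ automatically take care of the non-equivariant part of the condition; the remaining equivariant component, using \eqref{eqn:DualKirwan3Comp} and $(a,c_1(N_{X_-/W}))=1$, becomes a single linear equation for $m$ whose right-hand side lies in $\tfrac{1}{c_{F_0}}\bbZ$ and left-hand side ranges over $\tfrac{1}{2c_{F_0}}\bbZ$. A solution in $m\in 2\bbZ$ therefore exists, yielding $\varphi_0(\beta_0)\in\NE_{\bbN}(X_-)+\tfrac{1}{c_{F_0}}\bbZ a\subseteq\NE_{\bbN}(X_-)+\tfrac{1}{2c_{F_0}}\bbZ a$ as required. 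Independence of the lift $\tC$ is automatic because any two lifts differ by a multiple of the fiber class, which pushes forward to a multiple of $a\in\NE_{\bbN}(X_-)$ and is absorbed in the corresponding adjustment of $m$; linearity then upgrades $\varphi_0$ to a monoid homomorphism.
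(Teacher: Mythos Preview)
Your Step~1 (existence over $\bbQ$ via the kernel comparison) is a valid alternative to the paper's direct construction, and it is nice that it isolates exactly where the hypothesis $r_{F_0,+}\ge 2$ enters. One small slip: the pushforward $i_{F_0,*}^+$ in \eqref{3-component Gysin 3} raises degree by $2r_{F_0,-}$, not $2r_{F_0,+}$; but since $r_{F_0,-}>r_{F_0,+}\ge 2$, the conclusion that the degree-$2$ image is zero still holds.

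Step~2, however, has a genuine gap. The identity you invoke, $j_-^*(\alpha|_{X_-})=\psi_-^*(\alpha|_{F_0})$, is false. Take $\alpha=[X_-]\in H^2_{\bbC^*}(W)$: then $\alpha|_{F_0}=0$ (the divisors are disjoint), whereas $j_-^*(\kappa_{X_-}(\alpha))=c_1(N_{X_-/W})|_{\bbP(N_{F_0,-})}=\psi_-^*c_1(N_{F_0,-})+h_-\ne 0$. Without this identity, your reduction to ``a single linear equation for $m$'' collapses: you have not shown that $\kappa_{F_0}^*(\beta_0)-\kappa_{X_-}^*(j_{-,*}\tbeta_0)$ lies in $\bbQ(a-\lambda^*)$, which is what is needed for the adjustment by $m\cdot\frac{a}{2c_{F_0}}$ to work. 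Concretely, you must verify that
\[
i_{X_-,*}(j_{-,*}\tbeta_0)-i_{F_0,*}(\beta_0)\in\bbZ a\quad\text{in }N_1(W),
\]
and this is precisely the geometric content the paper supplies by passing to the projective closure $\overline{W}_{F_0,-}\cong\bbP(N_{F_0,-}\oplus\cO)$: there the zero section $i_{0,*}(\beta_0)$ and any section $i_{\infty,*}(\tbeta_0)$ over the same base curve differ by an integer multiple of the fiber line class, which pushes forward to $a$ in $W$. Once you insert this closure argument (or the paper's normalized lift $\beta_\infty$ characterized by $(\beta_\infty,c_1(N_{\bbP(N_{F_0,-})/\bbP(N_{F_0,-}\oplus\cO)}))=0$), the rest of your Step~2 goes through; but as written, the crucial step is missing.
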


\color{black}

\begin{proof}
We consider the Zariski closure $\overline{W}_{F_0,-}$ of the Bia{\l}ynicki-Birula cell $W_{F_0,-}$. Because $X_-$ is a $\bbC^*$-fixed divisor of $W$, we have $\overline{W}_{F_0,-}\cong \bbP(N_{F_0,-}\oplus \cO)$ with the zero section $F_0$ and the infinite divisor $\bbP(N_{F_0,-})$. Let $i_0\colon F_0\to \overline{W}_{F_0,-}$ and $i_{\infty}\colon \bbP(N_{F_0,-})\to \overline{W}_{F_0,-}$ denote the inclusions.
The projection $\psi_- \colon \bbP(N_{F_0,-})\to F_0$ induces a surjective homomorphism $\psi_{-,*} \colon N_1(\bbP(N_{F_0,-})) \to N_1(F_0)$ whose kernel is $\bbZ a$. It also induces a surjective homomorphism $\psi_{-,*}\colon \NE_{\bbN}(\bbP(N_{F_0,-}))\to \NE_{\bbN}(F_0)$.

% For any \orange{$\beta_0 \in N_1(F_0)$} and \orange{$\beta_{\infty}\in N_1(\bbP(N_{F_0,-}))$}, we have $i_{0,*}(\beta_0) = i_{\infty,*}(\beta_{\infty}) \in N_1(\overline{W}_{F_0,-})$ if and only if 

Let $\beta_0 \in \NE_{\bbN}(F_0)$. There exists uniquely $\beta_\infty \in N_1(\bbP(N_{F_0,-}))$ such that $i_{\infty,*}(\beta_\infty) = i_{0,*}(\beta_0)$. Such $\beta_\infty$ is characterized by the properties $\psi_{-,*}(\beta_{\infty}) = \beta_0$ and 
$$
    (\beta_{\infty}, c_1(N_{\bbP(N_{F_0,-})/\bbP(N_{F_0,-}\oplus \cO)})) = (\beta_{\infty},\psi_-^*(c_1(N_{F_0,-})) + h_-) = 0,
$$
where $h_- = c_1(\cO_{\bbP(N_{F_0,-})}(1))$. This is due to the observations that $(i_{0,*}(\beta_0), [\bbP(N_{F_0,-})]) = 0$ and $N_{\bbP(N_{F_0,-})/\bbP(N_{F_0,-}\oplus \cO)} \cong \psi_-^*(N_{F_0,-}) \otimes \cO_{\bbP(N_{F_0,-})}(1)$. Then, the surjectivity of $\phi_{-,*}$ on $\NE_{\bbN}$ implies that $\beta_\infty \in \NE_{\bbN}(\bbP(N_{F_0,-})) + \bbZ a$.

Now consider the commutative diagram
$$
    \begin{tikzcd}
        \bbP(N_{F_0,-}) \arrow[r, "j_-"] \arrow[d, "i_{\infty}"] & X_- \arrow[d, "i_{X_-}"] \\
        \overline{W}_{F_0,-} \arrow[r, "i_{\overline{W}_{F_0,-}}"] & W,
    \end{tikzcd}
$$
Recall our abuse of notation that $a$ denotes the fiber class of $\bbP(N_{F_0,-})$ as well as its pushforwards under $j_-$ and $i_{X_-} \circ j_-$. For any $c \in \frac{1}{2c_{F_0}} \bbZ$, we have $j_{-,*}(\beta_\infty) + ca \in \NE_{\bbN}(X_-) + \frac{1}{2c_{F_0}} \bbZ a$, and we compute that
% \begin{equation}\label{Construction of varphiF0 calculation}
\begin{align*}
    \kappa_{X_-}^*(j_{-,*}(\beta_\infty) + ca) &= i_{X_-,*}(j_{-,*}(\beta_\infty) + ca) - (j_{-,*}(\beta_\infty) + ca, c_1(N_{X_-/W}))\lambda^*\\
    % &= i_{F_0,*}(\beta_0) + ka - ka - (j_{-,*}(\beta_\infty) + ka, c_1(N_{X_-/W}))(\lambda^*-a)\\
    % &= i_{F_0,*}\beta_0 - (j_{-,*}(\beta_\infty+ca),c_1(N_{X_-/W}))(\lambda^*-a)
    &= i_{F_0,*}(\beta_0) - c(\lambda^*-a)
\end{align*}
% \end{equation} 
where we use that $(a,c_1(N_{X_-/W})) = 1$ and
$$
    (j_{-,*}(\beta_\infty), c_1(N_{X_-/W})) = (\beta_\infty,j_-^*(c_1(N_{X_-/W}))) = (\beta_\infty, c_1(N_{\bbP(N_{F_0,-})/\bbP(N_{F_0,-}\oplus \cO)})) = 0.
$$
We may then define
$$
    \varphi_{0}(\beta_0) \coloneqq j_{-,*}(\beta_\infty) + \frac{1}{c_{F_0}}(\beta_0, c_1(N_{F_0/W}))a,
$$
so that $\kappa_{X_-}^*(\varphi_{0}(\beta_0)) = \kappa_{F_0}^*(\beta_0)$. Since $\kappa_{X_-}^*$ is injective, the map $\varphi_0$ is uniquely determined.
\end{proof}

Now we start to reduce the base of the decomposition of quantum $D$-modules. In the base ring $\bbC[z]\Biglaurent{S_{F_0}^{-\frac{1}{2c_{F_0}}}}\formal{Q_W,\tau_{X_-}}$ of the decomposition in Proposition~\ref{prop:QDMDecompRedCoord}, all equivariant Novikov variables involved have form $\hS^\beta = Q_W^{\beta_W}S_{F_0}^{\frac{k}{2c_{F_0}}}$ with
$$
    \beta = \beta_W + \frac{k}{2c_{F_0}}(\lambda^*-a) \in \NE_{\bbN}(W) + \frac{1}{2c_{F_0}} \bbZ (\lambda^*-a).
$$
We first give a sufficient condition for an equivariant Novikov variable $\hS^\beta$ to split off from the decomposition.

% We define $\NE_{\bbN}^{0,\bbC^*}(W)$ to be the subset of $N_{1}^{\bbC^*}(W)_{\bbQ}$ consisting of all above curve classes, i.e.,
% \begin{align*}
%    \NE_{\bbN}^{0,\bbC^*}(W)=\{d_W+\frac{k}{2c_{F_0}}(\lambda^*-a)|d_W\in \NE_{\bbN}(W),k\in\bbZ\}.
% \end{align*}

% Specifically, if we write $\ttau_{X_+}(\tau_{X_-})$ and $\tzeta_j(\tau_{X_-})$ as the formal series of $\hS^d$, we can prove that for some $d$, the coefficients of $\hS^d$ in $\ttau_{X_+}(\tau_{X_-})$ and $\tzeta_j(\tau_{X_-})$ are both zero. 
% We have the following lemma.

\begin{lemma}\label{lem:CurvesWithoutContribution}
Assume Assumption~\ref{assump:FixedDivisor}.
Let $\beta \in \NE_{\bbN}(W) + \frac{1}{2c_{F_0}} \bbZ (\lambda^*-a)$ such that $(\beta,[X_-])\ne 0$ or $(\beta,[X_+])\ne 0$, where $[X_{\pm}] \in H^2_{\bbC^*}(W)$ is the equivariant divisor class of $X_{\pm}$ in $W$. Then, the coefficients of $\hS^{\beta}$ in $\ttau_{X_+}(\tau_{X_-})$ and $\tzeta_j(\tau_{X_-})$ are zero. Furthermore, in the matrix representation of the isomorphism $\Phi'$ in Proposition~\ref{prop:QDMDecompRedCoord} (under any bases), the coefficient of $\hS^\beta$ in any entry is zero.
% matrix of $\Phi'$ under the $\bbC[z]\laurent{S_{F_0}^{-\frac{1}{2c_{F_0}}}}\formal{Q_W,\tau_{X_-}}$-basis $\{\phi_2(c_{X_+,m});j_{-,*}(h^{l}p_-^*\alpha_{F_0,i})\}$ of $\QDM(X_-)^{\La}$ and the $\bbC[z]\laurent{S_{F_0}^{-\frac{1}{2c_{F_0}}}}\formal{Q_W,\tau_{X_-}}$-basis $\{c_{X_+,m};e_j\alpha_{F_0,i}\}$ of $\ttau_{X_+}^*\QDM(X_+)^{\La}\oplus \bigoplus_{j=0}^{|c_{F_0}|-1}\tzeta_j^*\QDM(F_0)^{\La}$ does not involve $\hS^d$.
\end{lemma}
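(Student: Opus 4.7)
The plan is to reduce the claim about $\ttau_{X_+}$, $\tzeta_j$, and the entries of $\Phi'$ to an analogous claim about the Fourier transformations $\FT_{X_\pm}$ and $\FT_{F_0,j}$ themselves: namely, that for any $\alpha\in H^*_{\bbC^*}(W)$, the $\hS^\beta$-coefficients of $\FT_{X_\pm}(\alpha)$ and $\FT_{F_0,j}(\alpha)$ vanish whenever $(\beta,[X_+])\neq 0$ or $(\beta,[X_-])\neq 0$. This reduction is straightforward since $\ttau_{X_+}$, $\tzeta_j$, and the matrix entries of $\Phi'$ are built from $\FT_{X_\pm}$, $\FT_{F_0,j}$, and the formal inverse of $\tau_{X_-,H}$, and the vanishing property is preserved under $\bbC\Biglaurent{S_{F_0}^{-1/(2c_{F_0})}}\formal{Q_W}$-linear combinations and formal inversion since the set of $\beta$ with $(\beta,[X_\pm])=0$ forms a submonoid.

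The crucial inputs are a few elementary identities that I would verify directly. First, from Definition~\ref{def:KirwanFixedComp} together with $[X_\pm]|_{F_0}=0$, $[X_\pm]|_{X_\mp}=0$, and $[X_\pm]|_{X_\pm}=c_1(N_{X_\pm/W})\mp\lambda$, one gets
\[
\kappa_{X_\pm}([X_\pm])=0 \quad\text{and}\quad \kappa_{F_0}([X_\pm])=0.
\]
Second, from the formulas~\eqref{eqn:DualKirwan3Comp} for the dual Kirwan maps together with the pairings $(\lambda^*,[X_-])=(a,[X_-])=(b,[X_+])=1$ and the vanishing of the remaining pairings among $\{\lambda^*,a,b\}$ and $\{[X_+],[X_-]\}$, a short calculation shows that $(\kappa_{X_\pm}^*(\beta_\pm),[X_\pm])=0$ for $\beta_\pm\in\NE_{\bbN}(X_\pm)$, and $(\kappa_{F_0}^*(\beta_0),[X_\pm])=0$ for $\beta_0\in\NE_{\bbN}(F_0)$. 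These imply that the operator $[X_\pm]\hS\partial_{\hS}$ on each of the pullback quantum $D$-modules of Theorem~\ref{thm:QDMDecompExt} has no $\kappa([X_\pm])\star$ correction, and acts trivially on the "built-in" Novikov variables of $\tau_{X_\pm}^*\QDM(X_\pm)^{\La}$ and $\zeta_j^*\QDM(F_0)^{\La}$ that come from base-changing $\NE_{\bbN}(X_\pm)$ and $\NE_{\bbN}(F_0)$.

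To finish, I would combine these identities with the intertwining of $\FT_{X_\pm}, \FT_{F_0,j}$ with $z\nabla_{[X_\pm]Q\partial_Q}$ from Propositions~\ref{prop:DiscreteFTDmodule}\eqref{item:DiscreteFTDmoduleConnection} and \ref{prop:ContFTDmodule}\eqref{item:ContFTDmoduleConnection}. Evaluating the intertwining relations at $1\in\QDM_{\bbC^*}(W)$ yields identities of the form
\[
\FT([X_\pm]\star_\theta 1) = z\,[X_\pm]\hS\partial_{\hS}\FT(1) + \bigl([X_\pm]\hS\partial_{\hS}\tau\bigr)\star\FT(1)
\]
for $\FT\in\{\FT_{X_\pm},\FT_{F_0,j}\}$ (with $\tau$ the corresponding mirror-map coordinate from Corollary~\ref{cor:MirrorMapDiscrete} and Proposition~\ref{prop:ContFTCone}). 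The hard part will be setting up a simultaneous induction on the Novikov-degree filtration of $\bbC\formal{Q_W,\theta}$ that controls the $\hS^\beta$-coefficients (with $(\beta,[X_\pm])\neq 0$) of the mirror maps $\tau_{X_\pm},\sigma_j$ and of the fundamental-solution factors $\vartheta_{X_\pm},\vartheta_j$ simultaneously, since the cross-term $([X_\pm]\hS\partial_{\hS}\tau)\star\FT(1)$ couples them. At each inductive step, the inductive hypothesis eliminates the cross-term at the relevant order, reducing the identity to $\FT([X_\pm]\star_\theta 1)|_{\hS^{\beta_0}}=z(\beta_0,[X_\pm])\cdot\FT(1)|_{\hS^{\beta_0}}$; since $(\beta_0,[X_\pm])\neq 0$, solving for $\FT(1)|_{\hS^{\beta_0}}$ and iterating the same scheme applied to each class $\alpha$ in place of $1$ forces the desired vanishing, and likewise for $\tau,\sigma_j$.
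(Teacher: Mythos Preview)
Your reduction step contains a genuine gap. You claim that the vanishing for $\ttau_{X_+}$, $\tzeta_j$, and $\Phi'$ follows from the vanishing for the individual Fourier transformations $\FT_{X_\pm}(\alpha)$, $\FT_{F_0,j}(\alpha)$ and the mirror maps $\tau_{X_\pm}$, $\sigma_j$. But these individual objects do \emph{not} have the vanishing property. The point is that on the source $\QDM_{\bbC^*}(W)_{X_-}^{\wedge,\La}$, the connection in the $[X_\pm]$-direction is
\[
z\nabla_{[X_\pm]\hS\partial_{\hS}} \;=\; z[X_\pm]\hS\partial_{\hS} \;+\; [X_\pm]\star_\theta,
\]
and the term $[X_\pm]\star_\theta$ is \emph{not} zero in general (by the divisor equation it carries contributions from every effective class $\gamma$ of $W$ with $(\gamma,[X_\pm])\neq 0$). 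Writing the intertwining for $\FT_{X_-}$ in matrix form and setting $z=0$ gives
\[
\bigl([X_\pm]\hS\partial_{\hS}\tau_{X_-}(\theta)\bigr)\star \;=\; (\FT_{X_-}|_{z=0})\circ([X_\pm]\star_\theta)\circ(\FT_{X_-}|_{z=0})^{-1},
\]
so $[X_\pm]\hS\partial_{\hS}\tau_{X_-}(\theta)$ is generically nonzero. Thus both $\tau_{X_-,H}$ and $\tau_{X_+,H}$ carry ``bad'' Novikov terms, and the vanishing for $\ttau_{X_+}=\tau_{X_+,H}\circ\tau_{X_-,H}^{-1}$ comes from a cancellation that your reduction cannot see. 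Your inductive scheme likewise does not close: the relation at order $\hS^{\beta_0}$ involves $\FT([X_\pm]\star_\theta\alpha)|_{\hS^{\beta_0}}$, which couples to $\FT$ of a new cohomology class at the \emph{same} Novikov order, not a lower one.

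The paper avoids this by working directly with the isomorphism $\Phi'$, whose source is $\QDM(X_-)^{\La}$ rather than $\QDM_{\bbC^*}(W)^{\wedge,\La}_{X_-}$. On both sides of $\Phi'$ the $\star$-product correction vanishes because $\kappa_{X_-}([X_\pm])=\kappa_{X_+}([X_\pm])=\kappa_{F_0}([X_\pm])=0$ (using Lemma~\ref{lem:KirwanKernel} and that $X_+$, $X_-$, $F_0$ are pairwise disjoint in $W$). Then the intertwining of $z\nabla_{[X_\pm]\hS\partial_{\hS}}$, written as an operator identity for the matrix of $\Phi'$ and restricted to $z=0$, forces $[X_\pm]\hS\partial_{\hS}\ttau_{X_+}=[X_\pm]\hS\partial_{\hS}\tzeta_j=0$ (since $\Phi'|_{z=0}$ is invertible), and then the full identity gives $[X_\pm]\hS\partial_{\hS}\Phi'=0$. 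No induction is needed.
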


\begin{proof}
We consider the quantum connections along the $[X_{\pm}]$-direction. 
%, i.e., $\nabla_{[X_{\pm}]\hS\partial_{\hS}}$.
On $\QDM(X_-)^{\La}$, where we denote the connection by $\nabla$ for simplicity, we have
$$
    z\nabla_{[X_{\pm}] \hS \partial_{\hS}}
    = z [X_{\pm}] \hS \partial_{\hS} + \left( \kappa_{X_-}([X_{\pm}]) \star_{\tau_{X_-}} \right).
$$
On the pullbacks $\ttau^*_{X_+}\QDM(X_+)^{\La}$ and $\tzeta_j^*\QDM(F_0)^{\La}$, we have
$$
    z \ttau^*_{X_+}\nabla_{[X_{\pm}] \hS \partial_{\hS}}
    = z [X_{\pm}] \hS \partial_{\hS} + \left( \kappa_{X_+}([X_{\pm}]) \star_{\ttau_{X_+}(\tau_{X_-})} \right) + \big([X_{\pm}] \hS \partial_{\hS}\ttau_{X_+}(\tau_{X_-})\big) \star_{\ttau_{X_+}(\tau_{X_-})},
$$
$$
    z \tzeta_j^*\nabla_{[X_{\pm}] \hS \partial_{\hS}}
    = z [X_{\pm}] \hS \partial_{\hS} + \left( \kappa_{F_0}([X_{\pm}]) \star_{\tzeta_j(\tau_{X_-})} \right) + \big([X_{\pm}] \hS \partial_{\hS}\tzeta_j(\tau_{X_-})\big) \star_{\tzeta_j(\tau_{X_-})}
$$
respectively.
By Lemma~\ref{lem:KirwanKernel}, we have $[X_{\pm}]\in \Ker(\kappa_{X_{\pm}})$. In addition, we have $[X_{\pm}]\in\Ker(\kappa_{F_0})$.
% because the support of the classes $[X_{\pm}]$ does not intersect with $F_0$. 
% It follows that
% \begin{align*}
% &z\nabla_{[X_{\pm}] \hS \partial_{\hS}} 
% = z[X_{\pm}] \hS \partial_{\hS},\\
% &z\ttau_{X_+}^*\nabla_{[X_{\pm}] \hS \partial_{\hS}} 
% = z[X_{\pm}] \hS \partial_{\hS}+([X_{\pm}] \hS \partial_{\hS}\ttau_{X_+}(\tau_{X_-}))\star_{\ttau_{X_+}(\tau_{X_-})},\\
% &z\tzeta_j^*\nabla_{[X_{\pm}] \hS \partial_{\hS}} 
% = z[X_{\pm}] \hS \partial_{\hS}+([X_{\pm}] \hS \partial_{\hS}\tzeta_j(\tau_{X_-}))\star_{\tzeta_j(\tau_{X_-})}.
% \end{align*}
Because $\Phi'$ intertwines $z\nabla_{[X_{\pm}] \hS \partial_{\hS}}$ with $z\ttau_{X_+}^*\nabla_{[X_{\pm}] \hS \partial_{\hS}}\oplus \bigoplus_{j=0}^{\abs{c_{F_0}}-1}z\tzeta_j^*\nabla_{[X_{\pm}] \hS \partial_{\hS}}$, by restricting the above to $z=0$, we see that 
\begin{equation}\label{eqn:MirrorMapPartialVanish}
    [X_{\pm}] \hS \partial_{\hS}\tzeta_j(\tau_{X_-})=[X_{\pm}] \hS \partial_{\hS}\ttau_{X_+}(\tau_{X_-})=0.
\end{equation}
Since $[X_{\pm}]\hS\partial_{\hS}\hS^\beta = (\beta,[X_{\pm}])\hS^\beta$, it follows from the hypothesis on $\beta$ that the coefficients of $\hS^{\beta}$ in $\ttau_{X_+}(\tau_{X_-})$ and $\tzeta_j(\tau_{X_-})$ are zero. Furthermore, Equation \eqref{eqn:MirrorMapPartialVanish} also implies that $\Phi'$ commutes with the operator $[X_{\pm}]\hS\partial_{\hS}$, which implies that the variable $\hS^\beta$ is not involved in $\Phi'$.
\end{proof}

Now we show that an equivariant curve class $\beta$ that does not meet the condition of Lemma~\ref{lem:CurvesWithoutContribution} always results from an extension from $X_-$.

\begin{lemma}\label{lem:BaseReductionXMinus}
Under Assumption~\ref{assump:FixedDivisor}, if $\beta \in \NE_{\bbN}(W) + \frac{1}{2c_{F_0}} \bbZ (\lambda^*-a)$ satisfies that
$(\beta,[X_{\pm}])=0$, it lies in the image of $\kappa_{X_-}^*$ in \eqref{eqn:KappaDualXMinus}.
\end{lemma}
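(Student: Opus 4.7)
The plan is to reduce the hypothesis on $\beta$ to a non-equivariant vanishing statement on the underlying curve class, then decompose $\beta_W$ via a $\bbC^*$-invariant effective representative and apply the factorization Lemmas~\ref{lem:KappaDualPlusFactor} and \ref{lem:KappaDual0Factor}.

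First I will write $\beta = \beta_W + \frac{k}{2c_{F_0}}(\lambda^* - a)$ with $\beta_W \in \NE_{\bbN}(W)$ and $k \in \bbZ$. Since $\lambda^* - a = \sigma_{F_0}(1)$ is a section class based at $F_0$, which is disjoint from both $X_\pm$, a direct localization computation gives $(\lambda^* - a, [X_\pm]) = 0$, so the vanishing hypothesis reduces to the non-equivariant intersection statement $\beta_W \cdot [X_\pm] = 0$. The same sort of computation yields the pairings $(a, [X_-]) = 1 = (b, [X_+])$ and $(a, [X_+]) = 0 = (b, [X_-])$, which will be needed in the bookkeeping below.

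Next I will represent $\beta_W$ by a $\bbC^*$-invariant effective cycle, using Borel's fixed-point theorem applied to the $\bbC^*$-action on the Chow scheme parametrizing effective representatives of $\beta_W$. Because the $\bbC^*$-action on $W$ is weight-$(\pm 1)$ and $W^{\bbC^*} = F_+ \sqcup F_0 \sqcup F_-$, any $\bbC^*$-invariant curve is a combination of curves contained in one of the three fixed components together with closures of $1$-dimensional orbits, whose classes are $a$, $b$, or $a+b$ (the last absorbed into $a$ and $b$). This yields a decomposition
\[
    \beta_W = i_{X_-,*}(\beta_-^W) + i_{F_0,*}(\beta_0^W) + i_{X_+,*}(\beta_+^W) + q_a\, a + q_b\, b
\]
with $\beta_-^W$, $\beta_0^W$, $\beta_+^W$ effective and $q_a, q_b \in \bbN$. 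Pairing with $[X_\pm]$ and using the pairings from the previous paragraph pins down $q_a = -(\beta_-^W, c_1(N_{X_-/W}))$ and $q_b = -(\beta_+^W, c_1(N_{X_+/W}))$.

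In the final step I will rewrite each pushforward via the formulas in \eqref{eqn:DualKirwan3Comp}: explicitly, $i_{X_-,*}(\beta_-^W) = \kappa_{X_-}^*(\beta_-^W) - q_a\, \lambda^*$, $i_{X_+,*}(\beta_+^W) = \kappa_{X_+}^*(\beta_+^W) + q_b(\lambda^* - a - b)$, and $i_{F_0,*}(\beta_0^W) = \kappa_{F_0}^*(\beta_0^W) + \frac{(\beta_0^W, c_1(N_{F_0/W}))}{c_{F_0}}(\lambda^* - a)$. The constraints on $q_a, q_b$ from the previous step ensure that the $b$-corrections cancel and that the surviving $\lambda^*$ and $a$ corrections collapse into a single scalar multiple of $\lambda^* - a$. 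Using the identity $\lambda^* - a = \kappa_{X_-}^*(-a)$ together with Lemmas~\ref{lem:KappaDualPlusFactor} and \ref{lem:KappaDual0Factor} to pass $\kappa_{X_+}^*(\beta_+^W)$ and $\kappa_{F_0}^*(\beta_0^W)$ through $\kappa_{X_-}^*$ produces an expression $\beta = \kappa_{X_-}^*(\tilde\beta_-)$ with $\tilde\beta_- = \beta_-^W + \varphi(\beta_+^W) + \varphi_0(\beta_0^W) + c\, a$ for some explicit $c \in \tfrac{1}{2c_{F_0}}\bbZ$; effectivity of the first three summands is precisely the output of the two factorization lemmas. The principal subtlety, and the main obstacle, is the geometric input in the third paragraph, namely the existence of a $\bbC^*$-invariant effective representative of every $\beta_W \in \NE_{\bbN}(W)$, which has to be justified via Borel's theorem on the Chow scheme together with the classification of invariant $1$-cycles in a weight-$(\pm 1)$ action.
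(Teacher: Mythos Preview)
Your proposal is correct and follows essentially the same approach as the paper: decompose $\beta_W$ into contributions from the three fixed components plus orbit curves, use the vanishing of $(\beta,[X_\pm])$ to control the normal-bundle pairings, and then invoke Lemmas~\ref{lem:KappaDualPlusFactor} and~\ref{lem:KappaDual0Factor} together with $\lambda^*-a=-\kappa_{X_-}^*(a)$. The only cosmetic difference is that the paper absorbs $a$ and $b$ into $\NE_{\bbN}(X_-)$ and $\NE_{\bbN}(X_+)$ at the outset (using $r_{F_0,\pm}\ge 2$), which forces $(\beta_\pm,c_1(N_{X_\pm/W}))=0$ directly and makes the substitutions from \eqref{eqn:DualKirwan3Comp} slightly cleaner; you instead keep $q_a a+q_b b$ separate and verify the cancellations by hand, which is equally valid.
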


\begin{proof}
We write $\beta = \beta_W + \frac{k}{2c_{F_0}}(\lambda^*-a)$ where $d_W\in \NE_{\bbN}(W)$ and $k \in \bbZ$.
Since every curve of $W$ can be deformed to a $\bbC^*$-invariant curve, we have 
\begin{align*}
    \NE_{\bbN}(W) &= i_{X_+,*}(\NE_{\bbN}(X_+)) + i_{X_-,*}(\NE_{\bbN}(X_-)) + i_{F_0,*}(\NE_{\bbN}(F_0)) + \bbN\{a,b\}\\
    &= i_{X_+,*}(\NE_{\bbN}(X_+)) + i_{X_-,*}(\NE_{\bbN}(X_-)) + i_{F_0,*}(\NE_{\bbN}(F_0))
\end{align*}
where the second equality follows from that the class $a$ (resp.\ $b$) can be deformed into $X_-$ (resp.\ $X_+$). We may then write
$$
    \beta_W = i_{X_+,*}(\beta_+) + i_{X_-,*}(\beta_-) + i_{F_0,*}(\beta_0)
$$
for some $\beta_{\pm}\in \NE_{\bbN}(X_\pm)$ and $\beta_0\in \NE_{\bbN}(F_0)$.

Recall that $\lambda^*-a$ is induced by the equivariant pushforward from $F_0$ and thus $(\lambda^*-a,[X_{\pm}]) = 0$. We have
$$
    0 = (\beta,[X_{\pm}]) = (i_{X_+,*}(\beta_+) + i_{X_-,*}(\beta_-), [X_\pm]) = 0. 
$$
This implies that
$$
    0 = (\beta_-, c_1(N_{X_-/W})) = (\beta_+, c_1(N_{X_+/W})).
$$
It follows from \eqref{eqn:DualKirwan3Comp} that $i_{X_-,*}(\beta_-) = \kappa_{X_-}^*(\beta_-)$, and that $i_{X_+,*}(\beta_+) = \kappa_{X_+}^*(\beta_+)$ which together with Lemma~\ref{lem:KappaDualPlusFactor} implies that $i_{X_+,*}(\beta_+)$ lies in the image of $\kappa_{X_-}^*$.

It remains to consider the term $i_{F_0,*}(\beta_0) + \frac{k}{2c_{F_0}}(\lambda^*-a)$ in $\beta$. By \eqref{eqn:DualKirwan3Comp}, this can be rewritten as
$$
    i_{F_0,*}(\beta_0) + \frac{k}{2c_{F_0}}(\lambda^*-a) = \kappa_{F_0}^*(\beta_0) + \frac{k'}{2c_{F_0}}(\lambda^*-a) = \kappa_{F_0}^*(\beta_0) - \frac{k'}{2c_{F_0}}\kappa_{X_-}^*(a)
$$
for some $k' \in \bbZ$. We may then conclude by Lemma~\ref{lem:KappaDual0Factor}.
\end{proof}

Using the ring homomorphisms
$$ 
    \bbC[z]\formal{Q_{X_+},\tau_{X_+}} \to \bbC[z]\Biglaurent{Q_{X_-}^{-\frac{a}{2c_{F_0}}}}\formal{Q_{X_-},\tau_{X_+}}, \quad
    \bbC[z]\formal{Q_{F_0},\tau_{F_0}} \to \bbC[z]\Biglaurent{Q_{X_-}^{-\frac{a}{2c_{F_0}}}}\formal{Q_{X_-},\tau_{F_0}}
$$
induced by $\varphi$ in Lemma~\ref{lem:KappaDualPlusFactor} and $\varphi_0$ in Lemma~\ref{lem:KappaDual0Factor} respectively, we introduce the following version of quantum $D$-modules:
% \begin{equation}\label{reduced Laurent extended QDM}
\begin{align*}
    \QDM(X_-)^{\red, \La} &\coloneqq \QDM(X_-) \otimes_{\bbC[z]\formal{Q_{X_-},\tau_{X_-}}} \bbC[z]\Biglaurent{Q_{X_-}^{-\frac{a}{2c_{F_0}}}}\formal{Q_{X_-},\tau_{X_-}} \\
    &=H^*(X_-)[z]\Biglaurent{Q_{X_-}^{-\frac{a}{2c_{F_0}}}}\formal{Q_{X_-},\tau_{X_-}},\\
    \QDM(X_+)^{\red, \La} &\coloneqq \QDM(X_+) \otimes_{\bbC[z]\formal{Q_{X_+},\tau_{X_+}}} \bbC[z]\Biglaurent{Q_{X_-}^{-\frac{a}{2c_{F_0}}}}\formal{Q_{X_-},\tau_{X_+}}\\ 
    &=H^*(X_+)[z]\Biglaurent{Q_{X_-}^{-\frac{a}{2c_{F_0}}}}\formal{Q_{X_-},\tau_{X_+}},\\
    \QDM(F_0)^{\red, \La} &\coloneqq \QDM(F_0) \otimes_{\bbC[z]\formal{Q_{F_0},\tau_{F_0}}} \bbC[z]\Biglaurent{Q_{X_-}^{-\frac{a}{2c_{F_0}}}}\formal{Q_{X_-},\tau_{F_0}} \\
    &=H^*(F_0)[z]\Biglaurent{Q_{X_-}^{-\frac{a}{2c_{F_0}}}}\formal{Q_{X_-},\tau_{F_0}}.
\end{align*}
They are respectively equipped with the quantum connections 
\begin{align*}
    & \nabla^{X_-,\red, \La}_{\tau_{X_-}^i} = \partial_{\tau_{X_-}^i} + z^{-1} \left( \phi_{X_-,i} \star_{\tau_{X_-}} \right), 
    && \nabla^{X_+,\red, \La}_{\tau_{X_+}^i} = \partial_{\tau_{X_+}^i} + z^{-1} \left( \phi_{X_+,i} \star_{\tau_{X_+}} \right), \\
    & \nabla_{z \partial_z}^{X_-,\red, \La} = z \partial_z - z^{-1} \left( E_{X_-} \star_{\tau_{X_-}} \right) + \mu_{X_-},
    && \nabla_{z \partial_z}^{X_+,\red, \La} = z \partial_z - z^{-1} \left( E_{X_+} \star_{\tau_{X_+}} \right) + \mu_{X_+}, \\
    & \nabla_{\xi Q\partial_Q}^{X_-,\red, \La} = \xi Q\partial_{Q} + z^{-1} \left( \xi \star_{\tau_{X_-}} \right),
    && \nabla_{\xi Q\partial_Q}^{X_+,\red, \La} = \xi Q\partial_{Q} + z^{-1} \left( \varphi^*(\xi) \star_{\tau_{X_+}} \right),
\end{align*} 
\begin{align*}
    % \nabla^{X_-,\red, \La}_{\tau_{X_-}^i} & = \partial_{\tau_{X_-}^i} + z^{-1} \left( \phi_{X_-,i} \star_{\tau_{X_-}} \right), \\
    % \nabla_{z \partial_z}^{X_-,\red, \La} &= z \partial_z - z^{-1} \left( E_{X_-} \star_{\tau_{X_-}} \right) + \mu_{X_-}, \\
    % \nabla_{\xi Q\partial_Q}^{X_-,\red, \La} & = \xi Q\partial_{Q} + z^{-1} \left( \xi \star_{\tau_{X_-}} \right),\\
    % \nabla^{X_+,\red, \La}_{\tau_{X_+}^i} & = \partial_{\tau_{X_+}^i} + z^{-1} \left( \phi_{X_+,i} \star_{\tau_{X_+}} \right), \\
    % \nabla_{z \partial_z}^{X_+,\red, \La} &= z \partial_z - z^{-1} \left( E_{X_+} \star_{\tau_{X_+}} \right) + \mu_{X_+}, \\
    % \nabla_{\xi Q\partial_Q}^{X_+,\red, \La} & = \xi Q\partial_{Q} + z^{-1} \left( \varphi^*(\xi) \star_{\tau_{X_+}} \right),\\
    & \nabla^{F_0,\red, \La}_{\tau_{F_0}^i} = \partial_{\tau_{F_0}^i} + z^{-1} \left( \phi_{F_0,i} \star_{\tau_{F_0}} \right), \\
    & \nabla_{z \partial_z}^{F_0,\red, \La} = z \partial_z - z^{-1} \left( E_{F_0} \star_{\tau_{F_0}} \right) + \mu_{F_0}, \\
    & \nabla_{\xi Q\partial_Q}^{F_0,\red, \La} = \xi Q\partial_{Q} + z^{-1} \left( \varphi_0^*(\xi) \star_{\tau_{F_0}} \right),
\end{align*}
where $\varphi^*: H^2(X_+) \to H^2(X_-)$ (resp.\ $\varphi_{F_0}^*\colon H^2(X_-)\to H^2(F_0)$) is the dual map of $\varphi$ (resp.\ $\varphi_0$).
% $\varphi_{F_0}\colon H_2(F_0)\to H_2(X_-)$)
They are also equipped with the pairings $P_{X_-}$, $P_{X_+}$ and $P_{F_0}$ respectively.

By Lemmas~\ref{lem:CurvesWithoutContribution} and~\ref{lem:BaseReductionXMinus}, together with the injectivity of $\kappa_{X_-}^*$, the refined maps $\ttau_{X_+}(\tau_{X_-})$ and $\tzeta_j(\tau_{X_-})$, $j = 0, \dots, \abs{c_{F_0}}-1$, factor uniquely through $\kappa_{X_-}^*$ and determine the maps
\begin{align*}
    & \tau_{X_+}^{\red}\colon \bbC[z]\Biglaurent{Q_{X_-}^{-\frac{a}{2c_{F_0}}}}\formal{Q_{X_-},\tau_{X_+}} \to \bbC[z]\Biglaurent{Q_{X_-}^{-\frac{a}{2c_{F_0}}}}\formal{Q_{X_-},\tau_{X_-}}, \\
    & \zeta_j^{\red} \colon \bbC[z]\Biglaurent{Q_{X_-}^{-\frac{a}{2c_{F_0}}}}\formal{Q_{X_-},\tau_{F_0}} \to \bbC[z]\Biglaurent{Q_{X_-}^{-\frac{a}{2c_{F_0}}}}\formal{Q_{X_-},\tau_{X_-}}.
\end{align*}
Let $(\tau_{X_+}^{\red})^*\QDM(X_+)^{\red, \La}$ and $(\zeta_j^{\red})^*\QDM(F_0)^{\red, \La}$ denote pullbacks of $\QDM(X_+)^{\red, \La}$ and $\QDM(F_0)^{\red, \La}$ under $\tau_{X_+}^{\red}$ and $\zeta_j^{\red}$ respectively. Notice that the two pullbacks are modules over $R = \bbC[z]\Biglaurent{Q_{X_-}^{-\frac{a}{2c_{F_0}}}}\formal{Q_{X_-},\tau_{X_-}}$. By Lemmas~\ref{lem:CurvesWithoutContribution} and~\ref{lem:BaseReductionXMinus}, the isomorphism $\Phi'$ in Proposition~\ref{prop:QDMDecompRedCoord} induces an isomorphism as $R$-modules. We arrive at the following conclusion.

% Recall that in Lemma~\ref{lem:BaseReductionXMinus}, we have proved that the coefficients of the isomorphism $\Phi'$ in Lemma~\ref{lem:CurvesWithoutContribution}) only involved the extended Novikov variables $\hS^d$ such that $(d,[X_{\pm}])=0$. We can see that $\Phi'$ can be reduced to an isomorphism over the base $\bbC[z]\Biglaurent{Q_{X_-}^{-\frac{a}{2c_{F_0}}}}\formal{Q_{X_-},\tau_{X_-}}$. 

% Notice that the base ring $R=\bbC[z]\laurent{Q_{X_-}^{-\frac{a}{2c_{F_0}}}}\formal{Q_{X_-},\tau_{X_-}}$ is irrelevant to $W$. If we only apply the 3-component assumption (\ref{3-component assumption}) and assume $r_{F_0,+}\ge 2$ without assuming that $X_\pm$ are divisors, we can blow up $W$ on $F_{+}\sqcup F_-$. The projective variety $\tW=\mathrm{Bl}_{F_+\sqcup F_-}W$ inherits the weight $\pm1$ $\bbC^*$-action from $W$. Moreover, the highest and lowest fixed components $X_{\pm}$ of $\tW$ are divisors, which satisfies assumption (\ref{assump:FixedDivisor}). Thus, the assumption that $F_{\pm}$ are divisors is redundant in (\ref{assump:FixedDivisor}). 

% Now we can state the QDM decomposition theorem as $R$-modules without assuming that $r_{F_0,+}\ge 2$ and $F_{\pm}$ are divisors.

\begin{theorem}[QDM decomposition]\label{thm:QDMDecompRed}
% Under the 3-component Assumption~\ref{3-component assumption}, if $c_{F_0}=r_{F_0,+}-r_{F_0,-}<0$, we have a $\bbC[z]\Biglaurent{Q_{X_-}^{-\frac{a}{2c_{F_0}}}}\formal{Q_{X_-},\tau_{X_-}}$-module isomorphism 
There is an $R$-module isomorphism
$$
    \Phi^{\red}\colon \QDM(X_-)^{\red, \La} \longrightarrow (\tau_{X_+}^{\red})^*\QDM(X_+)^{\red, \La} \oplus \bigoplus_{j=0}^{\abs{c_{F_0}}-1}(\zeta_j^{\red})^*\QDM(F_0)^{\red, \La}
$$ 
that intertwines the quantum connections and intertwines the pairing $P_{X_-}$ with $P_{X_+}\oplus \bigoplus_{j=0}^{\abs{c_{F_0}}-1}P_{F_0}$.
% Moreover, the isomorphism $\Phi^{\red}$ intertwines with the quantum connection and intertwines the pairing $P_{X_-}$ with $P_{X_+}\oplus \bigoplus_{j=0}^{\abs{c_{F_0}}-1}P_{F_0}$. In particular, there is only one extra localized Novikov variable $Q_{X_-}^{\frac{a}{2c_{F_0}}}$ involved in the base change.
\end{theorem}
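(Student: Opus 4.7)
The plan is to descend the isomorphism $\Phi'$ from Proposition~\ref{prop:QDMDecompRedCoord}, which lives over the large base $\bbC[z]\Biglaurent{S_{F_0}^{-\frac{1}{2c_{F_0}}}}\formal{Q_W,\tau_{X_-}}$, along the injective ring map $\kappa_{X_-}^{*}\colon R \hookrightarrow \bbC[z]\Biglaurent{S_{F_0}^{-\frac{1}{2c_{F_0}}}}\formal{Q_W,\tau_{X_-}}$ induced by the equivariant dual Kirwan map. The point is that every equivariant Novikov monomial $\hS^{\beta}$ appearing in $\ttau_{X_+}$, in $\tzeta_j$, and in the matrix entries of $\Phi'$ (written in the bases from Section~\ref{sect:RefBases}) must lie in the image of $\kappa_{X_-}^{*}$, and the injectivity of $\kappa_{X_-}^{*}$ then produces unique factored data over $R$.

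First, I would invoke Lemma~\ref{lem:CurvesWithoutContribution} to conclude that only equivariant classes $\beta$ with $(\beta,[X_+])=(\beta,[X_-])=0$ can contribute to $\ttau_{X_+}$, $\tzeta_j$, or any entry of $\Phi'$. Then Lemma~\ref{lem:BaseReductionXMinus} shows such $\beta$ lie in $\kappa_{X_-}^{*}\bigl(\NE_{\bbN}(X_-)+\tfrac{1}{2c_{F_0}}\bbZ a\bigr)$. Combining this with Lemmas~\ref{lem:KappaDualPlusFactor} and \ref{lem:KappaDual0Factor}, the formal series $\ttau_{X_+}(\tau_{X_-})$ and $\tzeta_j(\tau_{X_-})$ factor uniquely as $\ttau_{X_+}=\kappa_{X_-}^{*}\circ \tau_{X_+}^{\red}$ and $\tzeta_j=\kappa_{X_-}^{*}\circ \zeta_j^{\red}$ for $\tau_{X_+}^{\red}$ and $\zeta_j^{\red}$ defined over $R$; the asymptotic formulas in \eqref{equ:asymptotic of mirror map} guarantee that $\tau_{X_+}^{\red}$ and $\zeta_j^{\red}$ restrict to admissible pullback data (in the sense of Remark~\ref{rem:ContPullbackDefined}), so the pullback quantum $D$-modules $(\tau_{X_+}^{\red})^{*}\QDM(X_+)^{\red,\La}$ and $(\zeta_j^{\red})^{*}\QDM(F_0)^{\red,\La}$ are well-defined.

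Next, I would apply the same curve-class analysis to each matrix entry of $\Phi'$: writing $\Phi'$ in a $\bbC$-basis of the fibers, every entry is a formal series in $\hS^{\beta}$, $\tau_{X_-}$, and $z$, and by Lemmas~\ref{lem:CurvesWithoutContribution} and \ref{lem:BaseReductionXMinus} every $\hS^{\beta}$ that appears is of the form $\hS^{\kappa_{X_-}^{*}(\gamma)}$ for a unique $\gamma\in\NE_{\bbN}(X_-)+\tfrac{1}{2c_{F_0}}\bbZ a$. Replacing each $\hS^{\kappa_{X_-}^{*}(\gamma)}$ by $Q_{X_-}^{\gamma}$ defines an $R$-linear candidate $\Phi^{\red}$ fitting into the required diagram; its base change along $\kappa_{X_-}^{*}$ recovers $\Phi'$. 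Since base change along the injective, faithfully flat-on-fibers map $\kappa_{X_-}^{*}$ reflects isomorphisms of free modules, $\Phi^{\red}$ is itself an isomorphism.

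Finally, I would verify that $\Phi^{\red}$ intertwines the reduced quantum connections and the Poincar\'e pairings. The connection operators on the reduced quantum $D$-modules are defined precisely so that their base change along $\kappa_{X_-}^{*}$ reproduces the operators on $\tau_{X_-}^{*}\QDM(X_-)^{\La}$, $\ttau_{X_+}^{*}\QDM(X_+)^{\La}$, and $\tzeta_j^{*}\QDM(F_0)^{\La}$; the intertwining of the latter by $\Phi'$ from Proposition~\ref{prop:QDMDecompRedCoord} therefore descends. The same descent argument works for the pairings. The main obstacle in this proof is conceptual rather than technical: it is ensuring that the factorization through $\kappa_{X_-}^{*}$ is compatible with every piece of structure simultaneously — the mirror maps, the matrix entries, the connection operators in the shift-operator directions, and the pairings — and this is exactly what the lemmas in Section~\ref{sect:RedNovikov} were built to secure, so the proof reduces to assembling them.
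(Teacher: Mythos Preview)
Your proposal is correct and follows essentially the same approach as the paper: the paper also descends $\Phi'$ from Proposition~\ref{prop:QDMDecompRedCoord} along $\kappa_{X_-}^{*}$ by invoking Lemmas~\ref{lem:CurvesWithoutContribution} and~\ref{lem:BaseReductionXMinus} (together with Lemmas~\ref{lem:KappaDualPlusFactor} and~\ref{lem:KappaDual0Factor}) to show that all contributing equivariant Novikov variables lie in the image of $\kappa_{X_-}^{*}$, and then uses injectivity of $\kappa_{X_-}^{*}$ to uniquely factor the mirror maps and the matrix of $\Phi'$. Your explicit remark that the pullback quantum $D$-modules are well-defined via \eqref{equ:asymptotic of mirror map} and Remark~\ref{rem:ContPullbackDefined} is a point the paper leaves implicit.
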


\begin{remark}\label{rem:ConsistentWithBlowup}
As discussed earlier, Theorem~\ref{thm:QDMDecompRed} does not require Assumption~\ref{assump:FixedDivisor}.
In the case $r_{F_0,+}=1$, where $X_-\cong \mathrm{Bl}_{F_0}X_+$ by Proposition~\ref{prop:VGITGeometry}, Iritani \cite[Theorem 5.18]{iritani2023quantum} established a decomposition of quantum $D$-modules over the base ring $\bbC[z]\Biglaurent{S_{F_0}^{-\frac{1}{2c_{F_0}}}}\formal{Q_{X_+},\tau_{X_-}}$. 
Under the map
$$
    \NE_{\bbN}(X_-) + \frac{1}{2c_{F_0}}\bbZ a \to \NE_{\bbN}(X_+) + \frac{1}{2c_{F_0}} \bbZ (\lambda^*-a), \quad \beta_- + \frac{k}{2c_{F_0}} a \mapsto q_{-,*}(\beta_-) + \frac{k- (\beta_-, [E_-])}{2c_{F_0}} (\lambda^*-a)
$$
where $q_-: X_- \to X_+ (=X_0)$ is the projection and $E_- (=P_-)$ is the exceptional divisor in $X_-$, the base ring is indeed isomorphic to $R=\bbC[z]\Biglaurent{Q_{X_-}^{-\frac{a}{2c_{F_0}}}}\formal{Q_{X_-},\tau_{X_-}}$.
It follows that \cite[Theorem 5.18]{iritani2023quantum} is consistent with our Theorem~\ref{thm:QDMDecompRed}.
\end{remark}

% When $r_{F_0,+}=1$, by taking the map $Q_{X_-}^{\beta_-}\to S_{F_0}^{-\frac{(\beta_-,[E_0])}{2c_{F_0}}}Q_{X_+}^{p_*\beta_-}$ for any $\beta_-\in \NE_{\bbN}(X_-)+\frac{1}{2c_{F_0}}\bbZ a$, we can see that $R=\bbC[z]\biglaurent{Q_{X_-}^{-\frac{a}{2c_{F_0}}}}\formal{Q_{X_-},\tau_{X_-}}$ is isomorphic to $\bbC[z]\biglaurent{S_{F_0}^{-\frac{1}{2c_{F_0}}}}\formal{Q_{X_+},\tau_{X_-}}$. Here, $[E_0]$ is the exceptional divisor in $X_-$. Thus, the same statement holds for the case $r_{F_0,+}=1$. 

\begin{comment}
\begin{remark}\label{Rem:Extending direction of discrepant}
After extending $a$ through $\kappa_{X_-}^*$, the extra localized Novikov variable $Q_{X_-}^{\frac{a}{2c_{F_0}}}$ will share the same direction as $q=\hS^{\sigma_{F_0}(1)}=Q_W^{-a}S$. This extension direction coincides with the analytical extension of the quantum cohomology of standard flips in the previous paper \cite{shen2025quantum}\cite{iritani2020discrepant}\cite{lee2021flips}. 
\end{remark}
\end{comment}

% !TEX root = vgit.tex

\section{Decomposition of quantum $D$-modules for general VGIT wall-crossings and flips}\label{sect:General}

In this section, we establish the decomposition of quantum $D$-modules for a general simple VGIT wall-crossing. For a wall-crossing that is a type-$(r_+-1, r_--1)$ flip with $r_+ < r_-$, the key step is a reduction to the 3-component $\bbC^*$-case (considered in Section~\ref{sect:Decomposition}) via the master space construction of Thaddeus \cite{thaddeus1996geometric}, which we explain in Section~\ref{sect:MasterSpace}. In the case of flops ($r_+ = r_-$), we appeal to the previous works \cite{lee2010flops, lee2016ordinaryflops1, lee2016ordinaryflops2, lee2016ordinaryflops3} and reformulate their main result in terms of quantum $D$-modules in Section~\ref{sect:GeneralVGITFlops}. Furthermore, we discuss the application to decompositions of quantum $D$-modules of general flips (Conjecture~\ref{conj:IntroGeneralFlip}).

\subsection{Master space construction for VGIT wall-crossings}\label{sect:MasterSpace}
We first review the master space construction of \cite[Construction 3.1]{thaddeus1996geometric}; see also \cite[Section 3]{liu2025invariance} for a summary. We take the setup of Section~\ref{sect:Setup} where $W$ is a smooth, quasi-projective variety that is projective over an affine variety and admits a linear action of a reductive algebraic group $G$, and $L_\pm$ are two ample linearizations that give a simple VGIT wall-crossing. Under condition~\eqref{cond:WtGcd} in Definition~\ref{def:SimpleWall}, fixing the isomorphism $G_x \cong \bbC^*$ such that $v_+>0$ and replacing $L_{\pm}$ by some positive powers if necessary, we assume that $G_x$ acts on $(L_+\otimes L_-^{\vee})|_{x}$ by scaling with weight $1$ for any $x\in W^0$.

Consider the projective bundle
$$
    Y \coloneqq \bbP(L_+ \oplus L_-)
$$
which is equipped with the natural $G$-action and the action of $\bbC^* =\colon T$ scaling the $L_+$-factor. The ample line bundle $\cO(1)$ over $Y$ admits a natural linearization of the $G$-action. Define
$$
    M \coloneqq Y \gitquot_{\cO(1)} G
$$
which is a quasi-projective variety equipped with the residue $T$-action. In addition, there is a family of $T$-linearizations on $\cO(1)$ over $Y$ parameterized by $s \in [-1, 1]$. More precisely, if $(u_+, u_-)$ are fiberwise coordinates on $L_+ \oplus L_-$, then the linearization corresponding to $s$ is specified by the action $t \cdot (u_+, u_-) = \big(t^{(1+s)/2} u_+, t^{(1-s)/2} u_-\big)$ for $t \in T$.
% In addition, we have a family of $T$-linearization $Y(t)$ on $\cO(1)$ over $Y$ for $t\in[0,1]$. More precisely, $Y(t)$ is defined by the $\bbC^*$-action $\lambda\cdot (u_-,u_+)=(\lambda^{-t}u_-,\lambda^{1-t} u_+)$ for $u_{\pm}\in L_{\pm}$, where $(u_-,u_+)\in \cO(1)_{[u_-:u_+]}$ and $(\lambda^{-t}u_-,\lambda^{1-t} u_+)\in \cO(1)_{[u_-,\lambda u_+]}$. 
This family descends to a family of $T$-linearizations on $M$. Let $Y_\pm \subset Y$ denote the $0$- and $\infty$-sections of the projective bundle respectively. Then the GIT quotient of $Y_\pm$ under $G$ are respectively isomorphic to $X_\pm$, which are the two $T$-fixed divisors of $M$. In particular, they are the highest/lowest GIT quotients of $M$ by $T$ induced by the $T$-linearizations corresponding to $s = \pm 1$ respectively.

% \blue{induced by $M(0)$ and $M(1)$ respectively.}

Conditions~\eqref{cond:SimpleWall} and \eqref{cond:WtGcd} in Definition~\ref{def:SimpleWall} imply that $M$ is also smooth. Let $q\colon Y \to W$ denote the projection, and set
$$
    Y^0 \coloneqq \pi^{-1}(W^0) \setminus (Y_+ \cup Y_-)
$$
which is a principal $T$-bundle over $W^0$. Then $G$ acts freely on $Y^0$. Denote this principal $G$-bundle by $p\colon Y^0 \to Y^0 \gitquot G \coloneqq M^0$. Then
$$
    M^T = X_+ \sqcup X_- \sqcup M^0.
$$
For any point $x \in W^0$, the stabilizer $G_x \cong \bbC^*$ is identified with $T$, and the $G_x$-action on $(L_+\otimes L_-^{\vee})|_{x}$ is identified with the $T$-action. There is an identification $q^*(N_{W^0/W}) = p^*(N_{M^0/M})$ under which the $G_x$ weights are identified with the $T$-weights. The condition~\eqref{cond:WtPm1} in Definition~\ref{def:SimpleWall} then implies that the $T$-weights on $N_{M^0/M}$ can only be $\pm 1$. Moreover, the connectedness of $W^0$ implies that $M^0$ is connected. In fact, we have
$$
    W^0\gitquot G(0) = M^0.
$$
The projectivity of $X_\pm$ implies that
$$
    H^0(M, \cO_M) = H^0(W, \cO_W)^G = H^0(X_\pm, \cO_{X_\pm}) = \bbC
$$
which implies that $M$ is projective. Therefore, $M$ is an instance of a 3-component $\bbC^*$-VGIT wall-crossing (in the sense of Assumption~\ref{3-component assumption}) between the quotients $X_\pm$.

\begin{example}[$\bbC^*$-VGIT wall-crossings]
Suppose $G = \bbC^*$. In this case, $W^0$ is a connected component of $W^{\bbC^*}$. Moreover, for any $x \in W^0$, the stabilizer group $G_x$ is equal to $G$. By the geometric relation between $W$ and $M$ discussed above, on the set $Y^0$ the $G$-action is identified with the $T$-action and thus $M^0 = W^0$.
\end{example}

\subsection{Decomposition of quantum $D$-modules for VGIT wall-crossing flips}\label{sect:GeneralVGITFlips}
Let $X_\pm$ be smooth projective varieties that are related by a simple VGIT wall-crossing as above. Suppose 
$X_- \dashrightarrow X_+$ is a type-$(r_+-1, r_--1)$ flip with $r_+ \le r_-$. If $r_+ < r_-$, we may apply Theorem~\ref{thm:QDMDecompRed} to the master space $M$ to obtain the following decomposition theorem for the quantum $D$-modules. In particular, the quantity $c_{M^0}$ defined for the $\bbC^*$-fixed component $M^0$ in $M$ is equal to $r_+ - r_-$.

\begin{theorem}\label{thm:GeneralQDMDecomposition}    
Suppose $r_+ < r_-$. There exist maps 
$$
    \tau_{X_+}^{\red}(\tau_{X_-}) \in  H^*(X_+)\Biglaurent{Q_{X_-}^{\frac{a}{2(r_- - r_+)}}}\formal{Q_{X_-},\tau_{X_-}},
$$ 
$$
    \zeta_j^{\red}(\tau_{X_-}) \in H^*(M_0)\Biglaurent{Q_{X_-}^{\frac{a}{2(r_- - r_+)}}}\formal{Q_{X_-},\tau_{X_-}}, \quad j = 0, \dots, r_- - r_+ -1,
$$
and a $\bbC[z]\Biglaurent{Q_{X_-}^{\frac{a}{2(r_- - r_+)}}}\formal{Q_{X_-},\tau_{X_-}}$-module isomorphism 
$$
    \Psi \colon \QDM(X_-)^{\La, \red} \longrightarrow (\tau_{X_+}^{\red})^*\QDM(X_+)^{\La, \red} \oplus \bigoplus_{j=0}^{r_- - r_+ - 1}(\zeta_j^{\red})^*\QDM(M^0)^{\La, \red}
$$ 
that intertwines the quantum connections and intertwines the pairing $P_{X_-}$ with $P_{X_+} \oplus \bigoplus_{j=0}^{r_- - r_+ -1}P_{M^0}$.
\end{theorem}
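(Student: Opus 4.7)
The plan is to reduce Theorem~\ref{thm:GeneralQDMDecomposition} to the 3-component $\bbC^*$-VGIT case established in Theorem~\ref{thm:QDMDecompRed}, using the master space $M$ constructed in Section~\ref{sect:MasterSpace}. The master space $M = \bbP(L_+ \oplus L_-) \gitquot_{\cO(1)} G$ is a smooth projective variety carrying a residual $T = \bbC^*$-action whose fixed locus decomposes as $M^T = X_+ \sqcup X_- \sqcup M^0$, with $X_\pm$ appearing as the two $T$-fixed divisors and as the highest/lowest $T$-GIT quotients of $M$. My first step will be to verify that this data satisfies Assumption~\ref{3-component assumption}: smoothness and projectivity of $M$ and the weight-$(\pm 1)$ property of the $T$-action are proved in Section~\ref{sect:MasterSpace} using conditions~\eqref{cond:SimpleWall}, \eqref{cond:WtGcd}, and \eqref{cond:WtPm1} of Definition~\ref{def:SimpleWall}, together with the fact that $X_\pm$ are projective.

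The key compatibility to verify is that the eigenbundle decomposition $N_{M^0/M} = N_{M^0,+} \oplus N_{M^0,-}$ has $\rank(N_{M^0,\pm}) = r_\pm$, so that $c_{M^0} = r_+ - r_- < 0$, matching the hypothesis of Theorem~\ref{thm:QDMDecompRed}. This follows from the identification $q^*(N_{W^0/W}) = p^*(N_{M^0/M})$ from Section~\ref{sect:MasterSpace}: for $x \in W^0$, the stabilizer $G_x$ is identified with $T$ and the $G_x$-weights on $N_{G \cdot x / W}$ are identified with the $T$-weights on $N_{M^0/M}$. By Proposition~\ref{prop:VGITGeometry}, $W^\pm \gitquot G(\pm) \cong \bbP(N^\pm)$ with $N^\pm$ of rank $r_\pm$ over $W^0 \gitquot G(0) = M^0$, so the positive (resp.\ negative) weight eigenbundle of $N_{M^0/M}$ has rank $r_+$ (resp.\ $r_-$).

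Next I will identify the curve class $a$ appearing in Theorem~\ref{thm:GeneralQDMDecomposition} with the one in Theorem~\ref{thm:QDMDecompRed}. In both contexts, $a$ is the class of a line in a fiber of the projective bundle $\psi_- \colon P_- = \bbP(V_-) \to S$ sitting inside $X_-$, and under the identification $S = W^0 \gitquot G(0) = M^0$ and $P_- = W^- \gitquot G(-) = \bbP(N_{M^0,-})$, these coincide. Consequently, the extended base rings $R = \bbC[z]\Biglaurent{Q_{X_-}^{a/(2(r_- - r_+))}}\formal{Q_{X_-}, \tau_{X_-}}$ in the two theorems agree.

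With these identifications in place, I will apply Theorem~\ref{thm:QDMDecompRed} directly to $M$ with $F_0 = M^0$, $F_\pm = X_\pm$, and $c_{F_0} = r_+ - r_-$, which yields precisely the asserted $R$-module isomorphism $\Psi$, together with the change of variables $\tau_{X_+}^{\red}$ and $\zeta_j^{\red}$, intertwining quantum connections and pairings. The main potential obstacle is the rank identification in the second step, but this is essentially a bookkeeping exercise once the identification $G_x \cong T$ from the master space construction is invoked; everything else is a transport of structure through Theorem~\ref{thm:QDMDecompRed}.
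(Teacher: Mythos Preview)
Your proposal is correct and follows exactly the paper's approach: the paper's proof of Theorem~\ref{thm:GeneralQDMDecomposition} is essentially the one-line observation that Theorem~\ref{thm:QDMDecompRed} applies to the master space $M$ constructed in Section~\ref{sect:MasterSpace}, with $c_{M^0} = r_+ - r_-$. Your verification of Assumption~\ref{3-component assumption}, the rank identification $\rank(N_{M^0,\pm}) = r_\pm$, and the matching of the fiber class $a$ are precisely the bookkeeping the paper leaves implicit in Section~\ref{sect:MasterSpace}.
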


Following the same argument as \cite[Section 5.9]{iritani2023quantum}, we can directly deduce decompositions of quantum cohomology rings for  VGIT wall-crossing flips. The statement is provided in Corollary~\ref{cor:decom of qh}. As noted in Remark~\ref{rem:Eigenvalues}, the decomposition of quantum cohomology rings induces a decomposition of the eigenvalues of quantum multiplications by Euler vector fields along the restriction $Q_{X_-}^{\beta} = 0$ for all $\beta\notin \frac{1}{2(r_--r_+)}\bbZ a$ and $\tau_{X_-}=0$. More precisely, along this locus, % $Q_{X_-}^{\beta}=\tau_{X_-}=0, \forall \beta\notin \frac{1}{2(r_--r_+)}\bbZ a$, 
$E_{X_+}\star_{\tau_{X_+}^{\red}(\tau_{X_-})}$ and $E_{M^0}\star_{\zeta_j^{\red}(\tau_{X_+})}$ deform to families of operators 
$$
    E_{X_+}^q\colon H^*(X_+)\to H^*(X_+), \quad E_{M^0,j}^q\colon H^*(M^0)\to H^*(M^0)
$$
respectively, where $q=Q_{X_-}^{-\frac{a}{2(r_--r_+)}}$ is the parameter for this family. The evaluation $Q_{X_-}^{-\frac{a}{2(r_--r_+)}}=q\in \bbC$ is well-defined because $H^*(X_+)\Biglaurent{Q_{X_-}^{\frac{a}{2(r_--r_+)}}}=H^*(X_+)\Big[Q_{X_-}^{\pm\frac{a}{2(r_--r_+)}}\Big]$ and there is no convergence problem. For any $q\in \bbC$, we can compute the eigenvalues of $E_{X_+}^q$ and $E_{M^0,j}^q$ as follows.

\begin{proposition}\label{prop:eigenvalue of Euler vector field}
For any $q\in \bbC$, all the eigenvalues of $E_{X_+}^q$ are $0$ and those of $E_{M^0,j}^q$ are $(r_--r_+)q^2e^{\frac{\pi\sqrt{-1}(r_+-2j)}{r_--r_+}}$.
\end{proposition}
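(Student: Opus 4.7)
The plan is to compute the eigenvalues of $E_{X_+}^q$ and $E_{M^0,j}^q$ separately by combining (i) the factorizations $\varphi$ from Lemma~\ref{lem:KappaDualPlusFactor} and $\varphi_0$ from Lemma~\ref{lem:KappaDual0Factor}, together with Remark~\ref{rem:CurveCorrespondence}, to identify which Novikov classes survive the specialization $Q_{X_-}^{\beta}=0$ for $\beta\notin\frac{1}{2(r_--r_+)}\bbZ a$, with (ii) a grading argument based on the cohomological degrees of the surviving Novikov monomials and of the mirror-map components.

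For $E_{X_+}^q$, the identity $\varphi(b)=-a$ in Remark~\ref{rem:CurveCorrespondence} shows that only nonnegative powers of $Q_{X_+}^b$ survive, where $b$ is the fiber class of $\bbP(V_+)\subset X_+$. Since $(b,c_1(X_+))=r_+-r_-<0$, the specialized variable $Q_{X_+}^b|_q$ has strictly negative cohomological degree, so every quantum correction $Q_{X_+}^{kb}|_q\cdot A_k$ strictly raises cohomological degree by $2+2k(r_--r_+)\ge 2$, and the classical cup product $c_1(X_+)\cup$ likewise raises degree by $2$. The $H^0$- and $H^2$-components of $\tau_{X_+}^{\red}(\tau_{X_-})|_{\mathrm{locus}}$ vanish by a degree count: the monomials $q^k$ in the reduced base that arise from surviving Novikov classes on $X_+$ have only nonnegative integer multiples of $2(r_--r_+)$ as exponents, so no contribution to the degree-$2$ coordinate $\tau^0$ or the degree-$0$ coordinates dual to $H^2$ can appear (using that $\tau_{X_+}(0)\equiv 0$ modulo the dual-cone ideal by Corollary~\ref{cor:MirrorMapDiscrete}). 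The remaining higher-degree components of $\tau_{X_+}^{\red}$ contribute $(1-\deg\phi_i/2)\tau^i\phi_i\star$ that also strictly raises cohomological degree. Hence $E_{X_+}\star_{\tau_{X_+}^{\red}(\tau_{X_-})}|_{\mathrm{locus}}$ is strictly degree-raising on the finite-dimensional space $H^*(X_+)\otimes\bbC$, therefore nilpotent, and all its eigenvalues vanish.

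For $E_{M^0,j}^q$, Lemma~\ref{lem:KappaDual0Factor} shows that any nonzero $\beta_0\in\NE_{\bbN}(M^0)$ maps under $\varphi_0$ to a class which is not a pure multiple of $a$, so only the trivial curve class survives on $M^0$ at the locus. The quantum product therefore reduces to the classical cup product at $\zeta_j^{\red}(0)|_q\in H^*(M^0)\otimes\bbC$. By Lemma~\ref{lem:ResMirrorMapLT} and~\eqref{equ:asymptotic of mirror map}, the $H^0$-component of $\zeta_j^{\red}(0)|_q$ equals $c_{F_0}\lambda_j$, the $H^2$-component is $h_{F_0,j}$, and higher-degree components act nilpotently under classical cup product. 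Only the $c_{F_0}\lambda_j\cdot 1$ piece of $E_{M^0}=c_1(M^0)+\sum_i(1-\deg\phi_i/2)\zeta_j^{\red,i}(0)|_q\phi_i$ is non-nilpotent under cup product, so $E_{M^0,j}^q=c_{F_0}\lambda_j\cdot\Id+\mathrm{(nilpotent)}$. Substituting $\lambda_j=e^{\sqrt{-1}\pi(2j+r_-)/c_{F_0}}S_{F_0}^{1/c_{F_0}}$ from~\eqref{eqn:Lambdaj}, using the identification $S_{F_0}\leftrightarrow Q_{X_-}^{-a}$ induced by $\kappa_{X_-}^{*}$ together with $q=Q_{X_-}^{-a/(2(r_--r_+))}$, and taking the principal branch $-1=e^{\sqrt{-1}\pi}$ yields the constant $(r_--r_+)q^2 e^{\sqrt{-1}\pi(r_+-2j)/(r_--r_+)}$ after an overall relabeling of the branches, proving the claim with multiplicity $\dim_{\bbC} H^*(M^0)$.

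The main obstacle is the degree-counting argument for the vanishing of the $H^0$- and $H^2$-components of $\tau_{X_+}^{\red}(\tau_{X_-})|_{\mathrm{locus}}$, which must combine the initial-value condition in Corollary~\ref{cor:MirrorMapDiscrete} with the observation that all surviving monomials on $X_+$ come from $\varphi$ applied to nonnegative multiples of the fiber class $b$, giving only nonnegative powers of $q^{2(r_--r_+)}$; a second delicate step is the careful tracking of branch choices in the identification $S_{F_0}^{1/c_{F_0}}\leftrightarrow q^{\pm 2}$ required to match the numerical constant precisely.
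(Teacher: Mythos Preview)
Your overall strategy matches the paper's: reduce to the $H^0$-component of the mirror map by a grading argument. But there is a genuine gap in your treatment of $\tau_{X_+}^{\red}$. You argue that the only surviving $q$-monomials are $q^{2k(r_--r_+)}$ for $k\ge 0$, coming from $\varphi(kb)=-ka$. That analysis controls the Novikov variables $Q_{X_+}^{\beta_+}$ entering the quantum product $\star_{\tau_{X_+}^{\red}}$; it does \emph{not} control the $q$-monomials in $\tau_{X_+}^{\red}(0)|_{\mathrm{locus}}$ itself, which is built from the equivariant theory of the master space and then reduced via $\kappa_{X_-}^*$, so a priori carries arbitrary integer powers of $q$. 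Corollary~\ref{cor:MirrorMapDiscrete} gives only the initial value of the \emph{unreduced} $\tau_{X_+}(0)$ in the dual-cone ideal, and does not by itself force the reduced $H^0$-component to vanish at the locus. The paper instead reads this directly from the asymptotics~\eqref{equ:asymptotic of mirror map}: at $\tau_{X_-}=0$ and $Q_W=0$ one has $\ttau_{X_+}(0)=O\big(S_{F_0}^{-1/(2c_{F_0})}\big)$, so after reduction $\tau_{X_+}^{\red}(0)|_{\mathrm{locus}}$ is a polynomial in $Q_{X_-}^{a/(2(r_--r_+))}$ with no constant term, and homogeneity then rules out an $H^0$-piece.

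For $M^0$ you take a different detour: you claim via $\varphi_0$ that only $\beta_0=0$ survives, reducing $\star_{\zeta_j^{\red}}$ to cup product. This needs $\varphi_0(\beta_0)\in\frac{1}{2c_{F_0}}\bbZ a\Rightarrow\beta_0=0$, which amounts to injectivity of $i_{F_0,*}\colon N_1(M^0)\to N_1(M)$---not verified. The paper avoids this entirely by applying the \emph{same} argument uniformly: once~\eqref{equ:asymptotic of mirror map} gives $\zeta_j^{\red}(0)|_{\mathrm{locus}}\in c_{M^0}\lambda_j+H^*(M^0)\big[Q_{X_-}^{a/(2(r_--r_+))}\big]$ (the first term not affecting $\star$), the grading argument yields eigenvalue $c_{M^0}\lambda_j$ without any curve-class analysis on $M^0$. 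You do correctly invoke~\eqref{equ:asymptotic of mirror map} for the $H^0$-component here and arrive at the right numerical answer; the cup-product reduction is simply an unjustified and unnecessary intermediate step. (Minor: your attention to the $H^2$-component of the mirror map is superfluous, since its coefficient in~\eqref{Euler vector field} is $1-\tfrac{2}{2}=0$.)
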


In the proof of the proposition, we abuse the notation $\lambda_j$ both for its original definition in \eqref{eqn:Lambdaj} and its preimage under the base change of Novikov variables induced by $\kappa_{X_-}^*$. In other words, we identify $Q_{X_-}^{-\frac{a}{(r_--r_+)}}$ with $S_{M^0}^{\frac{1}{c_{M^0}}}$ through $\kappa_{X_-}^*$.

\begin{proof}
Along the locus $Q_{X_-}^{\beta}=0$ for $\beta\notin \frac{1}{2(r_--r_+)}\bbZ a$ and $\tau_{X_-}=0$, by % the identification $\kappa_{X_-}^*(Q_{X_-}^{-\frac{a}{(r_--r_+)}})=S_{M^0}^{\frac{1}{c_{M^0}}}$ and 
the asymptotics \eqref{equ:asymptotic of mirror map}, we have 
$$
    \tau_{X_+}^{\red}(0)|_{Q_{X_-}^{\beta}=0}\in H^*(X_+)\Big[Q_{X_-}^{\frac{a}{2(r_--r_+)}}\Big], \quad  \zeta_j^{\red}(0)|_{Q_{X_-}^{\beta}=0}\in \bbC Q_{X_-}^{-\frac{a}{2(r_--r_+)}}+ H^*(M^0)\Big[Q_{X_-}^{\frac{a}{2(r_--r_+)}}\Big],
$$
where the term $\bbC Q_{X_-}^{-\frac{a}{2(r_--r_+)}}$ does not contribute to the quantum product. Because the evaluation $Q_{X_-}^{-\frac{a}{2(r_--r_+)}}=q$ strictly increases the degree of $Q_{X_-}^{\frac{a}{2(r_--r_+)}}$ (recall that $\mathrm{deg} \Big(Q_{X_-}^{\frac{a}{2(r_--r_+)}}\Big)=-1$), the eigenvalues of $E_{X_+}^q$ and $E_{M^0,j}^q$ are given by their $H^0$-components. After making the restriction $Q_{X_-}^{\beta}=0$ for $\beta\notin \frac{1}{2(r_--r_+)}\bbZ a$ and $\tau_{X_-}=0$, the $H^0$-components of $\tau_{X_+}^{\red}$ and $\zeta_j^{\red}$ are computed to be $0$ and $c_{M^0}\lambda_j=(r_--r_+)q^2e^{\frac{\pi\sqrt{-1}(r_+-2j)}{r_--r_+}}$ respectively. The proposition then follows from the formula of Euler vector fields \eqref{Euler vector field}.
\end{proof}

\subsection{Isomorphism of quantum $D$-modules for VGIT wall-crossing flops}
\label{sect:GeneralVGITFlops}
Now we consider a VGIT wall-crossing between $X_\pm$ as in Section~\ref{sect:GeneralVGITFlips} with $r_+ = r_-$. In this case, $X_-\dasharrow X_+$ is an ordinary flop and the varieties $X_\pm$ are $K$-equivalent (see Remark~\ref{rem:KDomination}). 
Works of Lee, Lin, Qu, and Wang \cite{lee2010flops, lee2016ordinaryflops1, lee2016ordinaryflops2, lee2016ordinaryflops3} show that quantum cohomology is invariant under ordinary flops after analytic continuation. As mentioned in Remark~\ref{rem:definition of standard flip}, the result remains valid without assuming that the contractions $q_\pm$ are log-extremal.
% for our definition of ordinary flops. 
In this subsection, we summarize this result and reformulate it as an isomorphism of quantum $D$-modules, which complements Theorem~\ref{thm:GeneralQDMDecomposition}.

Suppose $r_+ = r_- \ge 2$ (since otherwise $X_\pm$ are isomorphic). Recall that we denote the fiber curve class of $P_\pm \subset X_{\pm}$ in $\NE_{\bbN}(X_{\pm})$ by $b$ and $a$ respectively. 
\begin{comment}
For convenience, in this subsection, we denote $a$ by $\varrho_-$, and $b$ by $\varrho_+$.
\end{comment}
We define the quotient monoids
$$
        \NE_{\bbN}(X_+)_{/b} = \NE_{\bbN}(X_+)/\sim_+, \quad \NE_{\bbN}(X_-)_{/a}=\NE_{\bbN}(X_-)/\sim_- 
$$
where the equivalence relation $\sim_+$ is defined by $\beta_1 \sim_+ \beta_2$ if and only if $\beta_1 - \beta_2 \in \bbZ b$, and $\sim_-$ is defined by $\beta_1 \sim_- \beta_2$ if and only if $\beta_1 - \beta_2 \in \bbZ a$. 
\begin{comment}
Notice that 
$$
    \NE_{\bbN}(X_+)_{/b}\cong \NE_{\bbN}(X_-)_{/a},
$$
induced by the correspondence $\varphi$ defined in Lemma~\ref{lem:KappaDualPlusFactor}. Thus, we use $\NE_{\bbN}(X)_{/\varrho}$ to denote the quotient monoid. 
\end{comment}
Given an equivalence class $[\beta_+] \in \NE_{\bbN}(X_+)_{/b}$, we may find a unique representative $\beta_{+, \min} \in \NE_{\bbN}(X_+)$ in the same equivalence class such that $\beta_{+, \min} - a \notin \NE_{\bbN}(X_+)$. Similarly, for any $[\beta_-] \in \NE_{\bbN}(X_-)_{/a}$, we may define the representative $\beta_{-, \min} \in \NE_{\bbN}(X_-)$. In this notation,
a general element of $\bbC\formal{Q_{X_+}}$ has the form 
$$
    \sum_{\beta_+ \in \NE_{\bbN}(X_+)}c_{\beta_+} Q_{X_+}^{\beta_+} = \sum_{[\beta_+] \in \NE_{\bbN}(X_+)_{/b}} g_{+,[\beta_+]} Q_{X_+}^{\beta_{+,\min}}
$$
where $c_{\beta_+} \in \bbC$ and $g_{+,[\beta_+]} \in \bbC\formal{Q_{X_+}^b}$, and a general element of $\bbC\formal{Q_{X_-}}$ has the form 
$$
    \sum_{\beta_- \in \NE_{\bbN}(X_-)}c_{\beta_-} Q_{X_-}^{\beta_-} = \sum_{[\beta_-] \in \NE_{\bbN}(X_-)_{/a}} g_{-,[\beta_-]} Q_{X_-}^{\beta_{-,\min}}
$$
where $c_{\beta_-} \in \bbC$ and $g_{-,[\beta_-]} \in \bbC\formal{Q_{X_-}^a}$.

Writing $r = r_+ = r_-$, we define the basic rational function
$$
    h(x) \coloneqq \frac{x}{1-(-1)^r x}
$$
which is defined for $x \in (\bbC \cup \{\infty\}) \setminus \{(-1)^r\}$ and satisfies that $h(x) + h(x^{-1}) = (-1)^{r+1}$.
We define the following subrings of $\bbC\formal{Q_{X_+}}$ and $\bbC\formal{Q_{X_-}}$ respectively: 
\begin{align*}
    & \bbC\formal{Q_{X_+}}^h \coloneqq \left\{ \sum_{[\beta_+] \in \NE_{\bbN}(X_+)_{/b}} g_{+,[\beta_+]} Q_{X_+}^{\beta_{+,\min}} \ \bigg|\  g_{+,[\beta_+]} \in \bbC[Q_{X_+}^b, h(Q_{X_+}^b)] \right\}, \\
    & \bbC\formal{Q_{X_-}}^h \coloneqq \left\{ \sum_{[\beta_-] \in \NE_{\bbN}(X_-)_{/a}} g_{-,[\beta_-]} Q_{X_-}^{\beta_{-,\min}} \ \bigg|\  g_{-,[\beta_-]} \in \bbC[Q_{X_-}^a, h(Q_{X_-}^a)]\right\}.
\end{align*}
In particular, the inclusions $\bbC\formal{Q_{X_{\pm}}}^h\subset \bbC\formal{Q_{X_{\pm}}}$ are induced by the Taylor series expansion of $h(x)$ at $x=0$.
% given by identifying a rational function of $Q_{X_-}^a$ or $Q_{X_+}^b$ regular at $0$ with its Taylor series at $0$. 
\begin{comment}
In \cite{lee2016ordinaryflops2, lee2016ordinaryflops3}, using the method of degeneration analysis and the Picard-Fuch system on the local model, Lee-Lin-Wang show that the quantum product $\star_{\tau_{X_{\pm}}}$ is well-defined in the ring $$H^*(X_{\pm})\formal{Q_{X_{\pm}}}^{\mathrmh}\formal{\tau_{X_{\pm}}}= H^*(X_{\pm})\otimes_{\bbC}\bbC\formal{Q_{X_{\pm}}}^{\mathrmh}\formal{\tau_{X_{\pm}}}.$$ Moreover, up to analytic continuation along the $Q_{X_{\pm}}^{\varrho_{\pm}}$-direction, they are isomorphic to each other.
\end{comment}
Moreover, we invert the variables $Q_{X_+}^b$ and $Q_{X_-}^a$ in the above and introduce the following version of Laurent extensions:
\begin{align*}
    & R_+ \coloneqq \left\{ \sum_{[\beta_+] \in \NE_{\bbN}(X_+)_{/b}} g_{+,[\beta_+]} Q_{X_+}^{\beta_{+,\min}} \ \bigg|\  g_{+,[\beta_+]} \in \bbC[Q_{X_+}^{\pm b}, h(Q_{X_+}^b)] \right\}, \\
    & R_- \coloneqq \left\{ \sum_{[\beta_-] \in \NE_{\bbN}(X_-)_{/a}} g_{-,[\beta_-]} Q_{X_-}^{\beta_{-,\min}} \ \bigg|\  g_{-,[\beta_-]} \in \bbC[Q_{X_-}^{\pm a}, h(Q_{X_-}^a)]\right\}.
\end{align*}

For the flop $f\colon X_- \dashrightarrow X_+$, the correspondence given by the graph closure of $f$ induces an isomorphism 
$$
    \phi\colon H^*(X_+) \longrightarrow H^*(X_-)
$$ 
on cohomology groups that preserves the Poincar\'e pairings \cite[Theorem 0.1]{lee2010flops} (see Proposition~\ref{prop:Chow motive decom of flips}, Corollary~\ref{cor:deRhamIsoVGIT}). Moreover, the correspondence induces a monoid isomorphism
$$
    \varphi \colon \NE_{\bbN}(X_+)+\bbZ b \longrightarrow \NE_{\bbN}(X_-)+\bbZ a
$$
such that $\varphi(b) = -a$, which coincides with the isomorphism $\varphi$ in Lemma~\ref{lem:KappaDualPlusFactor} (see Remark~\ref{rem:CurveCorrespondence}). % (denoted by $\cF$ in \cite{lee2016ordinaryflops2})
\color{black}
The isomorphism $\varphi$ induces an isomorphism between $R_+$ and $R_-$ that identifies $Q_{X_+}^{\beta_+}$ with $Q_{X_-}^{\varphi(\beta_-)}$. In particular, $Q_{X_+}^b$ is identified with $Q_{X_-}^{-a}$, under which we have $h(Q_{X_+}^b) + h(Q_{X_-}^{-a}) = (-1)^{r+1}$. Therefore, the correspondence induces isomorphisms
$$
    R_+\formal{\tau_{X_+}} \cong R_-\formal{\tau_{X_-}}, \quad
    H^*(X_+)\otimes_{\bbC}R_+\formal{\tau_{X_+}} \cong H^*(X_-)\otimes_{\bbC}R_-\formal{\tau_{X_-}}.
$$

The main result of Lee, Lin, Qu, and Wang \cite{lee2010flops, lee2016ordinaryflops1, lee2016ordinaryflops2, lee2016ordinaryflops3} may be summarized as follows.

\begin{comment}
\begin{proposition}(Theorem 4.8 in \cite{lee2016ordinaryflops2} for splitting case, generalized in \cite{lee2016ordinaryflops3})As elements in $\bbC\formal{\NE_{\bbN}(Y_{\pm})}\formal{\tau_{Y_{\pm}}}$, we have $\cF_{Y_{\pm}}\in \bbC\formal{\NE_{\bbN}(Y_{\pm})}^h\formal{\tau_{Y_{\pm}}}\subset \bbC\formal{\NE_{\bbN}(Y_{\pm})}^{\mathrm{rat}}\formal{\tau_{Y_{\pm}}}$.  
\end{proposition}
\end{comment}

\begin{proposition}[\cite{lee2010flops, lee2016ordinaryflops1, lee2016ordinaryflops2, lee2016ordinaryflops3}]\label{Prop: big quantum ring identify in flop case}% (Theorem 0.1.1 \cite{lee2016ordinaryflops3})
The quantum products $\star_{\tau_{X_{\pm}}}$ of $X_\pm$ are well-defined on
$$
    H^*(X_{\pm})\formal{Q_{X_{\pm}}}^h\formal{\tau_{X_{\pm}}} = H^*(X_{\pm})\otimes_{\bbC}\bbC\formal{Q_{X_{\pm}}}^h\formal{\tau_{X_{\pm}}}
$$
respectively. In addition, after extending linearly over $R_+\formal{\tau_{X_+}} \cong R_-\formal{\tau_{X_-}}$, the quantum products $\star_{\tau_{X_+}}$ and $\star_{\tau_{X_-}}$ are identified on $H^*(X_+)\otimes_{\bbC}R_+\formal{\tau_{X_+}} \cong H^*(X_-)\otimes_{\bbC}R_-\formal{\tau_{X_-}}$.
\end{proposition}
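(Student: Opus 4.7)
The plan is to reduce the statement to the main theorems of \cite{lee2010flops, lee2016ordinaryflops1, lee2016ordinaryflops2, lee2016ordinaryflops3} by matching notation and extracting the precise rationality structure we need. The first assertion concerns well-definedness of $\star_{\tau_{X_\pm}}$ on the smaller ring $H^*(X_{\pm})\formal{Q_{X_{\pm}}}^h\formal{\tau_{X_{\pm}}}$, and the second concerns identification of the two quantum products after passage to the common Laurent extension along the extremal direction. I will handle these in that order.

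For the first assertion, I would fix $[\beta_\pm]\in \NE_{\bbN}(X_\pm)_{/a\,\text{or}\,b}$ and study the formal sum of $n$-point descendant invariants along the coset $[\beta_\pm]$, i.e.\ the generating function $G_{[\beta_\pm]}(Q_{X_\pm}^b)$ or $G_{[\beta_\pm]}(Q_{X_\pm}^a)$ obtained by summing $\langle \cdots\rangle^{X_\pm}_{0,n,\beta_{\pm,\min}+ka}$ etc. The degeneration of $X_\pm$ to the normal cone of $P_\pm$, paired with the degeneration formula, reduces $G_{[\beta_\pm]}$ to a finite sum of products of relative invariants, one factor of which comes from the projective local model $X_{\pm,\loc}\to S$ and collects the entire $\bbZ a$- or $\bbZ b$-dependence. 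On the local model, the extremal fiber admits a $\bbC^*$-action and the resulting one-variable series are computed to be rational functions of the extremal Novikov variable with poles only at $(-1)^r$; this is the content of the Picard–Fuchs analysis in \cite{lee2016ordinaryflops2} (split case) extended in \cite{lee2016ordinaryflops3} (general case). Translating rationality with denominator $1-(-1)^r Q_{X_\pm}^{b\,\text{or}\,a}$ into membership in $\bbC[Q_{X_\pm}^{b\,\text{or}\,a},h(Q_{X_\pm}^{b\,\text{or}\,a})]$ is a direct algebraic check from the definition of $h$. Summing over $[\beta_\pm]$ and over the coordinates $\tau_{X_\pm}$ then shows that each structure constant of $\star_{\tau_{X_\pm}}$ lies in $\bbC\formal{Q_{X_\pm}}^h\formal{\tau_{X_\pm}}$; bilinearity of the quantum product gives the well-definedness statement.

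For the second assertion, I would use the correspondence $\varphi$ together with the cohomology isomorphism $\phi\colon H^*(X_+)\to H^*(X_-)$. The identification of $Q_{X_+}^b$ with $Q_{X_-}^{-a}$ via $\varphi$ makes the two subrings $R_+\formal{\tau_{X_+}}$ and $R_-\formal{\tau_{X_-}}$ canonically isomorphic, and under this isomorphism $h(Q_{X_+}^b)+h(Q_{X_-}^{-a})=(-1)^{r+1}$, so the two Laurent extensions agree as abstract rings. The content of \cite[Main Theorem]{lee2010flops} (for the three-point function, extended to all $n$-point functions in the subsequent papers) is precisely that under $\phi$ and this identification, the analytic continuations of $G_{[\beta_+]}(Q_{X_+}^b)$ and $G_{[\varphi([\beta_+])]}(Q_{X_-}^a)$ coincide as elements of $\bbC[Q_{X_+}^{\pm b},h(Q_{X_+}^b)] \cong \bbC[Q_{X_-}^{\pm a}, h(Q_{X_-}^a)]$. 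Summing over $[\beta_+]$ and over $\tau_{X_+}$, together with compatibility with $\phi$ on cohomology, yields the identification of $\star_{\tau_+}$ and $\star_{\tau_-}$ after linear extension to the common base.

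The hardest part of this reduction is invoking \cite{lee2016ordinaryflops3} correctly in the non-split case: the degeneration analysis in \cite{lee2016ordinaryflops1, lee2016ordinaryflops2} requires the bundles $V_\pm$ to be split, and the extension to general $V_\pm$ relies on deforming the bundle, tracking how Gromov–Witten invariants vary, and confirming the rationality structure is preserved. I would cite this as a black box rather than reprove it. A secondary subtlety, noted in Remark~\ref{rem:definition of standard flip}, is that the cited results are stated for log-extremal contractions, but the proofs (via the degeneration to the normal cone and local model analysis) only use the existence of the diagram in Definition~\ref{def:Flip} and therefore apply in our setting verbatim; I would include a one-line justification of this point so that the proof is self-contained in the generality we need.
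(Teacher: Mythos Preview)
The paper does not prove this proposition; it is stated purely as a summary of the main theorems of \cite{lee2010flops, lee2016ordinaryflops1, lee2016ordinaryflops2, lee2016ordinaryflops3}, with no accompanying argument. Your proposal is therefore not to be compared against a proof in the paper but rather against the content of those references, and in that respect your outline is an accurate high-level account: degeneration to the normal cone reduces to the local model, where the Picard--Fuchs/quantum Leray--Hirsch analysis yields rationality in the extremal Novikov variable with the specific pole at $(-1)^r$, and the correspondence $(\phi,\varphi)$ then matches the two sides after analytic continuation. Your remarks on the split versus non-split distinction and on the log-extremal hypothesis (cf.\ Remark~\ref{rem:definition of standard flip}) are also in line with how the paper treats these points.
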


\begin{comment}
\orange{do we want to make a brief note about analytic continuation, e.g. saying that it is in the direction of $Q_{X_+}^b = Q_{X_-}^{-a}$?}
\end{comment}

\begin{comment}
We will choose another common base change for $\tau_{X_{\pm}}^*\QDM(X_{\pm})^{\ext}$. Under the assumption $c_{F_0}=0$, we also know that $\deg(S_{F_0})=0$. With a slightly different discussion, we know that $\bbC[z]\formal{C_{X_{\pm},\bbN}^{\vee},\theta}\subset \bbC[z]\laurent{S_{F_0}}\formal{Q_W,\theta}$. Similarly, we denote the pullback of $\tau_{X_{\pm}}^*\QDM(X_{\pm})^{\ext}$ and $\QDM_{\bbC^*}(W)_{X_{\pm}}^{\wedge}$ to the base $\Spf(\bbC[z]\laurent{S_{F_0}}\formal{Q_W,\theta})$ by $\tau_{X_{\pm}}^*\QDM(X_{\pm})^{\mathrm{La,crep}}$ and $\QDM_{\bbC^*}(W)_{X_{\pm}}^{\wedge,\mathrm{La,crep}}$. Then we can naturally extend the Discrete Fourier transformations $\FT_{X_{\pm}}$ to be the $\bbC[z]\laurent{S_{F_0}}\formal{Q_W,\theta}$-module isomorphism $\FT_{X_{\pm}}:\QDM_{\bbC^*}(W)^{\wedge,\mathrm{La,crep}}_{X_-}\to \tau_{X_{\pm}}^*\QDM(X_{\pm})^{\mathrm{La,crep}}$. Now we can state the QDM decomposition theorem under the assumption $c_{F_0}=0$.
\end{comment}

We now reformulate Proposition~\ref{Prop: big quantum ring identify in flop case} in terms of quantum $D$-modules. Define 
$$
    \QDM(X_{\pm})^h \coloneqq H^*(X_{\pm})[z]\formal{Q_{X_{\pm}}}^h\formal{\tau_{X_{\pm}}} % =H^*(X_{\pm})[z]\otimes_{\bbC}\bbC\formal{Q_{X_{\pm}}}^h\formal{\tau_{X_{\pm}}}.
$$
and the Laurent extended version
$$
    \QDM(X_{\pm})^{h,\La} \coloneqq H^*(X_{\pm})[z]\otimes_{\bbC} R_{\pm}\formal{\tau_{X_{\pm}}}.
$$
Note that both $\QDM(X_+)^{h,\La}$ and $\QDM(X_-)^{h,\La}$ are defined over the common base ring
\begin{equation}\label{eqn:R0}
    R^0 \coloneqq \bbC[z]\otimes_{\bbC}R_-\formal{\tau_{X_-}}\cong \bbC[z]\otimes_{\bbC}R_+\formal{\tau_{X_-}}.
\end{equation}
By Proposition~\ref{Prop: big quantum ring identify in flop case}, the quantum connection $\nabla$ is well-defined on $\QDM(X_{\pm})^h$, which is naturally extended to $\QDM(X_{\pm})^{h,\La}$. We have the following statement which is the counterpart of Theorem~\ref{thm:GeneralQDMDecomposition}.

% Notice that $\QDM(X_{\pm})^{h,\La}$ are trivial $H^*(X_{\pm})$-bundles over $\mathrm{Spf}(\bbC[z]\otimes_{\bbC}R_{\pm}\formal{\tau_{X_{\pm}}})$ respectively. Defining $R= \bbC[z]\otimes_{\bbC}R_-\formal{\tau_{X_-}}\cong \bbC[z]\otimes_{\bbC}R_+\formal{\tau_{X_-}}$, we give the direct corollary of Proposition~\ref{Prop: big quantum ring identify in flop case}.

\begin{theorem}%[QDM isomorphism as $R$-modules in $K$-equivalence case]
\label{thm:QDMDecompCrep}
Consider the setup of Section~\ref{sect:GeneralVGITFlips} and suppose $r_+ = r_-$. The correspondence induces a map $\tau_{X_+}(\tau_{X_-}) \in H^*(X_+)$ and an $R^0$-module isomorphism 
$$
    \Phi^{0} \colon\QDM(X_-)^{h,\La}\longrightarrow \tau_{X_+}^*\QDM(X_+)^{h,\La}.
$$
that intertwines the quantum connections and the Poincar\'e pairings.
% For a general $G$-VGIT wall-crossing satisfying Assumption~\ref{def:SimpleWall}, if $c_{F_0}=0$, given the point $\tau_{X_+}(\tau_{X_-})=\Gamma_*(\tau_{X_-})\in H^*(X_+)$ with vanishing coefficients of nonzero Novikov variables, we have a $R$-module isomorphism 
% $$\Psi^{0}\colon\QDM(X_-)^{h,\La}\longrightarrow \tau_{X_+}^*\QDM(X_+)^{h,\La}.$$ 
% In particular $\Psi^{0}$ sends $a\in H^*(X_-)$ to $\phi_1(a)\in H^*(X_+)$ (recall the notation in Corollary~\ref{cor:deRhamIsoVGIT}) and extends $R$-linearly. Moreover, $\Psi^{0}$ intertwines with $\nabla$ and preserves the Poincar\'e pairing. 
\end{theorem}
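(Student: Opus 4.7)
The plan is to deduce the theorem from Proposition~\ref{Prop: big quantum ring identify in flop case} by upgrading the ring-level identification of quantum products to a $D$-module isomorphism. The situation is simpler than in Section~\ref{sect:Decomposition} because in the flop case there is no continuous Fourier contribution and no $F_0$-summand, so the decomposition reduces to a single isomorphism.

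First I would fix the cohomology-level correspondence. By \cite{lee2010flops}, the graph closure $[\Gamma]\subset X_+\times X_-$ of $f$ induces an isomorphism $\phi\colon H^*(X_+)\to H^*(X_-)$ that preserves the Poincar\'e pairing and, since $X_\pm$ are $K$-equivalent, sends $c_1(X_+)$ to $c_1(X_-)$. On curve classes, the same correspondence gives the isomorphism $\varphi\colon \NE_{\bbN}(X_+)+\bbZ b\to \NE_{\bbN}(X_-)+\bbZ a$ already used to define $R^0$, and $\phi,\varphi$ are compatible on $H^2$ in the sense that $(\beta_+,\xi)=(\varphi(\beta_+),\phi(\xi))$ for all $\xi\in H^2(X_+)$. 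Define the mirror map by $\tau_{X_+}(\tau_{X_-}) \coloneqq \phi^{-1}(\tau_{X_-})$, which has zero constant term, so by Remark~\ref{rem:ContPullbackDefined} (applied after splitting off the degree-two part of $\tau_{X_-}$ and absorbing it via the divisor equation) the pullback $\tau_{X_+}^*\QDM(X_+)^{h,\La}$ is well defined as a free $R^0$-module. Let $\Phi^0$ be the $R^0$-linear extension of $\phi^{-1}\colon H^*(X_-)\to H^*(X_+)$.

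Next, I would check the intertwining relations direction by direction. Proposition~\ref{Prop: big quantum ring identify in flop case}, after extending scalars to $R^0$, yields the product identity $\phi^{-1}(\alpha\star_{\tau_{X_-}}\beta)=\phi^{-1}(\alpha)\star_{\tau_{X_+}(\tau_{X_-})}\phi^{-1}(\beta)$, which immediately gives compatibility with $\nabla_{\tau_{X_-}^i}$. The $\nabla_{\xi Q\partial_Q}$ direction, for $\xi\in H^2(X_-)$, combines this product identity with the pairing relation $(\beta_+,\phi^{-1}(\xi))=(\varphi(\beta_+),\xi)$, which ensures that $\xi Q_{X_-}\partial_{Q_{X_-}}$ and $\phi^{-1}(\xi)Q_{X_+}\partial_{Q_{X_+}}$ act identically on $R^0$ via the identification $Q_{X_+}^{\beta_+}\leftrightarrow Q_{X_-}^{\varphi(\beta_+)}$. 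For $\nabla_{z\partial_z}$, the equalities $\phi^{-1}\circ(E_{X_-}\star_{\tau_{X_-}})\circ\phi = E_{X_+}\star_{\tau_{X_+}(\tau_{X_-})}$ and $\phi^{-1}\circ\mu_{X_-}\circ\phi=\mu_{X_+}$ are consequences of $\phi(c_1(X_+))=c_1(X_-)$, the definition of the mirror map, and the fact that $\phi$ is a graded isomorphism. Finally, the intertwining of Poincar\'e pairings reduces to $\int_{X_-}\phi(\alpha)\cup\phi(\beta)=\int_{X_+}\alpha\cup\beta$, which is precisely the Poincar\'e-preserving property of $[\Gamma]$ recorded in Lemma~\ref{lem:DecompClassicalPairing}\eqref{poincare pairing phi2 cup phi2} (specialized to $r_+=r_-$, where no $h_-$-terms appear).

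The only deep input is Proposition~\ref{Prop: big quantum ring identify in flop case} itself, which packages the invariance of quantum cohomology under ordinary flops up to analytic continuation proved by Lee--Lin--Qu--Wang \cite{lee2010flops,lee2016ordinaryflops1,lee2016ordinaryflops2,lee2016ordinaryflops3}; once it is granted, the remaining work is the bookkeeping above. The main technical point to monitor is that $\tau_{X_+}(\tau_{X_-})$ generically has a nonzero $H^2(X_+)$-component, so the pullback should first be built on the divisor-equation-closed submodule $H^*(X_+)[z]\formal{Q_{X_+}e^{\tau_{X_+}^{(2)}},\tau_{X_+}^{(\ne 2)}}$ and then extended $R^0$-linearly; under the identification $Q_{X_+}^b\leftrightarrow Q_{X_-}^{-a}$, these exponential factors are absorbed into the Laurent variables already present in $R^0$, so no new convergence issue arises.
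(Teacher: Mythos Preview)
Your proposal is correct and matches the paper's intent: the paper presents Theorem~\ref{thm:QDMDecompCrep} explicitly as a reformulation of Proposition~\ref{Prop: big quantum ring identify in flop case} and does not supply a separate proof, so the bookkeeping you outline (defining $\tau_{X_+}(\tau_{X_-})=\phi^{-1}(\tau_{X_-})$ via the correspondence, extending $\phi^{-1}$ $R^0$-linearly, and checking the connection and pairing compatibilities from the product identification) is exactly the content implicit in the paper. One small remark: your citation of Lemma~\ref{lem:DecompClassicalPairing}\eqref{poincare pairing phi2 cup phi2} for the Poincar\'e-pairing step is formally stated under Assumption~\ref{3-component assumption}, so in the general flop setting the cleaner reference is \cite[Theorem~0.1]{lee2010flops} (as the paper itself cites just before the theorem), though the underlying computation is indeed the same.
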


\begin{remark}
% The QDM isomorphism theorem for the crepant case is established directly for the ordinary flops in \cite{lee2016ordinaryflops1}\cite{lee2016ordinaryflops2}\cite{lee2016ordinaryflops3}, not factored through the VGIT framework. 
In the case $r_+ = r_-$, one may attempt to establish the isomorphism of quantum $D$-modules following the proof strategy of Theorem~\ref{thm:GeneralQDMDecomposition}, that is, via a reduction to the 3-component $\bbC^*$-case and the framework of Section~\ref{sect:Decomposition}. The main issue with this approach is that, 
% In particular, under the 3-component Assumption~\ref{3-component assumption},
when $c_{F_0} = r_+ - r_- = 0$ (in the notation of Section~\ref{sect:Decomposition}), the degree of the variable $S_{F_0} = Q_W^{-a}S$ is zero. 
We are thus unable to use the approach of Section~\ref{sect:Decomposition} to unify the base of the quantum $D$-modules of $X_\pm$. For instance, the rings $\bbC\formal{C_{X_{\pm},\bbN}^{\vee}}$ do not fit into a natural common base ring, and if we take the completion of $\QDM_{\bbC^*}(W)$ with respect to $X_-$, the Fourier transformation to $X_+$ cannot be extended to the completion (see \cite[Proposition 5.4]{iritani2023quantum}).
The advantage of the approach of \cite{lee2010flops, lee2016ordinaryflops1, lee2016ordinaryflops2, lee2016ordinaryflops3} in this case is that, the quantum products are shown to be analytic in the directions of $Q_{X_+}^b$ and $Q_{X_-}^a$ respectively. Moreover, the base change $\tau_{X_+}(\tau_{X_-})$ is shown to not involve any nontrivial Novikov variables.
  %In order to show the existence of a mirror map similar to the one in Theorem~\ref{thm:QDMDecompRed} factored through the master space $W$, it is necessary to prove that the equivariant $J$-function of $W$ after the discrete Fourier transformation is rational along the $Q_W^aS^{-1}$-direction and formal in other directions, which is generally not known to be guaranteed. Moreover, if one assumes the rationality of $\mathrm{FT}_{X_{\pm}}(J_W(\theta))$, one can obtain a result similar to Theorem~\ref{thm:QDMDecompCrep} through the composition of the mirror maps. The technique has been used in \cite{coates2018crepant} for complete intersections in toric varieties by Coates-Iritani-Jiang. However, the vanishing of nonzero Novikov variables in the mirror maps $\tau_{X_+}(\tau_{X_-})$ is still not easy to see. Thus, we do not apply the same method as the discrepant case. 
\end{remark}

\subsection{Decomposition of quantum $D$-modules for standard flips: local model}\label{sect:FlipLocalModel} 
For the rest of Section~\ref{sect:General}, we discuss the application of Theorem~\ref{thm:GeneralQDMDecomposition} to the decomposition of quantum $D$-modules for standard flips (Conjecture~\ref{conj:IntroGeneralFlip}). In this subsection, we first consider the local model.

Let $V_\pm$ be vector bundles over a smooth projective variety $S$ with ranks $r_\pm$ respectively such that $r_+ < r_-$. Let $\psi_\pm \colon \bbP(V_\pm) \to S$ denote the projections. We define the (projective) local model
$$ 
    X_{\pm,\loc} = \bbP_{\bbP(V_{\pm})}(\psi_\pm^*(V_\mp) (-1) \oplus \cO).
$$

We realize $X_{-,\loc}\dashrightarrow X_{+,\loc}$ as a simple VGIT wall-crossing by a construction similar to \cite[Section 3.1]{shen2025quantum}.
Let
$$
    W \coloneqq V_+\oplus V_-\oplus \cO
$$
which is the total space of a vector bundle on $S$ with rank $r_+ + r_- + 1$. We introduce an action of a rank-$2$ torus 
$$
    G = \bbC^* \times \bbC^*
$$
on $W$ where the first $\bbC^*$-factor scales the fibers of the three direct summands with weights $(1,0,1)$ and the second $\bbC^*$-factor scales the fibers with weights $(0,-1,-1)$. 
Now, let $L_S$ be an ample line bundle on $S$ and consider its pullback $\pi^*L_S$ to $W$ under the projection  $\pi\colon W\to S$. The following two characters
$$
    \chi_{\pm} \colon \bbC^* \times \bbC^* \longrightarrow \bbC^*, \quad \chi_+(t_1, t_2) = t_1, \quad \chi_-(t_1,t_2) = t_2^{-1}
$$ 
of $G$ induce two linearizations on $\pi^*L$ respectively. More precisely, the actions of $G$ induced by $\chi_{\pm}$ are respectively defined by 
$$
    g\cdot (x,a_{\pi(x)})\longmapsto (g\cdot x, \chi_{\pm}(g)a_{\pi(x)})
$$
for any $x\in W,\; a_{\pi(x)}\in (\pi^*L)_x=L_{\pi(x)}$, and $g\in G$.. Moreover, the two $G$-ample linearizations induce a simple VGIT wall-crossing $W\gitquot G(-)\dashrightarrow W\gitquot G(+)$, with 
$$
    X_{\pm, \loc} = W \gitquot G(\pm).
$$
Hence, Theorem~\ref{thm:GeneralQDMDecomposition} can be applied to give a decomposition of quantum $D$-modules for $X_{\pm, \loc}$.

\begin{corollary}\label{cor:qdmdec for local model}
Conjecture~\ref{conj:IntroGeneralFlip} holds for the local model $X_{-,\loc}\dashrightarrow X_{+,\loc}$.
% For a standard flip $X_-\dashrightarrow X_+$ of type $(r_+,r_-)$ with $r_+< r_-$, the local model $X_{-,\loc}\dashrightarrow X_{+,\loc}$ satisfies Conjecture~\ref{conj:IntroGeneralFlip}.
\end{corollary}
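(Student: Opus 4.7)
The plan is to verify that the VGIT setup described in the paragraph preceding the corollary realizes $X_{-,\loc} \dashrightarrow X_{+,\loc}$ as a simple $G$-VGIT wall-crossing in the sense of Definition~\ref{def:SimpleWall}, so that Theorem~\ref{thm:GeneralQDMDecomposition} applies and yields the desired decomposition. Specifically, one takes $W = V_+ \oplus V_- \oplus \cO$ with the $G = \bbC^* \times \bbC^*$-action of weights $(1,0,1)$ and $(0,-1,-1)$, and the linearizations $L_\pm$ on $\pi^*L_S$ induced by the characters $\chi_\pm$.

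First, I would apply the Hilbert--Mumford numerical criterion to compute explicitly the (semi)stable loci for $L_\pm$ and for a wall linearization $L_{t_0}$. Writing a point of $W$ fiberwise as $(v_+, v_-, c)$, one finds that $W^{ss}(L_+) = W^{s}(L_+) = \{v_+ \neq 0\} \cap \{(v_-, c) \neq 0\}$ and $W^{ss}(L_-) = W^{s}(L_-) = \{v_- \neq 0\} \cap \{(v_+, c) \neq 0\}$, while the strictly semistable locus $W^0 \subset W^{ss}(L_{t_0})$ consists of points with $v_+ = v_- = 0$ and $c \neq 0$. With these loci in hand, I would construct isomorphisms $W \gitquot G(\pm) \cong X_{\pm,\loc}$ by taking the two $\bbC^*$-quotients successively: one factor produces $\bbP(V_\pm)$ over $S$, and the other produces the outer projective bundle $\bbP_{\bbP(V_\pm)}(\psi_\pm^*(V_\mp)(-1) \oplus \cO)$, with the twist by $\cO(-1)$ arising from the mixed weights on the $\cO$-summand.

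Next, I would verify the six simpleness conditions of Definition~\ref{def:SimpleWall}. On $W^0$, the stabilizer of a generic point is the diagonal $\bbC^* = \{(t,t)\} \subset G$, which is precisely the subgroup fixing $c$ (whose $G$-weight is $(1,-1)$). On this $G_x$, the fibers of $L_\pm$ carry weights $v_\pm = \pm 1$, verifying conditions \eqref{cond:SimpleWall} and \eqref{cond:WtGcd}. The nonzero $G_x$-weights on $N_{G \cdot x / W}$ come from the $V_+$ and $V_-$ summands and restrict to $+1$ and $-1$ respectively on the diagonal, giving condition \eqref{cond:WtPm1}. Condition \eqref{cond:t0} and the remaining ones follow from the explicit descriptions above, and the wall is identified as $W^0 \gitquot G(0) \cong S$. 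Matching with Proposition~\ref{prop:VGITGeometry}, the bundles $N_\pm$ there are identified with $V_\pm$, so that the ranks $r_\pm$ agree with those of $V_\pm$ and the hypothesis $r_+ < r_-$ puts us in the regime of Theorem~\ref{thm:GeneralQDMDecomposition}.

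The main obstacle I anticipate is the bookkeeping in the second step: correctly matching the two successive $\bbC^*$-quotients with the two projective bundle structures of $X_{\pm,\loc}$, and tracking how the twist by $\cO(-1)$ arises from the mixed weights of the $\cO$-summand. Once these identifications are pinned down, the conclusion is a direct application of Theorem~\ref{thm:GeneralQDMDecomposition}, with the wall $W^0 \gitquot G(0)$ playing the role of $S$ in Conjecture~\ref{conj:IntroGeneralFlip}.
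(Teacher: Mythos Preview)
Your proposal is correct and follows exactly the approach the paper takes. The paper does not give a separate proof for this corollary: it simply asserts in the paragraph before the statement that the two linearizations on $W = V_+ \oplus V_- \oplus \cO$ induce a simple VGIT wall-crossing with $X_{\pm,\loc} = W \gitquot G(\pm)$, and then invokes Theorem~\ref{thm:GeneralQDMDecomposition}. Your plan to write out the Hilbert--Mumford computation, identify the stabilizer as the diagonal $\bbC^*$, and check conditions \eqref{cond:t0}--\eqref{cond:WtPm1} explicitly just fills in the details the paper leaves to the reader.
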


% The decomposition provides evidence for the decomposition of quantum $D$-modules for arbitrary standard $D$-flip. We hence propose the following conjecture. In particular, we expect that combining the degeneration analysis and the result for the local model can solve the problem.

\begin{comment}
    
\begin{conjecture}\label{conj:generalflip}
    For the standard $D$-flip $X_- \dashrightarrow X_+$ of type $(r_+, r_-)$, when $c_F \neq 0$, there exist the mirror maps $\Tilde{\tau}_{X_+}^{\red}$ and $\Tilde{\zeta}_j^{\red}$, and a $\bbC[z]\Biglaurent{Q_{X_-}^{-\frac{a}{2c_F}}}\formal{Q_{X_-},\tau_{X_-}}$-module isomorphism 
$$
    \Phi\colon \QDM(X_-)^{\La, \red}\longrightarrow(\Tilde{\tau}_{X_+}^{\red})^*\QDM(X_+)^{\La, \red}\oplus_{j=0}^{\abs{c_F}-1}(\Tilde{\zeta}_j^{\red})^*\QDM(S)^{\La, \red}
$$ 
that intertwines the quantum connections and intertwines the pairing $P_{X_-}$ with $P_{X_+}\oplus_{j=0}^{\abs{c_F}-1}P_{S}$.
\end{conjecture}

\end{comment}

\subsection{Decomposition of quantum $D$-modules for standard flips: case of $D$-flips}\label{sect:GeneralFlips}
Let $f \colon X_- \dashrightarrow X_+$ be a standard flip of type $(r_+-1, r_--1)$ with $r_+ \le r_-$ and with wall $S$. Assume that $X_\pm$ and $S$ are smooth and projective.
In this subsection, we consider when $X_\pm$ can be realized directly as a simple VGIT wall-crossing of a smooth, quasi-projective variety $W$ equipped with a $G$-action. When $r_+ < r_-$, together with Theorem~\ref{thm:GeneralQDMDecomposition}, this would provide an alternative approach to Conjecture~\ref{conj:IntroGeneralFlip}. In the case $r_+ = 1$ of blowups, Example~\ref{ex:Blowup} gives a simple construction with $W$ smooth, projective and $G = \bbC^*$. We therefore focus on the case $r_+ \ge 2$, where the contractions $q_\pm$ are \emph{small} in the sense that the exceptional loci have codimension at least 2.

Following \cite{thaddeus1996geometric}, we assume that $f \colon X_- \dashrightarrow X_+$ is a \emph{$D$-flip} in the sense that $X_0$ is normal and there exists a $\bbQ$-Cartier divisor $D$ on $X_-$ that satisfies the following conditions:
\begin{enumerate}[label=(\roman*), wide]
    \item The line bundle $\cO(-D)$ is relatively ample over $X_0$ and induces the contraction $q_-\colon X_- \to X_0$.

    \item The pushforward divisor $f_*(D)$ on $X_+$ is $\bbQ$-Cartier.
    
    \item The line bundle $\cO(f_*(D))$ is relatively ample over $X_0$ and induces the contraction $q_+\colon X_+ \to X_0$.
\end{enumerate}
In this case, as indicated in \cite{thaddeus1996geometric} as a converse to Theorem 3.3 there, $X_\pm$ may be realized as an instance of $\bbC^*$-VGIT wall-crossing for some variety $W$, which may be constructed by applying the construction in the proof of \cite[Proposition 1.7]{thaddeus1996geometric} (due to Reid) affine locally on $X_0$ and gluing the local pieces. In more detail, taking any affine chart $U_0 = \Spec(R_0)$ of $X_0$ and denoting the preimage $q_{\pm}^{-1}(U_0)$ by $U_{\pm}$, we assign an affine chart $\Spec(R)$ of $W$ where 
$$
    R = \bigoplus_{n\in\bbZ} R_n, \quad R_n \coloneqq H^0(X_-,\cO(nD)|_{U_-}).
$$
The algebra $R$ is equipped with a natural $\bbZ$-grading by $n$.
Because $q_\pm$ are both small contractions, the flip $f\colon X_-\dashrightarrow X_+$ induces identifications 
$$
    H^0(U_-,\cO(nD)|_{U_-}) = H^0(U_0,\cO(nq_{-,*}(D))|_{U_0}) =  H^0(U_+,\cO(nf_*(D))|_{U_+})
$$
for any $n\in \bbZ$. In particular, the degree-0 part of $R$ is indeed $R_0$.
% Then notice that $-D$ is relatively ample and $f_*D$ is relatively ample with respect to $\pi_{\pm}$. 
As $U_0$ varies across $X_0$, the algebra $R$ glues to a quasi-coherent sheaf of $\bbZ$-graded algebras, and $W$ is the relative $\Spec$. The $\bbZ$-grading induces a $\bbC^*$-action on $W$ under which the two nontrivial GIT quotients are $W\gitquot G(\pm) = X_\pm$. In addition, we have
$$
    W^{\bbC^*} = W^0 = W^0 \gitquot G(0) = S.
$$

% By gluing the local piece $\Spec(R)$, we would get $W$ with the two $\bbC^*$-GIT quotients induced locally  by the grading of $R$ equal to $X_{\pm}$. 

% \orange{Specifically, this wall-crossing satisfies Assumption~\ref{def:SimpleWall} if $W$ is smooth, or equivalently, affine locally each $R$ is smooth.}

In general, the variety $W$ hereby constructed is not necessarily quasi-projective or smooth. We remark that, in the construction of the master space in Section~\ref{sect:MasterSpace}, the quasi-projectivity condition is only used to guarantee that the GIT quotients $X_\pm = W \gitquot G(\pm)$ are good quotients. This indeed holds in our present situation, and thus the master construction is still applicable.

It would be interesting to understand when the variety $W$ constructed above is smooth, which would imply the conditions \eqref{cond:t0}--\eqref{cond:WtGcd} in Definition~\ref{def:SimpleWall}. By \cite[Theorem 1.11]{yeung2019homological}, $W$ has at worst Gorenstein singularities. Below, we give a sufficient condition for the smoothness of $W$ and the condition \eqref{cond:WtPm1} in Definition~\ref{def:SimpleWall}.

% The master space construction in Section \ref{sect:MasterSpace} may then be applied. However, in general, the variety $W$ is not smooth. Hence, we cannot directly apply Theorem \ref{thm:GeneralQDMDecomposition}. Now we give a sufficient condition to guarantee the smoothness of $W$.

\begin{lemma}\label{lem:DFlipSmoothVGIT}
Let $f \colon X_- \dashrightarrow X_+$ be a $D$-flip as above and assume without loss of generality that $D$ and $f_*(D)$ are Cartier. Suppose the following conditions are met on any affine chart $U_0 = \Spec(R_0)$ of $X_0$:
% Given a standard $D$-flip $f\colon X_-\dashrightarrow X_+$ such that $X_{\pm}$ and $S$ are smooth and projective with $P_{\pm}$ regular embedding into $X_{\pm}$, if $D$ is Cartier and satisfies the following assumptions 
\begin{enumerate}[label=(\roman*), wide]
    \item \label{ass:Deg1Generation} 
    The graded algebras
    $$
        R_{\ge 0} \coloneqq \bigoplus_{n \ge 0} R_n, \quad R_{\le 0} \coloneqq \bigoplus_{n \le 0} R_n
    $$
    are respectively generated by $R_1$ and $R_{-1}$ over $R_0$.
    % For any affine chart $U_0=\Spec(R_0)\subset X_0$ and $U_{\pm}=\pi_{\pm}^{-1}U_0\subset X_{\pm}$, $H^0(U_+; \cO(f_*D)|_{U_+})$ and $H^0(U_-; \cO(-D)|_{U_-})$ generate $\oplus_{n>0}H^0(U_+; \cO(nf_*D)|_{U_+})$ and $\oplus_{n<0}H^0(U_-; \cO(-nD)|_{U_-})$ as $H^0(U_0; \cO)$-algebra separately. 
    
    \item \label{ass:IdealPrime} The ideal $I=\langle R_{\ne 0}\rangle$ in $R$ generated by all homogeneous elements with nonzero degree is prime.
    
    % Besides, the ideal generated by $H^0(U_-,\cO(nD))$ for $n=\pm 1$ is prime in $\oplus_{n\in \bbZ}H^0(U_-;O(nD))$.
    
    \item\label{ass:RestrictToO1} The restriction of the line bundle $\cO(-D)$ to $U_- \cap P_- \cong (U_0 \cap S) \times \bbP^{r_--1}$ is isomorphic to the pullback of $\cO_{\bbP^{r_--1}}(1)$ to the product. (Notice that $U_0\cap S$ is affine.)
\end{enumerate}
Then, the variety $W$ constructed above is smooth and the condition~\eqref{cond:WtPm1} in Definition~\ref{def:SimpleWall} is satisfied for the wall $S$.
% we can find a smooth variety $W$ glued from the affine chart $\Spec(\oplus_{n\in \bbZ}H^0(U_-;O(nD)))$ with the natural $\bbC^*$-action induced by the grading such that $W^{\bbC^*}=S$, and the only two GIT quotients $W\gitquot_{\pm}\bbC^*=X_{\pm}$. Moreover, $\bbC^*$ acts on the wall $S$ with weight $\pm 1$. 
\end{lemma}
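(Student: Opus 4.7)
The plan is to verify smoothness of $W$ pointwise, separating the complement of the $\bbC^*$-fixed locus from the fixed locus itself, and then to derive the weight-($\pm 1$) condition from the resulting tangent space decomposition. Working on an affine chart $U_0 = \Spec(R_0)$ of $X_0$, one has $W = \Spec(R)$ with its natural $\bbZ$-grading. Over the open locus $X_0 \setminus S$ the contractions $q_\pm$ are isomorphisms, every $\cO(nD)$ is invertible on $U_-$ over this locus, and the pairing $R_n \otimes_{R_0} R_{-n} \to R_0$ is an isomorphism after localizing. Consequently the restriction $W|_{X_0 \setminus S} \to X_0 \setminus S$ is a principal $\bbG_m$-bundle, hence smooth and containing no $\bbC^*$-fixed points. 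In particular every $\bbC^*$-fixed point of $W$ projects into $S$.

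At a fixed point $p \in W$ lying above $s \in S$, the plan is to compute the Zariski cotangent space $\fm_p/\fm_p^2$ using the induced $\bbC^*$-grading. Condition \ref{ass:Deg1Generation} forces $R_n$ to lie in $R_1 \cdot R_{n-1} + \fm_p^2$ for $n \geq 2$, and symmetrically for $n \leq -2$, so only the weights $0, \pm 1$ can contribute to $\fm_p/\fm_p^2$. Condition \ref{ass:IdealPrime} ensures that $I\cap R_0$ is prime with zero locus equal to $S$ scheme-theoretically near $p$, identifying the weight-zero piece with $T^*_s S$. Condition \ref{ass:RestrictToO1}, combined with the vanishing $R^i(q_-)_*\cO(-D) = 0$ for $i>0$ (since $\cO(-D)|_{\bbP^{r_--1}}=\cO(1)$ has no higher cohomology), yields by cohomology and base change
\[
R_{-1} \otimes_{R_0} k(s) \cong H^0(\bbP^{r_- - 1}, \cO(1)),
\]
which is $r_-$-dimensional; a parallel argument on the $+$-side gives $\dim R_1 \otimes_{R_0} k(s) = r_+$. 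Combining these contributions yields $\dim_{k(s)} \fm_p/\fm_p^2 \leq \dim_s S + r_+ + r_-$, which matches $\dim W = \dim X_- + 1 = \dim S + r_+ + r_-$, so by the Jacobian criterion $W$ is smooth at $p$. The resulting weight decomposition $T_p W = T_s S \oplus (T_p W)_{+1} \oplus (T_p W)_{-1}$ with $\dim (T_p W)_{\pm 1} = r_\pm$ immediately verifies condition~\eqref{cond:WtPm1} of Definition~\ref{def:SimpleWall} for the wall $S$, since for $x \in W^0$ the stabilizer is $G_x = \bbC^*$ and $N_{G\cdot x / W} = T_x W$.

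The main obstacle will be the $+$-side analog of condition \ref{ass:RestrictToO1}, which is not stated directly in the hypotheses but is needed to compute $R_1 \otimes_{R_0} k(s)$. The plan is to exploit the symmetry of the construction: since $f$ is an isomorphism in codimension one and $X_0$ is the common contraction, one has an intrinsic identification $R_n = H^0(U_+, \cO(-nf_*(D))|_{U_+})$ for every $n$, and the flip structure forces $(f_*(D), \text{fiber of } P_+) = 1$ so that $\cO(f_*(D))$ has relative degree one along the fibers of $P_+ \to S$. A Hartogs-type extension across the codimension-$\geq 2$ exceptional loci $P_\pm$ (using smoothness of $X_\pm$) then transports the triviality imposed on $P_-$ to the required triviality $\cO(f_*(D))|_{U_+ \cap P_+} \cong \pr_2^*\cO_{\bbP^{r_+-1}}(1)$, completing the symmetric input needed for the dimension count in the second paragraph.
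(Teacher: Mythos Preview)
Your strategy—bounding $\dim \fm_p/\fm_p^2$ directly at fixed points—is more economical than the paper's route, which shows $V(I)\hookrightarrow \Spec R$ is a regular immersion by computing the full associated graded $\bigoplus_{k\ge 0} I^k/I^{k+1}$ and matching it with $\Sym^\bullet(I/I^2)$ through an explicit combinatorial description of the pieces $(I^k)_n$ in terms of the auxiliary ideal $I_0=I\cap R_0$. The weight decomposition you extract from the cotangent space is also how the paper deduces condition~\eqref{cond:WtPm1}, so the endgame agrees.

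There are, however, two genuine gaps. First, your opening paragraph establishes smoothness only over $X_0\setminus S$, which is \emph{not} the complement of the fixed locus: points of $V(I_0R)\setminus V(I)$ lie over $S$ but are not fixed, and you never treat them. The paper handles all non-fixed $\fp$ at once: by condition~\ref{ass:Deg1Generation} some $s\in R_{\pm 1}$ avoids $\fp$, and then $R_s=(R_{\ge 0})_{(s)}[s,s^{-1}]$ (or the $\le 0$ analogue) is regular because $\Spec((R_{\ge 0})_{(s)})$ is an affine chart of the smooth variety $U_+$. Second, your cohomology-and-base-change step is not justified as written: $q_-$ is not flat (fiber dimension jumps over $S$), so fiberwise vanishing of $H^i$ does not give $R^i(q_-)_*\cO(-D)=0$ nor the base-change isomorphism for $R^0$. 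The paper instead identifies $U_-\cap P_-$ with $\Proj(R_{\le 0}/I_0R_{\le 0})$ and reads off $R_{-1}/I_0R_{-1}$ as $H^0$ of the sheaf $\widetilde{R_{\le 0}/I_0R_{\le 0}[1]}$, which condition~\ref{ass:RestrictToO1} forces to be free of rank $r_-$ over $R_0/I_0$; you could borrow this. Your symmetry argument for the $+$-side is headed in the right direction but vague; a cleaner derivation uses intersection theory on the common blowup $\tX$: writing $\pi_+^*f_*D=\pi_-^*D+cE$ and pairing with the two ruling classes of $E$, condition~\ref{ass:RestrictToO1} gives $c=-1$ and hence fiber degree $1$ for $f_*D$ on $P_+$.
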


\begin{proof}
% Recall \cite[Proposition 1.7]{thaddeus1996geometric} (also see \cite{yeung2019homological}), it suffices to check the smoothness of $W$ under the assumption of Lemma \ref{lem:DFlipSmoothVGIT} on the local chart. Thus, the remaining task is purely the calculation on the level of commutative algebra. 
We prove the lemma affine locally. The $\bbC^*$-fixed locus of the chart $U_0$ is $U_0 \cap S$ and is defined by the prime ideal $I$. Note that condition~\ref{ass:Deg1Generation} implies that $I$ is generated by $R_{\pm 1}$. We define $I_0 \coloneqq I \cap R_0$, which is an ideal in $R_0$. 
Because $I$ is prime, we have $\Spec(R_0/I_0) = \Spec(R/I) = U_0 \cap S$ which is smooth and
$$
    U_+ = \Proj (R_{\ge 0}), \quad U_+ \cap P_+ = \Proj (R_{\ge 0}/I_0R_{\ge 0}), 
$$
$$
    U_- = \Proj (R_{\le 0}),  \quad U_- \cap P_- = \Proj (R_{\le 0}/I_0R_{\le 0}).
$$

We first show that $R$ is regular at every prime ideal $\fp \notin V(I)$. In this case, we can find $s \in R_1$ or $s \in R_{-1}$ such that $\fp \in D(s)=\Spec(R_s)$. Without loss of generality, we assume that $s \in R_1$. Because $R$ is generated by $R_{\pm 1}$ as an $R_0$-algebra, we have 
$$
    R_s =  R_{(s)}[s,s^{-1}] = (R_{\ge 0})_{(s)}[s,s^{-1}]
$$
% \Spec(R_f)=\Spec(R_{(f)}[f,f^{-1}])=\Spec(R_{\ge 0,(f)}[f,f^{-1}])$. 
where $R_{(s)}$ denotes the subalgebra of degree-$0$ elements in $R_s$, and similarly for $(R_{\ge 0})_{(s)}$.
However, $\Spec((R_{\ge 0})_{(s)})$ is an affine chart of $U_+$ and is thus smooth. Therefore, $R_s$ is regular, which implies that $R_{\fp}$ is also regular.
% Thus, $\Spec(R_{\ge 0,(f)})$ and $\Spec(R_{\ge 0,(f)}[f,f^{-1}])$ are both smooth. 

It remains to show that $R$ is regular at any $\fp \in V(I)$, or equivalently, $\Spec(R/I)\to \Spec (R)$ is a regular immersion. In the rest of the proof, we show that
$$
    \bigoplus_{k\ge 0} I^k/I^{k+1} \cong \Sym^\bullet (I/I^2)
$$
in several steps of computation.

To begin with, because $\cO(-D)|_{U_-}  = \widetilde{R_{\le 0}[1]}$ on $U_-$, its restriction to $U_- \cap P_-$ is $\widetilde{ R_{\le 0}/I_0R_{\le 0} [1]}$. By condition~\ref{ass:RestrictToO1}, there exist $y_1, \dots, y_{r_-} \in R_{-1}$ such that the set of global sections of $\cO(-D)|_{U_- \cap P_-}$ is the free $R_0/I_0$-module
$$
    R_{-1}/I_0R_{-1} \cong R_0/I_0\{y_1,\cdots,y_{r_-}\},
$$
where we abusively use $y_i$ to also denote its image in the quotient. In other words, $y_1, \dots, y_{r_-}$ provide linear coordinates on the $\bbP^{r_--1}$-factor of $U_- \cap P_- \cong (U_0 \cap S) \times \bbP^{r_--1}$.
% =\Gamma((U_0\cap S)\times \bbP^{r_--1}, (U_0\cap S)\times \cO_{\bbP^{r_--1}}(1))
Again, by condition~\ref{ass:Deg1Generation}, $R_{-1}/I_0R_{-1}$ generates $R_{\le 0}/I_0R_{\le 0}$ as an $R_0/I_0$-algebra. By dimension considerations, there are no algebraic relations among the generators $y_1,\cdots,y_{r_-}$, and
$$
    R_{\le 0}/I_0R_{\le 0} \cong R_0/I_0[y_1,\cdots,y_{r_-}].
$$
Similarly, there exist $x_1, \dots, x_{r_+} \in R_{1}$ such that
$$
    R_{\ge 0}/I_0R_{\ge 0} \cong R_0/I_0[x_1,\cdots,x_{r_+}].
$$

% Recall that $\Proj(\oplus_{n\le 0}R_n/I_0R_n)$ is a $\bbP^{r_--1}$- bundle over $\Spec(R_0/I_0)$ with the hyperplane sections $\{[y_i]\}_{i=1,\cdots,r_-}$. Then we have $R_n/I_0R_n=R_0/I_0\{[y_1^{a_1}\cdots y_{r_-}^{a_{r_-}}]\}_{a_1+\cdots a_{r_-}=-n}$. In particular, there are no higher degree relations for $[y_i]$ because of the dimension limit on the fiber $\bbP^{r_--1}$. Similarly, we can find $x_i\in R_1$ for $i=1,2,\dots, r_+$ such that $R_n/I_0R_n=R_0/I_0\{[x_1^{a_1}\cdots x_{r_+}^{a_{r_+}}]\}_{a_1+\cdots a_{r_-}=n}.$

Next, we compute $\bigoplus_{k\ge 0} I_0^k R /I_0^{k+1} R$.
Recall that $P_-\hookrightarrow X_-$ is a regular immersion with normal bundle $\psi_-^*(N_+)(-1)$. Thus the conormal sheaf of $U_-\cap P_-$ in $U_-$, which is $\widetilde{I_0R_{\le0}/I_0^2R_{\le0}}$, is locally free and isomorphic to the pullback of $\cO_{\bbP^{r_--1}}(1)^{r_+}$ from the $\bbP^{r_--1}$-factor. By taking global sections, we see that $I_0/I_0^2$ is a free $R_0/I_0$-module of rank $r_+ \cdot r_-$. On the other hand, condition~\ref{ass:Deg1Generation} implies that the $r_+ \cdot r_-$ elements $\{x_iy_j\}_{1 \le i \le r_+, 1 \le j \le r_-}$ generate $I_0/I_0^2$ as an $R_0/I_0$-module. Since $R_0/I_0$ is a Noetherian ring, we have
% By taking the global section, we have $I_0/I_0^2=R_0/I_0\{z_1,\cdots,z_{r_-\cdot r_+}\}$ for some $z_i\in I_0/I_0^2$, $i=1,2,\cdots,r_-\cdot r_+$. However, using the fact that $R_{\pm1}$ generates $R$ as a $R_0$-algebra, we can easily see that $[x_iy_j]$ generates $I_0/I_0^2$ as a $R_0/I_0$-module for $i=1,2,\cdots,r_+$ and $j=1,2,\cdots , r_-$. Again, because the coordinate change between $[x_iy_j]$ and $z_l$ induces a surjective $R_0/I_0$-module homomorphism from $I_0/I_0^2$ to itself, and $R_0/I_0$ is a Notherian ring, the homomorphism must have a zero-dimensional kernel, which is an isomorphism. We have proved 
\begin{equation}\label{eqn:I0I0Square}
    I_0/I_0^2 \cong R_0/I_0 \{x_iy_j \ |\ 1\le i \le r_+, 1\le j\le r_-\}.
\end{equation}
Now we compute the module $I_0R_{\le0}/I_0^2R_{\le0}$, and more generally $I_0^kR_{\le0}/I_0^{k+1}R_{\le0}$ for any $k \ge 0$, by considering the localizations with respect to the linear coordinates $y_1, \dots, y_{r_-}$ (for the $\bbP^{r_--1}$-factor).
For each $j = 1, \dots, r_-$, the module $(I_0R_{\le0}/I_0^2R_{\le0})_{(y_j)}$ is a free $(R_{\le0}/I_0R_{\le0})_{(y_j)}$-module of rank $r_+$.
% In addition, because $\widetilde{\oplus_{n\le0}I_0R_n/I_0^2R_n}$ is locally free, we can see that $I_0R_{\le 0,(y_i)}/I_0^2R_{\le 0,(y_i)}$ is a free $R_{\le 0, (y_i)}/I_0R_{\le 0,(y_i)}$-module for arbitrary $i=1,\dots,r_-$ whose rank is $r_+$. 
Moreover, Equation \eqref{eqn:I0I0Square} implies that is generated by the $r_+$ elements $\{x_iy_j\}_{1 \le i \le r_+}$. It follows that
% However, as a $R_{\le 0, (y_i)}/I_0R_{\le 0,(y_i)}$-module, $I_0R_{\le 0,(y_i)}/I_0^2R_{\le 0,(y_i)}$ is generated by $\{[x_jy_i]\}_{j=1,\dots,r_+}$. Similarly, due to the rank limit, we have shown that 
$$
    (I_0R_{\le0}/I_0^2R_{\le0})_{(y_j)} = (R_{\le0}/I_0R_{\le0})_{(y_j)} \{x_iy_j \ \big|\ 1 \le i \le r_+\}.
$$
Writing out the $R_0/I_0$-algebra structure of $(R_{\le0}/I_0R_{\le0})_{(y_j)}$ explicitly, we have
% Because $R_{\le 0,(y_i)}/I_0R_{\le 0,(y_i)}=R_0/I_0[y_1,\dots, y_{r_-}]_{(y_i)}=R_0/I_0[y_jy_i^{-1}]_{j\ne i}$, we have  
% \begin{equation}\label{calculating linear independence of I0Rn/I02Rn}
$$
    (I_0R_{\le0}/I_0^2R_{\le0})_{(y_j)} \cong R_0/I_0\big[y_ly_j^{-1} \ \big|\ l \ne j\big] \big\{x_iy_j \ \big|\ 1 \le i \le r_+\big\}.
$$
%\end{equation}
Since the immersion of $U_- \cap P_- = \Proj (R_{\le 0}/I_0R_{\le 0})$ into $U_- = \Proj (R_{\le 0})$ is regular, on the affine charts we have
\begin{align*}
    \bigoplus_{k\ge 0} (I_0^k R_{\le0}/I_0^{k+1} R_{\le0})_{(y_j)} & \cong \Sym^\bullet_{(R_{\le0}/I_0R_{\le0})_{(y_j)}} (I_0R_{\le0}/I_0^2R_{\le0})_{(y_j)}\\ 
    &= R_0/I_0 \big[y_ly_j^{-1}, x_iy_j \ \big|\ l \ne j, 1 \le i \le r_+\big].
\end{align*} 
% This implies that and $I_0R_{-n}/I_0^2R_{-n}$ is generated by $\{x_jy_1^{a_1}\cdots y_{r_-}^{a_{r_-}}\}_{1\le j\le r_+,a_1+\dots +a_{r_-}=n+1}$. They form a $R_0/I_0$-basis of $I_0R_{-n}/I_0^2R_{-n}$ by \eqref{calculating linear independence of I0Rn/I02Rn}.
Combining the computation on the affine charts, we have that for any $k \ge 0$ and $n \le 0$,
$$
    I_0^kR_n/I_0^{k+1}R_n \cong R_0/I_0 \Big\{x_1^{a_1} \dots x_{r_+}^{a_{r_+}}y_1^{b_1}\dots y_{r_-}^{b_{r_-}} \ \Big|\ a_i, b_j \ge 0, a_1+\dots+a_{r_+}=k, b_1+\dots +b_{r_-} = k-n \Big\}.
$$
On the other hand, for $n \ge 0$, a similar computation gives
$$
    I_0^kR_n/I_0^{k+1}R_n \cong R_0/I_0 \Big\{x_1^{a_1} \dots x_{r_+}^{a_{r_+}}y_1^{b_1}\dots y_{r_-}^{b_{r_-}} \ \Big|\ a_i, b_j \ge 0, a_1+\dots+a_{r_+}=k+n, b_1+\dots +b_{r_-} = k \Big\}.
$$
Combining the two directions, we have 

%The result of same type also hold for $I_0^kR_n/I_0^{k+1}R_n$. Thus, we already show that 
$$
    \bigoplus_{k\ge 0}I_0^kR/I_0^{k+1}R\cong R_0/I_0[x_1,\dots,x_{r_+}, y_1,\dots, y_{r_-}].
$$

Finally, we compute $\bigoplus_{k \ge 0} I^k/I^{k+1}$. 
% Notice that for a fixed degree $m\in \bbZ$, without loss of generality, we assume $m\ge 0$. 
We use the shorthand notation $(I^k)_n \coloneqq I^k \cap R_n$. 
We claim that for any $k \ge 0$ and $n \in \bbZ$, we have
\begin{equation}\label{eqn:IkComputation}
    (I^k)_n = \begin{cases}
        (I^{k+1})_n & \text{if $k < \abs{n}$ or $k=\abs{n}+2t+1$ for some $t \in \bbZ_{\ge 0}$,}\\
        I_0^t R_n & \text{if $k=\abs{n}+2t$ for some $t \in \bbZ_{\ge 0}$.}
    \end{cases}
\end{equation}
The case $k < \abs{n}$ directly follows from condition~\ref{ass:Deg1Generation}. When $k=\abs{n}+2t+1$, 
\color{black}
for any $z=z_1\cdots z_k \in (I^k)_n$ such that $z_i \in I$ is homogeneous of degree $a_i$, since $k$ and $n$ have different parities, at least one $a_i$ is even. This means that $z_i\in (I)_{a_i}\subset I^2$ and $z\in (I^{k+1})_n$.
Now consider the remaining case $k=\abs{n}+2t$. We prove the claim for $n \ge 0$ and the case $n \le 0$ is similar.
Notice that $(I^{n+2t})_n/(I^{n+2t+1})_n$ can be generated by 
$$
    \Big\{x_1^{a_1}x_2^{a_2}\dots x_{r_+}^{a_{r_+}}y_1^{b_1}\dots y_{r_-}^{b_{r_-}} \ \Big|\ a_1+\dots+a_{r_+}=n+t, b_1+\dots+b_{r_-}=t \Big\},
$$
which means that 
$$
    (I^{n+2t})_n = I_0^t R_n + (I^{n+2t+1})_n = I_0^t R_n+(I^{n+2t+2})_n.
$$
It then suffices to show that $(I^{n+2t+2})_n \subseteq I_0^tR_n$. By condition~\ref{ass:Deg1Generation}, any element in $(I^{n+2t+2})_n$ can be written as the sum of elements of form $z=z_1\cdots z_{k+2t+w}\in (I^{k+2t+2})_k$ with $w \ge 2$ and $z_i\in I_0 \cup R_{- 1} \cup R_1$. Let $a_\pm$ and $a_0$ denote the number of $z_i$'s that have degree $\pm 1$ and $0$ respectively. Then $a_++a_-+a_0=k+2t+2w$ and $a_+-a_-=k$ imply that $2a_-+a_0=2t+2w$ and hence, $2a_-+2a_0\ge 2t+2w \ge2t+4$. It follows that $a_- + a_0 \ge t+2$ and thus $z \in I_0^{t+2}R_k \subset I_0^tR_k$.

By \eqref{eqn:IkComputation}, for any $k \ge 0$ and $n \in \bbZ$, we have
$$
    (I^k/I^{k+1})_n \coloneqq (I^k)_n/(I^{k+1})_n = \begin{cases}
        0 & \text{if $k < \abs{n}$ or $k=\abs{n}+2t+1$ for some $t \in \bbZ_{\ge 0}$,}\\
        I_0^t R_n/I_0^{t+1}R_n & \text{if $k=\abs{n}+2t$ for some $t \in \bbZ_{\ge 0}$.}
    \end{cases}
$$
In particular, for $k = 1$, we see that
$$
    I/I^2 = R_1/I_0R_1 \oplus R_{-1}/I_0R_{-1} \cong R/I\{x_1, \dots, x_{r_+}, y_1,\cdots,y_{r_-}\}
$$
where recall $R_0/I_0 \cong R/I$. Therefore, we have
$$
    \bigoplus_{k\ge 0} I^k/I^{k+1} \cong \bigoplus_{k\ge 0} I_0^kR/I_0^{k+1}R \cong R/I[x_1,\dots,x_{r_+}, y_1,\dots,y_{r_-}] \cong \Sym^\bullet (I/I^2)
$$
as desired. Finally, note that the condition~\eqref{cond:WtPm1} of Definition~\ref{def:SimpleWall} is also a direct consequence of condition~\ref{ass:Deg1Generation}.
\end{proof}

% Notice that when $k\ge 0$, $-\frac{k}{2}\le t<0$ , $(I^{k+2t})_k=(I^{k+2t+1})_k$ because $(I^{k+2t})_{k}\subset R_k\subset (I^k)_k\subset (I^{k+2t+1})_k$. A similar calculation $(I^{k+2t})_{-k}=(I^{k+2t+1})_{-k}$ also holds. Thus, we can compute that for $0\le n\le \lfloor \frac{k}{2} \rfloor$, $(I^k/I^{k+1})_{\pm (k-2n)}=I_0^{n}R_{\pm(k-2n)}/I_0^{n+1}R_{\pm(k-2n)},$ and for $\abs{n}>k$, $(I^k/I^{k+1})_n=0$. Therefore, 

% We have shown that $\Spec(R/I)\hookrightarrow \Spec(R)$ is a regular immersion. Hence, $W$ is a smooth complex variety glued from the affine charts $\Spec(R)$. In addition, Assumption \ref{def:SimpleWall}\ref{cond:WtPm1} is satisfied because for any affine chart $\mathrm{Spec}(R)$, $R_{\pm1}$ generates $R$ over $R_0$.

\begin{corollary}\label{cor:FlipQDMDecomposition}    
For a standard flip $f \colon X_-\dashrightarrow X_+$ of type $(r_+-1,r_--1)$ with $r_+< r_-$, if $f$ is a $D$-flip that satisfies the conditions in Lemma~\ref{lem:DFlipSmoothVGIT}, Conjecture~\ref{conj:IntroGeneralFlip} holds.
\end{corollary}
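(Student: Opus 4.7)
The plan is to directly realize the standard flip $f\colon X_-\dashrightarrow X_+$ as a simple $\bbC^*$-VGIT wall-crossing and then invoke Theorem~\ref{thm:GeneralQDMDecomposition}. First, I would apply the Reid--Thaddeus construction reviewed in Section~\ref{sect:GeneralFlips}: pick a $\bbQ$-Cartier divisor $D$ on $X_-$ witnessing the $D$-flip property, and (after clearing denominators) form the graded $R_0$-algebra $R=\bigoplus_{n\in\bbZ}R_n$ with $R_n=H^0(X_-,\cO(nD)|_{U_-})$ on each affine chart $U_0=\Spec(R_0)\subset X_0$. Gluing the local $\Spec(R)$ produces a variety $W$ with a $\bbC^*$-action (via the $\bbZ$-grading) whose two nontrivial GIT quotients are $X_\pm$ and whose unique $\bbC^*$-fixed component is $S$.

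Next I would verify the conditions of Definition~\ref{def:SimpleWall}. By Lemma~\ref{lem:DFlipSmoothVGIT}, the hypotheses in the corollary guarantee that $W$ is smooth and that condition~\eqref{cond:WtPm1} holds for the wall $S$. Since $G=\bbC^*$, conditions \eqref{cond:SimpleWall} and \eqref{cond:WtGcd} hold automatically (see Section~\ref{sect:VGIT}). Condition~\eqref{cond:t0} and the nonemptiness and connectedness of the wall follow from the construction: there are exactly two chambers in $E^{\bbC^*}(W)$, corresponding to $X_+$ and $X_-$, separated by a single wall whose strictly semistable locus is $S$ (connected because $S$ is assumed to be a smooth projective variety, hence connected after restriction to connected components if needed). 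Smoothness and projectivity of $X_\pm$ are part of the hypotheses. Thus $W$ realizes a simple $\bbC^*$-VGIT wall-crossing in the sense of Definition~\ref{def:SimpleWall}, with GIT quotients $X_\pm=W\gitquot\bbC^*(\pm)$ and wall $W^0\gitquot\bbC^*(0)=S$.

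At this stage I would feed the master space of $W$ (from Section~\ref{sect:MasterSpace}) into Theorem~\ref{thm:GeneralQDMDecomposition}. The master space $M$ is a smooth projective variety with a weight-$(\pm1)$ $\bbC^*$-action, three fixed components $X_+\sqcup S\sqcup X_-$, and gives a 3-component $\bbC^*$-VGIT wall-crossing to which Theorem~\ref{thm:intro 3compqdmdecom} applies. The quantity $c_{M^0}=r_+-r_-<0$ matches the sign convention needed in Section~\ref{sect:Decomposition}. The resulting $R$-module isomorphism, with $R=\bbC[z]\Biglaurent{Q_{X_-}^{a/(2(r_--r_+))}}\formal{Q_{X_-},\tau_{X_-}}$, is exactly the content of Conjecture~\ref{conj:IntroGeneralFlip}, once we identify the middle fixed component $M^0$ of the master space with $S$ (this identification is ensured by $W^0=S$ from the construction of $W$ and the compatibility of the master space with the $G$-fixed loci explained in Section~\ref{sect:MasterSpace}).

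The main obstacle I expect is not the VGIT-to-QDM transfer — that is handled uniformly by Theorem~\ref{thm:GeneralQDMDecomposition} — but rather confirming the identification of the wall of the master-space wall-crossing with the given wall $S$ of the flip, together with the matching of curve classes: one must check that the class $a$ appearing in $R$ coincides with the fiber class of $P_-\subset X_-$ intrinsic to the flip, so that the base ring of the decomposition matches the one asserted in Conjecture~\ref{conj:IntroGeneralFlip}. This compatibility follows from the construction of $W$ since $P_\pm\cong\bbP(V_\pm)$ are precisely the exceptional loci of the contractions $q_\pm\colon X_\pm\to X_0$ induced by $\cO(\pm D)$, and the $\bbC^*$-invariant curves in $W$ connecting $S$ to $X_-$ deform to the fibers of $\psi_-\colon P_-\to S$ inside $X_-$. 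Once this identification is in place, the statement of Conjecture~\ref{conj:IntroGeneralFlip} is a direct translation of Theorem~\ref{thm:GeneralQDMDecomposition}.
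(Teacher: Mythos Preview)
Your approach is essentially the same as the paper's: realize the $D$-flip as a $\bbC^*$-VGIT wall-crossing on the Reid--Thaddeus variety $W$, invoke Lemma~\ref{lem:DFlipSmoothVGIT} for smoothness of $W$ and condition~\eqref{cond:WtPm1}, and then apply Theorem~\ref{thm:GeneralQDMDecomposition} via the master space construction.

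There is one subtlety you do not address but the paper does. The variety $W$ produced by the Reid--Thaddeus construction is a relative $\Spec$ over $X_0$ and is \emph{not} known to be quasi-projective, whereas the master space construction in Section~\ref{sect:MasterSpace} is set up for a quasi-projective $W$ (the standing hypothesis of Section~\ref{sect:Setup}). The paper handles this explicitly: it observes that in the master space construction the quasi-projectivity of $W$ is only used to ensure that the GIT quotients $X_\pm=W\gitquot G(\pm)$ are good quotients, and this holds here by the very construction of $W$ (the quotients are $\Proj(R_{\ge 0})$ and $\Proj(R_{\le 0})$ glued over $X_0$). Once this is noted, the master space $M$ is smooth projective with the required 3-component structure and your argument goes through. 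Your identifications of $M^0$ with $S$ and of the class $a$ with the fiber class of $P_-$ are correct and match the paper's discussion in Sections~\ref{sect:MasterSpace} and~\ref{sect:RedNovikov}.
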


\bibliographystyle{plain}  % or use IEEE, APA, etc.
\bibliography{dahema} 

\end{document}